\numberwithin{equation}{section}
\newtheorem{theorem}{Theorem}[section]
\newtheorem{proposition}{Proposition}[section]
\newtheorem{lemma}{Lemma}[section]
\newtheorem{definition}{Definition}[section]
\newtheorem{corollary}{Corollary}[section]
\newtheorem{remark}{Remark}[section]
\newtheorem{example}{Example}[section]
\begin{document}
\begin{frontmatter}

\title{Asymptotic behaviour of Dirichlet eigenvalues for homogeneous H\"{o}rmander operators and algebraic geometry approach}

\author[label1]{Hua Chen\corref{cor1}}
\ead{chenhua@whu.edu.cn}
\author[label2]{Hong-Ge Chen}
\ead{hongge\_chen@whu.edu.cn}
\author[label1]{Jin-Ning Li}
\ead{lijinning@whu.edu.cn}

\address[label1]{School of Mathematics and Statistics, Wuhan University, Wuhan 430072, China}
\address[label2]{Wuhan Institute of Physics and Mathematics, Innovation Academy for Precision Measurement Science and Technology, Chinese Academy of Sciences, Wuhan 430071, China}

\cortext[cor1]{corresponding author}

\begin{abstract}
We study the Dirichlet eigenvalue problem of homogeneous H\"{o}rmander operators $\triangle_{X}=\sum_{j=1}^{m}X_{j}^{2}$ on a bounded open domain containing the origin, where $X_1, X_2, \ldots, X_m$ are linearly independent smooth vector fields in $\mathbb{R}^n$ satisfying H\"{o}rmander's condition and a suitable homogeneity property with respect to a family of non-isotropic dilations. Suppose that $\Omega$ is an open bounded domain in $\mathbb{R}^n$ containing the origin. We use the Dirichlet form to study heat semigroups and subelliptic heat kernels. Then, by utilizing  subelliptic heat kernel estimates, the resolution of singularities in algebraic geometry, and employing some refined analysis involving convex geometry, we establish the explicit asymptotic behavior $\lambda_k \approx k^{\frac{2}{Q_0}}(\ln k)^{-\frac{2d_0}{Q_0}}$ as $k \to +\infty$, where $\lambda_k$ denotes the $k$-th Dirichlet eigenvalue of $\triangle_{X}$ on $\Omega$, $Q_0$ is a positive rational number, and $d_0$ is a non-negative integer. Furthermore, we provide optimal bounds of index $Q_0$, which depend on the homogeneous dimension associated with the vector fields $X_1, X_2, \ldots, X_m$.
\end{abstract}
\begin{keyword}
H\"{o}rmander's condition\sep homogeneous H\"{o}rmander operators\sep Dirichlet eigenvalues\sep subelliptic heat kernel\sep resolution of singularities

\MSC[2020] 35P15\sep 35P20\sep 35J70
\end{keyword}

\end{frontmatter}

\section{Introduction and main results}
\label{Section1}
For $n\geq 2$, let $X=(X_{1},X_{2},\ldots,X_{m})$ be the real smooth vector fields defined on $\mathbb{R}^n$ that satisfy the following assumptions:
\begin{enumerate}
  \item [(H.1)]

   There exists a family of (non-isotropic) dilations $\{\delta_{t}\}_{t>0}$ of the form
  \[ \delta_{t}:\mathbb{R}^n\to \mathbb{R}^n,\qquad \delta_{t}(x)=(t^{\alpha_{1}}x_{1},t^{\alpha_{2}}x_{2},\ldots,t^{\alpha_{n}}x_{n}), \]
 where $1=\alpha_{1}\leq \alpha_{2}\leq\cdots\leq \alpha_{n}$ are positive integers, such that $X_{1},X_{2},\ldots,X_{m}$ are $\delta_{t}$-homogeneous of degree $1$. That is, for all $t>0$, $f\in  C^{\infty}(\mathbb{R}^n)$, and $j = 1, \ldots, m$,
 \[ X_{j}(f\circ \delta_{t})=t(X_{j}f)\circ \delta_{t}. \]

   \item [(H.2)]
     \label{H-2}
    The vector fields $X_{1},X_{2},\ldots,X_{m}$ are linearly independent in $\mathcal{X}(\mathbb{R}^n)$ as linear differential operators, and satisfy  H\"{o}rmander's condition at $0\in \mathbb{R}^{n}$, i.e.,
       \[ \dim\{Y(0)|~Y\in \text{Lie}(X)\}=n, \]
 where $\text{Lie}(X)$ is the smallest Lie subalgebra in $\mathcal{X}(\mathbb{R}^n)$ containing $X=(X_{1},X_{2},\ldots,X_{m})$. Here, $\mathcal{X}(\mathbb{R}^n)$ denotes the set of all smooth vector fields in $\mathbb{R}^{n}$, which is also a vector space over $\mathbb{R}$ equipped with the natural operations.

\end{enumerate}

We denote the  $\delta_{t}$-homogeneous dimension of $(\mathbb{R}^n,\delta_{t})$ by
\begin{equation}\label{1-1}
  Q:=\sum_{j=1}^{n}\alpha_{j}.
\end{equation}

Then, we define the formally self-adjoint operator  $\triangle_{X}$  generated by vector fields $X_{1},X_{2},\ldots,X_{m}$ as follows:
\[ \triangle_{X}:=-\sum_{i=1}^{m}X_{i}^{*}X_{i},\]
where $X_{i}^{*}=-X_{i}-\text{div}X_{i}$ denotes the formal adjoint of $X_{i}$, and $\text{div}X_{i}$ is the
divergence of $X_{i}$. From assumptions (H.1) and (H.2), we can deduce that the Lie algebra $\text{\rm Lie}(X)$ is nilpotent of step $\alpha_{n}$, and the vector fields $X=(X_{1},X_{2},\ldots,X_{m})$ satisfy H\"{o}rmander's condition in $\mathbb{R}^n$. Specifically,  there exists a smallest positive integer $r$ such that $X_{1},X_{2},\ldots,X_{m}$ together with their commutators of length at most $r$ span the tangent space $T_{x}(\mathbb{R}^n)$ at each point $x\in\mathbb{R}^n$ (see Proposition \ref{prop2-5} below). We therefore  refer to the real smooth vector fields that satisfy assumptions (H.1) and (H.2) as \emph{homogeneous H\"{o}rmander vector fields}, and $r$ is called the H\"{o}rmander index of $X$.  It is worth noting that
 $r=\alpha_{n}$ under assumptions (H.1) and (H.2).

Moreover, by assumption (H.1) and Proposition \ref{prop2-2} below, we have $X_{i}^{*}=-X_{i}$ for $i=1,\ldots,m$. Thus, $\triangle_{X}$ has the sum of square form
\begin{equation}\label{1-2}
  \triangle_{X}=\sum_{i=1}^{m}X_{i}^{2}.
\end{equation}
The operator $\triangle_{X}$ in \eqref{1-2} under assumptions (H.1) and (H.2) is called the \emph{homogeneous H\"{o}rmander operator}.

The class of homogeneous H\"{o}rmander operators is extensive and encompasses significant degenerate operators that have been widely studied in the literature. Examples of such operators include sub-Laplacians on Carnot groups, Grushin operators, and Martinet operators. In recent years, the analysis of homogeneous H\"{o}rmander operators has garnered substantial interest.
Utilizing Folland's global lifting method \cite{Folland1977}, Biagi-Bonfiglioli \cite{Biagi2017} investigated the existence of  global fundamental solutions. Subsequently, Biagi-Bonfiglioli-Bramanti \cite{Biagi-Bonfiglioli-Bramanti2019} established global estimates for the fundamental solution. Concurrently, the existence and Gaussian bounds of global heat kernels for homogeneous H\"{o}rmander operators have been examined by Biagi-Bonfiglioli \cite{Biagi2019}, as well as Biagi-Bramanti \cite{Biagi2020}.
For further results on degenerate elliptic equations related to homogeneous H\"{o}rmander operators, one can refer to \cite{Battaglia2018, BiagiS, Biagi2020-jde, Biagi2021-jmaa, Biagi2021-nodea}.

As it is well-known,  the sub-Riemannian geometry provides the natural geometric framework for subelliptic PDEs.
Under H\"{o}rmander's condition, the vector fields $X=(X_{1},X_{2},\ldots,X_{m})$ induce a canonical sub-Riemannian structure $(D,g)$, such that $\mathbb{R}^n$ endowed with $(D,g)$ forms a sub-Riemannian manifold $(\mathbb{R}^n,D,g)$. Here, $D$ is the distribution consisting of linear subspaces $D_x \subset T_x(\mathbb{R}^n)$, which smoothly depend on $x \in \mathbb{R}^n$, with $D_{x}={\rm span}\{X_{1}(x),X_{2}(x),\ldots,X_{m}(x)\}$. The sub-Riemannian metric is denoted by $g$. When $D$ has a constant rank $m$ on $\mathbb{R}^n$ with $m\leq n$ (i.e., the dimension $\dim D_{x}=m\leq n$ for every $x\in \mathbb{R}^n$), $D$ is the subbundle of $T\mathbb{R}^{n}$. In this case, the vector fields $X=(X_{1},X_{2},\ldots,X_{m})$ are orthonormal concerning the sub-Riemannian metric $g$, and the H\"{o}rmander operator $\triangle_{X}$  coincides with the sub-Laplacian on sub-Riemannian manifold $(\mathbb{R}^n,D,g)$, as shown in \cite{Gordina2016}.
 However, the rank of $D$ may vary in general, and the above setting includes almost sub-Riemannian structures. Further details  on sub-Riemannian geometry can be found in \cite{Montgomery2002,Jean2014,Bellaiche1996,Andrei2019,Gromov1996,Rifford2014,Stefani2014}.

For each $x\in \mathbb{R}^n$ and $1\leq j\leq \alpha_{n}$,  we denote by $D_{x}^{j}$ the subspace of $T_{x}(\mathbb{R}^n)$ spanned by all commutators of $X_{1},X_{2},\ldots,X_{m}$ with length at most $j$ . Setting $\nu_{j}(x)=\dim D_{x}^{j}$ for $1\leq j\leq \alpha_{n}$ with $\nu_{0}(x):=0$, we define

\begin{equation}\label{1-3}
\nu(x):=\sum_{j=1}^{\alpha_{n}}j(\nu_{j}(x)-\nu_{j-1}(x))
\end{equation}
as the pointwise homogeneous dimension at $x$ (see \cite{Morbidelli2000}). A point $x\in \mathbb{R}^n$ is regular if, for every $1\leq j\leq \alpha_{n}$, the dimension $\nu_{j}(y)$ is a constant as $y$ varies in an open neighbourhood of $x$. Otherwise, $x$ is said to be singular. Furthermore, a set $S\subset \mathbb{R}^n$ is equiregular if every point of $S$ is regular, while
a set $S\subset \mathbb{R}^n$ is non-equiregular if it contains singular points. The equiregular assumption is also known as the M\'etivier's condition in PDEs (see \cite{Metivier1976}). For an equiregular connected set $S$, the pointwise homogeneous dimension $\nu(x)$ is a constant $\nu$, which coincides with the Hausdorff dimension of $S$ related to the vector fields $X$. This constant $\nu$ is also referred to as the M\'etivier's index. If the set $S\subset \mathbb{R}^n$ is non-equiregular, we can introduce the so-called generalized M\'{e}tivier's index  by
\begin{equation}\label{1-4}
  \tilde{\nu}_{S}:=\max_{x\in \overline{S}}\nu(x).
\end{equation}
The generalized M\'{e}tivier's index  is
also known as the non-isotropic dimension (see \cite{Yung2015,chen-chen2019,chen-chen2020}), which plays a significant role in the geometry and functional settings associated
with vector fields $X$. Note that $n+\alpha_{n}-1\leq \tilde{\nu}_{S}< n\alpha_{n}$ for $\alpha_{n}>1$, and  $\tilde{\nu}_{S}=\nu$ if the closure of $S$ is equiregular and connected.

In this paper, we study the Dirichlet eigenvalue problem associated with the homogeneous H\"{o}rmander operator $\triangle_X$, given by:
    \begin{equation}\label{1-5}
  \left\{
         \begin{array}{ll}
           -\triangle_{X}u=\lambda u, & \hbox{on $\Omega$;} \\[3mm]
          u\in H_{X,0}^{1}(\Omega), &
         \end{array}
       \right.
\end{equation}
where $\Omega\subset \mathbb{R}^{n}$ is a bounded open domain containing the origin and $H_{X,0}^{1}(\Omega)$ is the weighted Sobolev space associated with vector fields $X$ (see Section \ref{Section2} for details). Following H\"{o}rmander's condition, $-\triangle_{X}$ can be extend to
 a positive self-adjoint operator that exhibits discrete Dirichlet eigenvalues arranged as $0<\lambda_1\leq\lambda_2\leq\cdots\leq\lambda_{k-1}\leq\lambda_k\leq\cdots$,
and $\lambda_{k}\to +\infty $ as $k\to +\infty$.

In the case where $X=(\partial_{x_{1}},\ldots,\partial_{x_{n}})$, $\triangle_{X}$ simplifies to the standard Laplacian  $\triangle$. The classical eigenvalue problems associated with the Laplacian
have been studied extensively since Weyl \cite{Weyl1912} established the celebrated asymptotic formula:
\[ \lambda_{k}\sim 4\pi\left(\frac{|\Omega|}{\Gamma\left(\frac{n}{2}+1\right)}\right)^{-\frac{2}{n}}\cdot k^{\frac{2}{n}}\qquad\mbox{as}~~~k \to+\infty,  \]
where $|\Omega|$ is the $n$-dimensional Lebesgue measure of $\Omega$. We list some literature in this area, including \cite{chenqingmin1, chenqingmin2, Kroger1994, Yau1983, Polya1961, Weyl1912,Weyl1950, Seeley1978, Ivrii1980, Melrose, Kac1966, Hermi2008, cheng2009, Cheng1981}, along with the references cited therein.

The investigation of eigenvalue problems concerning the H\"{o}rmander operators can be traced back to M\'{e}tivier \cite{Metivier1976} in 1976. By imposing the
 equiregular assumption on $\overline{\Omega}$, M\'{e}tivier established the following asymptotic formula:
\begin{equation}\label{1-6}
  \lambda_{k}\sim \left(\int_{\Omega}\gamma(x)dx\right)^{-\frac{2}{\nu}}\cdot k^{\frac{2}{\nu}}\qquad\mbox{as}~~ k\to+\infty,
\end{equation}
where $\gamma(x)$ is a positive continuous function on $\Omega$ and $\nu$ is the  M\'{e}tivier index of $\Omega$. However, the asymptotic behaviour of eigenvalues has not been fully understood when the equiregular assumption is dropped.
To the best of our knowledge, only a few asymptotic results have been obtained for the general non-equiregular case. In 1981, Fefferman-Phong
\cite{Fefferman1981} proved that, for the closed eigenvalue problem of H\"{o}rmander operator on compact (closed) smooth manifold $M$,
\begin{equation}\label{1-7}
	c_1\int_{M}\frac{d\mu}{\mu(B_{d_{X}}(x,\lambda^{-\frac{1}{2}}))}\leq N(\lambda)\leq
	c_2\int_{M}\frac{d\mu}{\mu(B_{d_{X}}(x,\lambda^{-\frac{1}{2}}))}	
\end{equation}
holds for sufficiently large $\lambda$, where $N(\lambda):=\#\{k|\lambda_{k}\leq \lambda\}$ is the spectral counting function, $c_2>c_1>0$ are constants depending on $X$ and $M$, $\mu$ is the smooth measure on $M$, and
$B_{d_{X}}(x,r)$ is the subunit ball (defined in Section \ref{Section2} below). Nevertheless, without additional assumptions, the abstract integral in \eqref{1-7} cannot be explicitly calculated for general  H\"{o}rmander operators. Furthermore, the applicability of \eqref{1-7}  to the Dirichlet eigenvalues of H\"{o}rmander operators is a longstanding open problem, which will be addressed by Theorem \ref{thm1} in homogeneous case.

In a recent study \cite{chen-chen2020}, the first two current authors investigated the Dirichlet eigenvalue problem for H\"{o}rmander operators in the non-equiregular case and provided an explicit asymptotic formula under certain weak conditions. Specifically, consider an open bounded domain $\Omega\subset \mathbb{R}^n$ with smooth boundary that is non-characteristic for vector fields $X$. Let $H:=\{x\in \Omega|\nu(x)=\tilde{\nu}\}$ be the level set at which the pointwise homogeneous dimension attains its maximum value.  It was proven in
 \cite{chen-chen2020} that if $H$ possesses a positive measure, then
\begin{equation}\label{1-8} \lambda_{k}\sim\left(\frac{\Gamma\left(\frac{\tilde{\nu}}{2}+1\right)}{\int_{H}\gamma_{0}(x)dx}\right)^{\frac{2}{\tilde{\nu}}}\cdot
k^{\frac{2}{\tilde{\nu}}}\qquad\mbox{ as }~~k\to +\infty,
\end{equation}
where $\tilde{\nu}:=\max_{x\in \overline{\Omega}}\nu(x)$ is the non-isotropic dimension (generalized M\'{e}tivier's index) of $\Omega$  depending on vector fields $X$, and $\gamma_{0}$ is a positive measurable function on $\Omega$. We mention that \eqref{1-8} generalizes M\'{e}tivier's asymptotic formula \eqref{1-6}. In fact, the equiregular assumption on $\overline{\Omega}$  implies that $\tilde{\nu}=\nu$ and $H=\Omega$. Therefore, the condition $|H|=|\Omega|>0$ is certainly satisfied. Furthermore, in the case of $|H|=0$, \cite{chen-chen2020} derived the following result:
\begin{equation}\label{1-9}
\lim_{k\to+\infty}\frac{k^{\frac{2}{\tilde{\nu}}}}{\lambda_{k}}=0.
\end{equation}
Note that \eqref{1-9} merely suggests that $\lambda_{k}$ grows faster than $k^{\frac{2}{\tilde{\nu}}}$ as $k\to +\infty$. This observation gives rise to a natural question: \emph{If $|H|=0$,  what is the exact growth rate of $\lambda_{k}$ as $k\to +\infty$?}

It is worth pointing out that even if $|H|=0$, the class of the homogeneous H\"{o}rmander operators remains quite large and contains many crucial degenerate operators relevant to sub-Riemannian geometry.  For instance, the Grushin operators, Bony operators, and Martinet operators all belong to this class (see Section \ref{Section6}).

The present work aims to provide the explicit estimations of Dirichlet eigenvalues for homogeneous H\"{o}rmander operators. Our research into the asymptotic behaviour of eigenvalues also follows up on our earlier work \cite{chen-chen2020}, in which we examined the Dirichlet eigenvalue problem for general H\"{o}rmander operators on bounded open domains with boundaries that are smooth and non-characteristic for vector fields $X$. However, addressing the Dirichlet eigenvalue problem on general bounded domains (with the boundaries that may be non-smooth and characteristic for $X$), especially in cases where $|H|=0$, leads to many technical issues in estimating the trace of the subelliptic Dirichlet heat kernel. This introduces new difficulties and challenges in the estimation of Dirichlet eigenvalues.

Utilizing the abstract theory of Dirichlet forms and the heat semigroups, we provide an abstract estimation for the trace of the subelliptic Dirichlet heat kernel  on general bounded domains, achieved through the comparison of heat kernels. This estimation is formulated as an abstract integral involving the reciprocal of the volume of the subunit ball.  Then, by employing an innovative approach based on algebraic geometry and convex geometry, we construct explicit asymptotic estimates for the abstract integral, consequently deriving explicit asymptotic estimates for the trace of the subelliptic Dirichlet heat kernel. As a result, we successfully establish explicit asymptotic estimates for Dirichlet eigenvalues of homogeneous H\"{o}rmander operators, yielding a satisfactory answer to the above question.

Our explicit asymptotic results, particularly in the non-equiregular case, may shed light on eigenvalue problems of general H\"{o}rmander operators as well as degenerate elliptic operators, for which much less is known. A recent preprint \cite{Verdiere2022} used different method to study the asymptotic behaviour of the closed eigenvalues for a class of H\"{o}rmander operators on compact smooth manifolds without boundary. However, compared to the compact boundaryless case, studying the Dirichlet eigenvalue problem for degenerate elliptic operators is more complicated and challenging due to the involvement of estimates on the trace of the Dirichlet heat kernel. Furthermore, the method derived here from algebraic and convex geometry offers a novel and direct approach in the investigation of the eigenvalue problem in degenerate cases. For other results on eigenvalue problems of degenerate elliptic operators, readers can refer to \cite{CCD,chen-chen2019,chen-chen-li2022,YV1998,Arendt2009,Kokarev2013,
Hassannezhad2014,Taylor2020} and the references therein.

\emph{\textbf{Notations}}. Throughout this paper, the notation $f(x)\approx g(x)$ is used to indicate that
$C^{-1}g(x)\leq f(x)\leq Cg(x)$, where $C>0$ is a constant that is  independent of the relevant variables in $f(x)$ and $g(x)$.  Moreover, we say that $f(x)\approx g(x)$  as $x\to x_{0}$ if there exist some constants $C>0$ and $\delta>0$ such that $C^{-1}g(x)\leq f(x)\leq Cg(x)$ holds for all $0<|x-x_{0}|<\delta$.

We now present our main results. First, we derive the following estimates for the subelliptic Dirichlet heat kernel of homogeneous H\"{o}rmander operators.

\begin{theorem}
\label{thm1}
Let $X=(X_{1},X_{2},\ldots,X_{m})$ be the homogeneous H\"{o}rmander vector fields defined on $\mathbb{R}^n$.  Suppose that $\Omega$ is a bounded open domain in $\mathbb{R}^n$ containing the origin. Then the Dirichlet heat kernel $h_D(x,y,t)$ of $\triangle_{X}$ on $\Omega$ satisfies
  \begin{equation}\label{1-10}
\int_{\Omega}h_D(x,x,t) dx\approx\int_{\Omega}\frac{dx}{|B_{d_X}(x,\sqrt{t})|}~~ \mbox{as}~~~t\to 0^+,
\end{equation}
where $|B_{d_{X}}(x,r)|$ denotes the $n$-dimensional Lebesgue measure of subunit ball $B_{d_{X}}(x,r)$.
\end{theorem}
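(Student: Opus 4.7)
The plan is to reduce the trace estimate to a pointwise comparison between the Dirichlet heat kernel $h_D$ on $\Omega$ and the global heat kernel $h$ on $\mathbb{R}^n$, for which the sharp two-sided Gaussian bound $h(x,x,t)\approx 1/|B_{d_X}(x,\sqrt t)|$ is already known in the homogeneous H\"ormander setting by Biagi--Bonfiglioli and Biagi--Bramanti. Using the Dirichlet form on $H_{X,0}^{1}(\Omega)$, I would realize $-\triangle_{X}$ on $\Omega$ as a positive self-adjoint operator whose associated sub-Markovian semigroup admits a jointly continuous, symmetric, non-negative integral kernel $h_D(x,y,t)$ (the required local doubling and $L^{2}$-Poincar\'e inequality on subunit balls being supplied by Nagel--Stein--Wainger and Jerison), so that $\int_{\Omega}h_D(x,x,t)\,dx$ really is the semigroup trace.

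For the upper bound, Dirichlet form monotonicity (extension by zero) yields the pointwise domination $h_D(x,y,t)\le h(x,y,t)$ on $\Omega\times\Omega$; inserting the global on-diagonal Gaussian upper bound and integrating produces
\begin{equation*}
\int_{\Omega}h_D(x,x,t)\,dx\le\int_{\Omega}h(x,x,t)\,dx\le C\int_{\Omega}\frac{dx}{|B_{d_X}(x,\sqrt t)|}.
\end{equation*}

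For the lower bound, set $\Omega_t:=\{x\in\Omega:\,d_X(x,\partial\Omega)\ge 2\sqrt t\,\}$. For any $x\in\Omega_t$ the subunit ball $B_{d_X}(x,2\sqrt t)$ lies inside $\Omega$, and domain monotonicity of Dirichlet heat kernels gives $h_D^{\Omega}(x,x,t)\ge h_D^{B_{d_X}(x,2\sqrt t)}(x,x,t)$. On such a subunit ball I would invoke the on-diagonal Dirichlet lower bound that holds in any locally doubling metric measure space satisfying a scale-invariant parabolic Harnack inequality (Saloff-Coste, Grigor'yan), both of which are available here, to obtain $h_D^{B_{d_X}(x,2\sqrt t)}(x,x,t)\ge c/|B_{d_X}(x,\sqrt t)|$. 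Integrating on $\Omega_t$ then yields the correct lower bound restricted to the interior.

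The principal obstacle, and what I expect to be the most delicate step, is absorbing the subunit boundary collar: one must establish
\begin{equation*}
\int_{\Omega\setminus\Omega_t}\frac{dx}{|B_{d_X}(x,\sqrt t)|}\le C\int_{\Omega_t}\frac{dx}{|B_{d_X}(x,\sqrt t)|}\qquad\text{as }t\to 0^+.
\end{equation*}
This is subtle because $|B_{d_X}(x,\sqrt t)|$ degenerates at points where the pointwise homogeneous dimension $\nu(x)$ jumps, and the Euclidean thickness of the subunit collar itself depends on whether $\partial\Omega$ is characteristic for $X$. I would attack this via a Vitali covering of $\Omega\setminus\Omega_t$ by subunit balls $B_{d_X}(x_i,2\sqrt t)$ each required to meet $\Omega_t$ in a definite fraction of its volume; Jerison's doubling estimate then allows the integrand $1/|B_{d_X}(\cdot,\sqrt t)|$ to be swapped between the collar side and the $\Omega_t$ side of each covering ball, and summing over the cover produces the required absorption with a loss depending only on the doubling constant, completing the proof of \eqref{1-10}.
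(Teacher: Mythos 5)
Your upper bound coincides with the paper's: $h_D\le h$ by domain monotonicity of Dirichlet forms, then insert the on-diagonal Gaussian upper bound. For the lower bound you diverge. You propose domain monotonicity $h_D^{\Omega}(x,x,t)\ge h_D^{B_{d_X}(x,2\sqrt t)}(x,x,t)$ plus an on-diagonal Dirichlet lower bound on subunit balls, whereas the paper writes $h_D(x,x,t)=h(x,x,t)-E(x,x,t)$ and shows (Proposition \ref{prop3-7}) that
\begin{equation*}
0\le E(x,x,t)\le\frac{2A_1C_3}{|B_{d_X}(x,\sqrt t)|}\,e^{-d_X^2(x,\partial\Omega)/(A_1t)}
\end{equation*}
for $0<t\le d_X^2(x,\partial\Omega)/(A_1Q)$, which makes $E$ negligible against $h$ on any fixed compact $K\subset\Omega$ once $t$ is small. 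Either route delivers a lower bound of the form $\int_K h_D(x,x,t)\,dx\gtrsim\int_K dx/|B_{d_X}(x,\sqrt t)|$ on an interior compact set, so up to here your approach is a legitimate alternative.

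The genuine gap is in passing from the interior set back to all of $\Omega$. The collar estimate you pose is in fact true, but the Vitali covering mechanism you sketch does not produce it. Vitali gives a disjoint subfamily of balls covering (a dilate of) $\Omega\setminus\Omega_t$; it does not give you the property you ``require'', namely that each $B_{d_X}(x_i,2\sqrt t)$ centered in the collar meets $\Omega_t$ in a definite fraction of its volume. Since no regularity whatsoever is imposed on $\partial\Omega$, the domain may contain narrow outgrowths whose subunit thickness at scale $\sqrt t$ is below $4\sqrt t$ and whose every point is at subunit distance $>2\sqrt t$ from $\Omega_t$; balls centered there satisfy $B_{d_X}(x_i,2\sqrt t)\cap\Omega_t=\varnothing$, and doubling cannot help because there is nothing on the $\Omega_t$ side to swap onto. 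You have essentially assumed what you need to prove. The paper sidesteps the collar entirely with a dilation argument: choose $\varepsilon_0\in(0,1)$ with $K_{\varepsilon_0}:=\overline{\delta_{\varepsilon_0}(\Omega)}\subset\Omega$ (possible because $0\in\Omega$), then change variables $x=\delta_{\varepsilon_0}(y)$ and use $|B_{d_X}(\delta_{\varepsilon_0}(y),\sqrt t)|=\varepsilon_0^Q|B_{d_X}(y,\sqrt t/\varepsilon_0)|$ together with Corollary \ref{corollary2-1} to get
\begin{equation*}
\int_{K_{\varepsilon_0}}\frac{dx}{|B_{d_X}(x,\sqrt t)|}\ \ge\ \frac{\varepsilon_0^Q}{C_3}\int_{\Omega}\frac{dy}{|B_{d_X}(y,\sqrt t)|}
\end{equation*}
uniformly in $t>0$. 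This is the step your argument is missing; it is also exactly what allows the theorem to hold for arbitrary bounded open $\Omega$ containing the origin, without any non-characteristic or smoothness hypothesis on $\partial\Omega$. Your proposal would need to adopt this (or supply a genuinely boundary-regularity-free replacement) before the lower bound closes.
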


Combining the theory of resolution of singularities and some delicate analysis associated with convex geometry, we proceed to provide explicit estimates for the integral on the right-hand side of \eqref{1-10}.

\begin{theorem}
\label{thm2}
Consider the homogeneous H\"{o}rmander vector fields $X=(X_{1},X_{2},\ldots,X_{m})$ defined on  $\mathbb{R}^n$.
Let $Q$ be the homogeneous dimension given by \eqref{1-1}, and set $w:=\min_{x\in \mathbb{R}^n}\nu(x)$. Suppose that $\Omega\subset\mathbb{R}^n$ is an open bounded domain containing the origin, then we have
  \begin{equation}\label{1-11}
\int_{\Omega}\frac{d x}{|B_{d_X}(x,r)|}\approx r^{-Q_0}|\ln r|^{d_0}~~ \mbox{as}~~~r\to 0^+,
\end{equation}
where $Q_0\in \mathbb{Q}^{+}$ is a positive rational number, and $d_{0}$ is a non-negative integer satisfying the following properties:
\begin{enumerate}[(1)]
  \item If $w=Q$, then $Q_{0}=Q$ and $d_{0}=0$;
  \item If $w\leq Q-1$, then
\begin{equation}\label{1-12}
 n\leq\max\{w,Q-\alpha(X)\}\leq Q_0\leq Q-1~~~\mbox{and}~~~d_{0}  \in\{0,1,\ldots,v\}.
\end{equation}
\end{enumerate}
Here, $v\leq n-1$ denotes the number of degenerate components of $X$, and  $\alpha(X)$ is the sum of degenerate indexes, which are defined in Definition \ref{def2-4} below.
\end{theorem}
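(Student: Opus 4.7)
The plan is to combine the Nagel--Stein--Wainger formula for the volume of subunit balls with Hironaka's resolution of singularities and a Newton-polyhedron analysis on the resolved integrals; the homogeneity assumption (H.1) is the linchpin that turns a geometric estimate into a convex-algebraic one. First I would invoke the Nagel--Stein--Wainger representation
\[ |B_{d_X}(x,r)| \approx \sum_{I} |\lambda_I(x)|\, r^{d(I)}, \]
where $I$ runs over a finite family of admissible multi-indices, $\lambda_I(x)$ is the determinant of a frame of commutators of $X_1,\ldots,X_m$ indexed by $I$, and $d(I)$ is the total commutator length. Under (H.1), each $\lambda_I$ is a $\delta_t$-homogeneous polynomial in $x$, so $1/|B_{d_X}(x,r)|$ is uniformly comparable to the reciprocal of a finite, fully explicit sum of monomial-weighted positive terms. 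The integral in \eqref{1-11} thus reduces to a finite collection of local integrals of the form $\int_U \bigl(\sum_I |\lambda_I(x)|\, r^{d(I)}\bigr)^{-1}\, dx$, to be analyzed as $r\to 0^+$.

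Next, I would partition $\Omega$ into a neighbourhood $V$ of the singular locus $\Sigma$ on which $\nu(x)$ is locally maximal, and its complement. The complement contributes a term of size at most $r^{-w}$, already compatible with the claimed lower bound $Q_0\ge w$. On $V$, I would apply Hironaka's resolution of singularities to the product $\prod_I \lambda_I(x)$: there is a proper birational map $\pi:\widetilde V\to V$ such that, in local analytic coordinates $y$, each $\pi^*\lambda_I$ equals a monomial $y^{a_I}$ times a nowhere-vanishing unit $u_I(y)$, and the Jacobian of $\pi$ is likewise a monomial times a unit.

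On each resolved chart the integral then takes the normal-crossings form
\[ \int_{\widetilde U} \frac{y^\beta\, u(y)\, dy}{\sum_I y^{a_I}\, u_I(y)\, r^{d(I)}}, \]
and a convex-geometric analysis based on the Newton polyhedron of the denominator in the logarithmic variables $(\log y,\log r)$ yields an asymptotic of the form $r^{-Q_0}|\ln r|^{d_0}$. Here $Q_0$ is obtained as the minimum of a rational linear functional over the Newton polyhedron (forcing $Q_0\in\mathbb{Q}^+$), while $d_0+1$ equals the codimension of the face on which this minimum is attained (forcing $d_0$ to be a non-negative integer). Summation over finitely many charts preserves the shape of the asymptotic, since all contributions are positive and no cancellations can occur.

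Finally, I would deduce the numerical bounds in \eqref{1-12} by inspecting the Newton-polyhedron data. If $w=Q$ then $|B_{d_X}(x,r)|\approx r^Q$ uniformly, giving $Q_0=Q$ and $d_0=0$. If $w\le Q-1$, the lower bound $Q_0\ge w$ comes from the complement of $\Sigma$, while $Q_0\ge Q-\alpha(X)$ follows by restricting to a transverse slice to the degenerate components of $X$ and performing an explicit $\delta_r$-rescaling. The upper bound $Q_0\le Q-1$ uses $\delta_t$-homogeneity: under $w\le Q-1$ one can always integrate out at least one anisotropic direction and save an additional power of $r$. The bound $d_0\le v$ is a combinatorial consequence of the Newton-polyhedron description: logarithmic factors can only arise from the $v$ genuinely degenerate coordinates of the dilation. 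The main technical obstacle is carrying out the resolution of singularities uniformly on the entire family $\{\lambda_I\}$ while respecting $\delta_t$-homogeneity, so that the Newton-polyhedron analysis of the $r$-dependent denominator yields the exact rational exponent $Q_0$ and the exact integer log-multiplicity $d_0$ rather than only one-sided estimates.
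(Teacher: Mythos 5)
Your proposal matches the paper's overall architecture (Nagel--Stein--Wainger representation, resolution of singularities via Hironaka, and convex/polyhedral analysis of the resolved integrals), but several intermediate steps are handled differently or left at a level of hand-waving that would not survive scrutiny.

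First, your reduction step differs from the paper's. You propose partitioning $\Omega$ into a neighbourhood of the singular locus and its complement. The paper instead proves (Lemma~4.1) that $J_\Omega(r)$ is comparable, up to constants independent of $r$, to the integral of $\Lambda(x,r)^{-1}$ over any bounded neighbourhood of the origin in the $v$-dimensional subspace spanned by the degenerate coordinates. This dimension reduction is what makes the resolution of singularities tractable (it is applied only to polynomials in $v$ variables), and also delivers the bound $d_0 \le v$ essentially for free. Your partition does not obviously lead to the same reduction, and your claimed codimension formula ``$d_0+1$ equals the codimension of the face on which the minimum is attained'' does not agree with the paper's identification $d_0 = \dim M_{a,s}$, where $M_{a,s}$ is the maximizing face of the polyhedron $P_{a,s}$ for the linear form $\phi_a$. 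These may be reconcilable under a suitable dictionary, but as stated your formula appears wrong.

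The more serious gap is your treatment of the upper bound $Q_0 \le Q-1$. Your argument --- ``one can always integrate out at least one anisotropic direction and save an additional power of $r$'' --- is not a proof and does not obviously translate to one. The paper establishes this by showing $\lim_{r\to 0^+} r^{Q-1+\varepsilon}J_\Omega(r)=0$ for every $\varepsilon\in(0,1)$ via a dominated convergence argument. The dominating function $|\lambda_{\widetilde I}(x)|^{-(1-\varepsilon)/(Q-w)}$ (with $\widetilde I$ of degree $w$) is shown to be integrable using a nontrivial criterion for the local integrability of negative powers of homogeneous polynomials, exploiting that $\sum_i \beta_i \cdot (1-\varepsilon)/(Q-w) < 1$ for each monomial $c_{\beta}x^\beta$ in $\lambda_{\widetilde I}$ because $\sum_i \alpha_i\beta_i = Q-w$. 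Without an estimate of this type, the upper bound on $Q_0$ does not follow from homogeneity alone. This is the one place where your plan needs a genuinely new ingredient.
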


According to Theorem \ref{thm1} and Theorem \ref{thm2}, we achieve the following explicit asymptotic estimate of Dirichlet eigenvalues.

\begin{theorem}
\label{thm3}
Let $X=(X_{1},X_{2},\ldots,X_{m})$ and $\Omega$ satisfy the conditions of Theorem \ref{thm1}. Denote by $\lambda_{k}$ the $k$-th Dirichlet eigenvalue of problem \eqref{1-5}. Then we have
   \begin{equation}\label{1-13}
    N(\lambda) \approx \int_{\Omega}\frac{dx}{|B_{d_X}(x,\lambda^{-\frac{1}{2}})|}\approx\lambda^{\frac{Q_{0}}{2}}(\ln\lambda)^{d_{0}}~~~\mbox{as}~~\lambda\rightarrow+\infty,
  \end{equation}
and
  \begin{equation}\label{1-14}
    \lambda_k\approx k^{\frac{2}{Q_{0}}}(\ln k)^{-\frac{2d_{0}}{Q_{0}}}~~~\mbox{as}~~ k\to+\infty,
  \end{equation}
where $N(\lambda):=\#\{k|\lambda_{k}\leq \lambda\}$ is the spectral counting function,  $Q_{0}$ and $d_{0}$ are the indexes in Theorem \ref{thm2}.
\end{theorem}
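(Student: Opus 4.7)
The plan is to pass through the heat trace. Since $-\triangle_X$ is positive self-adjoint with compact resolvent on $L^2(\Omega)$, the spectral decomposition of its Dirichlet heat kernel gives
\begin{equation*}
\int_\Omega h_D(x,x,t)\,dx \;=\; \sum_{k=1}^\infty e^{-\lambda_k t}\;=\;\int_0^\infty e^{-\lambda t}\,dN(\lambda).
\end{equation*}
Combining Theorem \ref{thm1} (with $r=\sqrt{t}$) and Theorem \ref{thm2} instantly produces the heat-trace asymptotics
\begin{equation*}
\sum_{k=1}^\infty e^{-\lambda_k t}\;\approx\; t^{-Q_0/2}|\ln t|^{d_0}\qquad\text{as }t\to 0^+,
\end{equation*}
while the first $\approx$ in \eqref{1-13} follows directly by setting $r=\lambda^{-1/2}$ in Theorem \ref{thm2}. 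Thus the task reduces to a Tauberian passage from the heat trace to $N(\lambda)$, and then an inversion to the individual $\lambda_k$.

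For the upper bound on $N(\lambda)$ I will use the elementary inequality $N(\lambda)\le e^{\lambda t}\sum_k e^{-\lambda_k t}$, valid for every $t>0$, and pick $t=1/\lambda$. For the matching lower bound I will split the heat trace at a cutoff $A/t$:
\begin{equation*}
\sum_k e^{-\lambda_k t}\;=\;\int_0^{A/t}e^{-\lambda t}\,dN(\lambda)+\int_{A/t}^{\infty}e^{-\lambda t}\,dN(\lambda).
\end{equation*}
The first piece is bounded by $N(A/t)$. For the tail, an integration by parts combined with the upper bound on $N$ already obtained yields, after the substitution $u=\lambda t$, contributions of the form $t^{-Q_0/2}|\ln t|^{d_0-j}\int_A^\infty e^{-u}u^{Q_0/2}(\ln u)^j\,du$ for $0\le j\le d_0$; taking $A$ sufficiently large absorbs these tail terms into half of the lower bound on the heat trace. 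Consequently $N(A/t)\gtrsim t^{-Q_0/2}|\ln t|^{d_0}$, and reparametrising $\lambda=A/t$ (so that $\ln\lambda\sim|\ln t|$) gives the matching lower bound $N(\lambda)\gtrsim \lambda^{Q_0/2}(\ln\lambda)^{d_0}$, which completes \eqref{1-13}.

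The passage from $N$ to $\lambda_k$ is the classical inversion. The upper bound on $N$ together with $N(\lambda_k)\ge k$ yields $\lambda_k^{Q_0/2}(\ln\lambda_k)^{d_0}\gtrsim k$; taking logarithms and iterating once shows $\ln\lambda_k\asymp\ln k$, whence $\lambda_k\gtrsim k^{2/Q_0}(\ln k)^{-2d_0/Q_0}$. Conversely, the lower bound on $N$ applied at $\mu_k:=C\,k^{2/Q_0}(\ln k)^{-2d_0/Q_0}$ forces $N(\mu_k)\ge k$ for $C$ large, hence $\lambda_k\le\mu_k$, proving \eqref{1-14}. I expect the principal technical obstacle to be the Tauberian step: because Theorems \ref{thm1}--\ref{thm2} deliver only $\approx$ (two-sided bounds) rather than sharp asymptotic equivalence for the heat trace, the classical Karamata theorem does not apply verbatim, and the cutoff and integration-by-parts analysis sketched above must track carefully how the logarithmic factor $|\ln t|^{d_0}$ transforms under the change of variables $\lambda=u/t$ in the tail integral.
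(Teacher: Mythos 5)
Your proposal is correct and follows the same high-level strategy as the paper: express the heat trace via Theorem~\ref{thm1}, Theorem~\ref{thm2}, and Proposition~\ref{prop3-6} to obtain $\sum_k e^{-\lambda_k t}\approx t^{-Q_0/2}|\ln t|^{d_0}$, pass from the heat trace to $N(\lambda)$ by a Tauberian argument, and then invert $N$ to recover $\lambda_k$. The one genuine difference is in the Tauberian step: the paper invokes a ready-made Tauberian theorem (cited as Proposition~B.0.13 of \cite{karamata}), which presumably handles two-sided bounds with slowly varying logarithmic factors, whereas you carry out the Tauberian passage by hand. Your upper bound $N(\lambda)\le e^{\lambda t}\sum_k e^{-\lambda_k t}$ with $t=1/\lambda$ is immediate, and your lower bound via the cutoff at $A/t$ and integration by parts on $\int_{A/t}^\infty e^{-\lambda t}\,dN(\lambda)$, followed by the substitution $u=\lambda t$ and the binomial expansion of $(\ln u + |\ln t|)^{d_0}$, is exactly the computation needed to absorb the tail into half the constant in the lower heat-trace estimate for large $A$. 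This self-contained route is slightly longer but avoids a black-box citation and, as you correctly flag, makes transparent why the argument survives with $\approx$ rather than $\sim$; the inversion from $N$ to $\lambda_k$ in your sketch (using $N(\lambda_k)\ge k$ for the lower bound on $\lambda_k$, and the lower bound on $N$ at a trial point $\mu_k$ for the upper bound) is the same monotonicity argument as the paper's, which uses the left-limit $M(\lambda_k)<k\le N(\lambda_k)$ to sandwich $k$ between two powers of $\lambda_k$ with logarithmic corrections.
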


\begin{remark}
\label{remark1-1}
Theorem \ref{thm2} provides an explicit asymptotic behaviour for the integral \eqref{1-11}, which only involves the term $r^{-Q_0}|\ln r|^{d_0}$. It is worth pointing out that the index $Q_0$ is not restricted to positive integers, as illustrated in Example \ref{ex6-4} with $Q_0=\frac{11}{3}$ and $d_0=0$. This non-integer value for $Q_0$ represents a novel phenomenon that has not been encountered before. Moreover,  the bounds of $Q_{0}$ in \eqref{1-12} are optimal. In particular, by employing the blow-up technique in algebraic geometry and utilizing Proposition \ref{prop4-7} below, one can explicitly calculate the values of $Q_0$ and $d_{0}$ for specific homogeneous H\"{o}rmander vector fields. In Section \ref{Section6}, we will provide classical examples to demonstrate the optimality of our results and discuss the calculation method in detail.
\end{remark}

\begin{remark}
Theorem \ref{thm3} implies that the Fefferman-Phong's estimate \eqref{1-7} also holds for the Dirichlet eigenvalue problem \eqref{1-5}. Additionally, for homogeneous vector fields $X$ defined on $\mathbb{R}^{n}$, Proposition \ref{prop2-7} below shows an ingenious relationship between the homogeneous dimension $Q$ and the pointwise dimension $\nu(x)$. It states that if the domain $\Omega$ contains the origin in $\mathbb{R}^n$, then
$Q=\nu(0)=\max_{x\in \overline{\Omega}}\nu(x)=\tilde{\nu}$. Furthermore, according to Corollary \ref{corollary2-2} below, there are only two situations: $w=Q$ and $w\leq Q-1$. If $w=Q$, then $H=\{x\in \Omega|\nu(x)=\tilde{\nu}\}=\Omega$, and our asymptotic estimate \eqref{1-14} is consistent with the asymptotic formula \eqref{1-8}. If $w\leq Q-1$, we have $|H|=0$, and  \eqref{1-14} provides an explicit growth rate for $\lambda_{k}$ as $k\to +\infty$.
\end{remark}
\begin{remark}
Theorem \ref{thm3} also improves our previous estimates of Dirichlet eigenvalues for homogeneous H\"{o}rmander operators in \cite{chen-chen-li2020}.
\end{remark}

The plan of the rest paper is as follows. In Section \ref{Section2}, we introduce some necessary preliminaries, including  homogeneous functions and homogeneous vector fields,  subunit metric and volume estimates of subunit balls, properties of Nagel-Stein-Wainger polynomials, classification of homogeneous H\"{o}rmander vector fields, weighted Sobolev spaces, and subelliptic estimates and Poincar\'{e} inequality of H\"{o}rmander vector fields. In Section \ref{Section3}, we construct the subelliptic global heat kernel and Dirichlet heat kernel for the self-adjoint homogeneous H\"{o}rmander operators from the perspective of the heat semigroup. We also obtain the pointwise estimates of subelliptic Dirichlet heat kernel. In Section \ref{Section4}, we investigate the explicit asymptotic behaviours of integral \eqref{1-11}. Then, we provide the
 proofs of Theorem \ref{thm1}-Theorem \ref{thm3} in Section \ref{Section5}. Finally, in Section \ref{Section6},
we present some related examples  as further applications of Theorem \ref{thm2} and Theorem \ref{thm3}.

\section{Preliminaries}
\label{Section2}

\subsection{$\delta_{t}$-homogeneous functions and vector fields}

We first give a brief review of the definitions and properties of $\delta_{t}$-homogeneous functions and $\delta_{t}$-homogeneous vector fields. One can refer to \cite{Bonfiglioli2007} for a more detailed discussion.

\begin{definition}[$\delta_{t}$-homogeneous function]
\label{def2-1}
A real function $f$ defined on $\mathbb{R}^n$ is called the $\delta_{t}$-homogeneous of degree $\sigma\in \mathbb{R}$ if $f\not\equiv 0$ and $f$ satisfies
\[ f(\delta_{t}(x))=t^{\sigma}f(x)~~~~~\forall x\in \mathbb{R}^{n},~t>0. \]
\end{definition}
According to Definition \ref{def2-1}, if $f$ is a continuous function with $\delta_{t}$-homogeneous degree $\sigma$ and $f(x_{0})\neq 0$ for some $x_{0}\in \mathbb{R}^n$, then $\sigma\geq 0$. Moreover, the continuous and $\delta_{t}$-homogeneous of degree $0$ functions are precisely the non-zero constants (see \cite[p. 33]{Bonfiglioli2007}).

\begin{proposition}[Smooth $\delta_{t}$-homogeneous functions]
\label{prop2-1}
For any $f\in C^{\infty}(\mathbb{R}^{n})$,  $f$ is $\delta_{t}$-homogeneous of degree $\sigma\in \mathbb{N}$ if and only if $f$ is a polynomial function of the form
\begin{equation}\label{2-1}
  f(x)=\sum_{\sum_{i=1}^{n}\alpha_{i}\beta_{i}=\sigma}c_{\beta_{1},\ldots,\beta_{n}}x_{1}^{\beta_{1}}x_{2}^{\beta_{2}}\cdots x_{n}^{\beta_{n}}
\end{equation}
with some $c_{\beta_{1},\ldots,\beta_{n}}\neq 0$, where $\beta_{1},\beta_{2},\ldots,\beta_{n}$ are some non-negative integers.
\end{proposition}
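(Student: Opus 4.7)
\medskip

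\noindent\textbf{Proof proposal.} The plan is to establish the two implications separately; the ``if'' direction is a direct substitution, while the ``only if'' direction will use a Taylor expansion at the origin combined with the homogeneity relation to pin down both the exponents and the global form of $f$.

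For the ``if'' direction, I would observe that for any multi-index $\beta=(\beta_1,\ldots,\beta_n)$ one has
\[
(\delta_t x)_1^{\beta_1}\cdots(\delta_t x)_n^{\beta_n}=t^{\alpha_1\beta_1+\cdots+\alpha_n\beta_n}\,x_1^{\beta_1}\cdots x_n^{\beta_n},
\]
so each monomial appearing in \eqref{2-1} is $\delta_t$-homogeneous of degree exactly $\sigma$, and a finite sum of such monomials inherits the same property. Since some $c_{\beta_1,\ldots,\beta_n}\neq 0$, the polynomial is not identically zero, matching Definition~\ref{def2-1}.

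For the ``only if'' direction, I would first differentiate the identity $f(\delta_t x)=t^\sigma f(x)$: applying $\partial^\beta$ and using $\partial/\partial x_i(f\circ\delta_t)=t^{\alpha_i}(\partial_i f)\circ\delta_t$ yields
\[
(\partial^\beta f)(\delta_t x)=t^{\sigma-\sum_i\alpha_i\beta_i}\,\partial^\beta f(x)\quad\text{for all }t>0,\,x\in\mathbb{R}^n.
\]
Evaluating at $x=0$, the left-hand side is independent of $t$, so whenever $\sum_i\alpha_i\beta_i\neq\sigma$ we must have $\partial^\beta f(0)=0$. Because $\alpha_i\geq 1$, the condition $\sum_i\alpha_i\beta_i=\sigma$ forces $|\beta|\leq\sigma$, so only finitely many Taylor coefficients of $f$ at $0$ can be nonzero, and they are supported exactly on the index set in \eqref{2-1}. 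Let $P$ denote the corresponding polynomial; by construction $\partial^\beta(f-P)(0)=0$ for every $\beta$.

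The crux of the argument is then to show $f\equiv P$ globally, not just formally at $0$. Set $g:=f-P$; both $f$ and $P$ are $\delta_t$-homogeneous of degree $\sigma$, so $g(\delta_t x)=t^\sigma g(x)$. Since $g\in C^\infty(\mathbb{R}^n)$ has vanishing Taylor series at $0$, the $C^\infty$ Taylor remainder gives, for every integer $N>\sigma$, a constant $C_N>0$ and a neighborhood of $0$ on which $|g(y)|\leq C_N|y|^N$. Using $\alpha_1=1\leq\alpha_2\leq\cdots\leq\alpha_n$, one has $|\delta_t x|\leq t|x|$ for all $t\in(0,1)$ and $x\in\mathbb{R}^n$, hence for any fixed $x$ and small $t$,
\[
t^\sigma|g(x)|=|g(\delta_t x)|\leq C_N\,t^N|x|^N.
\]
Dividing by $t^\sigma$ and letting $t\to 0^+$ forces $g(x)=0$. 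Since $x$ was arbitrary, $f=P$ on $\mathbb{R}^n$, with at least one $c_{\beta_1,\ldots,\beta_n}\neq 0$ because $f\not\equiv 0$.

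The main obstacle is precisely this last step: transferring information from the Taylor jet at $0$ to global equality. The inequality $|\delta_t x|\leq t|x|$ (which relies on $\alpha_i\geq 1$) is what allows the non-isotropic dilation to behave, for our estimate, as a genuine contraction and lets the homogeneity relation amplify the flat vanishing at the origin into vanishing everywhere.
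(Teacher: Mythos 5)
Your proposal is correct, and it supplies a self-contained argument where the paper itself only cites \cite[Proposition 1.3.4]{Bonfiglioli2007} without reproducing a proof, so there is no in-paper argument to compare against directly. The structure you use is the standard one underlying that reference: the ``if'' direction is immediate from $(\delta_t x)^\beta = t^{\sum_i\alpha_i\beta_i}x^\beta$; for ``only if'' you differentiate the scaling identity to get $(\partial^\beta f)(\delta_t x)=t^{\sigma-\sum_i\alpha_i\beta_i}(\partial^\beta f)(x)$, evaluate at $x=0$ to kill all Taylor coefficients off the hyperplane $\sum_i\alpha_i\beta_i=\sigma$, observe that $\alpha_i\geq 1$ makes this index set finite so the candidate polynomial $P$ is well defined, and then show $g=f-P$ vanishes identically. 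The last step is the only place requiring care, and your handling of it is right: $g$ is flat at $0$ and satisfies $g(\delta_t x)=t^\sigma g(x)$ (this holds even when $P\equiv 0$, since the scaling identity is preserved under subtraction), and the elementary bound $|\delta_t x|\leq t|x|$ for $t\in(0,1)$, valid because $\alpha_1=1\leq\alpha_i$, turns the Taylor remainder estimate $|g(y)|\leq C_N|y|^N$ near the origin into $t^\sigma|g(x)|\leq C_N t^N|x|^N$ for $t$ small, which forces $g(x)=0$ after dividing by $t^\sigma$ and taking $N>\sigma$. One could alternatively finish by the substitution $f(x)=t^{-\sigma}f(\delta_t x)$ and a direct Taylor-with-remainder expansion of the right-hand side, but that is essentially a repackaging of the same flatness-plus-homogeneity observation, so your route is as clean as the standard one.
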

\begin{proof}
See \cite[Proposition 1.3.4]{Bonfiglioli2007}.
\end{proof}
On the other hand, the $\delta_{t}$-homogeneous vector field is defined as follows.

\begin{definition}
\label{def2-2}
Let $Y$ be a non-identically-vanishing linear differential operator defined on $\mathbb{R}^{n}$. We say $Y$ is $\delta_{t}$-homogeneous of degree $\sigma\in \mathbb{R}$ if
\[ Y(\varphi(\delta_{t}(x)))=t^{\sigma}(Y\varphi)(\delta_{t}(x))\qquad \forall \varphi\in C^{\infty}(\mathbb{R}^{n}),~ x\in \mathbb{R}^{n},~ t>0. \]
\end{definition}

The $\delta_{t}$-homogeneous smooth vector field admit the following properties.

\begin{proposition}[Smooth $\delta_{t}$-homogeneous vector fields]
\label{prop2-2}
Suppose $Y$ is a smooth non-vanishing vector field in $\mathbb{R}^{n}$ such that
\[ Y=\sum_{j=1}^{n}\mu_{j}(x)\partial_{x_{j}}. \]
Then $Y$ is $\delta_{t}$-homogeneous of degree $\sigma\in \mathbb{N}$ if and only if $\mu_{j}$ is a polynomial function $\delta_{t}$-homogeneous of degree $\alpha_{j}-\sigma$ in the form of \eqref{2-1} (unless $\mu_{j}\equiv 0$). Moreover, for
each $\mu_{j}$ with $\mu_{j}\not\equiv 0$, we have $\alpha_{j}\geq\sigma$, and $Y$ satisfies
\[ Y=\sum_{j\leq n,~ \alpha_{j}\geq \sigma}\mu_{j}(x)\partial_{x_{j}}. \]
In particular, if $\sigma\geq 1$, since $\mu_{j}$ is a $\delta_{t}$-homogeneous polynomial function of degree $\alpha_{j}-\sigma$, it follows from \eqref{2-1} that $\mu_{j}(x)=\mu_{j}(x_{1},\ldots,x_{j-1})$ does not depend on the variables $x_{j},\ldots,x_{n}$.

\end{proposition}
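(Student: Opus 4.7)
The plan is to reduce the characterization to a statement about each individual coefficient $\mu_j$ by testing the homogeneity identity against the coordinate functions $\varphi(x)=x_j$, and then to invoke Proposition \ref{prop2-1} to conclude the polynomial structure.

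First I would prove the forward implication. Assume $Y$ is $\delta_t$-homogeneous of degree $\sigma$. Applying Definition \ref{def2-2} to the coordinate function $\varphi(x)=x_j$, I would note that $\varphi\circ\delta_t$ equals $t^{\alpha_j}x_j$, so the left-hand side $Y(\varphi\circ\delta_t)(x)$ evaluates to $t^{\alpha_j}\mu_j(x)$, while the right-hand side is $t^{\sigma}\mu_j(\delta_t(x))$. Comparing these gives the scalar identity $\mu_j(\delta_t(x))=t^{\alpha_j-\sigma}\mu_j(x)$ for all $t>0$, so each non-vanishing $\mu_j$ is a smooth $\delta_t$-homogeneous function of degree $\alpha_j-\sigma$. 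Since a continuous non-trivial $\delta_t$-homogeneous function must have non-negative degree (a fact recalled just after Definition \ref{def2-1}), this forces $\alpha_j\geq\sigma$ whenever $\mu_j\not\equiv 0$, and Proposition \ref{prop2-1} then gives the polynomial form \eqref{2-1}.

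Next I would handle the converse by a direct computation. Assume each $\mu_j$ is either zero or a polynomial $\delta_t$-homogeneous of degree $\alpha_j-\sigma$, and take any $\varphi\in C^{\infty}(\mathbb{R}^n)$. Using the chain rule $\partial_{x_j}(\varphi\circ\delta_t)(x)=t^{\alpha_j}(\partial_{x_j}\varphi)(\delta_t(x))$ term by term, I would rewrite
\[
Y(\varphi\circ\delta_t)(x)=\sum_{j=1}^{n}\mu_j(x)\,t^{\alpha_j}(\partial_{x_j}\varphi)(\delta_t(x))
=\sum_{j=1}^{n}t^{\sigma}\mu_j(\delta_t(x))(\partial_{x_j}\varphi)(\delta_t(x))
=t^{\sigma}(Y\varphi)(\delta_t(x)),
\]
using the homogeneity of $\mu_j$ in the middle equality, which establishes that $Y$ is $\delta_t$-homogeneous of degree $\sigma$.

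Finally, for the last ``in particular'' claim, I would analyze the admissible monomials in the polynomial representation \eqref{2-1} of $\mu_j$. If $\sigma\geq 1$, any monomial $x_1^{\beta_1}\cdots x_n^{\beta_n}$ appearing in $\mu_j$ satisfies $\sum_{i=1}^{n}\alpha_i\beta_i=\alpha_j-\sigma<\alpha_j$; for any index $k\geq j$ we have $\alpha_k\geq\alpha_j$, so $\beta_k\geq 1$ would force $\alpha_k\beta_k\geq\alpha_j>\alpha_j-\sigma$, a contradiction. Hence $\beta_k=0$ for all $k\geq j$, proving that $\mu_j$ depends only on $x_1,\ldots,x_{j-1}$. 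The restriction $Y=\sum_{\alpha_j\geq\sigma}\mu_j(x)\partial_{x_j}$ is then immediate from the already-established fact that $\mu_j\equiv 0$ whenever $\alpha_j<\sigma$. No step here is truly an obstacle; the only conceptual point requiring care is the correct invocation of Proposition \ref{prop2-1} with the appropriate multi-index weights $\alpha_i$, which pins down the degree of each variable and yields the triangular structure $\mu_j=\mu_j(x_1,\ldots,x_{j-1})$.
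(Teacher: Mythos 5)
Your proof is correct. The paper itself does not prove Proposition \ref{prop2-2}: it only cites \cite[Proposition 1.3.5, Remark 1.3.7]{Bonfiglioli2007}, so your self-contained argument fills in what the paper outsources to the literature. The strategy you use is the natural one and matches the standard treatment: testing the operator identity of Definition~\ref{def2-2} against the coordinate functions $\varphi(x)=x_j$ isolates the scalar identity $\mu_j(\delta_t(x))=t^{\alpha_j-\sigma}\mu_j(x)$, the non-negativity-of-degree remark after Definition~\ref{def2-1} gives $\alpha_j\geq\sigma$ for each $\mu_j\not\equiv 0$, and Proposition~\ref{prop2-1} then yields the polynomial form \eqref{2-1}. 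The converse by the chain rule, and the weight-counting argument (using $\alpha_1\leq\cdots\leq\alpha_n$ to rule out monomials involving $x_j,\ldots,x_n$ when $\sigma\geq 1$) to obtain the triangular structure, are both sound. Nothing is missing.
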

\begin{proof}
See \cite[Proposition 1.3.5, Remark 1.3.7]{Bonfiglioli2007}.
\end{proof}

For $1\leq j_{i}\leq m$, we let $J=(j_{1},\ldots,j_{k})$ be a multi-index with length $|J|=k$. Then there exists a commutator $X_{J}$ of length $k$  such that
$$X_{J}=[X_{j_{1}},[X_{j_{2}},\ldots[X_{j_{k-1}},X_{j_{k}}]\ldots]].$$
Adopting the notations above, we have
\begin{proposition}
\label{prop2-3}
For $k\geq 1$, let $X^{(k)}=\{X_{J}|J=(j_{1},\ldots,j_{k}),~1\leq j_{i}\leq m, |J|=k \}$  be the set of all commutators of length $k$. Then for any $Y\in X^{(k)}$, $Y$ is the $\delta_{t}$-homogeneous of degree $k$ unless $Y\equiv 0$ (i.e., $Y$ is a zero vector field). In particular, $X^{(k)}=\{0\}$ for any $k>\alpha_{n}$.
\end{proposition}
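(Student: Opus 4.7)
The plan is induction on the commutator length $k$, combined with the structural description of smooth $\delta_t$-homogeneous vector fields given in Proposition \ref{prop2-2}. The key algebraic input is the following elementary claim: if $Y_1,Y_2$ are smooth vector fields on $\mathbb{R}^n$ which are $\delta_t$-homogeneous of degrees $\sigma_1,\sigma_2$ respectively, then the commutator $[Y_1,Y_2]$ is either identically zero or $\delta_t$-homogeneous of degree $\sigma_1+\sigma_2$. I would verify this directly from Definition \ref{def2-2}: for any test function $\varphi\in C^\infty(\mathbb{R}^n)$,
\[
[Y_1,Y_2](\varphi\circ\delta_t)
= Y_1\bigl(t^{\sigma_2}(Y_2\varphi)\circ\delta_t\bigr) - Y_2\bigl(t^{\sigma_1}(Y_1\varphi)\circ\delta_t\bigr)
= t^{\sigma_1+\sigma_2}\bigl([Y_1,Y_2]\varphi\bigr)\circ\delta_t,
\]
so the dichotomy (zero, or homogeneous of degree $\sigma_1+\sigma_2$) follows from the convention in Definition \ref{def2-2} that excludes the zero operator.

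The base case $k=1$ is immediate from assumption (H.1), since $X^{(1)}=\{X_1,\ldots,X_m\}$ and each $X_j$ is $\delta_t$-homogeneous of degree $1$. For the inductive step, any element of $X^{(k+1)}$ has the form $[X_i,Z]$ with $1\le i\le m$ and $Z\in X^{(k)}$. By the induction hypothesis either $Z\equiv 0$, in which case $[X_i,Z]\equiv 0$, or $Z$ is $\delta_t$-homogeneous of degree $k$; in the latter case the displayed identity (with $\sigma_1=1$, $\sigma_2=k$) shows that $[X_i,Z]$ is either zero or $\delta_t$-homogeneous of degree $k+1$. This establishes the first assertion.

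For the final claim that $X^{(k)}=\{0\}$ whenever $k>\alpha_n$, I would invoke Proposition \ref{prop2-2}. Any non-zero $Y\in X^{(k)}$ is a smooth vector field that is $\delta_t$-homogeneous of degree $k$ by what was just proved. Writing $Y=\sum_{j=1}^n \mu_j(x)\,\partial_{x_j}$, Proposition \ref{prop2-2} forces each non-zero coefficient $\mu_j$ to be a $\delta_t$-homogeneous polynomial of degree $\alpha_j-k$, and in particular $\alpha_j\ge k$. If $k>\alpha_n=\max_{1\le j\le n}\alpha_j$, this inequality fails for every $j$, so all $\mu_j\equiv 0$ and hence $Y\equiv 0$, yielding $X^{(k)}=\{0\}$.

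The argument is largely formal, and I do not anticipate a genuine obstacle. The only point requiring care is to carry the ``zero-or-homogeneous'' alternative throughout the induction, since Definitions \ref{def2-1} and \ref{def2-2} exclude the zero object from the class of homogeneous ones; once this bookkeeping is respected, the closure of the degree under Lie brackets and the coefficient constraint from Proposition \ref{prop2-2} give both conclusions.
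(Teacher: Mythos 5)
The paper does not prove this proposition itself but simply cites \cite[Proposition 1.3.10]{Bonfiglioli2007}, so there is no in-paper argument to compare against. Your self-contained proof is correct: the degree-additivity of the Lie bracket for $\delta_t$-homogeneous vector fields (with the zero/homogeneous dichotomy carried carefully through the induction on bracket length), combined with the constraint $\alpha_j\ge\sigma$ from Proposition \ref{prop2-2}, is exactly the standard argument that the cited reference uses.
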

\begin{proof}
See \cite[Proposition 1.3.10]{Bonfiglioli2007}.
\end{proof}
\begin{remark}
\label{remark2-1}
Proposition \ref{prop2-2} and Proposition \ref{prop2-3} imply that $X^{(k_{1})}\cap X^{(k_{2})}=\{0\}$ for $k_{1}\neq k_{2}$, and
\[ \text{\rm Lie}(X)={\rm span}~ X^{(1)}\oplus \cdots \oplus {\rm span}~ X^{(\alpha_{n})}.\]
\end{remark}

\subsection{Properties of Nagel-Stein-Wainger polynomials}

In this part, we employ standard notations from  \cite{Stein1985} to  further explore the homogeneous H\"{o}rmander vector fields.

 Let $Y_{1},\ldots,Y_{q}$ be an enumeration of the components of $X^{(1)},\ldots,X^{(\alpha_{n})}$. We say $Y_{i}$ has formal degree $d(Y_{i})=k$ if $Y_{i}$ is an element of $X^{(k)}$.
For each $n$-tuple of integers $I=(i_{1},\ldots,i_{n})$ with $1\leq i_{j}\leq q$, we consider the function
\begin{equation}\label{2-2}
\lambda_{I}(x):=\det(Y_{i_{1}},\ldots,Y_{i_{n}})(x),
\end{equation}
where $\det(Y_{i_{1}},\ldots,Y_{i_{n}})(x)=\det(b_{jk}(x))$ with $Y_{i_{j}}=\sum_{k=1}^{n}b_{jk}(x)\partial_{x_{k}}$. We also define
\begin{equation*}
d(I):=d(Y_{i_{1}})+\cdots+d(Y_{i_{n}}),
\end{equation*}
and
\begin{equation}\label{2-3}
  \Lambda(x,r):=\sum_{I}|\lambda_{I}(x)|r^{d(I)},
\end{equation}
where the sum is taken over all $n$-tuples. The function $ \Lambda(x,r)$ is the so-called Nagel-Stein-Wainger polynomial, which describes the volume of subunit balls (see Proposition \ref{prop2-8} below).

For the $\delta_{t}$-homogeneous vector fields $X$,  the function $\lambda_{I}$  admits the following properties.

\begin{proposition}
\label{prop2-4}
Consider the smooth vector fields $X=(X_{1},X_{2},\ldots,X_{m})$  defined on $\mathbb{R}^n$ that satisfy assumption (H.1). Then every $\lambda_{I}$ given by \eqref{2-2} is a polynomial.  Furthermore, $\lambda_{I}$  satisfies $\lambda_I(\delta_t(x))=t^{Q-d(I)}\lambda_I(x)$ and
\begin{enumerate}[(1)]
  \item If $d(I)<Q$, $\lambda_I(0)=0$;
  \item If $d(I)=Q$, $\lambda_I(x)\equiv\lambda_I(0)$;
  \item If $d(I)>Q$, $\lambda_I(x)\equiv0$.
\end{enumerate}
\end{proposition}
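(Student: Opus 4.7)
The plan is to establish first that $\lambda_I$ is a polynomial satisfying the scaling identity $\lambda_I(\delta_t(x)) = t^{Q-d(I)}\lambda_I(x)$, and then read off the three numerical cases as direct consequences of this single homogeneity relation.

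First I would invoke Proposition \ref{prop2-3} to conclude that each commutator $Y_{i_j}$ appearing in the determinant \eqref{2-2} is $\delta_t$-homogeneous of degree $d(Y_{i_j})$. Writing $Y_{i_j} = \sum_{k=1}^n b_{jk}(x)\partial_{x_k}$ and applying Proposition \ref{prop2-2}, each non-vanishing coefficient $b_{jk}$ is a polynomial which is $\delta_t$-homogeneous of degree $\alpha_k - d(Y_{i_j})$, while the remaining coefficients vanish identically. In particular $\lambda_I(x) = \det(b_{jk}(x))$ is a polynomial in $x$, which disposes of the first assertion.

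Next I would verify the scaling identity by expanding the determinant via the Leibniz formula:
\[
\lambda_I(\delta_t(x)) = \sum_{\sigma \in S_n} \mathrm{sgn}(\sigma) \prod_{j=1}^n b_{j,\sigma(j)}(\delta_t(x)) = \sum_{\sigma \in S_n} \mathrm{sgn}(\sigma) \prod_{j=1}^n t^{\alpha_{\sigma(j)} - d(Y_{i_j})} b_{j,\sigma(j)}(x).
\]
The key observation is that the total exponent $\sum_{j=1}^n \alpha_{\sigma(j)} - \sum_{j=1}^n d(Y_{i_j})$ reduces to $Q - d(I)$ for every permutation $\sigma \in S_n$, because $\sigma$ merely permutes the indices $1, \ldots, n$. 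Hence $t^{Q-d(I)}$ factors out uniformly and yields $\lambda_I(\delta_t(x)) = t^{Q-d(I)}\lambda_I(x)$.

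Given this identity, the three cases follow by elementary scaling arguments. For (1) with $d(I) < Q$, evaluating at $x = 0$ and using $\delta_t(0) = 0$ gives $\lambda_I(0) = t^{Q-d(I)}\lambda_I(0)$ for every $t > 0$; since the exponent is strictly positive, this forces $\lambda_I(0) = 0$. For (2) with $d(I) = Q$, the identity simplifies to $\lambda_I(\delta_t(x)) = \lambda_I(x)$ for all $t > 0$; letting $t \to 0^+$ and using the continuity of $\lambda_I$ at the origin (noting $\delta_t(x) \to 0$ since every $\alpha_j \geq 1$) gives $\lambda_I(x) \equiv \lambda_I(0)$. For (3) with $d(I) > Q$, if $\lambda_I$ were not identically zero, it would be a non-vanishing continuous $\delta_t$-homogeneous function of negative degree, contradicting the observation following Definition \ref{def2-1}; hence $\lambda_I \equiv 0$. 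The only substantive point is the permutation invariance of $\sum_{k=1}^n \alpha_k = Q$ which makes the exponent $Q - d(I)$ emerge cleanly, so I do not anticipate any serious obstacle beyond careful book-keeping.
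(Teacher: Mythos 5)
Your proof is correct and follows essentially the same approach as the paper: derive polynomiality and entry-wise homogeneity of $b_{jk}$ from Propositions \ref{prop2-2}--\ref{prop2-3}, establish $\lambda_I(\delta_t(x))=t^{Q-d(I)}\lambda_I(x)$ from the structure of the determinant, and read off the three cases from that identity. The only differences are cosmetic: you use the Leibniz expansion while the paper factors $t^{\alpha_k}$ and $t^{-d(Y_{i_j})}$ directly out of columns and rows; you prove (1) by substituting $x=0$ rather than taking $t\to 0^+$; and you prove (3) by invoking the remark after Definition \ref{def2-1} rather than rearranging and letting $t\to 0^+$.
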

\begin{proof}
From Proposition \ref{prop2-2} and Proposition \ref{prop2-3}, as well as \eqref{2-2}, we see that $\lambda_{I}$ is a polynomial for every $n$-tuple $I$. We then show the homogeneity of $\lambda_{I}$.

For each $n$-tuple of integers $I=(i_{1},\ldots,i_{n})$ with $1\leq i_{j}\leq q$, we let
$Y_{i_{1}},\ldots,Y_{i_{n}}$ be the corresponding vector fields of $I$ such that $Y_{i_{j}}=\sum_{k=1}^{n}b_{jk}(x)\partial_{x_{k}}\in X^{(d(Y_{i_{j}}))}$ for $j=1,\ldots,n$.
 Then, Proposition \ref{prop2-3} implies that $Y_{i_{j}}$ is $\delta_t$-homogeneous of degree $d(Y_{i_{j}})$. By Proposition \ref{prop2-2}, we can deduce that $b_{jk}(x)$ is either zero function or $\delta_t$-homogeneous polynomial function of degree $\alpha_{k}-d(Y_{i_{j}})$. Thus, it follows from \eqref{2-2} that for any $t>0$,
\begin{equation}\label{2-4}
 \begin{aligned}
\lambda_{I}(\delta_{t}(x))&=\det(b_{jk}(\delta_{t}(x)))
=\det(t^{\alpha_{k}-d(Y_{i_{j}})}b_{jk}(x))\\
&=\left(\prod_{k=1}^{n}t^{\alpha_{k}} \right)\left(\prod_{j=1}^{n}t^{-d(Y_{i_{j}})}  \right)\lambda_{I}(x)
=t^{Q-d(I)}\lambda_{I}(x).
\end{aligned}
\end{equation}

If $d(I)<Q$, we let $t\to 0^+$ in \eqref{2-4}, which derives  that $\lambda_I(0)= 0$. For the case where $d(I)=Q$, we let $t\to 0^+$ in \eqref{2-4} again, yielding $\lambda_I(x)\equiv\lambda_I(0)$. Finally, if $d(I)>Q$, we can use  \eqref{2-4} to obtain  $t^{d(I)-Q}\lambda_{I}(\delta_t(x))=\lambda_{I}(x)$, from which it follows that $\lambda_I(x)\equiv0$ by
taking $t\to 0^{+}$. This completes the proof of  Proposition \ref{prop2-4}.
\end{proof}

Proposition \ref{prop2-4} allows us to derive the validity of the H\"{o}rmander condition from the assumptions (H.1) and (H.2).

\begin{proposition}
\label{prop2-5}
Let $X=(X_{1},X_{2},\ldots,X_{m})$ be homogeneous H\"{o}rmander vector fields defined on  $\mathbb{R}^n$
 (i.e., $X$ satisfy the assumptions (H.1) and (H.2)), then $X$ satisfy H\"{o}rmander condition in $\mathbb{R}^n$ with the H\"{o}rmander index $r=\alpha_n$.
\end{proposition}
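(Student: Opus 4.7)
The plan is to leverage (H.2) together with Propositions 2.2, 2.3 and 2.4 to locate a single $n$-tuple of commutators whose determinant $\lambda_I$ is a nonzero constant on all of $\mathbb{R}^n$, and then to read the H\"ormander index off the homogeneity-degree matching of its entries.

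First I would invoke assumption (H.2) to obtain, at the origin, some $n$-tuple $I=(i_1,\ldots,i_n)$ of iterated commutators $Y_{i_1},\ldots,Y_{i_n}$ of $X_1,\ldots,X_m$ with $\lambda_I(0)\neq 0$. Proposition \ref{prop2-4} then forces $d(I)=Q$: the possibilities $d(I)<Q$ and $d(I)>Q$ give respectively $\lambda_I(0)=0$ and $\lambda_I\equiv 0$, each contradicting $\lambda_I(0)\neq 0$. Consequently $\lambda_I\equiv \lambda_I(0)$ is a nonzero constant on $\mathbb{R}^n$, so the same $n$-tuple $\{Y_{i_j}\}_{j=1}^n$ is linearly independent at every $x\in\mathbb{R}^n$ and hence spans $T_x(\mathbb{R}^n)$. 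This establishes H\"ormander's condition globally. Moreover, by Proposition \ref{prop2-3} one has $X^{(k)}=\{0\}$ for $k>\alpha_n$, so every $Y_{i_j}$ is a commutator of length $d(Y_{i_j})\leq \alpha_n$; therefore the H\"ormander index satisfies $r\leq \alpha_n$.

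The remaining task is to prove $r\geq \alpha_n$. Writing $Y_{i_j}=\sum_{k=1}^n b_{jk}(x)\partial_{x_k}$, the invertibility of the matrix $(b_{jk}(0))$ together with the Leibniz expansion of the determinant yields a permutation $\sigma$ of $\{1,\ldots,n\}$ with $b_{j,\sigma(j)}(0)\neq 0$ for every $j$. By Proposition \ref{prop2-2} each $b_{j,\sigma(j)}$ is a $\delta_t$-homogeneous polynomial of degree $\alpha_{\sigma(j)}-d(Y_{i_j})\geq 0$, and non-vanishing at the origin forces this degree to be $0$, giving $d(Y_{i_j})=\alpha_{\sigma(j)}$. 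Hence the multisets $\{d(Y_{i_j})\}_{j=1}^n$ and $\{\alpha_k\}_{k=1}^n$ coincide; in particular $\max_j d(Y_{i_j})=\alpha_n$. Thus every basis of $T_0(\mathbb{R}^n)$ extracted from the $Y_i$'s must contain at least one commutator of length exactly $\alpha_n$, so the H\"ormander condition at $0$ fails with any bound strictly less than $\alpha_n$, giving $r\geq \alpha_n$, and combined with the previous step $r=\alpha_n$.

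The main obstacle is the lower bound $r\geq \alpha_n$: the naive pigeonhole estimate $d(I)=Q\leq nr$ is too weak to rule out $r<\alpha_n$ (since $Q\leq n\alpha_n$ already), and one really has to exploit the precise homogeneity degrees of the coefficient polynomials $b_{jk}$ to force the full multiset equality $\{d(Y_{i_j})\}=\{\alpha_k\}$ and thereby extract the top weight $\alpha_n$.
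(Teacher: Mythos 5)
Your proof is correct. The first half --- invoking (H.2) to produce an $n$-tuple $I$ with $\lambda_I(0)\neq 0$, using Proposition~\ref{prop2-4} to force $d(I)=Q$ and $\lambda_I\equiv\lambda_I(0)\neq 0$, and reading off both the global H\"ormander condition and $r\leq\alpha_n$ via Proposition~\ref{prop2-3} --- is essentially the paper's argument. For the lower bound $r\geq\alpha_n$ you take a genuinely different route. The paper argues coordinate-by-coordinate: for every $Y\in\bigcup_{i=1}^{\alpha_n-1}X^{(i)}$, the coefficient of $\partial_{x_n}$ in $Y$, being $\delta_t$-homogeneous of positive degree $\alpha_n-i\geq 1$, vanishes at the origin, so $D_0^{\alpha_n-1}\subset\vspan\{\partial_{x_1},\ldots,\partial_{x_{n-1}}\}$ and therefore has dimension less than $n$. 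You instead exploit the nonvanishing determinant $\lambda_I(0)\neq 0$: some Leibniz term produces a permutation $\sigma$ with $b_{j,\sigma(j)}(0)\neq 0$ for every $j$, and Proposition~\ref{prop2-2} then forces the homogeneity degree $\alpha_{\sigma(j)}-d(Y_{i_j})$ to be zero, giving $d(Y_{i_j})=\alpha_{\sigma(j)}$. Hence the degree multiset of any commutator basis of $T_0(\mathbb{R}^n)$ is exactly $\{\alpha_1,\ldots,\alpha_n\}$, so every such basis must contain a commutator of length $\alpha_n$, which yields $r\geq\alpha_n$. Both routes overcome the inadequacy of the crude bound $Q\leq nr$; the paper does it by tracking a single degenerate coordinate direction at $0$, while your permutation matching pins down the entire degree multiset of any spanning $n$-tuple at the origin --- a slightly sharper structural fact that, incidentally, sidesteps the somewhat awkward existential step in the paper's own proof (``there exists $Y_0$ with $\mu_n\not\equiv 0$''), which is not really what is needed to conclude $\dim D_0^{\alpha_n-1}<n$.
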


\begin{proof}
It derives from Proposition \ref{prop2-4} that $\lambda_{I}(0)=0$ holds for any $n$-tuple $I$ with  $d(I)\neq Q$. If $\lambda_I(0)=0$ for all $n$-tuples $I$ such that $d(I)= Q$, then any $n$ vector fields $Y_{i_{1}},\ldots, Y_{i_{n}}$ belonging to $\bigcup_{k= 1}^{\alpha_{n}}X^{(k)}$ cannot span $\mathbb{R}^n$ at the origin, which implies
\[ \dim\{Y(0)|~Y\in \text{Lie}(X)\}<n. \]
This contradicts assumption (H.2). Hence,  there exists an $n$-tuple $I_{0}$ such that $d(I_{0})= Q$ and $\lambda_{I_{0}}(x)\equiv \lambda_{I_{0}}(0)\neq 0$. As a result
\[ \dim\{Y(x)|~Y\in \text{Lie}(X)\}=n~~~~~\forall x\in \mathbb{R}^n, \]
which yields the validity of H\"{o}rmander condition in $\mathbb{R}^n$. Furthermore, Proposition \ref{prop2-3} indicates the H\"{o}rmander index $r\leq \alpha_{n}$.\par

We proceed to show that the H\"{o}rmander index $r=\alpha_n$. By Remark \ref{remark2-1},
\[D_{x}^{j}=\text{\rm span}\{Y(x)|Y\in X^{(1)}\cup \cdots\cup X^{(j)}\}=\bigoplus_{i=1}^{j}\text{\rm span}\{Y(x)|Y\in X^{(i)}\}.\]
If for any $Y\in \bigcup_{i=1}^{\alpha_{n}-1}X^{(i)}$, the last coefficient function $\mu_{n}$ of $Y(x)=\sum_{j=1}^{n}\mu_{j}(x)\partial_{x_{j}}$ vanishes identically, then assumption (H.2) fails. Thus,
 there exists a vector field $Y_{0}\in \bigcup_{i=1}^{\alpha_{n}-1}X^{(i)}$ such that $Y_{0}(x)=\sum_{j=1}^{n}\mu_{j}(x)\partial_{x_{j}}$ with $\mu_{n}\not\equiv 0$. We next verify that the $\delta_t$-homogeneous degree of $\mu_{n}(x)$ cannot be zero. Observe that $Y_{0}\in X^{(i)}$
 for some $1\leq i\leq \alpha_{n}-1$. From Proposition \ref{prop2-2} and Proposition \ref{prop2-3}, we conclude that $\mu_{n}$ is a smooth $\delta_t$-homogeneous function of degree $\alpha_{n}-i$ with $\alpha_{n}-i\geq 1$. This indicates $\mu_{n}(0)=0$ and $\dim D_{0}^{\alpha_{n}-1}<n$. Therefore, the H\"{o}rmander index $r\geq \alpha_n$, and we have $r=\alpha_n$.
\end{proof}

Moreover, the Nagel-Stein-Wainger polynomial $\Lambda(x,r)$ has the following properties.

\begin{proposition}
\label{prop2-6}
For any $x\in \mathbb{R}^n$, the pointwise homogeneous dimension $\nu(x)$ satisfies
\begin{equation}\label{2-5}
  \nu(x)=\min\{d(I)|\lambda_{I}(x)\neq 0\}=\lim_{s\to 0^{+}}\frac{\ln
  \Lambda(x,s)}{\ln s}.
\end{equation}
\end{proposition}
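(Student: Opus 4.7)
The plan is to establish the two equalities separately: first the combinatorial identity $\nu(x) = \min\{d(I) : \lambda_I(x)\neq 0\}$, then the asymptotic identity $\nu(x) = \lim_{s\to 0^+}\frac{\ln\Lambda(x,s)}{\ln s}$, which will fall out easily from the first.

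For the first equality, I would translate the condition $\lambda_I(x)\neq 0$ into the statement that $Y_{i_1}(x),\ldots,Y_{i_n}(x)$ form a basis of $T_x\mathbb{R}^n$. Given any such tuple $I$, let $c_j$ denote the number of indices $i_k$ with $d(Y_{i_k})=j$, so that $d(I)=\sum_{j=1}^{\alpha_n} j c_j$ and $\sum_j c_j = n$. Because $Y_{i_1}(x),\ldots,Y_{i_n}(x)$ are linearly independent and the $Y_{i_k}$ with $d(Y_{i_k})\leq j$ all lie in $D_x^j$, we obtain the constraint $\sum_{i\leq j} c_i \leq \nu_j(x)$ for every $1\leq j\leq \alpha_n$. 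Minimizing $\sum_j j c_j$ under this system of cumulative constraints (together with the total $n=\nu_{\alpha_n}(x)$) is a standard greedy/rearrangement argument: one places as much mass as possible at small $j$, forcing $c_j=\nu_j(x)-\nu_{j-1}(x)$. The resulting minimum is exactly $\sum_{j=1}^{\alpha_n} j(\nu_j(x)-\nu_{j-1}(x))=\nu(x)$, yielding the lower bound $d(I)\geq \nu(x)$. For the matching upper bound, I choose, for each $j$, a set of $\nu_j(x)-\nu_{j-1}(x)$ vector fields in $X^{(j)}$ whose values at $x$ complete a basis of $D_x^{j-1}$ to a basis of $D_x^j$; concatenating over $j$ produces an $n$-tuple $I_0$ with $\lambda_{I_0}(x)\neq 0$ and $d(I_0)=\nu(x)$.

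For the second equality, using the first part, set $\nu:=\nu(x)$ and factor
\[
\Lambda(x,s)=s^{\nu}\Bigl(\sum_{d(I)=\nu}|\lambda_I(x)| + \sum_{d(I)>\nu}|\lambda_I(x)|\, s^{d(I)-\nu}\Bigr).
\]
The first sum is a strictly positive constant $c(x)$ by the first equality, while the second tends to $0$ as $s\to 0^+$ since each exponent $d(I)-\nu$ is positive. Hence $\Lambda(x,s)=c(x)s^{\nu}(1+o(1))$ as $s\to 0^+$, and taking logarithms and dividing by $\ln s$ (which diverges to $-\infty$) gives $\ln\Lambda(x,s)/\ln s\to \nu$.

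The only real subtlety is the linear-algebraic minimization in the first equality; the key point is recognizing that maximality of $\sum_{i\leq j}c_i$ at each level $j$ (the greedy choice) simultaneously minimizes the total weight $\sum j c_j$, which is just Abel summation applied to the cumulative inequalities. Once that is in place, the asymptotic statement is immediate.
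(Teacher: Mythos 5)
Your proof is correct. The paper itself gives no inline argument for this proposition: it simply cites \cite[Proposition 2.2]{chen-chen2019}, so there is no in-paper proof to compare against line by line. That said, the argument you give is the standard one for this Nagel--Stein--Wainger-type identity, and both halves check out.

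For the first equality, the Abel-summation step is the crux and you have it right: writing $S_j=\sum_{i\le j}c_i$, the total weight is $\sum_j jc_j = \alpha_n S_{\alpha_n} - \sum_{j=1}^{\alpha_n-1}S_j$, which with $S_{\alpha_n}=n$ fixed is minimized by pushing each $S_j$ up to its cap $\nu_j(x)$, yielding exactly $\sum_j j(\nu_j(x)-\nu_{j-1}(x))=\nu(x)$. The constraint $S_j\le\nu_j(x)$ is legitimate because a linearly independent collection of vectors inside $D_x^j$ has cardinality at most $\dim D_x^j$. For the matching construction, you do need (and implicitly use) that $D_x^j$ is spanned over $D_x^{j-1}$ by the values at $x$ of commutators of length exactly $j$; this is immediate from the definition $D_x^j=\mathrm{span}\{Y(x):Y\in X^{(1)}\cup\cdots\cup X^{(j)}\}$, so the greedy completion of a basis level by level is fine. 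The second equality then follows exactly as you say: with $c(x):=\sum_{d(I)=\nu(x)}|\lambda_I(x)|>0$ guaranteed by the first equality, $\Lambda(x,s)=c(x)s^{\nu(x)}(1+o(1))$ as $s\to 0^+$, and $\ln\Lambda(x,s)/\ln s\to\nu(x)$. No gap.
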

\begin{proof}
 See \cite[Proposition 2.2]{chen-chen2019}.
\end{proof}

\begin{proposition}
\label{prop2-7}
Let $X=(X_{1},X_{2},\ldots,X_{m})$ be the homogeneous H\"{o}rmander vector fields defined on  $\mathbb{R}^n$. Then $n\leq w \leq\nu(x)\leq Q$ and $\nu(0)=Q$, where $w:=\min_{x\in \mathbb{R}^n}\nu(x)$ and $Q$ is the homogeneous dimension defined in \eqref{1-1}.  Additionally, the Nagel-Stein-Wainger polynomial $\Lambda(x,r)$ satisfies
\begin{equation}\label{2-6}
    \Lambda(x,r)=\sum_{k=\nu(x)}^{Q}f_k(x)r^k=\sum_{k=w}^{Q}f_k(x)r^k,
  \end{equation}
where $f_k(x)=\sum_{d(I)=k}|\lambda_I(x)|$ is a non-negative continuous $\delta_t$-homogeneous function of degree $Q-k$. Furthermore,  $f_{w}(x_{0})\neq 0$ for some $x_{0}\in  \mathbb{R}^n$ and $f_Q(x)=f_{Q}(0)>0$ for all $x\in \mathbb{R}^n$.
\end{proposition}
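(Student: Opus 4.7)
The plan is to assemble the statement from the already-proved facts in Propositions \ref{prop2-4}, \ref{prop2-5}, and \ref{prop2-6}, proceeding in three stages: first the structure of $\Lambda(x,r)$, then the bounds on $\nu(x)$, and finally the special properties of $f_w$ and $f_Q$.

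First I would unpack $\Lambda(x,r) = \sum_{I}|\lambda_{I}(x)|r^{d(I)}$ by grouping $n$-tuples by their weighted degree $d(I)$. Since Proposition \ref{prop2-4}(3) guarantees that $\lambda_{I}\equiv 0$ whenever $d(I)>Q$, only degrees $k\in\{0,1,\ldots,Q\}$ contribute, which gives $\Lambda(x,r)=\sum_{k=0}^{Q}f_{k}(x)r^{k}$ with $f_{k}(x)=\sum_{d(I)=k}|\lambda_{I}(x)|$. Each $f_{k}$ is a finite sum of absolute values of polynomials, hence non-negative and continuous. The identity $\lambda_{I}(\delta_{t}(x))=t^{Q-d(I)}\lambda_{I}(x)$ from Proposition \ref{prop2-4} passes through the absolute value (since $t>0$), so $f_{k}$ is $\delta_{t}$-homogeneous of degree $Q-k$. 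By the characterization $\nu(x)=\min\{d(I):\lambda_{I}(x)\neq 0\}$ from Proposition \ref{prop2-6}, the coefficients $f_{k}(x)$ vanish for $k<\nu(x)$, which trims the sum to start at $k=\nu(x)$; and since $w\leq\nu(x)$ pointwise by definition of $w$, the lower summation index may uniformly be taken as $w$ without altering the value.

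Next I would establish the bounds on $\nu(x)$. For the upper bound $\nu(x)\leq Q$, I would invoke the argument in the proof of Proposition \ref{prop2-5}: there exists an $n$-tuple $I_{0}$ with $d(I_{0})=Q$ and $\lambda_{I_{0}}(x)\equiv\lambda_{I_{0}}(0)\neq 0$; in particular $\lambda_{I_{0}}(x)\neq 0$ for every $x$, so $\nu(x)\leq d(I_{0})=Q$. For $\nu(0)=Q$, Proposition \ref{prop2-4}(1) forces $\lambda_{I}(0)=0$ whenever $d(I)<Q$, so the minimum is attained precisely at degree $Q$. For the lower bound $\nu(x)\geq n$, I would return to the definition $\nu(x)=\sum_{j=1}^{\alpha_{n}}j(\nu_{j}(x)-\nu_{j-1}(x))$; since Proposition \ref{prop2-5} gives Hörmander's condition at every $x$, we have $\nu_{\alpha_{n}}(x)=n$, and bounding $j\geq 1$ in each term yields
\begin{equation*}
\nu(x)\geq\sum_{j=1}^{\alpha_{n}}\bigl(\nu_{j}(x)-\nu_{j-1}(x)\bigr)=\nu_{\alpha_{n}}(x)-\nu_{0}(x)=n.
\end{equation*}
Taking the minimum over $x$ gives $w\geq n$, completing $n\leq w\leq\nu(x)\leq Q$.

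Finally, the two pointwise claims on $f_{w}$ and $f_{Q}$ are short. Since $w=\min_{x}\nu(x)$ is attained at some $x_{0}$, Proposition \ref{prop2-6} yields an $n$-tuple $I$ with $d(I)=w$ and $\lambda_{I}(x_{0})\neq 0$, so $f_{w}(x_{0})\geq|\lambda_{I}(x_{0})|>0$. For $f_{Q}$, Proposition \ref{prop2-4}(2) says $\lambda_{I}(x)\equiv\lambda_{I}(0)$ whenever $d(I)=Q$, so $f_{Q}(x)=f_{Q}(0)$ is a constant, and the $I_{0}$ from the previous paragraph contributes a strictly positive term, giving $f_{Q}(0)>0$.

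The main obstacle is the uniform upper bound $\nu(x)\leq Q$, which is the only step that does not follow mechanically from the definitions; it rests on the nontrivial fact, extracted from the proof of Proposition \ref{prop2-5}, that some top-degree determinant $\lambda_{I_{0}}$ is not merely nonzero at the origin but identically a nonzero constant on $\mathbb{R}^{n}$. Once this is in hand, everything else is bookkeeping on the polynomial $\Lambda(x,r)$.
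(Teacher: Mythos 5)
Your proof is correct and follows essentially the same route as the paper: it assembles the statement from Propositions \ref{prop2-4}, \ref{prop2-5}, and \ref{prop2-6}, identifies the constant top-degree determinant $\lambda_{I_0}$ as the crux, and reads off the coefficient structure of $\Lambda(x,r)$. Two minor improvements over the paper's presentation: you supply the telescoping argument for $\nu(x)\geq n$ (which the paper merely asserts), and you deduce $f_w(x_0)\neq 0$ directly from the min-characterization of $\nu$ in Proposition \ref{prop2-6} rather than by re-evaluating the logarithmic limit in \eqref{2-8}.
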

\begin{proof}
Observing that $\nu(x)$ is a function with  integer values and $\nu(x)\geq n$, we obtain  $\nu(x_{0})=\min\limits_{x\in \mathbb{R}^n}\nu(x)=w$ for some $x_{0}\in  \mathbb{R}^n$. The proof of Proposition \ref{prop2-5} implies that
 $\lambda_{I_{0}}(x)\equiv \lambda_{I_{0}}(0)\neq 0$ holds for some $n$-tuple $I_{0}$ satisfying $d(I_{0})=Q$. Thus,  by \eqref{2-5} we have $\nu(x)\leq Q$. Additionally,  Proposition \ref{prop2-4} yields that $\lambda_{I}(0)=0$ for $d(I)\neq Q$, which indicates that $\nu(0)=Q$.

Combining \eqref{2-3}, Proposition \ref{prop2-4} and Proposition \ref{prop2-6},
\begin{equation}\label{2-7}
  \Lambda(x,r)=\sum_{I}|\lambda_I(x)|r^{d(I)}=\sum_{k=\nu(x)}^Qf_k(x)r^k,
\end{equation}
where $f_k(x)=\sum\limits_{d(I)=k}|\lambda_I(x)|$. Moreover, using Proposition \ref{prop2-6} and \eqref{2-7}, we obtain
\begin{equation}\label{2-8}
w=\nu(x_{0})=\lim_{r\to 0^{+}}\frac{\ln\left(
  \sum_{k=\nu(x_{0})}^Qf_k(x_{0})r^k\right)}{\ln r}=\lim_{r\to 0^{+}}\frac{\ln\left(
  \sum_{k=w}^Qf_k(x_{0})r^k\right)}{\ln r},
\end{equation}
  which gives $f_w(x_{0})\neq 0$. That means
  \[ \Lambda(x,r)=\sum_{k=\nu(x)}^{Q}f_k(x)r^k=\sum_{k=w}^{Q}f_k(x)r^k.\]
Finally,  Proposition \ref{prop2-4} indicates that $f_Q(x)=f_Q(0)\geq |\lambda_{I_{0}}(0)|>0$ for all $x\in \mathbb{R}^n$.
\end{proof}

\subsection{Subunit metric and volume estimates of subunit balls}

The essential geometric object  we are interested in is the subunit metric constructed on $\mathbb{R}^n$ using the vector fields $X=(X_{1},X_{2},\ldots,X_{m})$ that satisfy H\"{o}rmander's condition.  This metric plays a crucial role in estimating the heat kernel of homogeneous H\"{o}rmander operator.

\begin{definition}[Subunit metric, see \cite{Stein1985,Morbidelli2000}]
\label{def2-3}
For any $x,y\in \mathbb{R}^n$ and $\delta>0$, let $C(x,y,\delta)$ be the collection of absolutely continuous mapping $\varphi:[0,1]\to \mathbb{R}^n$, which satisfies $\varphi(0)=x,\varphi(1)=y$ and
\[ \varphi'(t)=\sum_{i=1}^{m}a_{i}(t)X_{i}(\varphi(t)) \]
with $\sum_{k=1}^{m}|a_{k}(t)|^2\leq \delta^2$ for a.e. $t\in [0,1]$.
The subunit metric $d_{X}(x,y)$ is defined by
    \[ d_{X}(x,y):=\inf\{\delta>0~|~ \exists \varphi\in C(x,y,\delta)~\mbox{with}~\varphi(0)=x,~ \varphi(1)=y\}. \]
\end{definition}

The Chow-Rashevskii theorem guarantees that the subunit metric $d_{X}(x,y)$ is well-defined (see \cite[Theorem 57]{Bramanti2014}). On the other hand, we would like to mention that the sub-Riemannian manifold $(\mathbb{R}^n,D,g)$ naturally possesses a metric space structure with the Carnot-Carath\'{e}odory distance induced by the sub-Riemannian metric $g$.  According to \cite[Proposition 3.1]{Jerison1987}, the Carnot-Carath\'{e}odory distance on the sub-Riemannian manifold $(\mathbb{R}^n,D,g)$ is equivalent to the subunit metric $d_{X}$. Consequently, we exclusively consider the subunit metric $d_{X}$ throughout this paper.

Given any $x\in \mathbb{R}^n$ and $r>0$, we denote by
    \[ B_{d_{X}}(x,r):=\{y\in \mathbb{R}^n~|~d_{X}(x,y)<r\} \]
 the subunit ball associated with subunit metric $d_{X}(x,y)$. Owing to assumption (H.1), the subunit metric $d_{X}$ and subunit ball $B_{d_{X}}(x,r)$ enjoy the following properties (see \cite{Biagi-Bonfiglioli-Bramanti2019}):
   \begin{enumerate}[(1)]
   \item For any $x,y\in \mathbb{R}^n$ and $t>0$, $d_{X}(\delta_{t}(x),\delta_{t}(y))=td_{X}(x,y)$.
\item For any $x,y\in \mathbb{R}^n$ and $t,r>0$, $y\in B_{d_{X}}(x,r)$ if and only if $\delta_{t}(y)\in B_{d_{X}}(\delta_{t}(x),r)$.
\item For any $t,r>0$ and $x\in \mathbb{R}^n$, $|B_{d_{X}}(\delta_{t}(x),tr)|=t^{Q}|B_{d_{X}}(x,r)|$,  where $|B_{d_{X}}(x,r)|$ denotes the $n$-dimensional Lebesgue measure of $B_{d_{X}}(x,r)$.
   \end{enumerate}

Furthermore, we have the following the volume estimates of subunit ball.

\begin{proposition}[Global version Ball-Box theorem]
\label{prop2-8}
For any $ x\in \mathbb{R}^n $, there exist positive constants $C_{1},C_{2}$  such that for any $x\in \mathbb{R}^n$ and any $r>0$,
\begin{equation}\label{2-9}
  C_{1}\Lambda(x,r)\leq |B_{d_{X}}(x,r)|\leq C_{2}\Lambda(x,r),
\end{equation}
where $|B_{d_{X}}(x,r)|$ is the $n$-dimensional Lebesgue measure of $B_{d_{X}}(x,r)$.
\end{proposition}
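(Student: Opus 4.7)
The plan is to combine the classical (local) Nagel--Stein--Wainger volume estimate with the $\delta_t$-homogeneity of both sides of \eqref{2-9}, using a rescaling argument to reduce the global statement to the local one.

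First, I would invoke the local Nagel--Stein--Wainger theorem (Nagel--Stein--Wainger, \emph{Acta Math.} 1985): since the components of $X=(X_1,\dots,X_m)$ satisfy H\"ormander's condition on all of $\mathbb{R}^n$ by Proposition \ref{prop2-5}, for every compact set $K\subset\mathbb{R}^n$ there exist constants $c_1=c_1(K)>0$, $c_2=c_2(K)>0$ and $r_0=r_0(K)>0$ such that
\[
c_1\Lambda(y,s)\;\le\;|B_{d_X}(y,s)|\;\le\;c_2\Lambda(y,s)\qquad\forall\,y\in K,\;0<s\le r_0.
\]
I would apply this to a fixed compact set, e.g.\ the closed unit ball $K=\{y\in\mathbb{R}^n:\|y\|_\delta\le 1\}$ with respect to the homogeneous (quasi-)norm $\|y\|_\delta:=\max_{1\le j\le n}|y_j|^{1/\alpha_j}$, which satisfies $\|\delta_t(y)\|_\delta=t\|y\|_\delta$. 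This fixes the constants $c_1,c_2,r_0$ once and for all.

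Next I would record the matching homogeneity of the two sides. The volume scaling $|B_{d_X}(\delta_t(x),tr)|=t^Q|B_{d_X}(x,r)|$ is already listed in the paper. For $\Lambda$, Proposition \ref{prop2-4} gives $\lambda_I(\delta_t(x))=t^{Q-d(I)}\lambda_I(x)$, and substituting $(\delta_t(x),tr)$ into \eqref{2-3} produces
\[
\Lambda(\delta_t(x),tr)=\sum_I|\lambda_I(\delta_t(x))|(tr)^{d(I)}=\sum_I t^{Q-d(I)}|\lambda_I(x)|t^{d(I)}r^{d(I)}=t^Q\Lambda(x,r).
\]
Thus both quantities in \eqref{2-9} transform identically under the map $(x,r)\mapsto(\delta_t(x),tr)$.

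Given an arbitrary pair $(x,r)\in\mathbb{R}^n\times(0,\infty)$, I would set
\[
t:=\max\!\left(\|x\|_\delta,\;r/r_0,\;1\right)>0,
\]
so that $y:=\delta_{1/t}(x)$ lies in $K$ (because $\|y\|_\delta=\|x\|_\delta/t\le 1$) and $s:=r/t\in(0,r_0]$. The local estimate applies to $(y,s)$, yielding $c_1\Lambda(y,s)\le|B_{d_X}(y,s)|\le c_2\Lambda(y,s)$; multiplying through by $t^Q$ and using the two homogeneity identities above converts this directly into
\[
c_1\Lambda(x,r)\;\le\;|B_{d_X}(x,r)|\;\le\;c_2\Lambda(x,r),
\]
which is \eqref{2-9} with $C_1=c_1$, $C_2=c_2$. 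The case $x=0$ is automatically included since $\delta_{1/t}(0)=0\in K$, and the bounds are uniform in both $x$ and $r$.

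The only genuine difficulty is the local Nagel--Stein--Wainger estimate itself, which is a deep theorem; however, the paper may invoke it as a known result. Assuming it, the main conceptual step is the rescaling: choosing $t$ to simultaneously normalize the base point into $K$ and the radius into $(0,r_0]$, so that the $t^Q$-homogeneity of both $|B_{d_X}(\cdot,\cdot)|$ and $\Lambda(\cdot,\cdot)$ produces global constants from local ones.
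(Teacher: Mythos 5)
Your proposal is correct and spells out exactly the ``local-to-global homogeneity argument'' that the paper invokes (and delegates to \cite[Theorem B]{Biagi-Bonfiglioli-Bramanti2019}): both $|B_{d_X}(\cdot,\cdot)|$ and $\Lambda(\cdot,\cdot)$ scale by $t^Q$ under $(x,r)\mapsto(\delta_t(x),tr)$, and rescaling by $t=\max(\|x\|_\delta,\,r/r_0,\,1)$ reduces any $(x,r)$ to a point in a fixed compact set with radius at most $r_0$, where the local Nagel--Stein--Wainger estimate applies. This is the same approach the paper takes; the only difference is that you have written out the normalization step explicitly rather than citing it.
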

\begin{proof}
The local version of Ball-Box theorem was obtained from the deep investigations into subelliptic metric and subunit metric carried out by Nagel-Stein-Wainger \cite{Stein1985} and Morbidelli \cite{Morbidelli2000}. Specifically, if the vector fields $X$ satisfy only H\"{o}rmander's condition, then \eqref{2-9} is constrained to a compact set $K\subset \mathbb{R}^n$ and $0<r\leq r_{0}$, where $r_{0}$ is a positive constant that depends on the vector fields $X$ and the compact set $K$. However, based on the homogeneous assumption (H.1), the global version of Ball-Box theorem can be derived using a local-to-global homogeneity argument, starting from the local version of Ball-Box theorem. A proof of this global estimate can be found in  \cite[Theorem B]{Biagi-Bonfiglioli-Bramanti2019}.
\end{proof}
Proposition \ref{prop2-8} yields the following corollary.
\begin{corollary}
\label{corollary2-1}
For any $x\in \mathbb{R}^n$ and $0<r_1<r_2$, there exists a positive constant $C_{3}>0$ such that
\begin{equation}\label{2-10}
 |B_{d_{X}}(x,r_{2})|\leq C_{3}\left(\frac{r_{2}}{r_{1}} \right)^{Q}|B_{d_{X}}(x,r_{1})|.
\end{equation}
\end{corollary}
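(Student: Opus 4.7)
The plan is to deduce this doubling-type estimate directly from the Ball-Box theorem (Proposition \ref{prop2-8}) together with the explicit polynomial form of the Nagel-Stein-Wainger function $\Lambda(x,r)$ given by Proposition \ref{prop2-7}. No new geometric input is needed; the only ingredient beyond those two results is the trivial monotonicity of the function $t\mapsto t^{k}$ on $[1,\infty)$ for fixed $k\in\mathbb{N}$.

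First I would invoke Proposition \ref{prop2-8} at both radii to reduce the claim to the polynomial inequality
\begin{equation*}
\Lambda(x,r_{2})\leq (r_{2}/r_{1})^{Q}\Lambda(x,r_{1}),
\end{equation*}
picking up only the universal factor $C_{2}/C_{1}$ in the final constant $C_{3}$. Next I would use the representation from Proposition \ref{prop2-7},
\begin{equation*}
\Lambda(x,r)=\sum_{k=w}^{Q} f_{k}(x)\,r^{k},\qquad f_{k}(x)\geq 0,
\end{equation*}
writing
\begin{equation*}
\Lambda(x,r_{2})=\sum_{k=w}^{Q} f_{k}(x)\,r_{1}^{k}\bigl(r_{2}/r_{1}\bigr)^{k}.
\end{equation*}
Since $r_{2}/r_{1}>1$ and every exponent $k$ appearing in the sum satisfies $k\leq Q$, each factor $(r_{2}/r_{1})^{k}$ is bounded above by $(r_{2}/r_{1})^{Q}$. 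Factoring this uniform bound out of the sum, together with the non-negativity of the coefficients $f_{k}(x)$, yields the desired polynomial inequality.

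Chaining the two steps gives
\begin{equation*}
|B_{d_{X}}(x,r_{2})|\leq C_{2}\Lambda(x,r_{2})\leq C_{2}(r_{2}/r_{1})^{Q}\Lambda(x,r_{1})\leq \frac{C_{2}}{C_{1}}(r_{2}/r_{1})^{Q}|B_{d_{X}}(x,r_{1})|,
\end{equation*}
so one may take $C_{3}:=C_{2}/C_{1}$, which depends only on the vector fields $X$ and is independent of $x$, $r_{1}$, and $r_{2}$. There is no real obstacle here: the argument is essentially the observation that $\Lambda(x,\cdot)$ is a polynomial with non-negative coefficients of maximal degree exactly $Q$ (this maximality being precisely the statement $f_{Q}(x)=f_{Q}(0)>0$ in Proposition \ref{prop2-7}), and the Ball-Box theorem transfers this algebraic fact to the volume of subunit balls. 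The only point worth double-checking is the non-negativity of the coefficients $f_{k}(x)=\sum_{d(I)=k}|\lambda_{I}(x)|$, which is built into the definition in \eqref{2-3} and Proposition \ref{prop2-7}, and without which the exponent-by-exponent estimate $(r_{2}/r_{1})^{k}\leq(r_{2}/r_{1})^{Q}$ could not be summed.
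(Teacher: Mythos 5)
Your proof is correct and is exactly the argument the paper implicitly intends (the corollary is stated without proof as an immediate consequence of Proposition \ref{prop2-8}): the Ball-Box estimate reduces the claim to the inequality $\Lambda(x,r_{2})\leq (r_{2}/r_{1})^{Q}\Lambda(x,r_{1})$, which follows at once from the representation $\Lambda(x,r)=\sum_{k=w}^{Q}f_{k}(x)r^{k}$ in Proposition \ref{prop2-7} because the coefficients $f_{k}(x)$ are non-negative and every exponent satisfies $k\leq Q$, so that $(r_{2}/r_{1})^{k}\leq (r_{2}/r_{1})^{Q}$ term by term. The resulting constant $C_{3}=C_{2}/C_{1}$ is independent of $x$, $r_{1}$, $r_{2}$, as required.
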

\subsection{Classification of homogeneous H\"{o}rmander vector fields}
In this part, we discuss the classification of homogeneous H\"{o}rmander vector fields. For this purpose, we give some useful propositions and definitions.

\begin{proposition}
\label{prop2-9}
 Let $X=(X_{1},X_{2},\ldots,X_{m})$ be the homogeneous H\"{o}rmander vector fields defined on  $\mathbb{R}^n$. Denote by $W:=\{x\in\mathbb{R}^n|\nu(x)=Q\}$. Then we have
  \begin{enumerate}[(i)]
    \item $w=Q$ if and only if $W=\mathbb{R}^n$;
    \item $w\leq Q-1$ if and only if $|W|=0$, where $|W|$ denotes the $n$-dimensional Lebesgue measure of $W$.
  \end{enumerate}
\end{proposition}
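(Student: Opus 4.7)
The plan is to reduce both equivalences to a single algebro-geometric observation: the set $W$ is cut out inside $\mathbb{R}^n$ by the simultaneous vanishing of the polynomials $\lambda_I$ of formal degree $d(I)<Q$. More precisely, Proposition \ref{prop2-6} identifies $\nu(x)=\min\{d(I)\mid\lambda_I(x)\neq 0\}$, and Proposition \ref{prop2-4} (together with assumption (H.1)) tells us that every $\lambda_I$ is a polynomial function on $\mathbb{R}^n$. Consequently
\[
W=\{x\in\mathbb{R}^n\mid\nu(x)=Q\}=\bigcap_{d(I)<Q}\{x\in\mathbb{R}^n\mid\lambda_I(x)=0\}.
\]

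For part (i), I would first note that by Proposition \ref{prop2-7} one has $w\leq\nu(x)\leq Q$ for every $x$, and $\nu$ is integer-valued. Hence $w=Q$ forces $\nu(x)\equiv Q$ on $\mathbb{R}^n$, i.e.\ $W=\mathbb{R}^n$; conversely, $W=\mathbb{R}^n$ gives $\nu(x)\equiv Q$, so $w=\min_x\nu(x)=Q$. This direction is essentially tautological from the definition of $w$.

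For part (ii), the forward direction is the substantive one. If $w\leq Q-1$, choose $x_0\in\mathbb{R}^n$ achieving the minimum $\nu(x_0)=w$. By Proposition \ref{prop2-6} there exists a multi-index $I_0$ with $d(I_0)=w\leq Q-1$ and $\lambda_{I_0}(x_0)\neq 0$. Since $\lambda_{I_0}$ is a polynomial (Proposition \ref{prop2-4}) that does not vanish identically, its zero set is a proper real algebraic subvariety of $\mathbb{R}^n$, hence has $n$-dimensional Lebesgue measure zero. The inclusion $W\subset\{\lambda_{I_0}=0\}$ therefore yields $|W|=0$. For the converse, if $|W|=0$ then certainly $W\neq\mathbb{R}^n$, so part (i) rules out $w=Q$, and since $w$ is an integer with $w\leq Q$, we must have $w\leq Q-1$.

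I do not anticipate any serious obstacle: the entire argument is a routine combination of the polynomial nature of $\lambda_I$ under the homogeneity assumption, the Nagel--Stein--Wainger formula for $\nu(x)$, and the standard fact that the zero locus of a nonzero real polynomial has Lebesgue measure zero. The only point requiring a little care is verifying that the index $I_0$ witnessing $\nu(x_0)=w$ indeed satisfies $d(I_0)=w$ (which is immediate from the minimum in the definition of $\nu(x_0)$), so that $\lambda_{I_0}$ is genuinely not identically zero and the measure-zero conclusion applies.
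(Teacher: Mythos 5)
Your proof is correct and follows essentially the same approach as the paper: both reduce part (ii) to the observation that $W$ lies inside the zero set of a non-identically-vanishing polynomial, whose Lebesgue measure is zero. Your route is in fact slightly cleaner — you read off $W=\bigcap_{d(I)<Q}\{\lambda_I=0\}\subset\{\lambda_{I_0}=0\}$ directly from the characterization $\nu(x)=\min\{d(I)\mid\lambda_I(x)\neq 0\}$ in Proposition \ref{prop2-6}, whereas the paper works with $f_w=\sum_{d(I)=w}|\lambda_I|$ and verifies the inclusion $W\subset Z(f_w)$ via a limiting argument with the Nagel--Stein--Wainger polynomial; the two are equivalent and rest on the same underlying facts.
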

\begin{proof}
If $w=Q$, Proposition \ref{prop2-7} implies $\nu(x)=Q$ for any $x\in \mathbb{R}^n$, which yields $W=\mathbb{R}^n$. Clearly, $W=\mathbb{R}^n$ gives $w=\min_{x\in \mathbb{R}^n}\nu(x)=Q$. Thus, conclusion (i) is proved.

We then show conclusion (ii). Suppose that $w\leq Q-1$, by Proposition \ref{prop2-7} we have $f_{w}(x_{0})\neq 0$ for some $x_{0}\in \mathbb{R}^n$. Denoting by $Z(f_w):=\{x\in\mathbb{R}^n|f_w(x)=0\}$  the zeros of function $f_{w}(x)$, we obtain from Proposition \ref{prop2-4} (1) that
\[ Z(f_w)= \bigcap_{d(I)=w}\{x\in\mathbb{R}^n|\lambda_{I}(x)=0\}\neq \varnothing. \]
Since $\lambda_{I}$ is the polynomial for each $n$-tuple $I$, \cite{Mityagi2020} derives that $|\{x\in\mathbb{R}^n|\lambda_{I}(x)=0\}|=0$ and $|Z(f_w)|=0$. Moreover,
for any $x\in W$, we have $w<\nu(x)=Q$. Combining \eqref{2-5} and \eqref{2-6},
\[ \nu(x)=Q=\lim_{r\to 0^{+}}\frac{\ln\left(
  \sum_{k=\nu(x)}^Qf_k(x)r^k\right)}{\ln r}=\lim_{r\to 0^{+}}\frac{\ln\left(
  \sum_{k=w}^Qf_k(x)r^k\right)}{\ln r},\]
which implies $f_{w}(x)=0$. This means $W\subset Z(f_w)$ and $|W|=0$. On the other hand, assuming that $|W|=0$, if $w=Q$, by conclusion (i) we would have $W=\mathbb{R}^n$, which contradicts the fact $|W|=0$.
\end{proof}

Proposition \ref{prop2-9} gives the following obvious corollary.

\begin{corollary}
\label{corollary2-2}
Suppose that  $X=(X_{1},X_{2},\ldots,X_{m})$ and $\Omega$ satisfy the conditions of
Theorem \ref{thm1}.  Then $Q=\tilde{\nu}$, and the set $H=\{x\in \Omega|\nu(x)=\tilde{\nu}\}=W\cap \Omega$ satisfies the following conclusions:
\begin{enumerate}[(1)]
    \item $w=Q$ if and only if $H=\Omega$;
    \item $w\leq Q-1$ if and only if $|H|=0$, where $|H|$ is the $n$-dimensional Lebesgue measure of $H$.
  \end{enumerate}
\end{corollary}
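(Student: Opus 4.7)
The corollary is essentially a translation of Proposition 2.9 from the ambient space $\mathbb{R}^n$ to the domain $\Omega$, exploiting the fact that $\Omega$ contains the origin. My plan is to first dispose of the two preliminary identifications $Q=\tilde\nu$ and $H=W\cap\Omega$, and then handle the two equivalences by combining Proposition 2.9 with a short additional argument for the reverse implications.

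\textbf{Step 1: Identifying $\tilde\nu$ and $H$.} Since $0\in\Omega\subset\overline\Omega$, Proposition \ref{prop2-7} gives $\nu(0)=Q$, while the same proposition gives $\nu(x)\leq Q$ for every $x\in\mathbb{R}^n$. Hence
\[
\tilde\nu=\max_{x\in\overline\Omega}\nu(x)=Q.
\]
From this, $H=\{x\in\Omega:\nu(x)=\tilde\nu\}=\{x\in\Omega:\nu(x)=Q\}=W\cap\Omega$.

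\textbf{Step 2: The equivalence in (1).} The forward direction is immediate: if $w=Q$, Proposition \ref{prop2-9}(i) gives $W=\mathbb{R}^n$, hence $H=W\cap\Omega=\Omega$. For the converse, the key observation is that $\nu$ is $\delta_t$-invariant. Indeed, Proposition \ref{prop2-4} shows $\lambda_I(\delta_t(x))=t^{Q-d(I)}\lambda_I(x)$, so $\lambda_I(x)\neq 0$ if and only if $\lambda_I(\delta_t(x))\neq 0$, and then Proposition \ref{prop2-6} yields $\nu(\delta_t(x))=\nu(x)$ for every $t>0$. Now if $H=\Omega$, then $\nu\equiv Q$ on the open neighbourhood $\Omega$ of the origin. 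Given any $y\in\mathbb{R}^n$, one can choose $t>0$ so small that $\delta_t(y)\in\Omega$ (since $\delta_t(y)\to 0$ as $t\to 0^+$), and therefore $\nu(y)=\nu(\delta_t(y))=Q$. This gives $w=Q$.

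\textbf{Step 3: The equivalence in (2).} If $w\leq Q-1$, Proposition \ref{prop2-9}(ii) yields $|W|=0$, and hence $|H|=|W\cap\Omega|\leq|W|=0$. Conversely, suppose $|H|=0$. If instead $w=Q$, Step 2 (or directly Proposition \ref{prop2-9}(i)) would force $H=\Omega$, but $\Omega$ is a non-empty bounded open set, so $|H|=|\Omega|>0$, a contradiction. Since by Proposition \ref{prop2-7} one always has $w\leq Q$, the only remaining possibility is $w\leq Q-1$.

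\textbf{Anticipated obstacle.} The only non-routine ingredient is the $\delta_t$-invariance of $\nu$, needed for the reverse direction of (1); but this is a two-line consequence of Proposition \ref{prop2-4} together with the characterisation of $\nu$ in Proposition \ref{prop2-6}. Everything else is a direct application of Proposition \ref{prop2-7} and Proposition \ref{prop2-9}, so I expect no substantive difficulty.
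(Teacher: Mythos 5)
Your proof is correct and fills in the details that the paper treats as an ``obvious'' consequence of Proposition \ref{prop2-9} (the paper gives no explicit argument). Step 1 correctly identifies $\tilde\nu=Q$ and $H=W\cap\Omega$ via Proposition \ref{prop2-7}, and Steps 2--3 reduce matters to Proposition \ref{prop2-9}. One small remark: the $\delta_t$-invariance of $\nu$ that you invoke for the converse of (1), while correct and a nice observation, is not actually needed. If $H=\Omega$ then $|W|\geq |W\cap\Omega|=|\Omega|>0$, so the contrapositive of Proposition \ref{prop2-9}(ii) rules out $w\leq Q-1$; since $w\leq Q$ always holds (Proposition \ref{prop2-7}) and $w$ is an integer, this forces $w=Q$. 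This is the same style of argument you already use in Step 3, so it gives a slightly more uniform and self-contained proof of both equivalences. Either way the result is established.
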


We now introduce the concepts of degenerate component and degenerate index, which will be useful in Section \ref{Section4} below.

\begin{definition}[Degenerate component and degenerate index]
\label{def2-4}
Let $X=(X_{1},X_{2},\ldots,X_{m})$ be the homogeneous H\"{o}rmander vector fields defined on  $\mathbb{R}^n$, and let $x=(x_{1},x_{2},\ldots,x_{n})$ be the point in $\mathbb{R}^n$. For each $1\leq j\leq n$, if the function $\Lambda(x,r)$ depends on the variable $x_{j}$,  we say that $x_{j}$ is the degenerate component of $X$. Otherwise, we call $x_j$ the non-degenerate component of $X$. Furthermore, we specify $\alpha_{j}$ as the degenerate index associated with $x_{j}$, where $\alpha_{j}$ is determined by the dilation $\delta_{t}$. We denote the sum of all degenerate indexes of the vector fields $X$ by
\begin{equation}\label{2-11}
  \alpha(X):=\sum_{j\in \mathcal{A}}\alpha_{j},
\end{equation}
where $\mathcal{A}:=\{1\leq j\leq n|x_{j}~\text{is the degenerate component of}~X\}$.
\end{definition}

\begin{remark}
\label{remark2-2}
From Proposition \ref{prop2-2}, we know the last variable $x_{n}$ is the non-degenerate component of $X$.
\end{remark}
\begin{remark}
The degenerate components of vector fields $X$ describe the  variable dependency of $\Lambda(x,r)$, enabling us to handle $\Lambda(x,r)$ in lower-dimensional space. It should be noted that the degenerate components of vector fields $X$ differ from the dependent variables of vector fields $X$. For instance, consider the vector fields $X_{1}=\partial_{x_{1}}+2x_{2}\partial_{x_{3}},X_{2}=\partial_{x_{2}}-2x_{1}\partial_{x_{3}}$ on $\mathbb{R}^{3}$. Although  $X_{1}$ and $X_{2}$ depend on the variables $x_{1},x_{2}$, the Nagel-Stein-Wainger polynomial $\Lambda(x,r)=24r^{4}$ is independent of the variables of $x$.
\end{remark}

Based on Proposition \ref{prop2-7} and Definition \ref{def2-4},
we can classify  homogeneous H\"{o}rmander vector fields as follows.
\begin{proposition}
\label{prop2-10}
Assuming that  $X=(X_{1},X_{2},\ldots,X_{m})$ and $\Omega$ satisfy the conditions of
Theorem \ref{thm1}, we  have the following classifications:
\begin{enumerate}[(a)]
    \item $w=Q$ if and only if all variables $x_{1},\ldots,x_{n}$ are non-degenerate components of vector fields $X$;
    \item $w\leq Q-1$ if and only if the vector fields $X$ have at least one degenerate component.
  \end{enumerate}
\end{proposition}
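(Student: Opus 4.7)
The plan is to establish (a) directly and then derive (b) as its logical counterpart. Since $\nu(x)$ takes only integer values and $\nu(x)\leq Q$ by Proposition \ref{prop2-7}, the integer $w=\min_{x}\nu(x)$ satisfies either $w=Q$ or $w\leq Q-1$, so the two halves of the proposition are mutually exclusive and exhaustive; consequently (b) is nothing more than the contrapositive of (a) and requires no separate argument.

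For the forward direction of (a), I would invoke the explicit expansion $\Lambda(x,r)=\sum_{k=w}^{Q}f_k(x)r^k$ from Proposition \ref{prop2-7}. When $w=Q$ the sum collapses to $\Lambda(x,r)=f_Q(x)r^Q$, and Proposition \ref{prop2-7} further asserts $f_Q(x)\equiv f_Q(0)>0$. Thus $\Lambda(x,r)=f_Q(0)r^Q$ depends on none of the coordinates $x_j$, so every $x_j$ is a non-degenerate component in the sense of Definition \ref{def2-4}.

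For the reverse direction, assume every $x_j$ is non-degenerate, so $\Lambda(x,r)$ is constant in $x$ and equals $\Lambda(0,r)$. At the origin, Proposition \ref{prop2-4}(1) forces $\lambda_I(0)=0$ whenever $d(I)<Q$, giving $f_k(0)=\sum_{d(I)=k}|\lambda_I(0)|=0$ for every $k<Q$ and hence $\Lambda(0,r)=f_Q(0)r^Q$. The identity $\sum_{k=w}^{Q}f_k(x)r^k=f_Q(0)r^Q$ is a polynomial identity in $r$ for each fixed $x$, so comparing coefficients yields $f_k(x)\equiv 0$ for all $k<Q$. Because $f_k$ is a sum of the non-negative quantities $|\lambda_I(x)|$, this forces $\lambda_I(x)\equiv 0$ whenever $d(I)<Q$. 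The characterization $\nu(x)=\min\{d(I):\lambda_I(x)\neq 0\}$ supplied by Proposition \ref{prop2-6} then gives $\nu(x)=Q$ for every $x\in\mathbb{R}^n$, and hence $w=Q$.

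I do not foresee a substantive technical obstacle: the argument is essentially bookkeeping built on Proposition \ref{prop2-7} and Proposition \ref{prop2-4}(1), together with the fact that $\Lambda(x,\cdot)$ is a genuine polynomial in $r$ whose coefficients can therefore be read off unambiguously. The only step meriting a brief sanity check is the transition from ``$\Lambda$ is independent of each $x_j$'' to ``each coefficient $f_k$ is independent of each $x_j$''; this is immediate since the monomials $r^k$ for distinct $k$ in $\{w,\ldots,Q\}$ are linearly independent functions of $r$.
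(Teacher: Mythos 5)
Your proof is correct. The paper itself states Proposition \ref{prop2-10} without a written proof, merely noting that it follows from Proposition \ref{prop2-7} and Definition \ref{def2-4}, and your argument is exactly the natural filling-in of that claim: the forward direction of (a) reads the constancy of $\Lambda(x,r)=f_Q(0)r^Q$ off the expansion in Proposition \ref{prop2-7}, the reverse direction compares coefficients of the polynomial in $r$ to force $f_k\equiv 0$ for $k<Q$ and then invokes the characterization of $\nu(x)$ from Proposition \ref{prop2-6}, and (b) follows as the contrapositive since $w$ is an integer bounded above by $Q$. The only mild cosmetic point is that you could equally well have routed the reverse direction through Proposition \ref{prop2-9}(i) (all non-degenerate $\Rightarrow$ $\Lambda$ constant $\Rightarrow$ $W=\mathbb{R}^n$ $\Rightarrow$ $w=Q$), but the coefficient-comparison route you chose is self-contained and arguably cleaner.
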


\subsection{Weighted Sobolev spaces of H\"{o}rmander vector fields}

We next introduce the weighted Sobolev spaces associated with general H\"{o}rmander vector fields $X=(X_{1},X_{2},\ldots,X_{m})$, which are the natural spaces when dealing with problems related to the H\"{o}rmander operators.

Suppose that $X=(X_{1},X_{2},\ldots,X_{m})$ are the smooth vector fields defined on $\mathbb{R}^n$, satisfying
 H\"{o}rmander's condition. The weighted Sobolev space, also known as the Folland-Stein space (cf. \cite{xu1992}), is a Hilbert space on $\mathbb{R}^n$  defined as
 \[ H_{X}^{1}(\mathbb{R}^n)=\{u\in L^{2}(\mathbb{R}^n)~|~X_{j}u\in L^{2}(\mathbb{R}^n),~ j=1,\ldots,m\}, \]
and endowed with the norm \[\|u\|^2_{H^{1}_{X}(\mathbb{R}^n)}=\|u\|_{L^2(\mathbb{R}^n)}^2+\|Xu\|_{L^2(\mathbb{R}^n)}^2=\|u\|_{L^2(\mathbb{R}^n)}^2+\sum_{j=1}^{m}\|X_{j}u\|_{L^2(\mathbb{R}^n)}^2. \]

For any open set  $\Omega\subset \mathbb{R}^n$, we denote by
   $H^{1}_{X,0}(\Omega)$ the closure of $C_{0}^{\infty}(\Omega)$ in $H_{X}^{1}(\mathbb{R}^n)$. It is well-known that $H^{1}_{X,0}(\Omega)$ is also a Hilbert space. In particular, when $\Omega=\mathbb{R}^n$, we have the following density result.
\begin{proposition}
\label{prop2-11}
Let $X=(X_{1},X_{2},\ldots,X_{m})$ be the smooth vector fields defined on $\mathbb{R}^n$, satisfying
H\"{o}rmander's condition and assumption (H.1). Then the space $C_ {0}^{\infty}(\mathbb{R}^n)$ is dense in $H_{X}^{1}(\mathbb{R}^n)$, which means $H_ {X,0}^{1}(\mathbb{R}^n)=H_{X}^{1}(\mathbb{R}^n)$.
\end{proposition}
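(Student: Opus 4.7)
The plan is to prove the density by the standard two-step argument: first truncate by cutoffs to reduce to compactly supported $H_X^1$ functions, then mollify to obtain smooth compactly supported approximations. The novelty compared to a generic Hörmander setting is that assumption (H.1) lets me build global cutoffs with excellent control on $X_j$ by pulling back a fixed cutoff through the non-isotropic dilation $\delta_t$.

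For the truncation step, I would fix $\eta \in C_0^\infty(\mathbb{R}^n)$ with $0 \le \eta \le 1$ and $\eta \equiv 1$ on a neighbourhood of the origin, and for $R > 1$ set $\eta_R(x) := \eta(\delta_{1/R}(x))$. Since $X_j$ is $\delta_t$-homogeneous of degree $1$, assumption (H.1) gives
\[
X_j\eta_R(x) \;=\; \tfrac{1}{R}\,(X_j\eta)(\delta_{1/R}(x)),
\]
so $\|X_j\eta_R\|_{L^\infty(\mathbb{R}^n)} \le C/R$. Because every $\alpha_i \ge 1$, one has $\delta_{1/R}(x)\to 0$ for every $x\in\mathbb{R}^n$, hence $\eta_R \to 1$ pointwise and the supports $\delta_R(\operatorname{supp}\eta)$ exhaust $\mathbb{R}^n$. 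For $u \in H_X^1(\mathbb{R}^n)$, dominated convergence then gives $\eta_R u \to u$ in $L^2$, and
\[
X_j(\eta_R u) \;=\; (X_j\eta_R)\,u + \eta_R\,X_j u \;\longrightarrow\; X_j u \quad \text{in } L^2,
\]
so $\eta_R u \to u$ in $H_X^1(\mathbb{R}^n)$, reducing the problem to approximating compactly supported $v \in H_X^1(\mathbb{R}^n)$ by $C_0^\infty$ functions.

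For the mollification step, I would take a standard Friedrichs mollifier $\phi_\epsilon$ and consider $\phi_\epsilon * v \in C_0^\infty(\mathbb{R}^n)$ for small $\epsilon$. Clearly $\phi_\epsilon * v \to v$ in $L^2$. For the derivative term, I write
\[
X_j(\phi_\epsilon * v) \;=\; \phi_\epsilon * (X_j v) \;+\; [X_j,\,\phi_\epsilon *]\,v,
\]
where the first term converges to $X_j v$ in $L^2$ by standard properties of convolution. The key point is to invoke the Friedrichs commutator lemma on the second term: because each $X_j$ has smooth (in fact polynomial, by Proposition \ref{prop2-2}) coefficients that are bounded on the compact support of $v$, the commutator $[X_j,\phi_\epsilon *]$ is uniformly bounded on $L^2$ and tends to zero strongly as $\epsilon \to 0$. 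Combining this with the truncation step and a diagonal selection yields a sequence in $C_0^\infty(\mathbb{R}^n)$ converging to the original $u$ in $H_X^1$.

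The main delicate point is the behaviour of the cutoffs $\eta_R$: one must verify that the non-isotropic rescaling really exhausts all of $\mathbb{R}^n$ and that the uniform bound $|X_j\eta_R| \le C/R$ translates into $L^2$-smallness of $(X_j\eta_R)u$, which it does because $\|u\|_{L^2}$ is finite. The mollification part is routine and only uses smoothness of the coefficients of $X_j$; Hörmander's condition itself enters only implicitly through the definition of $H_X^1(\mathbb{R}^n)$. Note that the dilation-based construction is crucial: without (H.1), a global cutoff with uniformly small $X_j$-derivative is not immediate, and one would have to resort to subunit distance cutoffs, which are more delicate.
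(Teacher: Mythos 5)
Your proof is correct, but it takes a genuinely different route from the paper's. You truncate first (using the same dilation-based cutoffs $\eta_R = \eta\circ\delta_{1/R}$ and the (H.1) identity $X_j\eta_R = \tfrac1R (X_j\eta)\circ\delta_{1/R}$) and then smooth the resulting compactly supported function by mollification, handling $X_j(\phi_\epsilon * v) - \phi_\epsilon*(X_jv)$ via the Friedrichs commutator lemma, which applies because the coefficients of the $X_j$ are polynomials and hence smooth on the (fixed, compact) support. The paper runs the two steps in the opposite order: it first invokes the Meyers--Serrin theorem (cited from Garofalo--Nhieu) to produce $u_k\in C^\infty\cap H_X^1$ with $u_k\to u$ in $H_X^1$, and only then multiplies by the same dilation cutoffs $\zeta(\delta_{1/l}(x))$ to get $C_0^\infty$ approximants. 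What your version buys is self-containedness: you do not outsource smoothing to the Meyers--Serrin black box, which is itself proved by a partition-of-unity-plus-Friedrichs argument. What the paper's ordering buys is that the cutoff estimate is applied only to already-smooth $u_k$, so the Leibniz rule $X_j(\zeta_l u_k) = (X_j\zeta_l)u_k + \zeta_l X_ju_k$ is classical; in your ordering, the truncation step uses this Leibniz rule in the weak sense for $u\in H_X^1$ with $\eta_R$ smooth, which is valid but deserves a one-line justification. Both routes crucially use (H.1) for the cutoff gradient bound $\|X_j\eta_R\|_\infty \lesssim 1/R$, and both are correct.
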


\begin{proof}
  The Meyers-Serrin's theorem (refer to \cite[Theorem 1.13]{Garofalo1996}) asserts that
for any $u\in H_{X}^{1}(\mathbb{R}^n)$, there exists a sequence $\{u_{k}\}_{k=1}^{\infty}\subset C^{\infty}(\mathbb{R}^n)\cap H_{X}^{1}(\mathbb{R}^n)$ such that
\begin{equation}\label{2-12}
  \lim_{k\to +\infty}\|u_{k}-u\|_{H_{X}^{1}(\mathbb{R}^n)}=0.
\end{equation}
Let $\zeta\in C_{0}^{\infty}(\mathbb{R}^{n})$ be a cut-off function with $0\leq \zeta\leq 1$, $\zeta\equiv 1 $ on $B_{1}(0)$, and $\text{supp}~\zeta \subset B_{2}(0)$. Here, $B_r(0)=\{x\in \mathbb{R}^{n}||x|<r\}$ denotes the classical Euclidean ball in $\mathbb{R}^n$ centered  at origin with radius $r$. Denoting by $u_{l,k}(x)=u_{k}\cdot \zeta(\delta_{\frac{1}{l}}(x))$, we have $u_{l,k}\in C_{0}^{\infty}(\mathbb{R}^n)$. For each $1\leq j\leq m$, by the homogeneous property (H.1) we have
$ X_{j}u_{l,k}=\zeta(\delta_{\frac{1}{l}}(x))X_{j}u_{k}+\frac{1}{l}(X_{j}\zeta)(\delta_{\frac{1}{l}}(x))u_{k}$.
Note that $X_{j}\zeta\in  L^{\infty}(\mathbb{R}^n)$. Thus, we have for $l\to+\infty$,
\begin{equation}\label{2-13}
\|X_{j}u_{l,k}-X_{j}u_{k}\|_{L^2(\mathbb{R}^n)}\leq \left(\int_{\{x\in \mathbb{R}^n||\delta_{\frac{1}{l}}(x)|\geq 1\}}|X_{j}u_{k}|^{2}dx\right)^{\frac{1}{2}}+\frac{\|X_{j}\zeta\|_{L^{\infty}(\mathbb{R}^n)}}{l}\|u_{k}\|_{L^2(\mathbb{R}^n)}\to 0,
\end{equation}
and
\begin{equation}\label{2-14}
  \|u_{l,k}-u_{k}\|_{L^2(\mathbb{R}^n)}\leq\left(\int_{\{x\in \mathbb{R}^n||\delta_{\frac{1}{l}}(x)|\geq 1\}}|u_{k}|^{2}dx\right)^{\frac{1}{2}}\to 0.
\end{equation}
By \eqref{2-12}-\eqref{2-14}, we conclude that $C_{0}^{\infty}(\mathbb{R}^n)$ is dense in $H_{X}^{1}(\mathbb{R}^n)$, and $H_{X,0}^{1}(\mathbb{R}^n)=H_{X}^{1}(\mathbb{R}^n)$.
\end{proof}

Then, we have the following chain rules in weighted Sobolev space $H_{X}^{1}(\mathbb{R}^n)$.

\begin{proposition}[Chain rules]
\label{prop2-12}
Suppose that $F\in C^{1}(\mathbb{R})$ with $F'\in L^{\infty}(\mathbb{R})$. Then for any $u\in H_{X}^{1}(\mathbb{R}^n)$  we have
\begin{equation}
  X_{j}(F(u))=F'(u)X_{j}u\qquad \mbox{in}~~\mathcal{D}'(\mathbb{R}^n)\qquad \mbox{for}~~j=1,\ldots,m.
\end{equation}
Moreover, if $F(0)=0$, then we further have $F(u)\in H_{X}^{1}(\mathbb{R}^n)$.

\end{proposition}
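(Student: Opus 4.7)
The plan is to reduce to the case $F(0)=0$, establish the identity for smooth $u$ by classical calculus, and then pass to the limit using the density result Proposition~\ref{prop2-11}. Note that the distributional identity to be proved is invariant under replacing $F$ by $F-F(0)$: the right-hand side $F'(u)X_j u$ is unchanged, and since $X_j$ annihilates the constant $F(0)$ in the sense of distributions, the left-hand side is unchanged as well. Hence I would assume $F(0)=0$ from the outset, which gives the Lipschitz bound $|F(t)|\leq \|F'\|_{L^\infty}|t|$ on $\mathbb{R}$, and therefore $F(u)\in L^2(\mathbb{R}^n)$ whenever $u\in L^2(\mathbb{R}^n)$.

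Next, by Proposition~\ref{prop2-11}, pick a sequence $u_k\in C_0^\infty(\mathbb{R}^n)$ with $u_k\to u$ in $H_X^1(\mathbb{R}^n)$. For each $k$, the classical chain rule for $C^1$ functions of $C^1$ arguments gives the pointwise identity
\begin{equation*}
X_j(F(u_k))=F'(u_k)X_j u_k,\qquad j=1,\ldots,m.
\end{equation*}
I would then let $k\to\infty$ on both sides in the distributional sense. The left-hand side converges: for any $\varphi\in C_0^\infty(\mathbb{R}^n)$,
\begin{equation*}
\langle X_j(F(u_k)),\varphi\rangle=-\langle F(u_k),X_j^*\varphi\rangle\to -\langle F(u),X_j^*\varphi\rangle=\langle X_j(F(u)),\varphi\rangle,
\end{equation*}
using the Lipschitz bound $\|F(u_k)-F(u)\|_{L^2}\leq \|F'\|_{L^\infty}\|u_k-u\|_{L^2}$. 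For the right-hand side, after passing to a subsequence so that $u_k\to u$ pointwise a.e., continuity of $F'$ gives $F'(u_k)\to F'(u)$ a.e.; combined with the uniform bound $|F'(u_k)|\leq\|F'\|_{L^\infty}$ and $X_j u_k\to X_j u$ in $L^2$, the splitting
\begin{equation*}
F'(u_k)X_j u_k-F'(u)X_j u=F'(u_k)(X_j u_k-X_j u)+(F'(u_k)-F'(u))X_j u
\end{equation*}
shows that the first term goes to $0$ in $L^2$ and the second in $L^2$ by dominated convergence. Testing against $\varphi\in C_0^\infty(\mathbb{R}^n)$ yields the desired distributional identity $X_j(F(u))=F'(u)X_j u$.

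Finally, when $F(0)=0$, the above shows that $F(u)\in L^2(\mathbb{R}^n)$ and $X_j(F(u))=F'(u)X_j u\in L^2(\mathbb{R}^n)$ for each $j$, since $|F'(u)X_j u|\leq \|F'\|_{L^\infty}|X_j u|$. Hence $F(u)\in H_X^1(\mathbb{R}^n)$, completing the proof. The only delicate point is the passage to the limit in $F'(u_k)X_j u_k$: continuity of $F'$ is only enough for a.e.\ convergence along a subsequence, not $L^2$ convergence, so the argument must be routed through pointwise convergence plus dominated convergence rather than a direct Lipschitz estimate on $F'$. Aside from this, the argument is mechanical once Proposition~\ref{prop2-11} is available, which is why the density statement was proved first.
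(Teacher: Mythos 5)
Your proof is correct, and it takes a genuinely different route from the paper's. The paper mollifies $u$ by convolution with a standard approximate identity $J_\varepsilon$, proves $F(u)\in L^1_{\mathrm{loc}}$ via the Lipschitz estimate $|F(u)-F(J_\varepsilon u)|\leq C|u-J_\varepsilon u|$, and passes to the limit $\varepsilon\to 0^+$ in the weak formulation by invoking Friedrichs-type commutator lemmas from Garofalo--Nhieu (their Lemma A.3 and Proposition A.4), which are needed precisely because convolution does not commute with non-constant-coefficient vector fields $X_j$. The case $F(0)=0$ is treated as a final afterthought. You instead reduce to $F(0)=0$ up front by the observation that $X_j$ annihilates constants in $\mathcal{D}'(\mathbb{R}^n)$ (valid since $\operatorname{div}X_j=0$ under (H.1), and actually valid in general since the term $\int (\operatorname{div}X_j)\varphi$ cancels), then exploit the global density statement Proposition~\ref{prop2-11} to approximate $u$ by $u_k\in C_0^\infty(\mathbb{R}^n)$ in $H_X^1$, apply the classical pointwise chain rule, and pass to the limit. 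This neatly sidesteps the convolution commutator machinery: the left side converges by the Lipschitz bound, and on the right side you correctly isolate the only delicate step --- $F'$ is merely continuous, so you pass to an a.e.\ convergent subsequence and use the splitting $F'(u_k)(X_ju_k-X_ju)+(F'(u_k)-F'(u))X_ju$, handling the first term by the $L^\infty$ bound on $F'$ and the second by dominated convergence in $L^2$. Since the distributional identity is a statement about $u$ alone, arguing along a subsequence suffices. Your route is more elementary given that Proposition~\ref{prop2-11} has already been established, though of course that density result itself rests on Meyers--Serrin and the homogeneity assumption (H.1), so the two arguments ultimately draw on the same underlying tools, just packaged differently.
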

\begin{proof}
Let $\eta\in C_{0}^{\infty}(\mathbb{R}^n)$ be a non-negative function with $\text{supp}~\eta\subset \{x\in \mathbb{R}^n| |x|\leq 1\}$ and $\int_{\mathbb{R}^n}\eta(x)dx=1$. For $\varepsilon>0$, we denote by $\eta_{\varepsilon}=\frac{1}{\varepsilon^n}\eta\left(\frac{x}{\varepsilon}\right)$ the classical mollifier. For any $v\in L^2(\mathbb{R}^n)$, we set
\[ (J_{\varepsilon}v)(x)=\int_{\mathbb{R}^n}v(y)\eta_{\varepsilon}(x-y)dy. \]
It is well-known that $J_{\varepsilon}v\in C^{\infty}(\mathbb{R}^n)\cap L^2(\mathbb{R}^n)$, $J_{\varepsilon}v(x)\to v(x)$ for almost all $x\in \mathbb{R}^n$, and $\|J_{\varepsilon}v-v\|_{L^2(\mathbb{R}^n)}\to 0$ as $\varepsilon\to 0$.

Suppose that $u\in H_{X}^{1}(\mathbb{R}^n)$. For any $\varepsilon>0$, by using the inequality
\[ |F(u)|\leq |F(u)-F(J_{\varepsilon}u)|+|F(J_{\varepsilon}u)|\leq C|u-J_{\varepsilon}u|+|F(J_{\varepsilon}u)|,\]
we can deduce that $F(u)\in L_{\rm loc}^{1}(\mathbb{R}^n)$ since $F(J_{\varepsilon}(u))\in C^{1}(\mathbb{R}^n)$. Therefore, $F(u)\in \mathcal{D}'(\mathbb{R}^n)$. For any test function $\varphi\in C_{0}^{\infty}(\mathbb{R}^n)$, we let $K$ be the compact set such that $\text{supp}~\varphi\subset K\subset \mathbb{R}^n$. Then, using \cite[Lemma A.3, Proposion A.4]{Garofalo1996} we have for any $j=1,\ldots,m$,
\begin{equation*}
\begin{aligned}
\int_{\mathbb{R}^n}F(u)X_{j}^{*}\varphi dx&=\lim_{\varepsilon \to 0}\int_{K}F(J_{\varepsilon}u)X_{j}^{*}\varphi dx=\lim_{\varepsilon \to 0}\int_{K}F'(J_{\varepsilon}u)X_{j}(J_{\varepsilon}u)\varphi dx\\
&=\int_{K}\varphi F'(u)X_{j}u dx=\int_{\mathbb{R}^n}\varphi F'(u)X_{j}u dx.
\end{aligned}
\end{equation*}
Thus, $X_{j}(F(u))=F'(u)X_{j}u$ in $\mathcal{D}'(\mathbb{R}^n)$ for $j=1,\ldots,m$.

If $F(0)=0$ and $F'$ belongs to $L^{\infty}(\mathbb{R})$, it follows that $|F(u)|\leq C|u|$. This inequality implies that $F(u)\in H_{X}^{1}(\mathbb{R}^n)$ for any $u\in H_{X}^{1}(\mathbb{R}^n)$.
\end{proof}

According to Proposition \ref{prop2-12}, we have the following corollary.
\begin{corollary}
\label{corollary2-3}
For any $u\in H_{X}^{1}(\mathbb{R}^n)$,  the functions $u_{+}, u_{-},|u|\in H_{X}^{1}(\mathbb{R}^n)$ and satisfy
  \[ Xu_{+}=\left\{
            \begin{array}{ll}
              Xu, & \hbox{on $\{x\in \mathbb{R}^n|~u(x)> 0\}$,} \\
              0, & \hbox{on $\{x\in \mathbb{R}^n|~u(x)\leq 0\}$;}
            \end{array}
          \right.\qquad Xu_{-}=\left\{
            \begin{array}{ll}
              0, & \hbox{on $\{x\in \mathbb{R}^n|~u(x)\geq 0\}$,} \\
              -Xu, & \hbox{on $\{x\in \mathbb{R}^n|~u(x)<0\}$;}
            \end{array}
          \right.\]
in $\mathcal{D}'(\mathbb{R}^n)$. Furthermore, we have  $X|u|=\text{sgn}(u)Xu$, where $\text{sgn}(u)$ denotes the sign of
$u$. In addition, for any non-negative constant $c\geq 0$,  $(u-c)_+\in H_{X}^1(\mathbb{R}^n)$ and \[ X(u-c)_+=\left\{
            \begin{array}{ll}
              Xu, & \hbox{on $\{x\in \mathbb{R}^n|~u(x)> c\}$,} \\
              0, & \hbox{on $\{x\in \mathbb{R}^n|~u(x)\leq 0\}$;}
            \end{array}
          \right.\quad \mbox{in}~~~\mathcal{D}'(\mathbb{R}^n).\]
\end{corollary}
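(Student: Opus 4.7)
The plan is to mimic the classical argument for Euclidean Sobolev spaces: approximate the positive-part map by a smooth family to which Proposition \ref{prop2-12} applies, and then pass to the limit. For $\varepsilon>0$ I would set
\[
F_\varepsilon(t) := \begin{cases} \sqrt{t^2+\varepsilon^2}-\varepsilon, & t>0,\\ 0, & t\leq 0, \end{cases}
\]
which satisfies $F_\varepsilon\in C^1(\mathbb{R})$, $F_\varepsilon(0)=0$, $0\leq F_\varepsilon'\leq 1$, together with the pointwise limits $F_\varepsilon(t)\to t_+$ and $F_\varepsilon'(t)\to \chi_{\{t>0\}}$ as $\varepsilon\to 0^+$. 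By Proposition \ref{prop2-12}, $F_\varepsilon(u)\in H_X^1(\mathbb{R}^n)$ and $X_j(F_\varepsilon(u))=F_\varepsilon'(u)X_ju$ in $\mathcal{D}'(\mathbb{R}^n)$ for each $j$.

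The next step is to let $\varepsilon\to 0^+$. Using the majorants $|F_\varepsilon(u)|\leq u_+\leq |u|\in L^2(\mathbb{R}^n)$ and $|F_\varepsilon'(u)X_ju|\leq |X_ju|\in L^2(\mathbb{R}^n)$, dominated convergence gives $F_\varepsilon(u)\to u_+$ and $F_\varepsilon'(u)X_ju\to \chi_{\{u>0\}}X_ju$ in $L^2$, whence $u_+\in H_X^1(\mathbb{R}^n)$ with $X_ju_+ = \chi_{\{u>0\}}X_ju$ in $\mathcal{D}'(\mathbb{R}^n)$. Applying the same argument to $-u$ yields $u_-=(-u)_+\in H_X^1(\mathbb{R}^n)$ and $X_ju_-=-\chi_{\{u<0\}}X_ju$, and then $|u|=u_++u_-$ is immediately in $H_X^1(\mathbb{R}^n)$.

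To match the piecewise formulas in the statement I still need $X_ju=0$ almost everywhere on the level set $\{u=0\}$. I would derive this as a by-product rather than as an input: since $u=u_+-u_-$ and $X_j$ is a first-order linear operator, combining the two formulas above gives $X_ju=\chi_{\{u>0\}}X_ju+\chi_{\{u<0\}}X_ju$, which forces $\chi_{\{u=0\}}X_ju=0$ a.e. The identity $X_j|u|=X_ju_++X_ju_-=\text{sgn}(u)X_ju$ then follows under the convention $\text{sgn}(0)=0$, consistent with what we have just proved.

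Finally, for the truncation $(u-c)_+$ with $c\geq 0$, I would rerun the entire scheme using $G_\varepsilon(t):=F_\varepsilon(t-c)$; the condition $c\geq 0$ is exactly what is needed to secure $G_\varepsilon(0)=F_\varepsilon(-c)=0$, while $0\leq(u-c)_+\leq u_+\in L^2$ supplies the domination for the limit. The step I expect to require the most care is the passage to the limit on the null-set $\{u=0\}$ and the verification that the piecewise formula stated by the authors is unambiguous there; everything else is essentially a chain-rule-plus-dominated-convergence exercise once Proposition \ref{prop2-12} is in hand.
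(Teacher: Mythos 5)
Your proof is correct and follows essentially the same route as the paper: both regularize the positive part by the $C^1$ function $\sqrt{t^2+\varepsilon^2}-\varepsilon$ (shifted by $c$ in the truncation case), invoke Proposition \ref{prop2-12} to obtain $X_j(F_\varepsilon(u))=F_\varepsilon'(u)X_ju$, and pass to the limit $\varepsilon\to 0^+$ via dominated convergence. Your additional remark that $X_ju=0$ a.e.\ on $\{u=0\}$ follows as a by-product of $u=u_+-u_-$ is a sensible and slightly more careful treatment of the level set than the paper, which leaves that point implicit.
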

\begin{proof}
We only verify that for any non-negative constant $c\geq 0$,  $(u-c)_+\in H_{X}^1(\mathbb{R}^n)$ and \[ X(u-c)_+=\left\{
            \begin{array}{ll}
              Xu, & \hbox{on $\{x\in\mathbb{R}^n|~u(x)> c\}$,} \\
              0, & \hbox{on $\{x\in \mathbb{R}^n|~u(x)\leq 0\}$;}
            \end{array}
          \right.\quad \mbox{in}~~~\mathcal{D}'(\mathbb{R}^n).\]
 The proof for the other situation is similar and we omit here.

For any $\varepsilon>0$ and $c\geq 0$, we define
\[ F_\varepsilon(x)=\left\{
            \begin{array}{ll}
             [(x-c)^2+\varepsilon^2]^{\frac{1}{2}}-\varepsilon, & \hbox{$x>c$;} \\
               0, & \hbox{$x\leq c$.}
            \end{array}
          \right.\]
  Then $F_ {\varepsilon}(0)=0$, $F_\varepsilon\in C^1(\mathbb{R})$, $|F_\varepsilon'(x)|\leq 1$ and

 \[ F_\varepsilon'(x)=\left\{
            \begin{array}{ll}
\frac{x-c}{\sqrt{(x-c)^2+\varepsilon^2}}, & \hbox{$x>c$;} \\
               0, & \hbox{$x\leq c$.}
            \end{array}
          \right.\]
 For any $u\in H_{X}^{1}(\mathbb{R}^n)$ and $j=1,\ldots, m$, Proposition \ref{prop2-12} yields that
\[ \int_{\mathbb{R}^n}F_{\varepsilon}(u)X_{j}^{*}\varphi dx=\int_{\mathbb{R}^n}\varphi F_{\varepsilon}'(u)X_{j}u dx,\qquad \forall \varphi\in C_{0}^{\infty}(\mathbb{R}^n). \]
Letting $\varepsilon\to 0^{+}$, by dominated convergence theorem we obtain
\[ \int_{\mathbb{R}^n}(u-c)_{+}X_{j}^{*}\varphi dx=\int_{\{x\in \mathbb{R}^n| u(x)>c\}}\varphi X_{j}u dx, \qquad\forall \varphi\in C_{0}^{\infty}(\mathbb{R}^n). \]
Therefore, $X_{j}(u-c)_{+}\in L^2(\mathbb{R}^n)$. In addition, the inequality $(u-c)_{+}\leq |u|$ derives that $(u-c)_{+}\in L^2(\mathbb{R}^n)$, which indicates that $(u-c)_{+}\in H_{X}^{1}(\mathbb{R}^n)$.
\end{proof}

\begin{proposition}
\label{prop2-13}
Let $\Omega$ be an open bounded subset of $\mathbb{R}^n$. For any $u\in H_{X}^{1}(\mathbb{R}^n)$, if  $\text{supp}~u$ is a compact subset of $\Omega$, then $u\in H_{X,0}^{1}(\Omega)$.
\end{proposition}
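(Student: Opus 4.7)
The strategy is to combine a cutoff argument with the Meyers--Serrin density theorem already employed in the proof of Proposition~\ref{prop2-11}. Since $\text{supp}~u$ is a compact subset of the open set $\Omega$, we have $\delta_0:=d(\text{supp}~u,\partial\Omega)>0$, so we can select a cutoff $\phi\in C_0^\infty(\Omega)$ with $0\leq\phi\leq 1$, $\phi\equiv 1$ on an open neighbourhood of $\text{supp}~u$, and $\text{supp}~\phi\subset\{x\in\mathbb{R}^n:d(x,\text{supp}~u)<\delta_0/2\}$. In particular, $\phi u=u$ as elements of $L^2(\mathbb{R}^n)$, since $u=0$ a.e.\ off $\text{supp}~u$ and $\phi=1$ on $\text{supp}~u$.

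By the Meyers--Serrin theorem (\cite[Theorem 1.13]{Garofalo1996}), already invoked in Proposition~\ref{prop2-11}, there exists a sequence $\{u_k\}_{k=1}^{\infty}\subset C^{\infty}(\mathbb{R}^n)\cap H_X^1(\mathbb{R}^n)$ with $\|u_k-u\|_{H_X^1(\mathbb{R}^n)}\to 0$. Setting $v_k:=\phi u_k$, each $v_k$ is smooth with compact support contained in $\text{supp}~\phi\subset\Omega$, hence $v_k\in C_0^\infty(\Omega)$. I plan to show $v_k\to u$ in $H_X^1(\mathbb{R}^n)$, which by the definition of $H_{X,0}^1(\Omega)$ yields $u\in H_{X,0}^1(\Omega)$. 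The $L^2$-convergence is immediate from $\phi u=u$ and $\|\phi\|_\infty\leq 1$. For the subelliptic derivatives, the smoothness of $\phi$ permits the Leibniz rule distributionally, giving
\[
X_j v_k - X_j u \;=\; X_j(\phi u_k) - X_j(\phi u) \;=\; (X_j\phi)(u_k-u) + \phi\,(X_j u_k - X_j u).
\]
Since $\phi$ and each $X_j\phi$ are smooth with compact support, both lie in $L^{\infty}(\mathbb{R}^n)$, so every term on the right-hand side tends to $0$ in $L^2(\mathbb{R}^n)$.

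The argument is essentially classical and no single step is a serious obstacle; the only mild technical point is verifying the distributional Leibniz rule $X_j(\phi w)=(X_j\phi)w+\phi X_j w$ for smooth $\phi$ and $w\in H_X^1(\mathbb{R}^n)$, which follows by writing $X_j=\sum_k b_{jk}(x)\partial_{x_k}$ with smooth coefficients $b_{jk}$ and applying the usual product rule component-wise in $\mathcal{D}'(\mathbb{R}^n)$. As an alternative route, one could bypass the cutoff and directly mollify $u$ by $u_\varepsilon:=\eta_\varepsilon\ast u$ for $\varepsilon<\delta_0$, so that $u_\varepsilon\in C_0^\infty(\Omega)$; a Friedrichs-type commutator lemma for smooth vector fields, already used in the proof of Proposition~\ref{prop2-12} via \cite[Proposition A.4]{Garofalo1996}, would then yield $X_j u_\varepsilon\to X_j u$ in $L^2(\mathbb{R}^n)$ and produce the same conclusion.
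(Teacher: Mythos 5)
Your proof is correct and takes essentially the same route as the paper's: both arguments pick a smooth cutoff $f$ (or $\phi$) that is $1$ on $\text{supp}~u$ with support in $\Omega$, take a Meyers--Serrin sequence $\psi_i\to u$ in $H_X^1(\mathbb{R}^n)$, and show $f\psi_i\to u$ in $H_X^1$ via the Leibniz rule $X_j(fw)=(X_jf)w+fX_jw$ (which holds distributionally since the $X_j$ have smooth coefficients and $X_jf\in C_0^\infty$). The only cosmetic difference is that the paper bounds $\|f\psi_i-fu\|_{H_X^1}$ directly in one displayed inequality rather than expanding the commutator term by term as you do.
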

\begin{proof}
Since $\text{supp}~u$ is a compact subset in $\Omega$, there exists a function $f\in C_{0}^{\infty}(\mathbb{R}^n)$ such that $f\equiv 1$ on $\text{supp}~u$ and $\text{supp}~f\subset \Omega$. Owing to the Meyers-Serrin's theorem (see \cite[Theorem 1.13]{Garofalo1996}), we can find  a sequence $\{\psi_{i}\}\subset C^{\infty}(\mathbb{R}^n)\cap H_{X}^{1}(\mathbb{R}^n)$ such that $\psi_{i}\to u$ in $H_{X}^{1}(\mathbb{R}^n)$. Observing that $f\psi_{i}\in C_{0}^{\infty}(\Omega)$ and
\[ \begin{aligned}
\|f\psi_i-u\|_{H_{X}^{1}(\mathbb{R}^n)}^{2}&=\|f\psi_i-fu\|_{H_{X}^{1}(\mathbb{R}^n)}^{2}\leq  \|f(\psi_i-u)\|_{L^2(\Omega)}^{2}+\|X(f\psi_i)-X(fu)\|_{L^2(\Omega)}^{2}\\
&\leq C(\|\psi_i-u\|_{L^2(\Omega)}^{2}+\|X\psi_i-Xu\|_{L^2(\Omega)}^{2})
\end{aligned}
\]
holds for some positive constant $C>0$, we conclude that $u\in H_{X,0}^{1}(\Omega)$.
\end{proof}

\subsection{Subelliptic estimates and Poincar\'{e} inequality}

We next recall the following subelliptic estimates:

\begin{proposition}[Subelliptic estimates I]
\label{subelliptic-estimate-1}
Let $X=(X_{1},X_{2},\ldots,X_{m})$ be the smooth vector fields defined on $\mathbb{R}^n$, satisfying
H\"{o}rmander's condition with H\"{o}rmander index $r$. For any open bounded subset $\Omega\subset \mathbb{R}^n$, there exists a constant $C>0$ such that
\begin{equation}\label{2-16}
  \|u\|_{H^{\frac{1}{r}}(\mathbb{R}^n)}^2\leq C\left(\sum_{i=1}^{m}\|X_{i}u\|_{L^2(\mathbb{R}^n)}^{2}+\|u\|_{L^2(\mathbb{R}^n)}^2\right)\qquad\forall u\in H_{X,0}^{1}(\Omega),
\end{equation}
where $\|u\|_{H^{s}(\mathbb{R}^n)}=\left(\int_{\mathbb{R}^n}(1+|\xi|^{2})^{s}|\hat{u}(\xi)|^{2}d\xi\right)^{\frac{1}{2}}$ is the classical fractional Sobolev norm.

\end{proposition}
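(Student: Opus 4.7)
The plan is to reduce the estimate to its classical form for smooth compactly supported test functions and then invoke Hörmander's subelliptic estimate for sum-of-squares operators, which is available under exactly the hypotheses assumed here.

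First, by Proposition \ref{prop2-11} and the definition of $H_{X,0}^1(\Omega)$ as the closure of $C_0^\infty(\Omega)$ in $H_X^1(\mathbb{R}^n)$, it suffices to establish \eqref{2-16} for $u\in C_0^\infty(\Omega)$. The extension to $H_{X,0}^1(\Omega)$ is routine: for $u\in H_{X,0}^1(\Omega)$, pick $\{u_k\}\subset C_0^\infty(\Omega)$ converging to $u$ in the $H_X^1$ norm; the $C_0^\infty$ estimate shows that $\{u_k\}$ is Cauchy in $H^{1/r}(\mathbb{R}^n)$, and its $H^{1/r}$ limit must coincide with $u$ by uniqueness of $L^2$ limits, so the inequality passes to the limit.

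For $u\in C_0^\infty(\Omega)$, observe that $\sum_{i=1}^m\|X_iu\|_{L^2}^2=\langle Lu,u\rangle_{L^2}$ with $L=\sum_i X_i^*X_i$. Thus \eqref{2-16} is equivalent to the classical basic subelliptic estimate
\[
\|u\|_{H^{1/r}(\mathbb{R}^n)}^2 \leq C\bigl(\langle Lu,u\rangle_{L^2}+\|u\|_{L^2}^2\bigr),
\]
valid for $u$ supported in a fixed bounded set when $X_1,\ldots,X_m$ satisfy Hörmander's condition of step $r$. This is precisely Hörmander's original theorem from \emph{Hypoelliptic second order differential equations} (Acta Math., 1967). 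The strategy is to prove, by induction on commutator length, that for every iterated commutator $Y_\alpha$ of length $|\alpha|\leq r$ built from $X_1,\ldots,X_m$, one has
\[
\|Y_\alpha u\|_{H^{1/r-1}(\mathbb{R}^n)}^2 \leq C\bigl(\langle Lu,u\rangle_{L^2}+\|u\|_{L^2}^2\bigr).
\]
The base case $|\alpha|=1$ is trivial since then $Y_\alpha=X_i$ and $L^2\hookrightarrow H^{1/r-1}$. In the inductive step one writes $Y_\alpha=[X_i,Y_{\alpha'}]$, splits $Y_\alpha u=X_iY_{\alpha'}u-Y_{\alpha'}X_i u$, and tests against the pseudodifferential operator $\Lambda^{2(1/r-1)}$ with symbol $(1+|\xi|^2)^{1/r-1}$. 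Commuting $\Lambda^{1/r-1}$ past the first-order operators $X_i$ and $Y_{\alpha'}$ produces only lower-order remainders (using that $[\Lambda^s,\text{first-order op.}]\in\mathrm{OP}S^s$), and these are absorbed by the inductive hypothesis on commutators of strictly smaller length.

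Having controlled all $Y_\alpha$ with $|\alpha|\leq r$ in $H^{1/r-1}$, one concludes by invoking Hörmander's condition: at every point of $\overline{\Omega}$ some $n$-tuple of such brackets forms a basis of the tangent space, so by a compactness/partition-of-unity argument on $\overline{\Omega}$ one can express $\partial_{x_j}u$ as a smooth combination of $Y_\alpha u$'s, yielding $\|\partial_{x_j}u\|_{H^{1/r-1}}\leq C(\langle Lu,u\rangle+\|u\|_0^2)^{1/2}$ and hence the desired $H^{1/r}$ bound. The main obstacle is the technical bookkeeping of symbol orders and lower-order remainders throughout the commutator induction; this is the heart of Hörmander's argument and requires the full pseudodifferential calculus of $S^s_{1,0}$ symbols. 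An alternative self-contained route, in the present homogeneous setting, would be via Rothschild--Stein lifting and approximation by free nilpotent groups, but the direct pseudodifferential argument is the shortest.
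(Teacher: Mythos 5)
The reduction to $u\in C_0^\infty(\Omega)$ by density and the rewriting $\sum_i\|X_iu\|_{L^2}^2=\langle Lu,u\rangle$ are both fine, and for the record the paper's own proof is simply the citation to Theorem~17 of the Rothschild--Stein 1976 Acta Mathematica paper. The gap is in the core of your argument: the commutator induction you describe does not close at the exponent you claim, and the attribution to H\"ormander's 1967 paper is wrong for the sharp constant. You want $\|Y_\alpha u\|_{H^{1/r-1}}^2\le C\bigl(\langle Lu,u\rangle+\|u\|_{L^2}^2\bigr)$ uniformly for all brackets of length $|\alpha|\le r$, by writing $Y_\alpha u=X_iY_{\alpha'}u-Y_{\alpha'}X_iu$ and pairing with $\Lambda^{2(1/r-1)}Y_\alpha u$. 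After integrating $X_i$ by parts and commuting it past $\Lambda^{2(1/r-1)}$, the main surviving term is of the form $\langle\Lambda^{1/r-1}Y_{\alpha'}u,\ \Lambda^{1/r-1}X_iY_\alpha u\rangle$; Cauchy--Schwarz bounds it by $\|Y_{\alpha'}u\|_{H^{1/r-1}}\cdot\|Y_\alpha u\|_{H^{1/r}}$, and the second factor is a strictly stronger norm than the quantity you are trying to prove, not something supplied by the inductive hypothesis on shorter brackets. This is exactly where the H\"ormander/Kohn pseudodifferential induction gives ground: each pass through a bracket costs roughly a factor of two in Sobolev order, so that scheme yields a subelliptic gain of order $2^{1-r}$, which coincides with $1/r$ only at $r=2$. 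Your remark that $[\Lambda^s,\text{first-order op.}]\in\mathrm{OP}S^s$ is correct, but it is the main term above, not the symbol-calculus remainders, that breaks the induction.

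The sharp exponent $1/r$ is not in H\"ormander's original 1967 Acta paper; it is precisely the content of Rothschild and Stein's Theorem~17, whose proof goes through the lifting to a free nilpotent group that you set aside as an ``alternative.'' So the route you call ``shortest'' in fact proves a strictly weaker estimate and cites the wrong source for the constant. A pseudodifferential proof of the sharp $1/r$ that avoids the lifting does exist (Fefferman--Phong, or Bolley--Camus--Nourrigat), but it is built on the Nagel--Stein--Wainger subunit metric together with a dyadic localization --- a genuinely different mechanism, not the naive bracket induction you outline.
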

\begin{proof}
 See \cite[Theorem 17]{Stein1976}.
\end{proof}
\begin{proposition}[Sub-elliptic estimates II]
\label{subelliptic-estimate-2}
Let $X=(X_{1},X_{2},\ldots,X_{m})$ be the smooth vector fields defined on $\mathbb{R}^n$, satisfying
H\"{o}rmander's condition with H\"{o}rmander index $r$. Assume that $\eta,~\eta_1\in C_{0}^{\infty}(\mathbb{R}^n)$ are some functions such that $\eta_1\equiv 1$ on the support of $\eta$. Then for every $s\geq 0$, there is a constant $C>0$ such that
\begin{equation}\label{2-17}
\|\eta u\|_{H^{s+\frac{2}{r}}(\mathbb{R}^n)}\leq C\left(\|\eta_1\triangle_{X}u\|_{H^s(\mathbb{R}^n)}+\|\eta_1 u  \|_{L^2(\mathbb{R}^n)}\right)\qquad \forall u\in L^2_{\rm loc}(\mathbb{R}^n),
\end{equation}
where $\|u\|_{H^{s}(\mathbb{R}^n)}=\left(\int_{\mathbb{R}^n}(1+|\xi|^{2})^{s}|\hat{u}(\xi)|^{2}d\xi\right)^{\frac{1}{2}}$ is the classical fractional Sobolev norm.
\end{proposition}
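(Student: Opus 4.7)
The plan is to first establish the base case $s=0$, which is the maximal subelliptic estimate $\|\eta u\|_{H^{2/r}} \leq C(\|\eta_1 \triangle_X u\|_{L^2} + \|\eta_1 u\|_{L^2})$, and then bootstrap to general $s \geq 0$ via commutator estimates with the Bessel potential $\Lambda^s := (I-\Delta)^{s/2}$. This is the classical Hörmander--Kohn estimate for sums of squares, and I would follow the pseudodifferential approach together with Proposition \ref{subelliptic-estimate-1} as the driving tool.

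For the case $s=0$, I would first pick a finite chain of cutoffs $\eta = \eta_{(0)} \prec \eta_{(1)} \prec \cdots \prec \eta_{(N)} = \eta_1$, each identically $1$ on the support of the previous one. Writing $f := \triangle_X u \in L^2_{\mathrm{loc}}$, multiplying by $\eta_{(0)}^2 u$ (after mollification in order to legalize the integration by parts despite $u$ being only locally $L^2$) and using $X_i^* = -X_i - \mathrm{div}\, X_i$ yields the energy bound $\sum_{i=1}^m \|\eta_{(0)} X_i u\|_{L^2}^2 \leq C(\|\eta_{(1)} f\|_{L^2}^2 + \|\eta_{(1)} u\|_{L^2}^2)$. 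Combined with $X_i(\eta_{(0)} u) = \eta_{(0)} X_i u + (X_i \eta_{(0)}) u$ and Proposition \ref{subelliptic-estimate-1} applied to $\eta_{(0)} u \in H_{X,0}^1$, I obtain the first-level gain $\|\eta_{(0)} u\|_{H^{1/r}}^2 \leq C(\|\eta_{(1)} f\|_{L^2}^2 + \|\eta_{(1)} u\|_{L^2}^2)$.

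To upgrade from $H^{1/r}$ to $H^{2/r}$, I would apply $\Lambda^{1/r}$ to $\eta_{(0)} u$ and rerun the energy estimate on $\Lambda^{1/r}(\eta_{(0)} u)$. The commutators $[\Lambda^{1/r}, \eta_{(0)}]$ and $[\Lambda^{1/r}, X_i]$ are pseudodifferential operators of order $1/r$, and the nested cutoff structure lets me absorb the resulting error terms into the already-controlled $H^{1/r}$-norms of $\eta_{(1)} u$ on successively larger supports. A second application of Proposition \ref{subelliptic-estimate-1} then delivers $\|\eta u\|_{H^{2/r}}^2 \leq C(\|\eta_1 f\|_{L^2}^2 + \|\eta_1 u\|_{L^2}^2)$. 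For general $s \geq 0$, the same scheme is iterated: apply $\Lambda^s$, use that $[\Lambda^s, \eta_{(0)}]$ and $[\Lambda^s, X_i]$ are of order $s$, and shrink the cutoff each time to absorb lower-order remainders, obtaining $\|\eta u\|_{H^{s+2/r}} \leq C(\|\eta_1 \triangle_X u\|_{H^s} + \|\eta_1 u\|_{L^2})$ after finitely many steps.

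The main obstacle is the gain from $H^{1/r}$ to $H^{2/r}$, i.e., the maximal subelliptic estimate itself: it saturates the derivative loss inherent in Hörmander's condition and genuinely requires either a pseudodifferential calculus adapted to the subunit geometry or, equivalently, the Rothschild--Stein lifting to a free nilpotent group and the sub-Laplacian estimate there. The commutator analysis $[X_i, \Lambda^{1/r}]$ is the technical heart. Since the statement is classical, one can alternatively simply cite the original references (e.g.\ \cite{Stein1976} and Hörmander's original sums-of-squares paper), but the scheme above is the self-contained route I would follow.
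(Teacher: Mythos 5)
The paper does not prove this statement at all: its ``proof'' is the single line ``See \cite[Theorem 18]{Stein1976}.'' Your sketch is therefore not in competition with an argument in the paper; it is an attempt to reconstruct the content of the cited reference (Rothschild--Stein), and you yourself note that simply citing is the alternative the paper takes. The overall architecture you describe --- energy estimate to get $\|\eta_{(0)}X_iu\|_{L^2}$, Proposition \ref{subelliptic-estimate-1} to convert to an $H^{1/r}$ gain, then a Kohn-style commutator bootstrap with nested cutoffs and Bessel potentials $\Lambda^s$ --- is indeed the standard route to this estimate, so the approach is sound in spirit.

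One technical point in your bootstrap step is stated too optimistically and would not go through as written. You say that $[\Lambda^{1/r},\eta_{(0)}]$ and $[\Lambda^{1/r},X_i]$ are of order $1/r$ and that ``the nested cutoff structure lets me absorb the resulting error terms into the already-controlled $H^{1/r}$-norms.'' But when you rerun the energy identity on $v=\Lambda^{1/r}(\eta_{(0)}u)$ you must commute $\triangle_X$ (a second-order operator) past $\Lambda^{1/r}\eta_{(0)}$, and $[\triangle_X,\Lambda^{1/r}\eta_{(0)}]$ is of order $1+1/r$, not $1/r$; tested against $\Lambda^{1/r}(\eta_{(0)}u)$ this naively produces a term of size $\|u\|_{H^{1+2/r}}$ on a larger support, which you do not yet control. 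The actual absorption in Kohn's argument uses the specific structure $[\triangle_X,P]=\sum_i\bigl(X_i[X_i,P]+[X_i,P]X_i\bigr)$, integrates one $X_i$ back across the pairing, and lands on expressions of the form $\bigl([X_i,P](\eta_{(0)}u),\,X_i P(\eta_{(0)}u)\bigr)$ that are controlled by $\varepsilon\|X_iP(\eta_{(0)}u)\|^2 + C_\varepsilon\|\Lambda^{1/r}\eta_{(1)}u\|^2$ and absorbed into the left-hand side by choosing $\varepsilon$ small. This is precisely the ``technical heart'' you flag in your last paragraph, but the sketch as written skips over the fact that a naive order count fails and the integration-by-parts trick is essential; without it the bootstrap does not close.
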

\begin{proof}
 See \cite[Theorem 18]{Stein1976}.
\end{proof}
 Proposition \ref{subelliptic-estimate-2} implies the following corollary.
\begin{corollary}
\label{corollary2-4}
Let $N\in \mathbb{N}^{+}$ and  $\eta(x)\in C_{0}^{\infty}(\mathbb{R}^n)$. If $(\triangle_{X})^{k}u\in L^2(\mathbb{R}^n)$ for $0\leq k\leq N$, then we have
\begin{equation}
 \|\eta u\|_{H^{\frac{2N}{r}}(\mathbb{R}^n)}\leq C\sum_{k=0}^{N}\|(\triangle_{X})^{k}u\|_{L^2(\mathbb{R}^n)}.
\end{equation}
\end{corollary}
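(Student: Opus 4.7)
The plan is to argue by induction on $N$, iteratively applying the localized sub-elliptic estimate in Proposition \ref{subelliptic-estimate-2}. The inductive hypothesis at step $N$ will be the estimate stated in the corollary, read as a statement valid for every cutoff function in $C_{0}^{\infty}(\mathbb{R}^{n})$.

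For the base case $N=1$, I would pick an auxiliary function $\eta_{1}\in C_{0}^{\infty}(\mathbb{R}^{n})$ with $\eta_{1}\equiv 1$ on $\mathrm{supp}\,\eta$ and apply Proposition \ref{subelliptic-estimate-2} with $s=0$. This directly yields
\[
\|\eta u\|_{H^{2/r}(\mathbb{R}^{n})}\leq C\bigl(\|\eta_{1}\triangle_{X}u\|_{L^{2}(\mathbb{R}^{n})}+\|\eta_{1}u\|_{L^{2}(\mathbb{R}^{n})}\bigr)\leq C\bigl(\|\triangle_{X}u\|_{L^{2}(\mathbb{R}^{n})}+\|u\|_{L^{2}(\mathbb{R}^{n})}\bigr),
\]
which is the claim for $N=1$.

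For the inductive step, assume the estimate holds for some $N\geq 1$ and every cutoff function. To pass to $N+1$, I would introduce nested cutoffs $\eta\prec\widetilde{\eta}\prec\eta^{*}$ in $C_{0}^{\infty}(\mathbb{R}^{n})$, meaning $\widetilde{\eta}\equiv 1$ on $\mathrm{supp}\,\eta$ and $\eta^{*}\equiv 1$ on $\mathrm{supp}\,\widetilde{\eta}$. Applying Proposition \ref{subelliptic-estimate-2} with $s=2N/r$ and cutoffs $(\eta,\widetilde{\eta})$ gives
\[
\|\eta u\|_{H^{2(N+1)/r}(\mathbb{R}^{n})}\leq C\bigl(\|\widetilde{\eta}\,\triangle_{X}u\|_{H^{2N/r}(\mathbb{R}^{n})}+\|\widetilde{\eta} u\|_{L^{2}(\mathbb{R}^{n})}\bigr).
\]
I would then apply the inductive hypothesis to the function $v:=\triangle_{X}u$ with cutoff $\widetilde{\eta}$ (backed up by $\eta^{*}$); this is legitimate because the assumption $(\triangle_{X})^{k}u\in L^{2}(\mathbb{R}^{n})$ for $0\leq k\leq N+1$ guarantees $(\triangle_{X})^{k}v=(\triangle_{X})^{k+1}u\in L^{2}(\mathbb{R}^{n})$ for $0\leq k\leq N$. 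This yields
\[
\|\widetilde{\eta}\,\triangle_{X}u\|_{H^{2N/r}(\mathbb{R}^{n})}\leq C\sum_{k=0}^{N}\|(\triangle_{X})^{k+1}u\|_{L^{2}(\mathbb{R}^{n})}=C\sum_{j=1}^{N+1}\|(\triangle_{X})^{j}u\|_{L^{2}(\mathbb{R}^{n})}.
\]
Combining the two displays and using $\|\widetilde{\eta} u\|_{L^{2}}\leq C\|u\|_{L^{2}}$ to absorb the remaining $L^{2}$ term produces the bound for $N+1$, closing the induction.

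No substantive obstacle arises in this argument: the mechanism is a routine bootstrap, and the only point requiring care is the verification that the inductive hypothesis can be applied to $\triangle_{X}u$, which is immediate from the standing assumption $(\triangle_{X})^{k}u\in L^{2}(\mathbb{R}^{n})$ for $0\leq k\leq N$. The existence of nested cutoffs in $C_{0}^{\infty}(\mathbb{R}^{n})$ is standard (smooth partition of unity / mollified indicators), and no auxiliary result beyond Proposition \ref{subelliptic-estimate-2} is invoked.
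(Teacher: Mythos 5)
Your proof is correct. The paper states Corollary \ref{corollary2-4} as an immediate consequence of Proposition \ref{subelliptic-estimate-2} without giving any argument, and the induction/bootstrap you lay out — nesting cutoffs, applying the sub-elliptic estimate with $s=2N/r$, and invoking the inductive hypothesis on $v=\triangle_{X}u$ — is exactly the standard reasoning the authors have in mind; the one point worth being explicit about (that the constant is allowed to depend on $\eta$ and on the finitely many auxiliary cutoffs introduced along the way) you handle correctly by treating the inductive statement as a claim for each fixed cutoff.
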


Using subelliptic estimates, we can obtain the following Friedrichs-Poincar\'{e} type inequality for $H_{X,0}^{1}(\Omega)$.

 \begin{proposition}[Friedrichs-Poincar\'{e} Inequality]
\label{prop2-16}
Let $X=(X_{1},X_{2},\ldots,X_{m})$ be the smooth vector fields defined on $\mathbb{R}^n$, satisfying
H\"{o}rmander's condition with H\"{o}rmander index $r$. For any open bounded subset $\Omega\subset \mathbb{R}^n$, there exists a positive constant $C>0$ such that
\begin{equation}\label{2-21}
 \int_{\Omega}{|u|^2dx}\leq C\int_{\Omega}|Xu|^2dx,\qquad \forall u\in H_{X,0}^{1}(\Omega).
  \end{equation}
\end{proposition}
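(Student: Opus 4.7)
The plan is to prove the inequality by the standard compactness-contradiction argument based on the subelliptic estimate I (Proposition \ref{subelliptic-estimate-1}), the Rellich-Kondrachov compact embedding, and Hörmander's hypoellipticity.

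First I would suppose, for contradiction, that no such constant $C$ exists. Then for every $k \in \mathbb{N}^{+}$ there exists $u_{k} \in H_{X,0}^{1}(\Omega)$ with
\[
\|u_{k}\|_{L^{2}(\Omega)} = 1, \qquad \|Xu_{k}\|_{L^{2}(\Omega)} < \tfrac{1}{k}.
\]
By Proposition \ref{subelliptic-estimate-1}, $\{u_{k}\}$ is bounded in $H^{1/r}(\mathbb{R}^{n})$. Each $u_{k}$, being the $H_{X}^{1}(\mathbb{R}^{n})$-limit of a sequence in $C_{0}^{\infty}(\Omega)$, vanishes almost everywhere on $\mathbb{R}^{n}\setminus \Omega$. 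Choosing a bounded open set $\Omega'$ with $\overline{\Omega} \subset \Omega'$, the classical Rellich-Kondrachov compact embedding $H^{1/r}(\Omega') \hookrightarrow L^{2}(\Omega')$ extracts a subsequence (still denoted $\{u_{k}\}$) converging strongly in $L^{2}(\mathbb{R}^{n})$ to some $u$ with $\|u\|_{L^{2}(\mathbb{R}^{n})} = 1$ and $u = 0$ a.e. outside $\Omega$.

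Next I would pass to the limit in the distributional sense. For any $\varphi \in C_{0}^{\infty}(\mathbb{R}^{n})$ and any $j \in \{1,\ldots,m\}$,
\[
|\langle X_{j}u,\varphi\rangle| = \lim_{k\to\infty}|\langle X_{j}u_{k},\varphi\rangle| \leq \|\varphi\|_{L^{2}(\mathbb{R}^{n})}\lim_{k\to\infty}\|X_{j}u_{k}\|_{L^{2}(\mathbb{R}^{n})} = 0,
\]
so $X_{j}u = 0$ in $\mathcal{D}'(\mathbb{R}^{n})$ for every $j$. Consequently $\triangle_{X}u = \sum_{j=1}^{m}X_{j}^{2}u = 0$ in $\mathcal{D}'(\mathbb{R}^{n})$. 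By Hörmander's hypoellipticity theorem (applicable because $X$ satisfies Hörmander's condition), $u \in C^{\infty}(\mathbb{R}^{n})$, and the identities $X_{j}u \equiv 0$ hold classically. Hence $u$ is constant along each horizontal curve. The Chow-Rashevskii theorem (cf.\ \cite[Theorem 57]{Bramanti2014}, already invoked for the subunit metric) implies that any two points of $\mathbb{R}^{n}$ can be joined by such a curve, so $u$ is a constant on $\mathbb{R}^{n}$. Combined with $u \equiv 0$ on the non-empty open set $\mathbb{R}^{n}\setminus \overline{\Omega}$, this forces $u \equiv 0$, contradicting $\|u\|_{L^{2}(\mathbb{R}^{n})} = 1$.

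The main obstacle is the compactness step: Proposition \ref{subelliptic-estimate-1} alone provides only boundedness of $\{u_{k}\}$ in the fractional Sobolev space $H^{1/r}(\mathbb{R}^{n})$, and compactness of the embedding into $L^{2}$ is not automatic on all of $\mathbb{R}^{n}$. The key point that makes the argument work is that functions in $H_{X,0}^{1}(\Omega)$ are uniformly supported in the fixed bounded set $\overline{\Omega}$, which reduces the compactness question to the standard Rellich-Kondrachov theorem on a bounded domain. The remaining steps — distributional limit, hypoellipticity and horizontal connectedness — are then routine applications of the hypotheses.
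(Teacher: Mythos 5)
Your proof is correct and rests on the same compactness--contradiction skeleton as the paper (subelliptic estimate for boundedness in $H^{1/r}$, Rellich--Kondrachov for $L^2$-compactness of functions supported in $\overline{\Omega}$, hypoellipticity of H\"ormander operators), but the route to the final contradiction is genuinely different. After extracting a strongly $L^2(\mathbb{R}^n)$-convergent subsequence you work directly with the limit $u$ supported in $\overline{\Omega}$, deduce $X_j u = 0$ in $\mathcal{D}'(\mathbb{R}^n)$, apply H\"ormander hypoellipticity once on all of $\mathbb{R}^n$, and conclude via Chow--Rashevskii horizontal connectedness that $u$ is a global constant, hence identically zero since it vanishes outside $\Omega$. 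The paper instead invokes Struwe's variational theorem to produce a minimizer $\varphi_0 \in H_{X,0}^1(\Omega)$ of the Rayleigh quotient, uses hypoellipticity on $\Omega$ to get $\varphi_0 \in C^\infty(\Omega)$, shows $\varphi_0$ is a nonzero constant via the bracket-generating condition (from $X_j\varphi_0 = 0$ one gets $\partial_{x_j}\varphi_0 = 0$), then expends additional effort to show that the zero extension $\overline{\varphi_0}$ lies in $H_{X,0}^1(\mathbb{R}^n)$ with $\triangle_X\overline{\varphi_0} = 0$, and applies hypoellipticity a second time on $\mathbb{R}^n$ to reach the contradiction that $\overline{\varphi_0}$ cannot be smooth across $\partial\Omega$. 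Your version avoids both the variational extraction and the two-stage hypoellipticity argument, at the small cost of invoking Chow--Rashevskii; the paper's approach, while more elaborate, isolates the minimizer explicitly. One minor imprecision in your write-up: you assert $\triangle_X u = \sum_j X_j^2 u$, but Proposition \ref{prop2-16} does not assume (H.1), so in general $\triangle_X = -\sum_j X_j^* X_j$, which differs from $\sum_j X_j^2$ by a first-order term; however, $X_j u = 0$ implies both $X_j^2 u = 0$ and $X_j^* X_j u = 0$, and either operator is hypoelliptic by H\"ormander's theorem, so the argument goes through unchanged.
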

It should be noted that \eqref{2-21} is entirely different from the Poincar\'{e}-Wirtinger type inequality extensively investigated in \cite{Jerison1986duke,Hajlasz2000,Garofalo1996}. The original statement of \eqref{2-21} in \cite[Lemma 5]{Xu1990} and \cite[Lemma 3.2]{Jost1998} assumes both smoothness and non-characteristic conditions on the boundary $\partial\Omega$. However, these assumptions might be too restrictive in certain cases. To address this issue, we provide a more general version of \eqref{2-21} that does not impose any additional conditions on $\partial\Omega$.
\begin{proof}[Proof of Proposition \ref{prop2-16}]
We prove \eqref{2-21} by contradiction. Suppose that
\[ \inf_{\|\varphi\|_{L^{2}(\Omega)}=1,~~ \varphi\in H_{X,0}^{1}(\Omega)}\|X\varphi\|^2_{L^2(\Omega)}=0.\]
 Then there exists a sequence
$\{\varphi_{j}\}_{j=1}^{\infty}$ in  $H_{X,0}^{1}(\Omega)$ such that
$\|X\varphi_{j}\|_{L^2(\Omega)}\to 0$ with
$\|\varphi_{j}\|_{L^2(\Omega)}=1$.  Proposition \ref{subelliptic-estimate-1} shows that
$H_{X,0}^{1}(\Omega)$ is continuously embedded into $H_{0}^{\frac{1}{r}}(\Omega)$, where $H_{0}^{\frac{1}{r}}(\Omega)$ denotes the closure of $C_{0}^{\infty}(\Omega)$ in the classical fractional Sobolev space $H^{\frac{1}{r}}(\mathbb{R}^n)$. Since for any bounded open set $\Omega$, $H_{0}^{\frac{1}{r}}(\Omega)$ is compactly embedded into $L^2(\Omega)$ (e.g., see \cite[Corollary 2.8]{Brasco2014}), we  conclude  that
 $H_{X,0}^{1}(\Omega)$ is
compactly embedded into $L^2(\Omega)$.  Using \cite[Chapter I, Theorem 1.2]{Struwe2000}, there exists
$\varphi_{0}\in H_{X,0}^{1}(\Omega)$ with
$\|\varphi_{0}\|_{L^2(\Omega)}=1$, $\triangle_{X}\varphi_{0}=0$ and $\|X\varphi_{0}\|_{L^2(\Omega)}=
0$. The hypoellipticity of  $\triangle_{X}$ yields that $\varphi_{0}\in C^{\infty}(\Omega)$. Moreover, since $X_{j}\varphi_0=0$ on $\Omega$ for $1\leq j\leq m$ and $\|\varphi_{0}\|_{L^2(\Omega)}=1$, we can deduce from  H\"{o}rmander's condition that $\partial_{x_{j}}\varphi_0=0$ on $\Omega$ for $1\leq j\leq n$. This implies  $\varphi_0$ must be a non-zero constant on $\Omega$.

Next, we choose the sequence $\{u_k\}_{k=1}^{\infty}\subset C_{0}^{\infty}(\Omega)$ such that $u_{k}\to \varphi_{0}$ in $H_{X}^{1}(\mathbb{R}^n)$, and we denote by \[ \overline{u_{k}}:=\left\{
                     \begin{array}{ll}
                      u_{k}, & \hbox{on $\Omega$,} \\
                       0, & \hbox{on $\mathbb{R}^n\setminus \Omega$;}
                     \end{array}
                   \right.\qquad \mbox{and}\qquad\overline{\varphi_{0}}:=\left\{
                     \begin{array}{ll}
                       \varphi_{0}, & \hbox{on $\Omega$,} \\
                       0, & \hbox{on $\mathbb{R}^n\setminus \Omega$.}
                     \end{array}
                   \right.\]
It follows that $\{\overline{u_{k}}\}_{k=1}^{\infty}\subset C_{0}^{\infty}(\mathbb{R}^n)$ is a Cauchy sequence in $H_{X,0}^{1}(\mathbb{R}^n)$ with $\overline{u_{k}}\to \overline{\varphi_{0}}$ in $L^2(\mathbb{R}^n)$. As a result, we have
$\overline{\varphi_{0}}\in H_{X,0}^{1}(\mathbb{R}^n)$,
$ \|\overline{u_{k}}-\overline{\varphi_{0}}\|_{H_{X}^{1}(\mathbb{R}^n)}=\|u_{k}-\varphi_{0}\|_{H_{X}^{1}(\mathbb{R}^n)}\to 0$ and
\[ \int_{\mathbb{R}^n}|X\overline{\varphi_{0}}|^{2}dx=\int_{\Omega}|X\varphi_{0}|^{2}dx=0.\]
For any $v\in H_{X,0}^{1}(\mathbb{R}^n)$, there exists a sequence $\{v_k\}_{k=1}^{\infty}\subset  C_{0}^{\infty}(\mathbb{R}^n)$ such that $v_{k}\to v$ in $H_{X,0}^{1}(\mathbb{R}^n)$.

Since $Xv_{k}$ is bounded in $L^2(\mathbb{R}^n)$ and $\lim_{k\to \infty}\|X\overline{u_{k}}\|_{L^2(\mathbb{R}^n)}=\|X\overline{\varphi_{0}}
\|_{L^2(\mathbb{R}^n)}=0$. Then
\begin{equation*}
\begin{aligned}
\left|\int_{\mathbb{R}^n}X\overline{\varphi_{0}}\cdot Xvdx\right|&=\lim_{k\to \infty}\left|\int_{\mathbb{R}^n}X\overline{u_{k}}\cdot Xv_k dx\right|\leq \lim_{k\to \infty}\|X\overline{u_{k}}\|_{L^2(\mathbb{R}^n)}\|Xv_{k}\|_{L^2(\mathbb{R}^n)}=0.
\end{aligned}
\end{equation*}
Therefore, $\overline{\varphi_{0}}\in H_{X,0}^{1}(\mathbb{R}^n)$ and satisfies $\triangle_{X}\overline{\varphi_{0}}=0$. The hypoellipticity of  $\triangle_{X}$ indicates that $\overline{\varphi_{0}}\in C^{\infty}(\mathbb{R}^n)$, which leads a contradiction since $\overline{\varphi_{0}}$ is not smooth across $\partial\Omega$.
\end{proof}

\section{Estimates of the subelliptic heat kernels}
\label{Section3}
We mention that in \cite{Biagi2019,Biagi2020}, the authors treated the global fundamental solution as
a subelliptic global heat kernel. Nevertheless, the properties of subelliptic global heat kernel stated in \cite{Biagi2019,Biagi2020} are incomplete, as its lack of $L^2$ framework prevents us from comparing the different subelliptic heat kernels in weak sense. In order to make the discussion reasonably self-contained, we will reconstruct the subelliptic global heat kernel of homogeneous H\"{o}rmander operators through the general $L^2$ theory of the heat semigroup and subelliptic estimates, and prove its  equivalence to the global fundamental solution in \cite{Biagi2019,Biagi2020}.

We first introduce the framework of Dirichlet forms and heat semigroups, as well as heat kernels. For more details, one can refer to \cite{Fukushima1994,Grigoryan2008,Grigoryan2014,Grigoryan2010-jfa,Davies1990}.

\subsection{Abstract theory of Dirichlet forms, heat semigroups and heat kernels}

Let  $(M,d,\mu)$ be a metric measure space, where
 $(M,d)$ is a locally compact separable metric space, and $\mu$ is a Radon measure on $M$ such that $\mu(\Omega)>0$ for any non-empty open subset $\Omega\subset M$.  To simplify notation, we denote by $L^2(M):=L^2(M,\mu)$ the set of all measurable functions $f: M\to \mathbb{R}$  such that
$\int_{M}|f(x)|^2d\mu(x)<\infty$.
\begin{definition}[Dirichlet form]
\label{def3-1}
A pair $(\mathcal{E},\mathcal{F})$ is said to be a Dirichlet form in $L^2(M)$ if
\begin{enumerate}[(1)]
  \item $\mathcal{F}$ is a dense subspace of $L^2(M)$.
  \item $\mathcal{E}(\cdot,\cdot)$ is a bilinear, symmetric, non-negative definite, closed functional on $\mathcal{F}\times \mathcal{F}$, where the closedness means that $\mathcal{F}$ is a Hilbert space equipped with the norm $(\|f\|_{L^2(M)}^2+\mathcal{E}(f,f))^{\frac{1}{2}}$.
  \item $\mathcal{E}(\cdot,\cdot)$ is Markovian, i.e.  $f\in \mathcal{F}$ implies $\tilde{f}:=\max\{\min\{f,1\},0\}\in \mathcal{F}$ and $\mathcal{E}(\tilde{f},\tilde{f})\leq \mathcal{E}(f,f)$.
\end{enumerate}
\end{definition}

\begin{definition}[Regular Dirichlet form]
\label{def3-2}
The Dirichlet form $(\mathcal{E},\mathcal{F})$ is said to be regular if the space $\mathcal{F}\cap C_{0}(M)$ is dense both in $\mathcal{F}$ and in $C_{0}(M)$, where $C_{0}(M)$ denotes the space of all real-valued continuous functions with compact support in $M$, endowed with sup-norm.
\end{definition}

\begin{definition}[Local Dirichlet form]
\label{def3-3}
The Dirichlet form $(\mathcal{E},\mathcal{F})$ is said to be local if $\mathcal{E}(f,g)=0$ for any $f,g\in\mathcal{F}$ with disjoint compact supports in $M$.
\end{definition}

For the Dirichlet form $(\mathcal{E},\mathcal{F})$, it is well-known (e.g., see \cite[Theorem 1.3.1, Corollary 1.3.1]{Fukushima1994}) that there exists a unique generator $\mathcal{L}$, which is a non-negative definite self-adjoint operator in $L^2(M)$ with the domain $\text{dom}(\mathcal{L})\subset \mathcal{F}$ such that
\[ \mathcal{E}(f,g)=(\mathcal{L}f,g)_{L^2(M)}\quad \forall f\in \text{dom}(\mathcal{L}),~ g\in \mathcal{F}. \]

According to the spectral theorem (see \cite[Appendix A.5.4]{Grigor'yan2009}), the self-adjoint operator $\mathcal{L}$ admits a unique spectral resolution $\{E_{\lambda}\}$ in $L^2(M)$  such that
\begin{equation}\label{3-1}
\mathcal{L}=\int_{0}^{+\infty}\lambda dE_{\lambda},
\end{equation}
and we can rewrite the domain $\text{dom}(\mathcal{L})$ of $\mathcal{L}$ as
\[  \text{dom}(\mathcal{L})=\left\{f\in L^2(M)\Bigg|\int_{0}^{+\infty}\lambda^{2} d\|E_{\lambda}f\|_{L^2(M)}^2<+\infty\right\}. \]
Moreover, the
corresponding heat semigroup $\{P_{t}\}_{t\geq 0}$ is given by
\[ P_{t}:=e^{-t\mathcal{L}}=\int_{0}^{+\infty}e^{-t\lambda}dE_{\lambda},\]
which satisfies the following properties (see \cite[Theorem 4.9]{Grigor'yan2009} and \cite[Section 2, p. 509]{Grigoryan2014}):
\begin{enumerate}[(S1)]
  \item $\|P_{t}f\|_{L^2(M)}\leq \|f\|_{L^2(M)}$ for all $f\in L^2(M)$.
  \item $P_{t}P_{s}=P_{t+s}$ for all $t,s\geq 0$.
  \item  $\lim_{t\to 0}\|P_{t}f-f\|_{L^2(M)}=0 $ for all $f\in L^2(M)$.
  \item For all $f,g\in L^2(M)$, $(P_{t}f,g)_{L^2(M)}=(f,P_{t}g)_{L^2(M)}$.
  \item For any $t>0$, $f\geq 0$  a.e. on $M$ implies $P_{t}f\geq 0$ a.e. on $M$. In addition, $f\leq 1$ a.e. on $M$ implies $P_{t}f\leq 1$ a.e. on $M$.
\item If $f\in L^2(M)$ and $t>0$, then $P_{t}f\in \text{dom}(\mathcal{L})$ and
\begin{equation}\label{3-2}
  \frac{d}{dt}(P_{t}f)=-\mathcal{L}(P_{t}f),
\end{equation}
where $\frac{d}{dt}$ denotes the strong derivative in $L^2(M)$.
\end{enumerate}

\begin{definition}[Heat kernel, see \cite{Grigoryan2008}]
\label{def3-4}
A family of measurable functions $\{H_t(x,y)\}_{t>0}$ defined on $M\times M$ is called the heat kernel of $(\mathcal{E},\mathcal{F})$ if, for all $f\in L^2(M)$, $t>0$, and almost all $x\in M$,
\begin{equation}
  P_{t}f(x)=\int_{M}H_t(x,y)f(y)d\mu(y).
\end{equation}
\end{definition}

We mention that the heat kernel does not have to exist, but if it exists, then it is unique (up
to a set of measure zero) and automatically satisfies the following properties, which follow
 from the corresponding properties of the heat semigroup (see \cite{Grigoryan2008,Grigoryan2014}):

\begin{enumerate}[(P1)]
  \item  For all $t>0$ and almost all $x,y\in M$, $H_t(x,y)\geq 0$ and
$\int_{M} H_t(x,y)d\mu(y)\leq 1$.
\item For all $t,s>0$ and almost all $x,y\in M$, $H_t(x,y)=H_t(y,x)$.
\item For all $t>0$ and almost all $x,y\in M$,
$ H_{t+s}(x,y)=\int_{M} H_t(x,z)H_s(z,y)d\mu(y)$.
 \item For any $f\in L^2(M)$,
 $\lim_{t\to0^+}\left\|\int_{M}H_t(\cdot,y)f(y)d\mu(y)-f(\cdot)\right\|_{L^2(M)}=0$.
\end{enumerate}

We next introduce the concept of restricted Dirichlet forms. Let $\Omega$ be a non-empty open subset of $M$. We identify the space $L^2(\Omega)$ as a subspaces of $L^2(M)$ by extending every function $f\in L^2(\Omega)$ to $M$ by setting $f=0$ outside $\Omega$. Assuming that  $(\mathcal{E},\mathcal{F})$ is a regular Dirichlet form, we define $\mathcal{F}(\Omega)$ as follows:
\[\mathcal{F}(\Omega):=\overline{C_0(\Omega)\cap \mathcal{F}}^{\|\cdot\|_{\mathcal{F}}},\]
where $C_0(\Omega)$ denotes the set consisting of all continuous functions with compact support contained in $\Omega$. In particular, $\mathcal{F}(M)=\mathcal{F}$ if $\Omega=M$. The pair $(\mathcal{E},\mathcal{F}(\Omega))$ is usually referred to the restricted Dirichlet form. Moreover, according to \cite[Appendix, p. 112]{Grigoryan2008}, we know that
$(\mathcal{E},\mathcal{F}(\Omega))$ is also a regular Dirichlet form.

Let $(\mathcal{E},\mathcal{F}(\Omega_{1}))$ and $(\mathcal{E},\mathcal{F}(\Omega_{2}))$ be two restricted Dirichlet forms, where $\Omega_{1}$ and $\Omega_{2}$ are two open subset of $M$ with $\Omega_{1}\subset \Omega_{2}$. Then, it follows from \cite{Grigoryan2010-jfa} that for any $0\leq f\in L^2(M)$ and $t>0$,
\begin{equation}\label{3-4}
P_{t}^{\Omega_{1}}f\leq P_{t}^{\Omega_{2}}f\quad \text{a.e. on}~~  M,
\end{equation}
where $\{P_{t}^{\Omega_{i}}\}_{t\geq 0}$ denotes the heat semigroup of $(\mathcal{E},\mathcal{F}(\Omega_{i}))$ for $i=1,2$.

\subsection{Subelliptic global heat kernel}

We investigated the existence of subelliptic global heat kernel using the abstract theory mentioned above.

Let $X=(X_1,X_{2},\ldots,X_m)$ be the homogeneous H\"{o}rmander vector fields defined on $\mathbb{R}^n$.
We consider the bilinear form $\mathcal{Q}(\cdot,\cdot): H_X^1(\mathbb{R}^n)\times H_X^1(\mathbb{R}^n)\to \mathbb{R}$ such that
\begin{equation}\label{3-5}
\mathcal{Q}(u,v)=\int_{\mathbb{R}^n} Xu\cdot Xv dx.
\end{equation}
Then we have
\begin{proposition}
\label{prop3-1}
Assuming that $X=(X_1,X_{2},\ldots,X_m)$ are the homogeneous H\"{o}rmander vector fields defined on $\mathbb{R}^n$ and $\Omega\subset \mathbb{R}^n$ is an open bounded domain, we have the following:
  \begin{enumerate}[(1)]
    \item The pair $(\mathcal{Q}, H_{X}^1(\mathbb{R}^n))$ is a regular and local Dirichlet form.
    \item The pair $(\mathcal{Q}, H_{X,0}^1(\Omega))$ is a restricted Dirichlet form, which is also
 regular and local.
  \end{enumerate}
\end{proposition}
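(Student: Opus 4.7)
The plan is to verify the four Dirichlet-form axioms of Definition \ref{def3-1} for $(\mathcal{Q}, H_X^1(\mathbb{R}^n))$, then separately check regularity and locality, and finally deduce part (2) by identifying $H_{X,0}^1(\Omega)$ with the canonical restricted domain $\mathcal{F}(\Omega)$.

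For part (1), density of $H_X^1(\mathbb{R}^n)$ in $L^2(\mathbb{R}^n)$ is immediate from $C_0^\infty(\mathbb{R}^n) \subset H_X^1(\mathbb{R}^n)$. Bilinearity and symmetry of $\mathcal{Q}$ are clear from the definition \eqref{3-5}, and non-negativity is just $\mathcal{Q}(u,u)=\|Xu\|_{L^2(\mathbb{R}^n)}^2 \geq 0$. Closedness reduces to the observation that the form norm $(\|u\|_{L^2}^2 + \mathcal{Q}(u,u))^{1/2}$ coincides with the $H_X^1$-norm, under which $H_X^1(\mathbb{R}^n)$ is by construction a Hilbert space. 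For the Markovian property, given $f \in H_X^1(\mathbb{R}^n)$, I rewrite the truncation as $\tilde{f}=(f)_+ - (f-1)_+$; Corollary \ref{corollary2-3} then gives both $(f)_+, (f-1)_+ \in H_X^1(\mathbb{R}^n)$ together with explicit formulas for their $X$-derivatives. A pointwise computation on the three pieces $\{f\leq 0\}$, $\{0<f<1\}$, $\{f\geq 1\}$ yields $X\tilde{f} = Xf\cdot \mathbf{1}_{\{0<f<1\}}$ in $\mathcal{D}'(\mathbb{R}^n)$, whence $|X\tilde{f}| \leq |Xf|$ a.e. and $\mathcal{Q}(\tilde{f},\tilde{f}) \leq \mathcal{Q}(f,f)$.

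Regularity follows because $C_0^\infty(\mathbb{R}^n) \subset H_X^1(\mathbb{R}^n) \cap C_0(\mathbb{R}^n)$ is dense in $H_X^1(\mathbb{R}^n)$ by Proposition \ref{prop2-11} and dense in $C_0(\mathbb{R}^n)$ in sup-norm by a standard mollifier/truncation argument. Locality is immediate: if $f,g \in H_X^1(\mathbb{R}^n)$ have disjoint compact supports, then $Xf \cdot Xg = 0$ a.e., so $\mathcal{Q}(f,g)=0$.

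For part (2), the key step is identifying $H_{X,0}^1(\Omega)$ with $\mathcal{F}(\Omega):= \overline{C_0(\Omega)\cap H_X^1(\mathbb{R}^n)}^{\|\cdot\|_{H_X^1}}$. The inclusion $H_{X,0}^1(\Omega) \subset \mathcal{F}(\Omega)$ is obvious from $C_0^\infty(\Omega) \subset C_0(\Omega)\cap H_X^1(\mathbb{R}^n)$. For the reverse inclusion I invoke Proposition \ref{prop2-13}: any $u \in C_0(\Omega)\cap H_X^1(\mathbb{R}^n)$ has compact support in $\Omega$, hence lies in $H_{X,0}^1(\Omega)$; closing up in $\|\cdot\|_{H_X^1}$ yields $\mathcal{F}(\Omega) \subset H_{X,0}^1(\Omega)$. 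With this identification, the abstract fact cited after Definition \ref{def3-3} (restricted forms of regular Dirichlet forms are again regular Dirichlet forms) gives the regularity of $(\mathcal{Q}, H_{X,0}^1(\Omega))$; closedness is automatic since $H_{X,0}^1(\Omega)$ is a closed subspace of $H_X^1(\mathbb{R}^n)$, while the Markovian property and locality are inherited from part (1), noting that the truncation $\tilde{f}$ above satisfies $\mathrm{supp}\,\tilde{f} \subset \mathrm{supp}\,f$ so it keeps us inside the closed subspace.

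The main obstacle I anticipate is the Markovian property: the truncation $F(t)=\max\{\min\{t,1\},0\}$ is only Lipschitz, not $C^1$, so Proposition \ref{prop2-12} does not apply directly. The decomposition $\tilde{f}=(f)_+-(f-1)_+$ together with Corollary \ref{corollary2-3} bypasses this, but one must verify carefully that the pointwise identities for $X(f)_+$ and $X(f-1)_+$ combine correctly in $\mathcal{D}'(\mathbb{R}^n)$ on the overlap $\{f>1\}$, and that the analogous argument in $H_{X,0}^1(\Omega)$ preserves support in $\Omega$.
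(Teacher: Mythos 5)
Your proposal is correct and follows essentially the same route as the paper: you verify the Dirichlet-form axioms directly, handle the Markovian property by writing $\tilde{f}=f_+-(f-1)_+$ and invoking Corollary~\ref{corollary2-3} (exactly the paper's device for bypassing the non-$C^1$ truncation, which is the genuine crux), use Proposition~\ref{prop2-11} for regularity, and use Proposition~\ref{prop2-13} to identify $H_{X,0}^1(\Omega)$ with the abstract restricted domain $\mathcal{F}(\Omega)$. The only cosmetic difference is that the paper appeals to Stone--Weierstrass for density of $C_0^\infty(\mathbb{R}^n)$ in $C_0(\mathbb{R}^n)$ where you cite a mollifier argument, and your heuristic remark about $\mathrm{supp}\,\tilde f\subset\mathrm{supp}\,f$ at the end is unnecessary (and would not by itself be rigorous for general elements of $\mathcal{F}(\Omega)$) once you have cited the abstract fact that restrictions of regular Dirichlet forms are regular Dirichlet forms, which is what the paper relies on.
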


\begin{proof}
We first show that $(\mathcal{Q}, H_{X}^1(\mathbb{R}^n))$ is a Dirichlet form. Clearly, $H_ {X}^1(\mathbb{R}^n)$ is a dense subspace in $L^2(\mathbb{R}^n)$. By \eqref{3-5} we see that $\mathcal{Q}$ is a bilinear, symmetric, non-negative definite, closed functional on $H_{X}^1(\mathbb{R}^n)\times H_{X}^1(\mathbb{R}^n)$. Thus, we only need to verify the Markovian property.

For any $u\in H_{X}^1(\mathbb{R}^n)$, we set $\tilde{u}:=\max\{\min\{u,1\},0\}$. Observing that $\tilde{u}=u_{+}-(u-1)_{+}$, it follows from Corollary \ref{corollary2-3} that $\tilde{u}\in H_{X}^{1}(\mathbb{R}^n)$ and satisfies
\[ \mathcal{Q}(\tilde{u},\tilde{u})=\int_{\mathbb{R}^n}|X\tilde{u}|^2dx=\int_{\{x\in\mathbb{R}^n|0\leq u(x)\leq 1\}}|Xu|^2dx \leq\int_{\mathbb{R}^n}|Xu|^2dx= \mathcal{Q}(u,u).\]
Thus,  $(\mathcal{Q}, H_{X}^1(\mathbb{R}^n))$ is a Dirichlet form.

 The local property of $(\mathcal{Q}, H_{X}^1(\mathbb{R}^n))$ follows from \eqref{3-5}. By Proposition \ref{prop2-11}, we deduce that  $H_ {X}^1(\mathbb{R}^n)\cap C_0(\mathbb{R}^n)$ is dense in $H_{X}^1(\mathbb{R}^n)$. Moreover, due to the
 Stone-Weierstrass theorem, we know that $C_{0}^{\infty}(\mathbb{R}^n)$ is dense in $C_{0}(\mathbb{R}^n)$, which implies $H_ {X}^1(\mathbb{R}^n)\cap C_0(\mathbb{R}^n)$ is dense in $C_{0}(\mathbb{R}^n)$. Consequently, $(\mathcal{Q}, H_{X}^1(\mathbb{R}^n))$ is a regular and local Dirichlet form in $L^2(\mathbb{R}^n)$, and the conclusion (1) is proved.

The proof of local property for $(\mathcal{Q}, H_{X,0}^1(\Omega))$ is similar to (1). Using Proposition \ref{prop2-13} we have
\begin{equation*}
H_{X,0}^1(\Omega)=\overline{H_{X}^1(\mathbb{R}^n)\cap C_0(\Omega)}^{\|\cdot\|_{H_{X}^1(\mathbb{R}^n)}},
\end{equation*}
which implies that $(\mathcal{Q}, H_{X,0}^1(\Omega))$ is a restricted Dirichlet form in $L^2(\Omega)$. In addition, $(\mathcal{Q}, H_{X,0}^1(\Omega))$ is regular since $(\mathcal{Q}, H_{X}^1(\mathbb{R}^n))$ is regular.
\end{proof}

 Proposition \ref{prop3-1} (1) indicates that the Dirichlet form
 $(\mathcal{Q}, H_{X}^{1}(\mathbb{R}^n))$ has a unique generator, denoted by $\mathcal{L}$, which is a non-negative defined self-adjoint operator in $L^2(\mathbb{R}^n)$ with  $\text{dom}(\mathcal{L})\subset H_{X}^{1}(\mathbb{R}^n)$ such that
\begin{equation}\label{3-6}
   \mathcal{Q}(u,v)=(\mathcal{L}u,v)_{L^2(\mathbb{R}^n)}
\end{equation}
for all $u\in \text{dom}(\mathcal{L})$ and $v\in H_{X}^{1}(\mathbb{R}^n)$. Furthermore, the operator $\mathcal{L}$ is the unique self-adjoint extension of $-\triangle_{X}|_{\mathcal{D}(\mathbb{R}^n)}$. The domain $\text{dom}(\mathcal{L})$ can be characterized by
\[ \begin{aligned}
\text{dom}(\mathcal{L})&=\{u\in H_{X}^{1}(\mathbb{R}^n)|\exists c\geq 0~\mbox{such that}~|\mathcal{Q}(u,v)|\leq c\|v\|_{L^{2}(\mathbb{R}^n)}~\forall v\in H_{X}^{1}(\mathbb{R}^n)\}\\
&=\{u\in H_{X}^{1}(\mathbb{R}^n)|\triangle_{X} u\in L^2(\mathbb{R}^n)\}.
\end{aligned}
 \]

Let $\{P_{t}\}_{t\geq 0}$ be the corresponding heat semigroup of $\mathcal{L}$. Then we have
\begin{proposition}
\label{prop3-2}
For any $f\in L^2(\mathbb{R}^n)$ and $t>0$, the function $P_{t}f\in C^{\infty}(\mathbb{R}^n)\cap L^2(\mathbb{R}^n)$. Moreover, for any compact set $K\subset \mathbb{R}^n$ and any non-negative integer $m\geq 0$, we have
\begin{equation}\label{3-7}
\|P_{t}f\|_{C^{m}(K)}\leq  F_{K}(t)\|f\|_{L^2(\mathbb{R}^n)},
\end{equation}
where
\begin{equation}\label{3-8}
  F_{K}(t)=C(1+t^{-N}),
\end{equation}
$N$ is the smallest integer such that $N>\frac{n\alpha_n}{4}+\frac{m\alpha_n}{2}$, and $C$ is a positive constant.
\end{proposition}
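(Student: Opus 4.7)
The plan is to deduce both the smoothness $P_t f\in C^{\infty}(\mathbb{R}^n)$ and the quantitative bound \eqref{3-7} from three ingredients: spectral-calculus $L^{2}$ bounds on $\mathcal{L}^{k}P_t f$, the identification of these self-adjoint iterates with distributional iterates of $-\triangle_{X}$, and the subelliptic a priori estimate of Corollary \ref{corollary2-4} together with the classical Sobolev embedding $H^{s}(\mathbb{R}^n)\hookrightarrow C^{m}(\mathbb{R}^n)$ for $s>n/2+m$. The spectral input is straightforward: since $\lambda\mapsto \lambda^{k}e^{-t\lambda}$ is bounded on $[0,\infty)$ for every $k\in\mathbb{N}$ and $t>0$, the spectral resolution $\mathcal{L}=\int_{0}^{+\infty}\lambda\, dE_{\lambda}$ gives $P_{t}f\in\text{dom}(\mathcal{L}^{k})$ with
\[
\|\mathcal{L}^{k}P_{t}f\|_{L^{2}(\mathbb{R}^n)}^{2}=\int_{0}^{+\infty}\lambda^{2k}e^{-2t\lambda}\,d\|E_{\lambda}f\|_{L^{2}}^{2}\le \Bigl(\sup_{\lambda\ge 0}\lambda^{k}e^{-t\lambda}\Bigr)^{\!2}\|f\|_{L^{2}(\mathbb{R}^n)}^{2}=\Bigl(\tfrac{k}{e}\Bigr)^{\!2k}t^{-2k}\|f\|_{L^{2}(\mathbb{R}^n)}^{2}.
\]
Together with the $L^{2}$-contraction in (S1), this yields $\|\mathcal{L}^{k}P_{t}f\|_{L^{2}}\le C_{k}(1+t^{-k})\|f\|_{L^{2}}$ for every $k\ge 0$.

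Next I would show that $\mathcal{L}^{k}P_{t}f=(-\triangle_{X})^{k}P_{t}f$ in $\mathcal{D}'(\mathbb{R}^n)$ by induction on $k$. The characterization $\text{dom}(\mathcal{L})=\{u\in H_{X}^{1}(\mathbb{R}^n):\triangle_{X}u\in L^{2}(\mathbb{R}^n)\}$ recalled just before the statement gives $\mathcal{L}u=-\triangle_{X}u$ in $\mathcal{D}'(\mathbb{R}^n)$ for every $u\in\text{dom}(\mathcal{L})$. Since $P_{t}f\in\bigcap_{j\ge 0}\text{dom}(\mathcal{L}^{j})$ by the previous paragraph, every intermediate iterate $\mathcal{L}^{j}P_{t}f$ again belongs to $\text{dom}(\mathcal{L})$, and the induction step goes through. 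Consequently $(\triangle_{X})^{k}P_{t}f\in L^{2}(\mathbb{R}^n)$ with the same norm bound as in the first step.

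Finally, fix a compact set $K\subset\mathbb{R}^n$, pick a cut-off $\eta\in C_{0}^{\infty}(\mathbb{R}^n)$ with $\eta\equiv 1$ on $K$, and let $N$ be the smallest positive integer with $N>\frac{n\alpha_{n}}{4}+\frac{m\alpha_{n}}{2}$. Since $r=\alpha_{n}$ by Proposition \ref{prop2-5}, this forces $\frac{2N}{r}>\frac{n}{2}+m$, and the classical Sobolev embedding gives $H^{2N/r}(\mathbb{R}^n)\hookrightarrow C^{m}(\mathbb{R}^n)$. Combining Corollary \ref{corollary2-4} with the $L^{2}$ estimates above,
\[
\|P_{t}f\|_{C^{m}(K)}=\|\eta P_{t}f\|_{C^{m}(K)}\le C\|\eta P_{t}f\|_{H^{2N/r}(\mathbb{R}^n)}\le C\sum_{k=0}^{N}\|(\triangle_{X})^{k}P_{t}f\|_{L^{2}(\mathbb{R}^n)}\le C\bigl(1+t^{-N}\bigr)\|f\|_{L^{2}(\mathbb{R}^n)},
\]
which is \eqref{3-7}. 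Running the argument for arbitrary $m$ gives $P_{t}f\in C^{m}(K)$ for every $m$ and every compact $K$, whence $P_{t}f\in C^{\infty}(\mathbb{R}^n)$. The step I expect to require the most care is the inductive identification in the second paragraph: one must carefully verify at each level that the intermediate iterates remain in $\text{dom}(\mathcal{L})$ and that the self-adjoint action coincides with $-\triangle_{X}$ when tested against $C_{0}^{\infty}$ functions; the rest of the proof is a fairly mechanical assembly of spectral theory, the subelliptic a priori estimate, and Sobolev embedding.
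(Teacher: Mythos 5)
Your proposal is correct and follows essentially the same route as the paper: spectral calculus gives $\|\mathcal{L}^{k}P_{t}f\|_{L^{2}}\le (k/e)^{k}t^{-k}\|f\|_{L^{2}}$, one identifies $\mathcal{L}^{k}P_{t}f$ with $(-\triangle_{X})^{k}P_{t}f$, and then Corollary \ref{corollary2-4} plus the Sobolev embedding $H^{2N/\alpha_n}(\mathbb{R}^n)\hookrightarrow C^{m}(K)$ delivers \eqref{3-7}. The only difference is that you spell out the inductive verification that each iterate $\mathcal{L}^{j}P_{t}f$ lies in $\text{dom}(\mathcal{L})$ and that $\mathcal{L}$ acts as $-\triangle_{X}$ there, a point the paper passes over tacitly; this is a legitimate refinement rather than a deviation.
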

\begin{proof}
Let $\{E_{\lambda}\}_{0\leq \lambda<+\infty}$ be the spectral resolution of $\mathcal{L}$ in $L^2(\mathbb{R}^n)$. The heat semigroup $\{P_{t}\}_{t\geq 0}$ is given by
\[ P_{t}f=e^{-t\mathcal{L}}f=\int_{0}^{+\infty}e^{-t\lambda}dE_{\lambda}f\qquad \forall f\in L^2(\mathbb{R}^n).\]
 Consider the function $\Phi(\lambda)=\lambda^k e^{-t\lambda}$, where $t>0$ and $k$ is a positive integer. It follows from spectral theory  that
\begin{equation}\label{3-9}
\mathcal{L}^{k}e^{-t\mathcal{L}}=\Phi(\mathcal{L})=\int_{0}^{+\infty}\lambda^{k}e^{-t\lambda}dE_{\lambda}.
\end{equation}
Since the function   $\Phi(\lambda)$ is bounded on $[0,+\infty)$, the operator $\Phi(\mathcal{L})=\mathcal{L}^{k}e^{-t\mathcal{L}}$ is bounded in $L^2(\mathbb{R}^n)$. Hence, for any $f\in L^2(\mathbb{R}^n)$ and any $k\geq 0$, we have
$ \mathcal{L}^{k} (e^{-t\mathcal{L}}f)\in L^2(\mathbb{R}^n)$,
that is
\begin{equation}\label{3-10}
(-\triangle_{X})^k(P_{t}f)\in L^2(\mathbb{R}^n).
\end{equation}
Proposition \ref{prop2-5}, Corollary \ref{corollary2-4} and \eqref{3-10} give that $P_{t}f\in H_{\rm loc}^{\frac{2k}{\alpha_{n}}}(\mathbb{R}^n)$ for any $k\geq 0$, which implies that $P_{t}f\in C^{\infty}(\mathbb{R}^n)\cap L^{2}(\mathbb{R}^n)$.

Note that the function $\Phi(\lambda)$ takes its maximal value at
$\lambda=\frac{k}{t}$. This implies for any $f\in L^{2}(\mathbb{R}^n)$,
\begin{equation*}
\begin{aligned}
\|\triangle_{X}^{k}P_{t}f\|_{L^2(\mathbb{R}^n)}&=\|\mathcal{L}^{k} (e^{-t\mathcal{L}}f)\|_{L^2(\mathbb{R}^n)}=\left(\int_{0}^{+\infty}(\lambda^{k}e^{-t\lambda})^2 d\|E_{\lambda}f\|^2_{L^2(\mathbb{R}^n)}\right)^{\frac{1}{2}}\\
&\leq 	\sup_{\lambda\geq 0}\big(\lambda^{k}e^{-t\lambda}\big)\left(\int_{0}^{\infty}d\|E_{\lambda}f\|^2_{L^2(\mathbb{R}^n)}
\right)^{\frac{1}{2}}= \left(\frac{k}{t}\right)^{k}e^{-k}\|f\|_{L^2(\mathbb{R}^n)}.
\end{aligned}
\end{equation*}
Then for any positive integer $N$, we have
\begin{equation}\label{3-11}
\begin{aligned}
\sum_{k=0}^{N}\|\triangle_{X}^{k}P_{t}f\|_{L^2(\mathbb{R}^n)}&\leq \left(1+\sum_{k=1}^{N}\left(\frac{k}{t}\right)^{k}e^{-k}  \right)\|f\|_{L^2(\mathbb{R}^n)}\leq C(1+t^{-N})\|f\|_{L^2(\mathbb{R}^n)},
 \end{aligned}
\end{equation}
where $C$ is a constant depending on $N$.

For any compact subset $K\subset \mathbb{R}^n$, we can choose a cut-off function $\eta\in C_{0}^{\infty}(\mathbb{R}^n)$ such that $\eta\equiv 1$ on $K$. Using Proposition \ref{prop2-5}, Corollary \ref{corollary2-4}, \eqref{3-11} and the classical Sobolev embedding $H^{s}(\mathbb{R}^n)\hookrightarrow C^{m}(K)$ for $s>\frac{2}{n}+m$,
 we obtain for $N>\frac{n\alpha_n}{4}+\frac{m\alpha_n}{2}$ and any $f\in L^2(\mathbb{R}^n)$,
\[ \begin{aligned}
\|P_{t}f\|_{C^{m}(K)}&=\|\eta P_{t}f\|_{C^{m}(K)}\leq C_{1}\|\eta P_{t}f \|_{H^{\frac{2N}{\alpha_n}}(\mathbb{R}^n)}\leq C_{2}\sum_{k=0}^{N}\|(\triangle_{X})^{k}P_{t}f \|_{L^2(\mathbb{R}^n)}\\
&\leq C_{3}(1+t^{-N})\|f \|_{L^2(\mathbb{R}^n)}.
\end{aligned}\]
Here, $C_{1},C_{2}$ and $C_{3}$ are some positive constants.
\end{proof}

We provide the following serval lemmas to establish the heat kernel of $\{P_t\}_{t\geq 0}$.
\begin{lemma}
\label{lemma3-1}
For any $x\in \mathbb{R}^n$ and  $t>0$, there exists a unique
function $p_{t,x}\in L^2(\mathbb{R}^n)$ such that
\begin{equation}\label{3-12}
  P_{t}f(x)=\int_{\mathbb{R}^n}p_{t,x}(y)f(y)dy\qquad \forall f\in L^2(\mathbb{R}^n).
\end{equation}
\end{lemma}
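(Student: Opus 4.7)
The plan is to apply the Riesz representation theorem to the pointwise evaluation functional $f \mapsto (P_t f)(x)$, using Proposition \ref{prop3-2} to ensure this functional is well-defined and bounded on $L^2(\mathbb{R}^n)$.

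First, I would fix $x \in \mathbb{R}^n$ and $t > 0$, and choose a compact neighbourhood $K$ of $x$, for example the closed Euclidean ball $\overline{B_1(x)}$. By Proposition \ref{prop3-2}, for any $f \in L^2(\mathbb{R}^n)$ the function $P_t f$ has a smooth representative, and with $m=0$ the estimate \eqref{3-7} gives
\[
|(P_t f)(x)| \leq \|P_t f\|_{C^0(K)} \leq F_K(t)\,\|f\|_{L^2(\mathbb{R}^n)}.
\]
Hence the linear functional $\Phi_{t,x}: L^2(\mathbb{R}^n) \to \mathbb{R}$ defined by $\Phi_{t,x}(f) := (P_t f)(x)$ is well-defined (it does not depend on the choice of representative, since the smooth one is canonical) and bounded with $\|\Phi_{t,x}\| \leq F_K(t)$.

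Next, I would invoke the Riesz representation theorem for the Hilbert space $L^2(\mathbb{R}^n)$: there exists a unique $p_{t,x} \in L^2(\mathbb{R}^n)$ such that
\[
\Phi_{t,x}(f) = (P_t f)(x) = \int_{\mathbb{R}^n} p_{t,x}(y)\, f(y)\, dy, \qquad \forall f \in L^2(\mathbb{R}^n),
\]
and moreover $\|p_{t,x}\|_{L^2(\mathbb{R}^n)} = \|\Phi_{t,x}\| \leq F_K(t)$. This yields existence and uniqueness simultaneously.

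There is no real obstacle here; the only subtle point is ensuring that pointwise evaluation at $x$ makes sense for elements of $L^2$ in the range of $P_t$, which is exactly guaranteed by the smoothing property established in Proposition \ref{prop3-2}. The subsequent development in the paper (heat kernel properties such as symmetry, semigroup identity, and positivity) would then be obtained from the abstract properties (S1)--(S6) of the semigroup together with the identification $H_t(x,y) = p_{t,x}(y)$.
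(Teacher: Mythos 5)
Your proof is correct and follows essentially the same route as the paper: fix $x$ and $t$, use the pointwise bound from Proposition \ref{prop3-2} to show that $f \mapsto (P_t f)(x)$ is a bounded linear functional on $L^2(\mathbb{R}^n)$, and then apply the Riesz representation theorem to obtain $p_{t,x}$ with existence and uniqueness at once.
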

\begin{proof}
Assume that $K\subset \mathbb{R}^n$ is a compact subset.
By Proposition \ref{prop3-2}, for all $t>0$ and $f\in L^2(\mathbb{R}^n)$,  we have $P_{t}f\in C^{\infty}(\mathbb{R}^n)$ and
\begin{equation}\label{3-13}
  |P_{t}f(x)|\leq F_{K}(t)\|f\|_{L^2(\mathbb{R}^n)}\qquad \forall x\in K.
\end{equation}
This means, for fixed $t>0$ and $x\in K$, the mapping $f\mapsto P_{t}f(x)$ is a bounded linear functional on $L^2(\mathbb{R}^n)$. By the Riesz representation theorem, there exists a unique function $p_{t,x}\in L^2(\mathbb{R}^n)$ such that
$ P_{t}f(x)=(p_{t,x},f)_{L^2(\mathbb{R}^n)}$ for all $f\in L^2(\mathbb{R}^n)$,
whence \eqref{3-12} follows. Since
for any point $x\in \mathbb{R}^n$ there is a compact set $K$ containing $x$, the function $p_{t,x}$ is defined for all $t > 0$ and $x\in \mathbb{R}^n$.
\end{proof}

\begin{lemma}
\label{lemma3-2}
For any $f\in L^2(\mathbb{R}^n)$, the function $u(x,t)=P_{t}f(x)\in C^{\infty}(\mathbb{R}^n\times\mathbb{R}^{+})$ and satisfies the degenerate heat equation $\partial_{t}u=\triangle_{X}u$ in $\mathbb{R}^n\times\mathbb{R}^{+}$.
\end{lemma}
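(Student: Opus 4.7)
The plan is to combine the semigroup derivative identity (S6) with H\"ormander's hypoellipticity theorem. The spatial smoothness has already been obtained in Proposition \ref{prop3-2}, so the main task is to upgrade this to joint smoothness in $(x,t)$ and to identify the equation satisfied by $u$.

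First, I would verify that $u=P_t f$ solves the degenerate heat equation in the distributional sense on $\mathbb{R}^n\times \mathbb{R}^{+}$. Pick any test function $\varphi\in C_0^{\infty}(\mathbb{R}^n\times\mathbb{R}^{+})$, and for each fixed $t>0$ regard $\varphi(\cdot,t)\in C_0^\infty(\mathbb{R}^n)\subset H_X^1(\mathbb{R}^n)$. Property (S6) gives $\frac{d}{dt}(P_t f,\varphi(\cdot,t))_{L^2(\mathbb{R}^n)}$ as $-(\mathcal{L}P_t f,\varphi(\cdot,t))_{L^2}+(P_t f,\partial_t\varphi)_{L^2}$, and by the defining identity \eqref{3-6} together with $-X_j^{*}=X_j$ we can rewrite
\begin{equation*}
-(\mathcal{L}P_t f,\varphi(\cdot,t))_{L^2(\mathbb{R}^n)}=-\mathcal{Q}(P_t f,\varphi(\cdot,t))=\int_{\mathbb{R}^n}(P_t f)\,\triangle_X\varphi(\cdot,t)\,dx.
\end{equation*}
Integrating in $t$ over $(0,\infty)$ and using the compact support of $\varphi$ (so the boundary terms vanish), this yields $\int_0^{\infty}\!\int_{\mathbb{R}^n} u(\partial_t\varphi+\triangle_X\varphi)\,dx\,dt=0$, which is exactly the distributional formulation of $\partial_t u=\triangle_X u$ on $\mathbb{R}^n\times\mathbb{R}^{+}$.

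Next, I would invoke H\"ormander's celebrated hypoellipticity theorem. By Proposition \ref{prop2-5} the vector fields $X_1,\ldots,X_m$ satisfy H\"ormander's condition on $\mathbb{R}^n$, so the lifted family $\partial_t,X_1,\ldots,X_m$ on $\mathbb{R}^n\times\mathbb{R}^{+}$ still generates the full tangent space at every point (the commutators among the $X_j$'s produce the missing spatial directions, while $\partial_t$ supplies the time direction). Consequently the degenerate parabolic operator $\partial_t-\triangle_X$ is hypoelliptic, so any distributional solution is automatically smooth. Applied to $u\in L^2_{\mathrm{loc}}(\mathbb{R}^n\times\mathbb{R}^{+})$ (we already know from Proposition \ref{prop3-2} that $u$ is even continuous in $x$ for each $t$, and by (S1)--(S3) is $L^2$-continuous in $t$), this yields $u\in C^{\infty}(\mathbb{R}^n\times\mathbb{R}^{+})$, and the equation $\partial_t u=\triangle_X u$ then holds pointwise.

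The main obstacle is simply the careful justification of the distributional identity: one must be attentive that the $L^2$-valued derivative in (S6) can legitimately be paired with the time-dependent test function, and that the symmetry $X_j^{*}=-X_j$ (available under (H.1) by Proposition \ref{prop2-2}) converts the form $\mathcal{Q}$ into the distributional action of $-\triangle_X$. Once this is set up, the rest is a direct appeal to classical hypoellipticity theory and requires no further estimates beyond those already listed in Section \ref{Section2}.
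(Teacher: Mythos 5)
Your argument is correct and follows the same strategy as the paper: show that $u=P_tf$ is a distributional solution of $\partial_t u-\triangle_X u=0$ on $\mathbb{R}^n\times\mathbb{R}^{+}$ by combining property (S6), the defining identity \eqref{3-6} of the form $\mathcal{Q}$, and the anti-symmetry $X_j^{*}=-X_j$, and then conclude smoothness from H\"ormander hypoellipticity of $\partial_t-\triangle_X$. The one step the paper treats more carefully is the justification that $u$ defines a locally integrable function on $\mathbb{R}^n\times\mathbb{R}^{+}$ before testing it against $\varphi$: the paper first proves joint continuity of $u$ by using the spectral theorem together with the subelliptic estimate of Corollary~\ref{corollary2-4} to show $P_{t+h}f\to P_tf$ in the $C^{\infty}$ topology as $h\to 0$, whereas your appeal to separate continuity in $x$ and strong $L^2$-continuity in $t$ is adequate but needs a Pettis-type measurability argument to yield a jointly measurable representative.
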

\begin{proof}
For any $t>0$ and any non-negative
integer $m$, by \eqref{3-9} we have for small $|h|>0$,
\begin{equation*}
  \mathcal{L}^{m}(P_{t+h}f)=\int_{0}^{\infty}\lambda^m e^{-(t+h)\lambda}dE_{\lambda}f.
\end{equation*}
Note that the function $\lambda\mapsto \lambda^{m}e^{-(t+h)\lambda}$ is uniformly bounded as $h\to 0$.  \cite[Lemma 4.8]{Grigor'yan2009} yields that $\mathcal{L}^{m}(P_{t+h}f)\to \mathcal{L}^{m}(P_{t}f)$ in $L^2(\mathbb{R}^n)$ as $h\to 0$. The proof of Proposition \ref{prop3-2} indicates that $u(t,\cdot)\in \text{dom}(\mathcal{L}^m)$ and $\triangle_{X}^{m}u=(-\mathcal{L})^{m}u$. Hence, $\triangle_{X}^{m}(P_{t+h}f-P_{t}f)\to 0$ in $L^2(\mathbb{R}^n)$ as $h\to 0$ for any non-negative
integer $m$, which implies $P_{t+h}f-P_{t}f\to 0$ in $H_{\rm loc}^{s}(\mathbb{R}^n)$ for any $s>0$ due to Corollary \ref{corollary2-4}. According to the classical Sobolev embedding results, we deduce that $u(\cdot,t+h)\to u(\cdot,t)$ in $C^{\infty}$ topology as $h\to 0$, and  $u(x,t)$ is continuous in $t$ locally
uniformly in $x$. Moreover, Proposition \ref{prop3-2} shows that $u(x,t)$ is $C^{\infty}$-smooth in $x$ for any fixed $t>0$, which derives that $u (x,t)\in C(\mathbb{R}^n\times \mathbb{R}^{+})$.

Let $\widetilde{M}:=\mathbb{R}^n\times \mathbb{R}^{+}$ be the product manifold with the measure $d\mu=dxdt$. The continuity of $u(x,t)$ jointly in $(x,t)$ allows us to consider $u$ as a distribution on $\widetilde{M}$. For any  $\varphi\in \mathcal{D}(\widetilde{M})$, we obtain from Fubini's theorem that
\begin{equation}
\begin{aligned}\label{3-14}
\left(u,\frac{\partial \varphi}{\partial t}+\triangle_{X}\varphi\right)_{L^2(\widetilde{M})}&=\int_{\widetilde{M}}u\left(\frac{\partial \varphi}{\partial t}+\triangle_{X}\varphi\right)d\mu\\
&=\int_{0}^{+\infty}(u,\partial_{t}\varphi)_{L^2(\mathbb{R}^n)}dt+\int_{0}^{+\infty}(u,\triangle_{X}\varphi)_{L^2(\mathbb{R}^n)}dt.
\end{aligned}
\end{equation}
Considering $\varphi(\cdot,t)$ as a path in $L^2(\mathbb{R}^n)$, it follows that the classical partial derivative $\partial_{t}\varphi$
 coincides with the strong derivative $\frac{d\varphi}{dt}$. Then
\begin{equation}\label{3-15}
\left(u,\partial_{t}\varphi\right)_{L^2(\mathbb{R}^n)}=\left(u,\frac{d\varphi}{dt}\right)_{L^2(\mathbb{R}^n)}=\frac{d}{dt}(u,\varphi)_{L^2(\mathbb{R}^n)}-\left( \frac{du}{dt},\varphi\right)_{L^2(\mathbb{R}^n)}.
\end{equation}
Since $\varphi(\cdot,t)\in \mathcal{D}(\widetilde{M})$ vanishes outside some time interval $[a,b]$ where $0<a<b$, we obtain
\begin{equation}\label{3-16}
  \int_{0}^{+\infty}\frac{d}{dt}(u,\varphi)_{L^2(\mathbb{R}^n)}dt=0.
\end{equation}
Besides, by \eqref{3-2} we have $\frac{du}{dt}=\triangle_{X}u$, which yields that
\begin{equation}\label{3-17}
  \int_{0}^{+\infty}\left(\frac{du}{dt},\varphi\right)_{L^2(\mathbb{R}^n)}dt= \int_{0}^{+\infty}\left(\triangle_{X}u,\varphi\right)_{L^2(\mathbb{R}^n)}dt=\int_{0}^{+\infty}\left(u,\triangle_{X}\varphi\right)_{L^2(\mathbb{R}^n)}dt.
\end{equation}
Combining \eqref{3-14}-\eqref{3-17}, we obtain
$ \left(u,\frac{\partial \varphi}{\partial t}+\triangle_{X}\varphi\right)_{L^2(\widetilde{M})}=0$ for all $\varphi\in \mathcal{D}(\widetilde{M})$. This means $\partial_{t}u-\triangle_{X}u=0$ in $\mathcal{D}'(\widetilde{M})$. The hypoellipticity of $\partial_{t}-\triangle_{X}$ derives that $u(x,t)\in C^{\infty}(\mathbb{R}^n\times\mathbb{R}^{+})$ and satisfies  $\partial_{t}u=\triangle_{X}u$ in $\mathbb{R}^n\times\mathbb{R}^{+}$.
\end{proof}

\begin{proposition}
\label{prop3-3}
The heat semigroup $\{P_{t}\}_{t\geq 0}$ admits a unique heat kernel $h(x,y,t)$ such that for any $f\in L^2(\mathbb{R}^n)$,
\begin{equation}\label{3-18}
  P_{t}f(x)=e^{-t\mathcal{L}}f(x)=\int_{\mathbb{R}^n}h(x,y,t)f(y)dy
\end{equation}
holds for  all $x\in \mathbb{R}^n$ and $t>0$. The heat kernel $h(x,y,t)\in C^{\infty}(\mathbb{R}^n\times\mathbb{R}^n\times\mathbb{R}^{+})$ and satisfies the above properties (P1)-(P4) pointwise. Furthermore, we have
\begin{itemize}
  \item For any $f_{0}\in C_{0}^{\infty}(\mathbb{R}^n)$,
 \begin{equation} \label{3-19}
\lim_{t\to 0^{+}}\int_{\mathbb{R}^n}h(x,y,t)f_{0}(y)dy=f_{0}(x).
\end{equation}
  \item For any fixed point $y\in \mathbb{R}^n$, $h(x,y,t)$ is the solution of
\begin{equation}
\label{3-20}
  \left(\frac{\partial}{\partial t}-\triangle_{X}\right)h(x,y,t)=0 \quad \mbox{for all}~~ (x,t)\in\mathbb{R}^n\times \mathbb{R}^{+}.
\end{equation}
  \item For all $x,y\in \mathbb{R}^{n}$ and $s<t$, $h(x,y,t-s)$ is identical to the fundamental solution of the degenerate heat operator $\partial_{t}-\triangle_{X}$ at $(s,y)\in \mathbb{R}^{1+n}$, and satisfies
  the global Gaussian bounds:
\begin{equation}
 \label{3-21}
    \frac{1}{A_{1}|B_{d_{X}}(x,\sqrt{t})|}e^{-\frac{A_{1} d_{X}^{2}(x,y)}{t}}\leq h(x,y,t)\leq
 \frac{A_{1}}{|B_{d_{X}}(x,\sqrt{t})|}e^{-\frac{ d_{X}^{2}(x,y)}{A_{1}t}}\qquad \forall x,y\in \mathbb{R}^n,~t>0,
 \end{equation}
where $A_{1}>1$ is a positive constant, and $d_{X}(x,y)$ is the subunit metric (see Definition \ref{def2-3}).
\end{itemize}
\end{proposition}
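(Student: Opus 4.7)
The plan is to construct $h$ by setting $h(x,y,t):=p_{t,x}(y)$, where $p_{t,x}$ is the $L^2$-function furnished by Lemma \ref{lemma3-1}, and then to extract all the listed properties from the abstract framework (S1)--(S6), combined with Lemma \ref{lemma3-2} and the smoothing bound of Proposition \ref{prop3-2}. The representation \eqref{3-18} is then immediate. For the symmetry $h(x,y,t)=h(y,x,t)$, I would use the self-adjointness (S4): for any $f,g\in L^2(\mathbb{R}^n)$, the identity
\[
\iint p_{t,x}(y)f(y)g(x)\,dy\,dx=(P_tf,g)_{L^2}=(f,P_tg)_{L^2}=\iint p_{t,y}(x)g(x)f(y)\,dy\,dx
\]
forces $p_{t,x}(y)=p_{t,y}(x)$ for a.e.\ $(x,y)$. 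The positivity and sub-stochasticity in (P1), the Chapman--Kolmogorov relation (P3), and the semigroup convergence (P4) then follow directly from (S5), (S2) and (S3), respectively; uniqueness of $h$ (up to a null set) comes from the uniqueness of $p_{t,x}$ in Lemma \ref{lemma3-1}.

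For the $C^\infty$-regularity of $h$ on $\mathbb{R}^n\times\mathbb{R}^n\times\mathbb{R}^+$, I would exploit the semigroup property in the form
\[
h(x,y,t)=P_{t-s}\bigl[h(\cdot,y,s)\bigr](x),\qquad 0<s<t.
\]
Since $h(\cdot,y,s)\in L^2(\mathbb{R}^n)$, Lemma \ref{lemma3-2} yields that $(x,t)\mapsto h(x,y,t)$ is $C^\infty$ on $\mathbb{R}^n\times(s,+\infty)$, and hence on $\mathbb{R}^n\times\mathbb{R}^+$ by the arbitrariness of $s$. The symmetry already established transfers this regularity to $(y,t)$ for every fixed $x$. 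To upgrade to joint $C^\infty$-regularity in $(x,y,t)$ I would split the time into three equal intervals and write
\[
h(x,y,t)=\iint h(x,z,t/3)\,h(z,w,t/3)\,h(w,y,t/3)\,dz\,dw,
\]
then differentiate under the double integral, controlling the integrands uniformly on compact sets by the $L^2$-bound $\|h(\cdot,y,t/3)\|_{L^2}\le F_K(t/3)$ read off from \eqref{3-7}--\eqref{3-8} together with the smoothness in the first factor.

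For the initial-data identity \eqref{3-19}, fix $f_0\in C_0^\infty(\mathbb{R}^n)$. Since every iterate $\mathcal{L}^k f_0$ lies in $L^2(\mathbb{R}^n)$, spectral calculus gives
\[
\|\mathcal{L}^k(P_tf_0-f_0)\|_{L^2(\mathbb{R}^n)}^2=\int_0^{+\infty}\lambda^{2k}(1-e^{-t\lambda})^2\,d\|E_\lambda f_0\|^2_{L^2(\mathbb{R}^n)}\xrightarrow[t\to 0^+]{}0
\]
by dominated convergence, for every $k\geq 0$. Applying Corollary \ref{corollary2-4} to $P_tf_0-f_0$ and letting $k$ tend to infinity shows that $P_tf_0\to f_0$ in $H^{s}_{\mathrm{loc}}(\mathbb{R}^n)$ for every $s\geq 0$, hence in $C^\infty_{\mathrm{loc}}(\mathbb{R}^n)$ by Sobolev embedding, which delivers \eqref{3-19} pointwise. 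The heat equation \eqref{3-20} is then immediate: fix $y\in\mathbb{R}^n$; by (S6), $\partial_t h(\cdot,y,t)=-\mathcal{L}h(\cdot,y,t)=\triangle_Xh(\cdot,y,t)$ in $L^2(\mathbb{R}^n)$, and the joint smoothness just proved promotes this to a pointwise identity on $\mathbb{R}^n\times\mathbb{R}^+$.

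The main obstacle is the sharp two-sided Gaussian estimate \eqref{3-21} and the identification of $h(x,y,t-s)$ with the global fundamental solution of $\partial_t-\triangle_X$ at $(s,y)$. My plan is to invoke the global heat kernel $\widetilde h$ constructed by Biagi--Bonfiglioli \cite{Biagi2019} and Biagi--Bramanti \cite{Biagi2020} under the homogeneous H\"ormander assumptions (H.1)--(H.2), which is known there to satisfy the Gaussian bounds stated in \eqref{3-21}, and to show $h\equiv\widetilde h$ via a semigroup-uniqueness argument. Concretely, the integral operators $\widetilde P_tf(x):=\int_{\mathbb{R}^n}\widetilde h(x,y,t)f(y)\,dy$ form a strongly continuous, symmetric, Markovian semigroup on $L^2(\mathbb{R}^n)$ whose infinitesimal generator agrees with $-\triangle_X$ on $C_0^\infty(\mathbb{R}^n)$; since $\mathcal{L}$ is the unique non-negative self-adjoint operator associated via Proposition \ref{prop3-1} with the Dirichlet form $(\mathcal{Q},H_X^1(\mathbb{R}^n))$, and since $C_0^\infty(\mathbb{R}^n)$ is a core by Proposition \ref{prop2-11}, the two semigroups must coincide and so must their kernels almost everywhere. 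The smoothness of both kernels then upgrades the equality to a pointwise one, yielding \eqref{3-21} and the identification with the fundamental solution via time integration.
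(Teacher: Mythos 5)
Your overall architecture matches the paper's: construct $h$ from the representatives $p_{t,x}$ of Lemma~\ref{lemma3-1}, read (P1)--(P4) off the semigroup axioms (S1)--(S6), and defer the Gaussian bounds to the Biagi--Bonfiglioli / Biagi--Bramanti global heat kernel. However, two of your key steps have genuine gaps.

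\textbf{Joint $C^\infty$-regularity.} Your triple-split identity
$h(x,y,t)=\iint h(x,z,t/3)\,h(z,w,t/3)\,h(w,y,t/3)\,dz\,dw$
requires differentiation under a double integral over all of $\mathbb{R}^n\times\mathbb{R}^n$. The bound you cite, $\|h(\cdot,y,t/3)\|_{L^2}\le F_K(t/3)$, is uniform only for $y$ in a \emph{compact} set $K$, because the constants in \eqref{3-7}--\eqref{3-8} and in the underlying subelliptic estimate (Proposition~\ref{subelliptic-estimate-2}) depend on the compact set. For the integration variables $z,w$ running over all of $\mathbb{R}^n$, the corresponding quantities $\|p_{s,z}\|_{L^2}=\sqrt{h(z,z,2s)}$ are not yet known to be bounded, and without the not-yet-established Gaussian bounds there is no integrable majorant; any attempt to justify the interchange forces you to prove exactly the strong $L^2$-differentiability of $(x,t)\mapsto p_{t,x}$ that you were trying to avoid. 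The paper takes the direct route: Lemma~\ref{lemma3-2} shows $(x,t)\mapsto p_{t,x}$ is \emph{weakly} $C^\infty$ into $L^2(\mathbb{R}^n)$, then \cite[Lemma~7.21]{Grigor'yan2009} upgrades this to \emph{strong} $C^\infty$, and joint smoothness of $h(x,y,t+s)=(p_{t,x},p_{s,y})_{L^2}$ follows since the inner product is a continuous bilinear form evaluated at two smooth $L^2$-valued maps. This sidesteps the decay issue entirely. (A related, smaller point: your definition $h(x,y,t):=p_{t,x}(y)$ is only defined a.e.\ in $y$, whereas the paper sets $h(x,y,t):=(p_{t/2,x},p_{t/2,y})_{L^2}$, which is defined pointwise from the outset; this matters when you want pointwise versions of (P1)--(P3).)

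\textbf{Identification with the Biagi et al.\ kernel.} Your plan is to show that the integral operators $\widetilde P_t$ and $P_t$ have generators coinciding on $C_0^\infty(\mathbb{R}^n)$ and then invoke uniqueness. But Proposition~\ref{prop2-11} only gives that $C_0^\infty$ is dense in $H_X^1(\mathbb{R}^n)$, i.e.\ a \emph{form} core for $(\mathcal{Q},H_X^1)$. That alone does not force the generator of $\widetilde P_t$ to equal $\mathcal{L}$: you would also have to check that the quadratic form of the $\widetilde P_t$-generator, restricted to $C_0^\infty$, coincides with $\mathcal{Q}$, which is not addressed. What you really want is essential self-adjointness of $-\triangle_X|_{C_0^\infty}$; the paper asserts this fact (after \eqref{3-6}) but neither proves nor cites it in a form you rely on. The paper instead avoids the semigroup-uniqueness route altogether: it shows $u(x,t)=\int h(x,y,t)f_0(y)dy$ is a bounded classical solution of the Cauchy problem (using Lemma~\ref{lemma3-2} and \eqref{3-19}), and then applies the uniqueness of such solutions, \cite[Theorem~1.4]{Biagi2019} and \cite[Lemma~3.4]{Grigoryan2014}, to identify $h$ with the global fundamental solution and read off \eqref{3-21} from \cite[Theorem~2.4]{Biagi2020}.

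The rest of your proposal (positivity, symmetry via (S4), initial-data convergence \eqref{3-19} via spectral calculus and Corollary~\ref{corollary2-4}, and the heat equation \eqref{3-20} via (S6) plus joint smoothness) is sound and mirrors the paper.
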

\begin{proof}
Using $P_{t+s}=P_{s}P_{t}$, \eqref{3-12} and the symmetry of $P_{t}$, we obtain for any $f\in L^2(\mathbb{R}^n)$,
\begin{equation}\label{3-22}
\begin{aligned}
 P_{t+s}f(x)&=P_{s}(P_{t}f)(x)=(p_{s,x},P_{t}f)_{L^2(\mathbb{R}^n)}=(P_{t}p_{s,x},f)_{L^2(\mathbb{R}^n)}\\
&=\int_{\mathbb{R}^n}P_{t}p_{s,x}(z)f(z)dz=\int_{\mathbb{R}^n}(p_{t,z},p_{s,x})_{L^2(\mathbb{R}^n)}f(z)dz.
\end{aligned}
\end{equation}
Then, for any $0<r<s<t$, applying \eqref{3-12} and \eqref{3-22} with $f=p_{r,x}$,
\begin{equation}\label{3-23}
\begin{aligned}
(p_{s,x},p_{t-s,y})_{L^2(\mathbb{R}^n)}&=P_{s}p_{t-s,y}(x)=P_{r}(P_{s-r}p_{t-s,y})(x)=\int_{\mathbb{R}^n}p_{r,x}(z)(p_{s-r,z},p_{t-s,y} )_{L^2(\mathbb{R}^n)}dz\\
&=P_{t-r}p_{r,x}(y)=(p_{t-r,y},p_{r,x})_{L^2(\mathbb{R}^n)}.
\end{aligned}
\end{equation}
Clearly, \eqref{3-23} asserts that for all $x,y\in \mathbb{R}^n$ and $t>0$, $(p_{s,x},p_{t-s,y})_{L^2(\mathbb{R}^n)}$ is independent of $s\in (0,t)$.

For any $x,y\in \mathbb{R}^n$ and $t>0$, we define
\begin{equation}\label{3-24}
h(x,y,t):=\left(p_{\frac{t}{2},x},p_{\frac{t}{2},y}\right)_{L^2(\mathbb{R}^n)}.
\end{equation}
This means $h(x,y,t)=h(y,x,t)$ for any $x,y\in \mathbb{R}^n$ and $t>0$, which gives (P2) pointwise.  It follows from \eqref{3-22} and \eqref{3-24} that
\[ P_{t}f(x)=\int_{\mathbb{R}^n}\left(p_{\frac{t}{2},z},p_{\frac{t}{2},x}\right)_{L^2(\mathbb{R}^n)}f(z)dz=\int_{\mathbb{R}^n}h(x,y,t)f(y)dy, \]
which yields \eqref{3-18}. Additionally, by \eqref{3-23} and \eqref{3-24} we have
\begin{equation}\label{3-25}
  h(x,y,t)=(p_{s,x},p_{t-s,y})_{L^2(\mathbb{R}^n)}\quad \forall 0<s<t.
\end{equation}
Comparison of \eqref{3-12} and \eqref{3-18} shows that for all $x\in \mathbb{R}^n, t>0$,
\begin{equation}\label{3-26}
  h(x,\cdot,t)=p_{t,x}(\cdot)\quad\mbox{a.e. on}~~\mathbb{R}^n.
\end{equation}
Using \eqref{3-25} and \eqref{3-26}, we obtain for all $x,y\in \mathbb{R}^n$ and $t,s>0$,
\begin{equation}\label{3-27}
  \int_{\mathbb{R}^n}h(x,z,t)h(z,y,s)dz=(h(x,\cdot,t),h(y,\cdot,s))_{L^2(\mathbb{R}^n)}=(p_{t,x},p_{s,y})_{L^2(\mathbb{R}^n)}=h(x,y,t+s),
\end{equation}
which derives (P3) pointwise.

We next show that $h(x,y,t)\in C^{\infty}(\mathbb{R}^n\times \mathbb{R}^n\times \mathbb{R}^+)$, which indicates that $h(\cdot,\cdot,t)$ is a measurable function on $\mathbb{R}^n\times \mathbb{R}^n$ for all $t>0$, and thus serves as the heat kernel of $P_{t}$. Consider the map $(x,t)\mapsto p_{t,x}(\cdot)$ from $\mathbb{R}^n\times \mathbb{R}^{+}$ to $L^2(\mathbb{R}^n)$. By Lemma \ref{lemma3-2}, for any $f\in L^2(\mathbb{R}^n)$, the function $P_{t}f(x)=(p_{t,x},f)_{L^2(\mathbb{R}^n)}$ is $C^{\infty}$-smooth in $x,t$, implying that the mapping $(x,t)\mapsto p_{t,x}(\cdot)$ is weakly $C^{\infty}$. Then, \cite[Lemma 7.21]{Grigor'yan2009} derives that the mapping $(x,t)\mapsto p_{t,x}(\cdot)$ is strongly $C^{\infty}$. Furthermore, it follows from \eqref{3-25} that $h(x,y,t+s)=(p_{t,x},p_{s,y})_{L^2(\mathbb{R}^n)}\in C^{\infty}(\mathbb{R}^n\times \mathbb{R}^n)$ for any $t,s>0$. Consequently, we conclude that $h(x,y,t)\in  C^{\infty}(\mathbb{R}^n\times \mathbb{R}^n\times \mathbb{R}^+)$.

The property (S5)  and \eqref{3-12} indicate that
\[
  0\leq P_{t}(p_{t,x})_{-}=(p_{t,x},(p_{t,x})_{-})_{L^{2}(\mathbb{R}^n)}=-((p_{t,x})_{-},(p_{t,x})_{-})_{L^{2}(\mathbb{R}^n)},\]
which yields that $p_{t,x}\geq 0$ a.e. on $\mathbb{R}^n$. Using \eqref{3-26}, we have $h(x,y,t)\geq 0$ on $\mathbb{R}^n\times \mathbb{R}^n\times \mathbb{R}^+$. Also, the property (S5) implies that $\int_{\mathbb{R}^n}h(x,y,t)dy\leq 1$ for all $x\in \mathbb{R}^n$ and $t>0$. Hence, the property (P1) holds pointwise. Moreover, the property (P4) follows from (S3) and \eqref{3-18}.

If $f_0\in C_{0}^{\infty}(\mathbb{R}^n)$, then $f_{0}\in \text{dom}(\mathcal{L}^{m})$ for any positive integer $m$, and we have
\[ \int_{0}^{+\infty}\lambda^{2m}d\|E_{\lambda}f\|_{L^2(\mathbb{R}^n)}^2<\infty. \]
The identities $\mathcal{L}^{m}f=\int_{0}^{+\infty}\lambda^{m}dE_{\lambda}f$ and $\mathcal{L}^{m}P_{t}f=\int_{0}^{+\infty}\lambda^{m}e^{-t\lambda}dE_{\lambda}f$
give that
\[ \|\mathcal{L}^{m}(P_{t}f-f)\|_{L^2(\mathbb{R}^n)}^{2}=\int_{0}^{\infty}\lambda^{2m}(1-e^{-t\lambda})^{2}d\|E_{\lambda}f\|_{L^2(\mathbb{R}^n)}^2. \]
Since $\lambda^{2m}(1-e^{-t\lambda})^2\leq \lambda^{2m}$ for all $t>0$ and $\lambda^{2m}(1-e^{-t\lambda})^2\to 0$ as $t\to 0^{+}$, the dominated convergence theorem yields that $\|\mathcal{L}^{m}(P_{t}f-f)\|_{L^2(\mathbb{R}^n)}=\|\triangle_{X}^{m}(P_{t}f-f)\|_{L^2(\mathbb{R}^n)}\to 0$ as $t\to 0^{+}$ for any integer $m\geq 1$. Thus, Corollary \ref{corollary2-4} and the  classical Sobolev embedding results give \eqref{3-19}.

Fix $s>0$ and $y\in \mathbb{R}^n$, we set $v(x,t):=h(x,y,t+s)$. By \eqref{3-27} we have
$ v(x,t)=(p_{t,x},p_{s,y})_{L^2(\mathbb{R}^n)}=P_{t}p_{s,y}(x)$.
Since $p_{s,y}\in L^2(\mathbb{R}^n)$, Lemma \ref{lemma3-2} yields that $v(x,t)\in C^{\infty}(\mathbb{R}^n\times \mathbb{R}^{+})$ and solves the equation $\partial_{t}u=\triangle_{X}u$. Changing $t$ to $t-s$, we obtain \eqref{3-20}.

Lemma \ref{lemma3-2} and \eqref{3-19} imply that for any $f_0\in C_{0}^{\infty}(\mathbb{R}^n)$, $u(x,t)=\int_{\mathbb{R}^n}h(x,y,t)f_{0}(y)dy$ is the bounded classical solution of the degenerate heat equation
\[ \left\{
     \begin{array}{ll}
 \partial_{t}u-\triangle_{X}u=0, & \hbox{on ~$\mathbb{R}^n\times \mathbb{R}^{+}$;} \\
      \lim_{t\to 0}u(x,t)=f_{0}(x), & \hbox{for all $x\in \mathbb{R}^n$.}
     \end{array}
   \right.\]
According to  \cite[Theorem 1.4]{Biagi2019} and \cite[Lemma 3.4]{Grigoryan2014}, we have $h(x,y,t-s)=\Gamma(s,y;t,x)$ for all $x,y\in \mathbb{R}^{n}$ and $s<t$, where $\Gamma(s,y;t,x)$ denotes the global fundamental solution of degenerate heat operator $\partial_{t}-\triangle_{X}$ at $(s,y)\in \mathbb{R}^{1+n}$. The global Gaussian bounds \eqref{3-21} follows from \cite[Theorem 2.4]{Biagi2020}.
\end{proof}

\subsection{Subelliptic Dirichlet heat kernel}

We now turn our attention to the subelliptic Dirichlet heat kernel.

 Suppose that $\Omega$ is an open bounded domain in $\mathbb{R}^n$. By Proposition \ref{prop3-1} (2),   the restricted Dirichlet form
 $(\mathcal{Q}, H_{X,0}^{1}(\Omega))$ admits a unique generator $\mathcal{L}_{\Omega}$, which is a non-negative defined self-adjoint operator in $L^2(\Omega)$ with  $\text{dom}(\mathcal{L}_{\Omega})\subset H_{X,0}^{1}(\Omega)$ such that
\begin{equation}\label{3-28}
   \mathcal{Q}(u,v)=(\mathcal{L}_{\Omega}u,v)_{L^2(\Omega)}
\end{equation}
for all $u\in \text{dom}(\mathcal{L}_{\Omega})$ and $v\in H_{X,0}^{1}(\Omega)$. Additionally, the operator $\mathcal{L}_{\Omega}$ is the unique self-adjoint extension of $-\triangle_{X}|_{\mathcal{D}(\Omega)}$ with the domain
\[ \begin{aligned}
\text{dom}(\mathcal{L}_{\Omega})&=\{u\in H_{X,0}^{1}(\Omega)|\exists c\geq 0~\mbox{such that}~|\mathcal{Q}(u,v)|\leq c\|v\|_{L^{2}(\Omega)}~\forall v\in H_{X,0}^{1}(\Omega)\}\\
&=\{u\in H_{X,0}^{1}(\Omega)|\triangle_{X} u\in L^2(\Omega)\}.
\end{aligned}
 \]
By using Proposition \ref{subelliptic-estimate-1} and Proposition \ref{prop2-16}, we can easily verify the well-definedness of the Dirichlet eigenvalue problem \eqref{1-5}. Precisely, we have
\begin{proposition}
\label{prop3-4}
Let $X=(X_{1},X_{2},\ldots,X_{m})$ be the homogeneous H\"{o}rmander vector fields defined on $\mathbb{R}^n$.  Suppose that $\Omega$ is a bounded open domain in $\mathbb{R}^n$. Then the Dirichlet eigenvalue problem \eqref{1-5} of homogeneous H\"{o}rmander operator $\triangle_{X}$ is well-defined. Specifically,  the self-adjoint operator $\mathcal{L}_{\Omega}$ admits a sequence of discrete Dirichlet eigenvalues
$ 0<\lambda_{1}\leq \lambda_{2}\leq\cdots\leq\lambda_{k}\leq\cdots$,
and $\lambda_{k}\to +\infty $ as $k\to +\infty$. Moreover, the corresponding Dirichlet eigenfunctions $\{\phi_{k}\}_{k=1}^{\infty}\subset C^{\infty}(\Omega)$ constitute an orthonormal basis of $L^2(\Omega)$ and an orthogonal basis of $H_{X,0}^{1}(\Omega)$.
\end{proposition}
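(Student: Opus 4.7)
The plan is to derive Proposition \ref{prop3-4} from the standard variational approach, using the Friedrichs--Poincar\'e inequality (Proposition \ref{prop2-16}) to make $\mathcal{Q}$ an equivalent inner product on $H_{X,0}^1(\Omega)$, and the subelliptic estimate (Proposition \ref{subelliptic-estimate-1}) together with classical fractional Sobolev embedding to obtain compactness. I would first endow $H_{X,0}^1(\Omega)$ with the inner product $\langle u,v\rangle_{\mathcal{Q}}:=\mathcal{Q}(u,v)=\int_{\Omega}Xu\cdot Xv\,dx$ and observe that by Proposition \ref{prop2-16} the associated norm $\mathcal{Q}(u,u)^{1/2}$ is equivalent to $\|u\|_{H_X^1(\mathbb{R}^n)}$ on $H_{X,0}^1(\Omega)$, so $(H_{X,0}^1(\Omega),\langle\cdot,\cdot\rangle_{\mathcal{Q}})$ is a Hilbert space.

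Next, for any $f\in L^2(\Omega)$ the linear functional $v\mapsto (f,v)_{L^2(\Omega)}$ is bounded on $(H_{X,0}^1(\Omega),\langle\cdot,\cdot\rangle_{\mathcal{Q}})$ (again by Poincar\'e), so the Riesz representation theorem yields a unique $u=Tf\in H_{X,0}^1(\Omega)$ with $\mathcal{Q}(u,v)=(f,v)_{L^2(\Omega)}$ for all $v\in H_{X,0}^1(\Omega)$. By the characterization of $\text{dom}(\mathcal{L}_{\Omega})$ recalled just above the statement, this means $u\in\text{dom}(\mathcal{L}_{\Omega})$ and $\mathcal{L}_{\Omega}u=f$, so $T=\mathcal{L}_{\Omega}^{-1}$ exists as a bounded operator from $L^2(\Omega)$ to $H_{X,0}^1(\Omega)$, and its symmetry and positivity on $L^2(\Omega)$ follow directly from symmetry and non-negativity of $\mathcal{Q}$ combined with Poincar\'e (which forces $(\mathcal{L}_{\Omega}^{-1}f,f)_{L^2(\Omega)}>0$ for $f\not\equiv0$).

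The crucial compactness step is to show that $\mathcal{L}_{\Omega}^{-1}$, viewed as an operator on $L^2(\Omega)$, is compact. For this I chain the embeddings $H_{X,0}^1(\Omega)\hookrightarrow H_0^{1/r}(\Omega)\hookrightarrow L^2(\Omega)$: the first is continuous by Proposition \ref{subelliptic-estimate-1}, and the second is compact for bounded $\Omega$ by the classical fractional Sobolev compactness (as cited in the proof of Proposition \ref{prop2-16} from \cite{Brasco2014}). Hence $\mathcal{L}_{\Omega}^{-1}:L^2(\Omega)\to L^2(\Omega)$ is a compact, positive, self-adjoint operator. The spectral theorem for such operators then provides a countable orthonormal basis $\{\phi_k\}_{k=1}^{\infty}$ of $L^2(\Omega)$ consisting of eigenfunctions of $\mathcal{L}_{\Omega}^{-1}$ with positive eigenvalues $\mu_k\searrow 0$; setting $\lambda_k=1/\mu_k$ gives an increasing sequence $0<\lambda_1\leq\lambda_2\leq\cdots\to+\infty$ with $\mathcal{L}_{\Omega}\phi_k=\lambda_k\phi_k$, i.e.\ $-\triangle_X\phi_k=\lambda_k\phi_k$ in the weak sense.

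Finally, to upgrade the eigenfunctions to $C^{\infty}(\Omega)$, I would invoke H\"ormander's hypoellipticity theorem: since $\phi_k\in H_{X,0}^1(\Omega)\subset L^2(\Omega)$ satisfies $\triangle_X\phi_k=-\lambda_k\phi_k$ in $\mathcal{D}'(\Omega)$ and $\triangle_X$ is hypoelliptic by H\"ormander's condition (Proposition \ref{prop2-5}), a standard bootstrap via Proposition \ref{subelliptic-estimate-2} and Corollary \ref{corollary2-4} yields $\phi_k\in H_{\mathrm{loc}}^{s}(\Omega)$ for every $s>0$, hence $\phi_k\in C^{\infty}(\Omega)$. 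Orthogonality in $H_{X,0}^1(\Omega)$ follows from $\mathcal{Q}(\phi_i,\phi_j)=\lambda_i(\phi_i,\phi_j)_{L^2(\Omega)}=\lambda_i\delta_{ij}$, and the fact that $\{\phi_k\}$ spans $H_{X,0}^1(\Omega)$ is obtained by decomposing any $u\in H_{X,0}^1(\Omega)$ as $u=\sum_k c_k\phi_k$ in $L^2(\Omega)$ and verifying convergence in the $\mathcal{Q}$-norm via Bessel's inequality applied to $\mathcal{L}_{\Omega}^{1/2}u$. The main obstacle is ensuring the compactness step is clean in the presence of merely bounded (possibly non-smooth, characteristic) $\partial\Omega$, but this is already handled by the proof strategy of Proposition \ref{prop2-16} which does not require any regularity of $\partial\Omega$.
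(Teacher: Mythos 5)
Your proposal is correct and follows precisely the route the paper indicates (the paper itself gives no detailed proof, only the remark that the result follows from the subelliptic estimate of Proposition \ref{subelliptic-estimate-1} and the Friedrichs--Poincar\'e inequality of Proposition \ref{prop2-16}): Poincar\'e makes $\mathcal{Q}$ an equivalent inner product, the subelliptic estimate plus fractional Sobolev compactness gives compact resolvent, the spectral theorem produces the discrete spectrum, hypoellipticity yields smooth eigenfunctions, and the $\mathcal{Q}$-orthogonality and completeness in $H_{X,0}^1(\Omega)$ follow from the identity $\mathcal{Q}(\phi_i,\phi_j)=\lambda_i\delta_{ij}$ and the expansion of $\mathcal{L}_\Omega^{1/2}u$.
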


Denote by $\{P_{t}^{\Omega}\}_{t\geq 0}$ the corresponding Dirichlet heat semigroup of $\mathcal{L}_{\Omega}$. Then, we have the following ultracontractivity result.
\begin{proposition}
\label{prop3-5}
For any $f\in L^2(\Omega)$ and $t>0$, we have
\begin{equation}\label{3-29}
  \|P_{t}^{\Omega}f\|_{L^{\infty}(\Omega)}\leq F_{K}(t)\|f\|_{L^2(\Omega)},
\end{equation}
where $F_{K}$ is the function defined in \eqref{3-8}, with $K=\overline{\Omega}$.
\end{proposition}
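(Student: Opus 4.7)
The plan is to deduce this ultracontractivity estimate by comparing the Dirichlet semigroup $\{P_t^\Omega\}_{t\geq 0}$ with the global heat semigroup $\{P_t\}_{t\geq 0}$, thereby reducing the problem to Proposition \ref{prop3-2} applied on the compact set $K=\overline{\Omega}$. The key tool is the monotonicity property \eqref{3-4} for restricted Dirichlet forms.

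First, I would verify that $(\mathcal{Q},H_{X,0}^{1}(\Omega))$ is indeed the restriction (in the abstract sense) of the regular Dirichlet form $(\mathcal{Q},H_{X}^{1}(\mathbb{R}^n))$ to the open set $\Omega$. Tracing through Definition \ref{def3-2} and the restriction construction, the candidate space is $\mathcal{F}(\Omega)=\overline{C_{0}(\Omega)\cap H_{X}^{1}(\mathbb{R}^n)}^{\|\cdot\|_{H_{X}^{1}(\mathbb{R}^n)}}$. The inclusion $H_{X,0}^{1}(\Omega)\subset \mathcal{F}(\Omega)$ is immediate from $C_{0}^{\infty}(\Omega)\subset C_{0}(\Omega)\cap H_{X}^{1}(\mathbb{R}^n)$, while the reverse inclusion follows from Proposition \ref{prop2-13}, since any element of $C_{0}(\Omega)\cap H_{X}^{1}(\mathbb{R}^n)$ has compact support in $\Omega$ and hence belongs to $H_{X,0}^{1}(\Omega)$.

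With this identification, \eqref{3-4} (taking $\Omega_{1}=\Omega$ and $\Omega_{2}=\mathbb{R}^n$) gives $P_{t}^{\Omega}g\leq P_{t}g$ a.e. on $\mathbb{R}^n$ for every non-negative $g\in L^{2}(\Omega)$ (extended by zero outside $\Omega$). For a general $f\in L^{2}(\Omega)$, I would split $f=f_{+}-f_{-}$, apply the comparison to each non-negative piece, and use linearity together with property (S5) of the heat semigroup to obtain the pointwise bound
\begin{equation*}
|P_{t}^{\Omega}f(x)|\leq P_{t}^{\Omega}f_{+}(x)+P_{t}^{\Omega}f_{-}(x)\leq P_{t}f_{+}(x)+P_{t}f_{-}(x)=P_{t}|f|(x)
\end{equation*}
for a.e. $x\in \Omega$.

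Finally, since $\Omega$ is bounded, $K:=\overline{\Omega}$ is compact, and Proposition \ref{prop3-2} (with $m=0$ and $N$ the smallest integer exceeding $n\alpha_{n}/4$) yields $P_{t}|f|\in C^{\infty}(\mathbb{R}^n)$ together with
\begin{equation*}
\|P_{t}|f|\|_{C^{0}(K)}\leq F_{K}(t)\,\||f|\|_{L^{2}(\mathbb{R}^n)}=F_{K}(t)\,\|f\|_{L^{2}(\Omega)}.
\end{equation*}
Combining this with the previous pointwise comparison gives $\|P_{t}^{\Omega}f\|_{L^{\infty}(\Omega)}\leq F_{K}(t)\|f\|_{L^{2}(\Omega)}$, which is \eqref{3-29}. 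There is no serious technical obstacle here; the only point requiring care is the identification $\mathcal{F}(\Omega)=H_{X,0}^{1}(\Omega)$, so that the abstract comparison \eqref{3-4} is legitimately available, and the routine handling of sign via $f=f_{+}-f_{-}$. The compactness of $\overline{\Omega}$ then lets the $C^{0}(K)$-bound from Proposition \ref{prop3-2} control the $L^{\infty}(\Omega)$-norm directly.
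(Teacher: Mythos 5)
Your proposal is correct and follows essentially the same route as the paper: compare $P_t^\Omega$ with the global semigroup $P_t$ via the restriction-form monotonicity \eqref{3-4}, reduce to $|P_t^\Omega f|\leq P_t|f|$ pointwise, and then invoke the ultracontractivity of $P_t$ from Proposition \ref{prop3-2} on $K=\overline\Omega$. The only cosmetic differences are that the paper obtains $|P_t^\Omega f|\leq P_t^\Omega|f|$ directly from $|f|\geq\pm f$ rather than splitting $f=f_+-f_-$, and the identification $\mathcal{F}(\Omega)=H^1_{X,0}(\Omega)$ you re-derive is already recorded as Proposition \ref{prop3-1}(2).
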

\begin{proof}
For any $f\in L^2(\Omega)$, we extend $f$ to a function in $L^2(\mathbb{R}^n)$  by setting $f=0$ outside $\Omega$. Since $|f|\geq \pm f$, we have
\begin{equation*}
  |P_{t}^{\Omega}f(x)|\leq  P_{t}^{\Omega}|f(x)|\qquad \mbox{for almost all}~x\in\Omega.
\end{equation*}
As a result of \eqref{3-4} and \eqref{3-13}, we obtain for almost all $x\in \Omega$,
\begin{equation*}
  |P_{t}^{\Omega}f(x)|\leq  P_{t}^{\Omega}|f(x)|\leq P_{t}|f(x)|\leq  \sup_{x\in\overline{\Omega}}|P_{t}f(x)|\leq F_{\overline{\Omega}}(t)\|f\|_{L^2(\mathbb{R}^n)},
\end{equation*}
which gives \eqref{3-29}.
\end{proof}

Thanks to Proposition \ref{prop3-5}, we can establish the subelliptic Dirichlet heat kernel of $\triangle_{X}$.
\begin{proposition}
\label{prop3-6}
The Dirichlet heat semigroup
$\{P_{t}^{\Omega}\}_{t\geq 0}$ admits a unique heat kernel $h_{D}(x,y,t)$, such that for any $f\in L^{2}(\Omega)$,
\begin{equation}\label{3-30}
  P_{t}^{\Omega}f(x)=e^{-t\mathcal{L}_{\Omega}}f(x)=\int_{\Omega}h_{D}(x,y,t)f(y)dy
\end{equation}
holds for all $x\in \Omega$ and $t>0$. The heat kernel $h_{D}(x,y,t)\in C^{\infty}(\Omega\times \Omega\times\mathbb{R}^{+})$ and satisfies the above properties (P1)-(P4) pointwise. Moreover, we have
\begin{enumerate}[(i)]
  \item For any $t>0$ and any $k\in \mathbb{N}$, $h_{D}(\cdot,\cdot,t)\in L^2(\Omega\times \Omega)$ and
\begin{equation}\label{3-31}
  \partial_{t}^{k}h_{D}(x,y,t)=\sum_{j=1}^{\infty}(-\lambda_{j})^{k}e^{-\lambda_{j}t}\phi_j(x)\phi_j(y).
\end{equation}
Additionally, $\{\phi_{j}\}_{j=1}^{\infty} \subset L^{\infty}(\Omega)\cap C^{\infty}(\Omega)$, and  the series \eqref{3-31} converges absolutely and uniformly on $\Omega\times\Omega\times [a,+\infty)$ for any $a>0$.
\item For any fixed $y\in \Omega$, $h_D(x,y,t)$ is the solution of
\begin{equation}\label{3-32}
\left(\frac{\partial}{\partial t}-\triangle_{X}\right)h_D(x,y,t)=0 \quad \mbox{for all}~~ (x,t)\in\Omega\times \mathbb{R}^{+}.
\end{equation}
\item For any $f\in L^2(\Omega)$, $P_{t}^{\Omega}f\in C^{\infty}(\Omega\times \mathbb{R}^{+})$ and solves the degenerate heat equation
\[ \left\{
     \begin{array}{ll}
    \partial_{t}u-\triangle_{X}u=0, & \hbox{on ~$\Omega\times \mathbb{R}^{+}$;} \\
     \lim_{t\to 0^{+}}u(x,t)=f(x) , & \hbox{in $L^2(\Omega)$.}
     \end{array}
   \right.\]
\item
For all $t>0$ and $y\in \Omega$,
\begin{equation}\label{3-33}
  h_{D}(\cdot,y,t)\in {\rm dom}(\mathcal{L}_{\Omega})\subset H_{X,0}^{1}(\Omega).
\end{equation}
Hence, $h_{D}(x,y,t)$ is usually referred to the Dirichlet heat kernel of $\triangle_{X}$, since it admits the Dirichlet boundary condition in weak sense.
\end{enumerate}
\end{proposition}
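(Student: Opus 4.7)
The plan is to mirror the construction of the global heat kernel carried out in Proposition~\ref{prop3-3}, replacing the role of Proposition~\ref{prop3-2} with the ultracontractivity estimate~\eqref{3-29} from Proposition~\ref{prop3-5} and substituting interior hypoellipticity on $\Omega$ wherever the earlier argument used hypoellipticity on $\mathbb{R}^{n}$. The Dirichlet boundary behaviour, which is the genuinely new feature, will be handled at the end by exploiting that $\mathrm{dom}(\mathcal{L}_{\Omega})\subset H_{X,0}^{1}(\Omega)$.

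First I would extract a square-integrable representer. For each fixed $x\in\Omega$ and $t>0$, Proposition~\ref{prop3-5} shows that the functional $f\mapsto P_{t}^{\Omega}f(x)$ is bounded on $L^{2}(\Omega)$ with norm at most $F_{\overline{\Omega}}(t)$. The Riesz representation theorem then yields a unique $p_{t,x}^{\Omega}\in L^{2}(\Omega)$ with $P_{t}^{\Omega}f(x)=(p_{t,x}^{\Omega},f)_{L^{2}(\Omega)}$. Repeating verbatim the identity chain~\eqref{3-22}--\eqref{3-23} with $P_{t}$ replaced by $P_{t}^{\Omega}$ shows that $(p_{s,x}^{\Omega},p_{t-s,y}^{\Omega})_{L^{2}(\Omega)}$ is independent of $s\in(0,t)$, so
\[
h_{D}(x,y,t):=\bigl(p_{t/2,x}^{\Omega},p_{t/2,y}^{\Omega}\bigr)_{L^{2}(\Omega)}
\]
defines a candidate kernel satisfying symmetry and the Chapman--Kolmogorov identity (P3) pointwise. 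Smoothness on $\Omega\times\Omega\times\mathbb{R}^{+}$ follows from the observation that $P_{t}^{\Omega}f$ solves the degenerate heat equation $\partial_{t}u=\triangle_{X}u$ in the interior $\Omega\times\mathbb{R}^{+}$ (this is an $L^{2}$ semigroup identity identical to Lemma~\ref{lemma3-2}), and then interior hypoellipticity of $\partial_{t}-\triangle_{X}$ gives $P_{t}^{\Omega}f\in C^{\infty}(\Omega\times\mathbb{R}^{+})$. Hence $(x,t)\mapsto p_{t,x}^{\Omega}$ is weakly $C^{\infty}$ into $L^{2}(\Omega)$, is therefore strongly $C^{\infty}$, and $h_{D}$ inherits joint smoothness.

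Next I would verify the elementary properties. Nonnegativity of $p_{t,x}^{\Omega}$, and hence of $h_{D}$, follows from the Markovian property of the restricted Dirichlet form exactly as in Proposition~\ref{prop3-3}; the submarkovian inequality $\int_{\Omega}h_{D}(x,y,t)\,dy\leq 1$ and (P4) follow from the corresponding properties (S1), (S3), (S5) of $\{P_{t}^{\Omega}\}$. Property (ii) is a direct consequence of fixing $y\in\Omega$, setting $v(x,t)=h_{D}(x,y,t+s)=P_{t}^{\Omega}p_{s,y}^{\Omega}(x)$, and applying the hypoellipticity argument above; shifting $t$ by $-s$ gives~\eqref{3-32}. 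Property (iii) follows from (S3) and interior hypoellipticity together with the semigroup identity.

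For the spectral expansion (i), I would invoke Proposition~\ref{prop3-4} to obtain the orthonormal eigenbasis $\{\phi_{j}\}$ with eigenvalues $\lambda_{j}\to+\infty$. Since $\phi_{j}=e^{\lambda_{j}t}P_{t}^{\Omega}\phi_{j}$, estimate~\eqref{3-29} gives $\|\phi_{j}\|_{L^{\infty}(\Omega)}\leq F_{\overline{\Omega}}(t)e^{\lambda_{j}t}$; combined with Weyl-type lower growth of $\lambda_{j}$, the series $\sum(-\lambda_{j})^{k}e^{-\lambda_{j}t}\phi_{j}(x)\phi_{j}(y)$ converges absolutely and uniformly on $\Omega\times\Omega\times[a,+\infty)$ for any $a>0$, and its sum coincides with $h_{D}$ and its derivatives because both sides act identically on the orthonormal basis via~\eqref{3-30}. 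Finally, for (iv), which is the main obstacle because it encodes the Dirichlet boundary condition in weak sense, I would use the semigroup factorisation $h_{D}(\cdot,y,t)=P_{t/2}^{\Omega}\bigl(h_{D}(\cdot,y,t/2)\bigr)$ together with the fact that $P_{s}^{\Omega}$ maps $L^{2}(\Omega)$ into $\mathrm{dom}(\mathcal{L}_{\Omega})\subset H_{X,0}^{1}(\Omega)$ for every $s>0$; this is where the abstract theory of restricted Dirichlet forms is essential, since pointwise boundary vanishing is not available when $\partial\Omega$ is rough or characteristic for $X$.
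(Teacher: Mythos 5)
Your overall architecture is sound and largely parallels the paper's: you build the representer via Riesz (where the paper instead quotes an abstract existence lemma for ultracontractive semigroups from Grigoryan, but that is only a bookkeeping difference), you obtain smoothness from interior hypoellipticity, and your treatment of (ii)--(iv) — in particular the factorisation $h_D(\cdot,y,t)=P_{t/2}^{\Omega}\bigl(h_D(\cdot,y,t/2)\bigr)$ for (iv) — matches what the paper does. However, there is a genuine gap in your proof of (i).

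You invoke ``Weyl-type lower growth of $\lambda_j$'' to secure absolute and uniform convergence of the spectral series. This is circular: the Weyl asymptotic for $\lambda_k$ is exactly what the paper is establishing (Theorem~\ref{thm3}), and Proposition~\ref{prop3-6} is an ingredient of that proof, not a consequence of it. It is also unnecessary. The summability of $e^{-s\lambda_j}$ for every $s>0$ follows directly from ultracontractivity with no spectral asymptotics at all: since $0\le h_D(x,y,t)\le F_{\overline{\Omega}}(t/2)^2$ a.e.\ by (P3) and Proposition~\ref{prop3-5}, and $|\Omega|<\infty$, one has $h_D(\cdot,\cdot,t)\in L^2(\Omega\times\Omega)$, and Parseval gives $\sum_{j}e^{-2t\lambda_j}=\|h_D(\cdot,\cdot,t)\|_{L^2(\Omega\times\Omega)}^2<\infty$ (i.e.\ $e^{-t\mathcal{L}_\Omega}$ is Hilbert--Schmidt, hence $e^{-t\mathcal{L}_\Omega}$ is trace class). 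Moreover, your bound $\|\phi_j\|_{L^\infty(\Omega)}\le F_{\overline{\Omega}}(t)e^{\lambda_j t}$ should be pushed further: since it holds for all $t>0$ and $F_{\overline{\Omega}}(t)=C(1+t^{-N})$, optimising in $t$ yields the polynomial bound $\|\phi_j\|_{L^\infty(\Omega)}\le C(1+\lambda_j^N)$. Combining the Hilbert--Schmidt summability with this polynomial $L^\infty$ bound (and absorbing the polynomial into a spare exponential factor $e^{-a\lambda_j/4}$) is what actually delivers absolute and uniform convergence of $\sum_j(-\lambda_j)^k e^{-\lambda_j t}\phi_j(x)\phi_j(y)$ on $\Omega\times\Omega\times[a,+\infty)$. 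Replace the Weyl-law appeal with this self-contained argument and (i) is complete.
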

\begin{proof}
It follows from Proposition \ref{prop3-5} and \cite[Lemma 3.7]{Grigoryan2014} that
$P_{t}$ admits a unique heat kernel $h_{D}(x,y,t)$ satisfying \eqref{3-30} for all $t>0$ and almost all $x\in \Omega$. Moreover, we have
\begin{equation}\label{3-34}
  0\leq h_{D}(x,y,t)\leq F_{\overline{\Omega}}\left(\frac{t}{2}\right)^{2}\quad \mbox{for all}~ t>0~~\mbox{and almost all}~~x,y\in \Omega.
\end{equation}
Note that (P2) and \eqref{3-34} imply that $h_{D}(x,\cdot,t)=h_{D}(\cdot,x,t)\in L^2(\Omega)$ holds for almost all $x\in \Omega$ and all $t>0$. Then, we deduce from \eqref{3-30} and \cite[A.28.(b), p. 455]{Grigor'yan2009} that
\begin{equation}\label{3-35}
  \int_{\Omega}h_{D}(\cdot,z,t)\phi_{j}(z)dz=P_{t}^{\Omega}\phi_{j}=e^{-t\mathcal{L}_{\Omega}}\phi_{j}=e^{-t\lambda_{j}}\phi_{j}.
\end{equation}
Proposition \ref{prop3-4} shows that $\{\phi_{k}\}_{k=1}^{\infty}$ is an orthonormal basis of $L^2(\Omega)$. Thus, we obtain $ h_{D}(\cdot,x,t)=\sum_{j=1}^{\infty}e^{-t\lambda_{j}}\phi_{j}\phi_{j}(x)$,
that is,
\begin{equation}\label{3-36}
  h_{D}(x,y,t)=\sum_{j=1}^{\infty} e^{-t\lambda_{j}}\phi_{j}(x)\phi_{j}(y),
\end{equation}
where the series converges in $L^2(\Omega)$ in variable $y$ for almost all $x\in \Omega$ and all $t>0$. The estimate \eqref{3-34} then implies
\begin{equation}\label{3-37}
  \|h_{D}(\cdot,\cdot,t)\|_{L^2(\Omega\times\Omega)}\leq  C|\Omega|(1+2^{N}t^{-N})^{2}<\infty\qquad \forall t>0.
\end{equation}
It follows from \cite[Lemma 10.7, Lemma 10.14]{Grigor'yan2009} and \eqref{3-37} that
\begin{equation}\label{3-38}
  \sum_{j=1}^{\infty}e^{-2t\lambda_{j}}=\|h_{D}(\cdot,\cdot,t)\|_{L^2(\Omega\times\Omega)}^{2}\leq  C^{2}|\Omega|^{2}(1+2^{N}t^{-N})^{4}<\infty\qquad \forall t>0.
\end{equation}
Since $\{\phi_{k}(x)\phi_{k}(y)\}_{k=1}^{\infty}$ is obviously orthonormal in $L^2(\Omega\times\Omega)$, \eqref{3-38} derives that the series \eqref{3-36} converges in $L^2(\Omega\times \Omega)$, and $h_{D}(\cdot,\cdot,t)\in L^2(\Omega\times \Omega)$ for all $t>0$.

Applying Proposition \ref{prop3-5} for $f=\phi_{j}\in C^{\infty}(\Omega)\cap L^2(\Omega)$ and using \eqref{3-35}, we get
\begin{equation}\label{3-39}
  \sup_{x\in \Omega}|e^{-t\lambda_{j}}\phi_{j}(x)|\leq F_{\overline{\Omega}}(t)\leq C(1+t^{-N})\qquad \forall t>0,
\end{equation}
which means $\sup_{x\in \Omega}|\phi_{j}(x)|\leq C(1+\lambda_{j}^{N})$. Meanwhile, by \eqref{3-39} we obtain that
\begin{equation}\label{3-40}
  |e^{-\frac{t}{2}\lambda_{j}}\phi_{j}(x)\phi_{j}(y)|\leq F_{\overline{\Omega}}\left(\frac{t}{4}\right)^{2}\leq C(1+4^{N}t^{-N})^{2} \qquad \forall t>0,~x,y\in \Omega.
\end{equation}
For any $k\in \mathbb{N}$ and any $a>0$, \eqref{3-38} and \eqref{3-40} give that, for all $x,y\in \Omega$ and $t\geq a$,
\begin{equation*}
 \begin{aligned}
 \sum_{j=1}^{\infty}\lambda_{j}^{k}|e^{-t\lambda_{j}}\phi_j(x)\phi_j(y)| &=\sum_{j=1}^{\infty}\lambda_{j}^{k}e^{-\frac{t\lambda_{j}}{4}}|e^{-\frac{t\lambda_{j}}{2}}\phi_j(x)\phi_j(y)|e^{-\frac{t\lambda_{j}}{4}}\\
&\leq C(1+4^{N}a^{-N})^{2}\sum_{j=1}^{\infty}\left(\lambda_{j}^{k}e^{-\frac{a\lambda_{j}}{4}} \right)e^{-\frac{a\lambda_{j}}{4}}\\
 &\leq  C(1+4^{N}a^{-N})^{2}(1+16^{N}a^{-N})^{4}\max_{u\in \mathbb{R}^{+}}(u^{k}e^{-\frac{a}{4}u})<\infty,
\end{aligned}
\end{equation*}
which yields the absolutely and uniformly convergence of \eqref{3-31} on $\Omega\times\Omega\times [a,+\infty)$.

For any fixed $y\in \Omega$ and each $j\geq 1$, the term $e^{-t\lambda_{j}}\phi_j(x)\phi_j(y)$ solves $\partial_{t}u-\triangle_{X}u=0$ on $\Omega\times \mathbb{R}^{+}$. The  local
 uniform convergence of \eqref{3-31} implies that $h_{D}(x,y,t)\in \mathcal{D}'(\Omega\times\mathbb{R}^{+})$ is a weak solution to \eqref{3-32}. Analogously,  $h_{D}(x,y,t)\in \mathcal{D}'(\Omega\times\Omega\times\mathbb{R}^{+})$ is a weak solution of equation $[\partial_{t}-\frac{1}{2}(\triangle_{X}^{x}+\triangle_{X}^{y})]u(x,y,t)=0$, since for each $j\geq 1$, $e^{-t\lambda_{j}}\phi_j(x)\phi_j(y)$ is a solution of $[\partial_{t}-\frac{1}{2}(\triangle_{X}^{x}+\triangle_{X}^{y})]u(x,y,t)=0$. The hypoellipticity of $\partial_{t}-\frac{1}{2}(\triangle_{X}^{x}+\triangle_{X}^{y})$ indicates that $h_{D}(x,y,t)\in C^{\infty}(\Omega\times\Omega\times\mathbb{R}^{+})$. This implies the properties (P1) and (P2) hold pointwise. Additionally, \eqref{3-31} derives that for $t>0$ and $s>0$,
\begin{align*}
\int_{\Omega}h_D(x,z,t)h_D(z,y,s)dz&=\int_{\Omega}\left(\sum_{i=1}^{\infty}
e^{-\lambda_{i}t}\phi_{i}(x)\phi_{i}(z) \right)\left(\sum_{j=1}^{\infty}e^{-\lambda_{j}s}\phi_{j}(z)\phi_{j}(y) \right)dz\\
 &=\sum_{i=1}^{\infty}e^{-\lambda_{i}(t+s)}\phi_{i}(x)\phi_{i}(y)=h_D(x,y,t+s),
\end{align*}
which indicates that the property (P3) holds pointwise.

 Given a function $f_{0}\in L^2(\Omega)$, we have $f_{0}=\sum_{i=1}^{\infty}a_{i}\phi_{i}$ in $L^{2}(\Omega)$ with $a_{i}=(f_0,\phi_{i})_{L^2(\Omega)}$.
In terms of \eqref{3-30} and \eqref{3-31}, we have for any $t>0$,
\begin{align*}
P_{t}^{\Omega}f=\int_{\Omega}\left(\sum_{i=1}^{\infty}e^{-\lambda_{i}t}\phi_{i}\phi_{i}(y)\right)
\left(\sum_{j=1}^{\infty}a_{j}\phi_{j}(y)\right)dy=\sum_{i=1}^{\infty}e^{-\lambda_{i}t}a_{i}\phi_{i}~~\mbox{in}~~ L^2(\Omega).
\end{align*}
Note that the Parseval's identity gives that $\sum_{i=1}^{\infty}a_{i}^2= \|f_{0}\|_{L^2(\Omega)}^{2}<+\infty$. Using \eqref{3-38} and \eqref{3-39}, we have for any $t>0$ and all $x\in \Omega$,
\[ \begin{aligned}
 \sum_{i=1}^{\infty}e^{-\lambda_{i}t}|a_{i}|\cdot|\phi_{i}(x)|&\leq \|f_{0}\|_{L^2(\Omega)}\sum_{i=1}^{\infty}e^{-\frac{\lambda_{i}t}{2}}\cdot e^{-\frac{\lambda_{i}t}{2}}|\phi_{i}(x)|\leq C(1+2^{N}t^{-N})(1+8^{N}t^{-N})^{4},
\end{aligned} \]
which means  $\sum_{i=1}^{\infty}e^{-\lambda_{i}t}a_{i}\phi_{i}(x)$ converges uniformly on $\Omega\times [a,+\infty)$ for any $a>0$. Since each term $e^{-\lambda_{i}t}a_{i}\phi_{i}(x)$ solves $\partial_{t}u-\triangle_{X}u=0$ on $\Omega\times \mathbb{R}^{+}$, using the hypoellipticity of $\partial_{t}-\triangle_{X}$, we deduce that $P_{t}^{\Omega}f\in C^{\infty}(\Omega\times \mathbb{R}^{+})$ and solves $\partial_{t}u-\triangle_{X}u=0$ on $\Omega\times \mathbb{R}^{+}$. This implies \eqref{3-30} holds for all $x\in \Omega$ and $t>0$.

Using \eqref{3-34} and (P1), we have $h_{D}(\cdot,y,t_{1})\in L^2(\Omega)$ for all $t_{1}>0$ and $y\in \Omega$.
Thus,  \eqref{3-30}, (P3) and (S6) imply that
\begin{equation*}
\begin{aligned}
h_{D}(x,y,t)&=\int_{\Omega}h_{D}\left(x,z,t-t_{1}\right)h_{D}\left(z,y,t_{1}\right)dz=\left(P_{t-t_{1}}^{\Omega}h_{D}(\cdot,y,t_{1})\right)(x)\in \text{dom}(\mathcal{L}_{\Omega})
\end{aligned}
\end{equation*}
for all $t>t_{1}>0$, which yields \eqref{3-33}.
\end{proof}

\subsection{Comparison of heat kernels}

Based on the aforementioned results, we can establish the following comparison results for the subelliptic heat kernels,  which constitutes a crucial component in the proof of Theorem \ref{thm1}. Specifically, for any $(x,y,t)\in \Omega\times \Omega\times \mathbb{R}^{+}$, we define
\begin{equation}\label{3-41}
  E(x,y,t):=h(x,y,t)-h_{D}(x,y,t).
\end{equation}
Then, we have
\begin{proposition}
\label{prop3-7}
Assume that $X=(X_{1},X_{2},\ldots,X_{m})$ and $\Omega$ satisfy the conditions of
Theorem \ref{thm1}. Let $\eta(x):=\frac{d_{X}^{2}(x,\partial\Omega)}{A_{1}Q}$ be the continuous function defined on $\Omega$, where $A_{1}$ is the positive constant in \eqref{3-21}, $Q$ is the homogenous dimension, and  $ d_{X}(x,\partial\Omega):=\inf_{y\in \partial\Omega}d_{X}(x,y)$. Then we have the following estimates:
\begin{itemize}
  \item For any $x\in \Omega$ and $0<t\leq \eta(x)$,
\begin{equation}\label{3-42}
E(x,x,t)\leq \frac{2A_{1}C_{3}}{|B_{d_{X}}(x,\sqrt{t})|}e^{-\frac{d_{X}^{2}(x,\partial\Omega)}{A_{1}t}},
\end{equation}
where $C_{3}$ is a positive constant appeared in Corollary \ref{corollary2-1}.
  \item For any $x\in \Omega$ and $t>0$,
\begin{equation}\label{3-43}
  E(x,x,t)\geq 0.
\end{equation}
\end{itemize}

\end{proposition}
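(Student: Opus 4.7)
The lower bound (3-43) should follow immediately from heat-semigroup comparison. Since Proposition 3.1 identifies $(\mathcal{Q}, H^1_{X,0}(\Omega))$ as the restriction of $(\mathcal{Q}, H^1_X(\mathbb{R}^n))$ to $\Omega$, (3-4) gives $P^\Omega_t f \leq P_t f$ almost everywhere for every nonnegative $f \in L^2(\Omega)$ extended by zero to $\mathbb{R}^n$. Testing with approximate identities and invoking the joint continuity of $h$ and $h_D$ (Propositions 3.3 and 3.6) would yield $0 \leq h_D(x,y,t) \leq h(x,y,t)$ pointwise, hence $E(x,x,t) \geq 0$.

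For the upper bound (3-42), fix $x_0 \in \Omega$ and $0 < t \leq \eta(x_0) = d_X^2(x_0,\partial\Omega)/(A_1 Q)$. My first step would be a uniform boundary bound: for $z \in \partial\Omega$ and $s \in (0,t]$, the Gaussian upper bound (3-21) with $d_X(z,x_0) \geq d_X(x_0,\partial\Omega)$ gives
\[
h(z,x_0,s) \leq \frac{A_1}{|B_{d_X}(x_0,\sqrt{s})|}\exp\Bigl(-\tfrac{d_X^2(x_0,\partial\Omega)}{A_1 s}\Bigr),
\]
and the doubling estimate (2-10) with $r_1=\sqrt{s},r_2=\sqrt{t}$ contributes the factor $C_3(t/s)^{Q/2}$. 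Setting $A := d_X^2(x_0,\partial\Omega)/A_1$, the hypothesis $t \leq \eta(x_0)$ becomes $A/t \geq Q$, and an elementary lemma (reducing, via $r=t/s\geq 1$, to $\ln r \leq 2(r-1)$) shows
\[
(t/s)^{Q/2}\exp(-A/s) \leq \exp(-A/t) \quad \text{for all } s \in (0,t].
\]
Combining these bounds yields $M := \sup_{z\in\partial\Omega,\, s\in(0,t]} h(z,x_0,s) \leq \frac{A_1 C_3}{|B_{d_X}(x_0,\sqrt{t})|}\exp(-A/t)$.

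The second step would propagate this boundary bound into the interior. The function $v(y,s):=E(y,x_0,s)$ is smooth on $\Omega\times\mathbb{R}^+$, solves $\partial_s v = \Delta_X v$ by (3-20) and (3-32), vanishes as $s\to 0^+$, and by (3-33) $h_D(\cdot,x_0,s)\in H^1_{X,0}(\Omega)$, so in a weak Dirichlet-form sense $v$ inherits the boundary values of $h(\cdot,x_0,\cdot)$ on $\partial\Omega$. The target is the comparison $v(y,s) \leq 2M$ on $\Omega \times (0,t]$, from which (3-42) follows by setting $y=x_0,s=t$.

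The main obstacle is precisely this second step: because $\partial\Omega$ may be non-smooth and characteristic for $X$, and $\Delta_X$ is degenerate, a pointwise classical parabolic maximum principle is unavailable. My plan to circumvent this would be to exploit the symmetry and semigroup identity: write
\[
E(x_0,x_0,t) = \int_{\mathbb{R}^n\setminus\Omega} h(x_0,z,t/2)^2\,dz + \int_\Omega (h+h_D)(x_0,z,t/2)\,E(x_0,z,t/2)\,dz,
\]
bound the first integral by $\sup_{z\notin\Omega} h(x_0,z,t/2)$ times $\int h \leq 1$ (via property (P1)), using $d_X(x_0,z)\geq d_X(x_0,\partial\Omega)$ to pull out the Gaussian factor, and dominate the second integrand via $h+h_D\leq 2h$ together with a further semigroup/iteration step that brings a boundary term into play; the factor $2$ in the conclusion is precisely the residue of the $h+h_D\leq 2h$ splitting, and the calculus lemma of Step~1 is what collapses the intermediate $t/2$ scales back to $\sqrt{t}$ in the volume and to $1/t$ in the exponent.
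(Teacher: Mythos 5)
Your lower-bound argument (3-43) matches the paper's: both reduce to the semigroup domination (3-4) between the restricted and full Dirichlet forms and then pass to the diagonal via continuity (the paper cites [Grigoryan2014, Lemma~3.4] for the final step rather than mollifiers, but the route is the same). Your ``Step~1'' boundary computation for the upper bound is also sound and is a variant of what the paper does: the paper observes that $g(s)=s^{-Q/2}e^{-A/s}$ is increasing on $(0,2A/Q]$ (so $\sup_{0<s\le t}(t/s)^{Q/2}e^{-A/s}=e^{-A/t}$ when $t\le\eta(x)$), while you reach the same inequality by the calculus bound $\ln r\le 2(r-1)$; these are equivalent observations, and both correctly combine (3-21) with the doubling estimate (2-10).

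The genuine gap is in your ``Step~2.'' You correctly identify that a classical parabolic maximum principle is unavailable because $\partial\Omega$ may be non-smooth and characteristic and $\Delta_X$ is degenerate, and you correctly note that what is needed is a comparison inequality that converts the boundary bound into an on-diagonal bound. But your proposed workaround via the identity
\begin{equation*}
E(x_0,x_0,t)=\int_{\mathbb{R}^n\setminus\Omega}h(x_0,z,t/2)^2\,dz+\int_\Omega(h+h_D)(x_0,z,t/2)\,E(x_0,z,t/2)\,dz
\end{equation*}
does not close: the second integral contains $E(x_0,z,t/2)$, which near $\partial\Omega$ is of the same order as $h(x_0,z,t/2)$, so bounding it by the boundary sup and iterating neither produces a geometrically convergent series nor obviously recovers the $e^{-A/t}$ exponent at the correct scale. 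After one iteration you obtain $E(x_0,z,t/2)$ for off-diagonal pairs $(x_0,z)$, and the analogous decomposition for those involves mixed products $h(x_0,w,\cdot)E(z,w,\cdot)+E(x_0,w,\cdot)h_D(z,w,\cdot)$ with no evident contraction. The paper avoids this entirely: the crucial ingredient, which you have essentially rediscovered the need for but not supplied, is [Grigoryan2010-jfa, Theorem~5.1], which directly gives
\begin{equation*}
h(x,x,t)\le h_D(x,x,t)+2\sup_{0<s\le t}\,\sup_{z\in\Omega\setminus K}h(x,z,s)
\end{equation*}
for every compact $K\subset\Omega$. The paper then takes $K=\overline{\Omega_j}$ for an exhausting sequence $\Omega_j\uparrow\Omega$, applies your Step~1 computation with $\partial\Omega_j$ in place of $\partial\Omega$, and lets $j\to\infty$. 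Without a proof of this comparison lemma (or a citation of it), the argument you sketch is not a valid derivation of (3-42).
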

\begin{proof}
Proposition \ref{prop3-1} (1) indicates that  $(\mathcal{Q}, H_{X}^1(\mathbb{R}^n))$ is a regular and local Dirichlet form in $L^2(\mathbb{R}^n)$. Note that $h(x,y,t)\in C^{\infty}(\mathbb{R}^n\times \mathbb{R}^n\times \mathbb{R}^{+})$ is locally bounded in $\mathbb{R}^n\times \mathbb{R}^n\times \mathbb{R}^{+}$. By means of \cite[Theorem 5.1]{Grigoryan2010-jfa}, for any compact subset $K\subset \Omega$ and any $(x,t)\in \Omega\times \mathbb{R}^{+}$, we have
\begin{equation}\label{3-44}
  h(x,x,t)\leq h_{D}(x,x,t)+2\sup_{0<s\leq t}\sup_{z\in \Omega\setminus K}h(x,z,s).
\end{equation}

We denote by $\Omega_{j}:=\{x\in \Omega|d_{X}(x,\partial\Omega)>\frac{1}{j}\}$ for any  $j\geq 1$. It follows that $\Omega=\bigcup_{j=1}^{\infty} \Omega_{j}$ and $\overline{\Omega_{j}}\subset \Omega_{j+1}\subset \Omega$. Moreover, if
 $\Omega_{j}\neq \varnothing$, then $\Omega_{j}$ is a pre-compact open subset of $\Omega$. As a result, for any $x\in \Omega$, there exists a positive integer $j_0=j_{0}(x)\geq 1$  such that $x\in \Omega_{j}$ for all $j\geq j_0$. In addition, by Definition \ref{def2-3} we can easily verify the following facts:
\begin{enumerate}[(i)]
  \item For any $x\in \Omega_j$ and $y\in \Omega\setminus\overline{\Omega_j}$, we have $d_ {X}(x,\partial \Omega_{j})\leq d_{X}(x,y)$.
  \item $\lim_{j\to+\infty}d_X(x,\partial \Omega_{j})=d_X(x,\partial \Omega)$.
\end{enumerate}

For any fixed $x\in \Omega$ and $j\geq j_0$ (such that $x\in \Omega_{j}$),  \eqref{3-21}, (i) and Corollary \ref{corollary2-1} imply that
\begin{equation}\label{3-45}
\begin{aligned}
\sup_{0<s\leq t}\sup_{z\in \Omega\setminus \overline{\Omega_j}}h(x,z,s) &\leq \sup_{0<s\leq t}\sup_{z\in \Omega\setminus\overline{\Omega_j}}
 \frac{A_{1}}{|B_{d_{X}}(x,\sqrt{s})|}e^{-\frac{ d_{X}^{2}(x,z)}{A_{1}s}}\leq \sup_{0<s\leq t}\frac{A_{1}}{|B_{d_{X}}(x,\sqrt{s})|}e^{-\frac{ d_{X}^{2}(x,\partial \Omega_j )}{A_{1}s}}\\
&=\sup_{0< s\leq t}\frac{A_{1}|B_{d_{X}}(x,\sqrt{t})|}{|B_{d_{X}}(x,\sqrt{s})|}\cdot \frac{1}{|B_{d_{X}}(x,\sqrt{t})|}e^{-\frac{d_{X}^{2}(x,\partial\Omega_{j})}{A_{1}s} }\\
 &\leq \frac{A_{1}C_{3}}{|B_{d_{X}}(x,\sqrt{t})|}\sup_{0< s\leq t}\left(\frac{t}{s}\right)^{\frac{Q}{2}}e^{-\frac{d_{X}^{2}(x,\partial\Omega_{j})}{A_{1}s} }.\\
 \end{aligned}
\end{equation}
Observe that the function $g(s):=s^{-\frac{Q}{2}}e^{-\frac{d_{X}^{2}(x,\partial\Omega_{j})}{A_{1}s}}$
is increasing on $\left(0,\frac{2d_{X}^{2}(x,\partial\Omega_{j})}{A_{1}Q}  \right]$ and decreasing on $\left(\frac{2d_{X}^{2}(x,\partial\Omega_{j})}{A_{1}Q},+\infty\right)$ with
$\lim_{s\to 0^{+}}g(s)=\lim_{s\to +\infty}g(s)=0$.
Combining \eqref{3-44} and \eqref{3-45}, for any fixed  $x\in\Omega$ and $j\geq j_0$, we have
\begin{equation}\label{3-46}
  E(x,x,t)\leq \frac{2A_{1}C_{3}}{|B_{d_{X}}(y,\sqrt{t})|}e^{-\frac{d_{X}^{2}(y,\partial\Omega_{j})}{A_{1}t}}\qquad \forall 0<t\leq \frac{2d_{X}^{2}(x,\partial\Omega_{j})}{A_{1}Q}.
\end{equation}
Since the positive constants $A_{1},C_{3}, Q$ in  \eqref{3-46} are independent of $j$, we can take $j\to +\infty$ and  derive from (ii) that
\begin{equation*}
  E(x,x,t)\leq \frac{2A_{1}C_{3}}{|B_{d_{X}}(x,\sqrt{t})|}  e^{-\frac{d_{X}^{2}(x,\partial\Omega)}{A_{1}t}}
\end{equation*}
holds for all $x\in \Omega$ and $0<t\leq \eta(x):=\frac{d_{X}^{2}(x,\partial\Omega)}{A_{1}Q}$, which proves \eqref{3-42}.

 On the other hand, using \eqref{3-4}, \eqref{3-18} and \eqref{3-30} we deduce that for any non-negative functions $f,g\in L^2(\Omega)$ and any $t>0$,
\begin{equation*}
  \int_{\Omega} \int_{\Omega}E(x,y,t)f(y)g(x)dxdy= \int_{\Omega} \int_{\Omega}(h(x,y,t)-h_{D}(x,y,t))f(y)g(x)dxdy\geq 0.
\end{equation*}
By \cite[Lemma 3.4]{Grigoryan2014},  $E(x,x,t)\geq 0$ for all $(x,t)\in \Omega\times \mathbb{R}^{+}$, which yields \eqref{3-43}.
\end{proof}

\section{Asymptotic behaviour of integral of $\Lambda(x,r)^{-1}$}
\label{Section4}

In this section, we investigate the explicit asymptotic behaviour of the following integral:
\begin{equation}\label{4-1}
 J_{\Omega}(r):=\int_{\Omega}\frac{dx}{\Lambda(x,r)}\qquad\mbox{as}~~r\to 0^{+},
\end{equation}
where $\Lambda(x,r)$ is the Nagel-Stein-Wainger
polynomial mentioned in \eqref{2-3}, and  $\Omega$ is a bounded open domain in $\mathbb{R}^n$ containing the origin. The asymptotic behaviour of the integral given by \eqref{4-1} will be essential  in the proof of Theorem \ref{thm2}, as it is directly related to the Ball-Box theorem (as stated in Proposition \ref{prop2-8} above).

One potential strategy for dealing with \eqref{4-1},  inspired by Nagel-Stein-Wainger \cite{Stein1985}, is to
divide the domain $\Omega$ into subregions  $\Omega_{I,r}$, where $\Omega_{I,r}:=\{x\in \Omega|\lambda_{I}(x)r^{d(I)}=\max_{J}|\lambda_{J}(x)r^{d(J)}|\}$ for any $0<r<1$. The estimation of
$J_{\Omega}(r)$ then involves estimating  the integrals of $(\lambda_{I}(x)r^{d(I)})^{-1}$ over these subregions. However, since these subregions cannot be expressed explicitly for the abstract vector fields that only satisfy H\"{o}rmander's condition, computing these integrals presents significant challenges.

Fortunately, if we restrict ourselves to homogeneous H\"{o}rmander vector fields, the corresponding smooth functions $\lambda_{I}$ are polynomials. This enables us to study the explicit asymptotic behaviour of integral $J_{\Omega}(r)$ by improving the above idea. We briefly outline our approach. By utilizing the degenerate components and homogeneity property (H.1), we first show that $J_{\Omega}(r)$ is  asymptotically equivalent to the integral of $\Lambda(x,r)^{-1}$ over any $v$-dimensional bounded open domain $\Omega_{0}\subset \mathbb{R}^{v}$ that contains the origin in $\mathbb{R}^{v}$. Here, $v$ denotes the number of degenerate components of the vector fields $X$, and $\mathbb{R}^{v}$ is the projection of $\mathbb{R}^n$ in the directions of all degenerate components. Then, by employing the resolution of singularities in algebraic geometry, we can identify a real analytic map $\rho$ on a real analytic manifold, such that every $\lambda_{I}\circ\rho$ has the form $c(u)m(u)$ in some local coordinates, where $m(u)$ is a monomial, and $c(u)$ is a non-vanishing analytic function. Therefore, we can estimate $J_{\Omega}(r)$ using the following chain of approximations:
\begin{align*}
  J_{\Omega}(r)&=\int_{\Omega}\frac{dx}{\Lambda(x,r)}\approx \int_{\Omega_0}\frac{dx_{i_1}\cdots dx_{i_v}}{\sum_{I}|\lambda_I(x)|r^{d(I)}}&&\text{(Lemma \ref{lemma4-1})}\\
  & \approx \sum_{j=1}^l\int_{(-1,1)^{v}}\frac{|u^{q_{j}}| du}{\sum_{I}|u^{p_{j,I}}|r^{d(I)}} &&\text{(Resolution of singularities)}\\
  &\approx \sum_{j=1}^l\int_{(0,1]^{v}}\frac{u^{q_{j}} du}{\sum_{I}u^{p_{j,I}}r^{d(I)}},&&
\end{align*}
where $l$ is a positive integer, $\mathbf{1}=(1,\ldots,1)$ is the vector in $\mathbb{R}^{v}$, $q_{j}$ and $p_{j,I}$ are $v$-dimensional multi-indexes. By changing the coordinates, we have
\begin{align*}
J_{\Omega}(r)&\approx \sum_{j=1}^l\int_{(0,1]^{v}}\frac{u^{q_{j}} du}{\sum_{I}u^{p_{j,I}}r^{d(I)}}\approx\sum_{j=1}^l\int_{[0,+\infty)^{v}}\frac{e^{-\langle q_{j}+\textbf{1},u\rangle}du}{\sum_{I}e^{-\langle p_{j,I},u\rangle}r^{d(I)}}~~\mbox{for}~~0<r<1.
\end{align*}
Next, we divide the set $[0,+\infty)^{v}$ by some polyhedrons of the form
\[ \begin{aligned}
P_{j,I,r}&=\{u\in [0,+\infty)^v|e^{-\langle p_{j,I},u\rangle}r^{d(I)}=\max_{J}e^{-\langle p_{j,J},u\rangle}r^{d(J)}\}\\
&=\left\{u\in [0,+\infty)^v\bigg|\langle p_{j,I}-p_{j,J},u\rangle\leq (d(J)-d(I))\left(\ln \frac{1}{r}\right)~~\mbox{for all $n$-tuples}~J\right\},
\end{aligned}
\]
and show that
\[ J_{\Omega}(r) \approx\sum_{j=1}^l\int_{[0,+\infty)^{v}}\frac{e^{-\langle q_{j}+\textbf{1},u\rangle}du}{\sum_{I}e^{-\langle p_{j,I},u\rangle}r^{d(I)}}\approx \sum_{j=1}^l\sum_{I} \frac{1}{r^{d(I)}}\int_{P_{j,I,r}}e^{\langle p_{j,I}- q_{j}-\textbf{1},u\rangle} du~~\mbox{for}~~0<r<1.\]
After some refined analysis involving convex geometry, we finally obtain the following estimate:
\[ \frac{1}{r^{d(I)}}\int_{P_{j,I,r}}e^{\langle p_{j,I}- q_{j}-\textbf{1},u\rangle} du \approx r^{-\tilde{\alpha}}|\ln r|^{\tilde{d}}~~\mbox{as}~~r\to 0^{+} \]
with $\tilde{\alpha}\in \mathbb{Q}$ and $\tilde{d}\in \{0,1,\ldots, v\}$. Consequently, we conclude that
\[ J_{\Omega}(r)\approx r^{-Q_0}|\ln r|^{d_0}~~\mbox{as}~~r\to 0^{+},\]
where $Q_0\in \mathbb{Q}$ and $d_0\in \{0,1,\ldots, v\}$. Furthermore, we will also provide the optimal ranges of indexes $Q_0$ and $d_0$.

\subsection{Simplification procedures for the integral $J_{\Omega}(r)$}
We start with the following technical lemma.
\begin{lemma}
\label{lemma4-1}
For any given bounded open subsets $\Omega_{1}$ and $\Omega_{2}$ in $\mathbb{R}^n$ containing the origin, we have $J_{\Omega_{1}}(r)\approx J_{\Omega_{2}}(r)$, where the notation $J_{D}(r)$ denotes
\[ J_{D}(r)=\int_{D}\frac{dx}{\Lambda(x,r)} ~~~\mbox{for the set}~~D\subset\mathbb{R}^{n}.\]
Moreover, let $\{x_{i_{1}},\ldots,x_{i_{v}}\}$ be the collection of all degenerate components of $X$, and let $\mathbb{R}^{v}:=\mathbb{R}_{x_{i_{1}}}\times \cdots\times \mathbb{R}_{x_{i_{v}}}$ be the projection of $\mathbb{R}^n$ in directions $\{x_{i_{1}},\ldots,x_{i_{v}}\}$. Then we have
\[J_{\Omega_{1}}(r)\approx J_{\Omega_{2}}(r)\approx J_{\Omega_{3},v}(r)\approx J_{\Omega_{4},v}(r),\]
 where $\Omega_{3}$ and $\Omega_{4}$ are any bounded open subsets of $\mathbb{R}^{v}$ containing the origin, and the notation $J_{D',v}(r)$ denotes
 \[ J_{D',v}(r)=\int_{D'}\frac{dx_{i_{1}}\cdots dx_{i_{v}} }{\Lambda(x,r)} ~~~\mbox{for the set}~~D'\subset\mathbb{R}^{v}.\]
\end{lemma}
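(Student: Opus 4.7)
The plan is to prove the lemma using two key tools. The first is the \emph{scaling identity} $J_{\delta_t(\Omega)}(r) = J_\Omega(r/t)$, obtained from the change of variable $y = \delta_t(x)$ (Jacobian $t^Q$) together with $\Lambda(\delta_t(x), r) = t^Q \Lambda(x, r/t)$, which is immediate from \eqref{2-6} and the $\delta_t$-homogeneity of each $f_k$ of degree $Q-k$. The second is a \emph{doubling estimate}: since \eqref{2-6} expresses $\Lambda(x,r)$ as a sum of non-negative terms $f_k(x) r^k$ with $w \leq k \leq Q$, we have for $0 < r_1 \leq r_2$ that $\Lambda(x, r_2) \leq (r_2/r_1)^Q \Lambda(x, r_1)$, and hence $J_\Omega(s_1) \leq (s_2/s_1)^Q J_\Omega(s_2)$ whenever $0 < s_1 \leq s_2$.

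For the first assertion, I first establish the geometric sandwich: given two bounded open subsets $\Omega_1, \Omega_2 \subset \mathbb{R}^n$ each containing the origin, there exist constants $0 < a_1 \leq a_2$ such that $\delta_{a_1}(\Omega_2) \subset \Omega_1 \subset \delta_{a_2}(\Omega_2)$. This follows because each $\Omega_i$ contains a Euclidean ball of some radius $\epsilon_i > 0$ and is contained in a Euclidean ball of some radius $R_i$; for $t \leq 1$ the inequality $|\delta_t(x)| \leq t|x|$ (which uses $\alpha_i \geq 1$) forces $\delta_t(\Omega_2) \subset \Omega_1$ once $t \leq \epsilon_1/R_2$, while for large $t$ a direct computation shows $\delta_t(\Omega_2)$ contains the axis-parallel box $\prod_i (-t^{\alpha_i}\epsilon_2/\sqrt{n},\, t^{\alpha_i}\epsilon_2/\sqrt{n})$, which contains $\Omega_1$ once $t \geq R_1\sqrt{n}/\epsilon_2$. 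Combining this sandwich with the monotonicity $\Omega \mapsto J_\Omega(r)$ and the scaling identity yields
\begin{equation*}
J_{\Omega_2}(r/a_1) \leq J_{\Omega_1}(r) \leq J_{\Omega_2}(r/a_2).
\end{equation*}
The doubling estimate then bounds both ends in terms of $J_{\Omega_2}(r)$ up to a constant depending only on $a_1, a_2, Q$, giving $J_{\Omega_1}(r) \approx J_{\Omega_2}(r)$ for all $r > 0$.

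For the second assertion, I observe that since every variable not in $\{x_{i_1}, \ldots, x_{i_v}\}$ is non-degenerate, $\Lambda(x, r)$ depends only on $x' := (x_{i_1}, \ldots, x_{i_v})$. I therefore fix any bounded open $\Omega'' \subset \mathbb{R}^{n-v}$ containing the origin of $\mathbb{R}^{n-v}$ and set $\widetilde{\Omega} := \Omega_3 \times \Omega''$, which is a bounded open subset of $\mathbb{R}^n$ containing the origin of $\mathbb{R}^n$. Fubini's theorem gives $J_{\widetilde{\Omega}}(r) = |\Omega''| \cdot J_{\Omega_3, v}(r)$. Applying the already-proven first assertion to the pair $(\Omega_1, \widetilde{\Omega})$ delivers $J_{\Omega_1}(r) \approx J_{\Omega_3, v}(r)$, and an analogous construction with $\Omega_4$ in place of $\Omega_3$ (and $\Omega_2$ in place of $\Omega_1$) yields $J_{\Omega_2}(r) \approx J_{\Omega_4, v}(r)$, completing the full chain of equivalences.

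The main technical obstacle is the precise verification of the geometric sandwich $\delta_{a_1}(\Omega_2) \subset \Omega_1 \subset \delta_{a_2}(\Omega_2)$, since it is here that the anisotropic nature of $\delta_t$ must be reconciled with the isotropic neighborhoods of the origin guaranteed by openness and boundedness; once the sandwich is in place, the rest of the argument is essentially mechanical via the scaling identity, the doubling estimate, and Fubini.
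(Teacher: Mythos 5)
Your proof is correct and takes essentially the same approach as the paper's. The paper proves $J_{D(p)}(r)\approx J_{D(q)}(r)$ for anisotropic boxes $D(p)$ via the change of variable $x=\delta_{p/q}(y)$, noting that the resulting factors $(p/q)^{Q-k}$ ($w\le k\le Q$) are bounded above and below, and then sandwiches an arbitrary $\Omega_i$ between two such boxes. You package the same homogeneity computation as the scaling identity $J_{\delta_t(\Omega)}(r)=J_\Omega(r/t)$ and a doubling estimate, and sandwich $\Omega_1$ between scaled copies of $\Omega_2$; the second assertion is handled by the same Fubini observation that $\Lambda$ depends only on the degenerate variables. The organizational differences are minor and the substance is identical.
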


\begin{proof}
Given any positive constant $p>0$, we consider the corresponding $n$-dimensional $\delta_{t}$-box
\[ D(p):=\{(x_{1},\ldots,x_{n})\in \mathbb{R}^n| |x_{j}|<p^{\alpha_{j}}, j=1,\ldots,n\}. \]
By Proposition \ref{prop2-7}, we have for any $0<p<q$,
\begin{equation*}
\begin{aligned}
J_{D(p)}(r)&=\int_{D(p)}\frac{dx}{\Lambda(x,r)}
=  \int_{D(p)}\frac{dx}{\sum_{k=w}^{Q}f_{k}(x)r^{k}}\\
&=\left(\frac{p}{q}\right)^{Q}\int_{D(q)}\frac{dy}{\sum_{k=w}^{Q}f_{k}(\delta_{\frac{p}{q}}(y))r^{k}}
=\left(\frac{p}{q}\right)^{Q}\int_{D(q)}\frac{dy}{\sum_{k=w}^{Q}\left(\frac{p}{q}\right)^{Q-k}f_{k}(y)r^{k}}
\\ &\approx J_{D(q)}(r).
\end{aligned}
\end{equation*}
In particular, for any fixed $p>0$,
\begin{equation}\label{4-2}
  J_{D(p)}(r)\approx J_{D(1)}(r).
\end{equation}
For any bounded open set $\Omega_{1}\subset \mathbb{R}^n$ containing the origin, there exist positive constants $0<p<q$ such that $D(p)\subset \Omega_{1}\subset D(q)$. Applying \eqref{4-2} to
$ J_{D(p)}(r)\leq J_{\Omega_{1}}(r)\leq J_{D(q)}(r)$,
we derives $J_{\Omega_{1}}(r)\approx J_{D(1)}(r)$. This means $J_{\Omega_{1}}(r)\approx J_{\Omega_{2}}(r)$ for any bounded open sets $\Omega_{1},\Omega_{2}$ in $\mathbb{R}^n$ containing the origin.

Similarly, for any bounded open set $\Omega_{3}\subset \mathbb{R}^{v}$ that contains the origin, there exist $0<p<q$ such that the projections of $n$-dimensional $\delta_{t}$-boxes $D(p)$ and $D(q)$ in the directions $\{x_{i_{1}},\ldots,x_{i_{v}}\}$, denoted by  $D(p,v)$ and $D(q,v)$, respectively, satisfy
\begin{equation}\label{4-3}
 D(p,v)\subset \Omega_{3}\subset D(q,v),
\end{equation}
where $D(p,v)=\{(x_{i_{1}},\ldots,x_{i_{v}})\in \mathbb{R}^{v}||x_{i_{j}}|<p^{\alpha_{i_{j}}},~j=1,\ldots,v\}$. From \eqref{4-3} we have
\begin{equation}\label{4-4}
 J_{D(p,v),v}(r)\leq J_{\Omega_{3},v}(r)\leq J_{D(q,v),v}(r).
\end{equation}
Moreover, a straightforward calculation shows that
\[ 2^{n-v}p^{Q-\alpha(X)}J_{D(p,v),v}(r)=J_{D(p)}(r)~~~\mbox{and}~~~
2^{n-v}q^{Q-\alpha(X)}J_{D(q,v),v}(r)=J_{D(q)}(r),\]
where $\alpha(X)$ is the sum of all degenerate indexes of $X$ defined in \eqref{2-11}. Therefore, we have  $J_{\Omega_{3},v}(r)\approx J_{D(1)}(r)$, which implies that
\[J_{\Omega_{1}}(r)\approx J_{\Omega_{2}}(r)\approx J_{\Omega_{3},v}(r)\approx J_{\Omega_{4},v}(r)\]
 for any bounded open sets $\Omega_{1},\Omega_{2}\subset\mathbb{R}^{n}$ and $\Omega_{3},\Omega_{4}\subset \mathbb{R}^{v}$  containing the origin.
\end{proof}

If the vector fields $X$ have a unique degenerate component $x_{j}$, then the non-constant function $\lambda_{I}$ is a monomial that depends on the variable $x_{j}$.  It is worth noting that there are many homogeneous H\"{o}rmander vector fields that possess a unique degenerate component. A well-known example is the Grushin type vector fields $X=(\partial_{x_{1}},x_1\partial_{x_{2}})$ in $\mathbb{R}^2$. In the following, we will study the asymptotic behaviour of $J_{\Omega}(r)$ for this simple case.

\begin{proposition}
\label{prop4-1}
Let $X=(X_{1},X_{2},\ldots,X_{m})$ be homogeneous H\"{o}rmander vector fields defined on $\mathbb{R}^n$, and let $\Omega\subset \mathbb{R}^n$ be a bounded open domain containing the origin. If $x_{j}$ is the unique degenerate component of $X$ associated with the degenerate index $\alpha_{j}$, then we have
\[ J_{\Omega}(r)\approx\left\{
     \begin{array}{ll}
      \frac{1}{r^{Q-\alpha_{j}}}|\ln r|, & \hbox{if $Q-w=\alpha_{j}$;} \\[2mm]
      \frac{1}{r^{Q-\alpha_{j}}}, & \hbox{if $Q-w>\alpha_{j}$,}
     \end{array}
   \right.~~~\mbox{as}~~r\to 0^{+},\]
where $Q$ is the homogeneous dimension and $w=\min_{x\in\mathbb{R}^n}\nu(x)$.

\end{proposition}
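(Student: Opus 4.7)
The plan is to reduce the problem to a one-dimensional integral on the $x_j$-axis, exploit homogeneity to put $\Lambda$ into an explicit closed form, and then peel off the $r$-dependence by a scaling substitution.

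First I would invoke Lemma \ref{lemma4-1} with $v=1$ and $\mathbb{R}^v=\mathbb{R}_{x_j}$ to reduce to the one-variable integral
\[
J_\Omega(r)\approx \int_{-1}^{1}\frac{dx_j}{\Lambda(x,r)},
\]
where $\Lambda$ is now regarded as a function of $(x_j,r)$ alone. Next I would pin down the algebraic form of $\Lambda$. By Proposition \ref{prop2-7}, $\Lambda(x,r)=\sum_{k=w}^{Q}f_k(x)r^k$ with each $f_k=\sum_{d(I)=k}|\lambda_I|$ continuous, non-negative, and $\delta_t$-homogeneous of degree $Q-k$. Since $x_j$ is the unique degenerate component, each $f_k$ depends only on $x_j$, so $f_k$ is determined by its restriction to the $x_j$-axis. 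On that axis, Proposition \ref{prop2-1} forces every $\lambda_I$ (with $d(I)=k$) to equal $c_I x_j^{m_k}$ for $m_k:=(Q-k)/\alpha_j$ if this is a non-negative integer, and to vanish identically otherwise. Therefore
\[
\Lambda(x,r)=\sum_{k\in S}\tilde c_k\,|x_j|^{m_k}\,r^k, \qquad \tilde c_k=\sum_{d(I)=k}|c_I|,
\]
where $S:=\{k\mid m_k\in\mathbb{Z}_{\geq 0},\ \tilde c_k>0\}$. Proposition \ref{prop2-7} guarantees $Q\in S$ with $\tilde c_Q>0$, and since $\nu(x)=\min S$ for $x_j\neq 0$, we get $w=\min S$ and $\tilde c_w>0$. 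The degeneracy assumption further forces $w\leq Q-\alpha_j$, so $N:=(Q-w)/\alpha_j$ is a positive integer.

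Now I would rescale by $x_j=u\,r^{\alpha_j}$. Using $m_k\alpha_j=Q-k$, every term in $\Lambda$ becomes $\tilde c_k u^{m_k}r^Q$, so
\[
\int_0^1\frac{dx_j}{\Lambda(x,r)}=\frac{1}{r^{Q-\alpha_j}}\int_0^{r^{-\alpha_j}}\frac{du}{P(u)},\qquad P(u):=\sum_{k\in S}\tilde c_k u^{m_k},
\]
where $P$ is a genuine polynomial of degree $N$ with positive constant term $\tilde c_Q$ and positive leading coefficient $\tilde c_w$. The integral over $(-1,0)$ contributes the same quantity by the evenness of $|x_j|^{m_k}$. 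It then suffices to analyze $\int_0^{r^{-\alpha_j}}du/P(u)$ as $r\to 0^+$ in two cases. If $Q-w=\alpha_j$, then $N=1$ and $P(u)=\tilde c_Q+\tilde c_w u$, giving
\[
\int_0^{r^{-\alpha_j}}\frac{du}{\tilde c_Q+\tilde c_w u}=\frac{1}{\tilde c_w}\ln\!\Bigl(1+\tfrac{\tilde c_w}{\tilde c_Q}r^{-\alpha_j}\Bigr)\approx |\ln r|,
\]
so $J_\Omega(r)\approx r^{-(Q-\alpha_j)}|\ln r|$. If $Q-w>\alpha_j$, then $N\geq 2$ (since $N\alpha_j=Q-w$), hence $P(u)\gtrsim u^N$ at infinity and $\int_0^{+\infty}du/P(u)$ converges to a positive finite constant; consequently $\int_0^{r^{-\alpha_j}}du/P(u)$ is bounded between two positive constants for $r$ small, yielding $J_\Omega(r)\approx r^{-(Q-\alpha_j)}$.

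The main obstacle is the structural step: justifying that the non-vanishing $f_k$ must be scalar multiples of $|x_j|^{m_k}$ with integer $m_k$. The argument rests on combining Proposition \ref{prop2-1} applied to each polynomial $\lambda_I$ restricted to the $x_j$-axis with the observation that $f_k$ is globally a function of $x_j$ alone, so its values on the axis determine it everywhere. Once this normal form is in hand, the proof is essentially a one-variable computation, and verifying that $w=\min S$ together with $w\leq Q-\alpha_j$ (ruling out the degenerate sub-case $N=0$) is straightforward from Propositions \ref{prop2-7}, \ref{prop2-9} and \ref{prop2-10}.
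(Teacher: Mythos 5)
Your proof is correct and follows essentially the same strategy as the paper: reduce to a one-dimensional integral via Lemma \ref{lemma4-1}, identify each $\lambda_I$ restricted to the $x_j$-axis as a monomial $c_I x_j^{(Q-k)/\alpha_j}$ (vanishing unless that exponent is a non-negative integer), observe $Q-w\geq\alpha_j$, and then rescale. The only cosmetic difference is the substitution: you use the linear scaling $x_j=u\,r^{\alpha_j}$, which converts $\Lambda$ uniformly into $r^Q P(u)$ for a single polynomial $P$ of degree $N=(Q-w)/\alpha_j$, while the paper uses $x_j=(ry)^{\alpha_j}$, producing an extra $y^{\alpha_j-1}$ factor and handling the two cases with slightly different bookkeeping; your version is marginally cleaner in that it dispatches both cases with one rescaling, and it makes explicit the integrality of $N$ (hence $N\geq 2$ in the second case) that the paper leaves implicit.
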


\begin{proof}
Proposition \ref{prop2-7} gives that
$ \Lambda(x_{j},r)=\sum_{k=w}^{Q}f_k(x_{j})r^k$,
where $f_k(x_{j})=\sum_{d(I)=k}|\lambda_I(x_{j})|$ is a $\delta_t$-homogeneous non-negative continuous function of degree $Q-k$. For each $n$-tuple $I$ with $d(I)=k$, by Proposition \ref{prop2-4} we know that $\lambda_{I}(x_{j})$ is a polynomial in the form of \eqref{2-1} and satisfies $\lambda_{I}(t^{\alpha_{j}}x_{j})=t^{Q-k}\lambda_{I}(x_{j})$. This means $\lambda_{I}(x_{j})$ is a monomial and satisfies
\begin{equation}\label{4-5}
 \lambda_{I}(x_{j})=\left\{
                      \begin{array}{ll}
                        \lambda_{I}(1)x_{j}^{\frac{Q-k}{\alpha_{j}}}, & \hbox{if $\frac{Q-k}{\alpha_{j}}\in \mathbb{N}$ and $\lambda_{I}(x_{j})\not\equiv 0$;} \\[2mm]
                        0, & \hbox{if $\frac{Q-k}{\alpha_{j}}\notin \mathbb{N}$ or $\lambda_{I}(x_{j})\equiv 0$.}
                      \end{array}
                    \right.
\end{equation}
 If $Q-w<\alpha_j$, by \eqref{4-5} we have $f_w(x_{j})=\sum_{d(I)=w}|\lambda_I(x_{j})|\equiv 0$, which contradicts the fact that $f_w(x_{j})\not\equiv0$ as stated in Proposition \ref{prop2-7}. Thus, we only need to consider the following two cases:\\
\emph{\textbf{Case 1:}} $Q-w=\alpha_{j}$. Combining \eqref{2-6} and \eqref{4-5}, we have
$ \Lambda(x_{j},r)=c_{w}|x_{j}|r^{Q-\alpha_{j}}+f_{Q}r^{Q}$,
where $c_{w}$ and $f_{Q}$ are some positive constants.
Using Lemma \ref{lemma4-1}, we obtain
\begin{equation*}
\begin{aligned}
J_{\Omega}(r)&=\int_{\Omega}\frac{dx}{\Lambda(x,r)}
\approx \int_{-1}^{1}\frac{dx_{j}}{c_{w}|x_{j}|r^{Q-\alpha_{j}}+f_{Q}r^{Q}}\\
&=\frac{2}{r^{Q-\alpha_{j}}}\int_{0}^{1}\frac{dz}{c_{w}z+f_{Q}r^{\alpha_{j}}}
\approx \frac{1}{r^{Q-\alpha_{j}}}|\ln r|~~~~~\mbox{as}~~r\to 0^{+}.
\end{aligned}
\end{equation*}\\
\emph{\textbf{Case 2:}} $Q-w>\alpha_{j}$. It follows from  \eqref{2-6} and \eqref{4-5} that
$ \Lambda(x_{j},r)=\sum_{k=w}^{Q-1}c_{k}|x_{j}|^{\frac{Q-k}{\alpha_{j}}}r^{k}+f_{Q}r^{Q}$,
where $c_{w}, f_{Q}>0$ are some positive constants, and $c_{j}\geq 0$ for $w+1\leq j\leq Q-1$. Then, by Lemma \ref{lemma4-1} we have
\begin{equation*}
\begin{aligned}
J_{\Omega}(r)&=\int_{\Omega}\frac{dx}{\Lambda(x,r)}
\approx\int_{-1}^{1}\frac{dx_{j}}{\sum_{k=w}^{Q-1}c_{k}|x_{j}|^{\frac{Q-k}{\alpha_{j}}}r^{k}+f_{Q}r^{Q}}\\
&=2\int_{0}^{1}\frac{dz}{\sum_{k=w}^{Q-1}c_{k}z^{\frac{Q-k}{\alpha_{j}}}r^{k}+f_{Q}r^{Q}}
=\frac{2\alpha_{j}}{r^{Q-\alpha_{j}}}\int_{0}^{\frac{1}{r}}\frac{y^{\alpha_{j}-1}dy}
{\sum_{k=w}^{Q-1}c_{k}y^{Q-k}+f_{Q}}~~~~\mbox{as}~~r\to 0^{+}.
\end{aligned}
\end{equation*}
Because $Q-w-(\alpha_{j}-1)>1$, we have
$ C^{-1}\leq\int_{0}^{\frac{1}{r}}\frac{y^{\alpha_{j}-1}dy}{\sum_{k=w}^{Q-1}c_{k}y^{Q-k}+f_{Q}}\leq C$ for all $0<r<1$ and some positive constant $C>0$.
Therefore $ J_{\Omega}(r)\approx \frac{1}{r^{Q-\alpha_{j}}}$  as $r\to 0^{+}$.
\end{proof}

We now turn to the general case where the vector fields $X$ have more than one degenerate component. In this situation, every non-constant $\lambda_{I}$ is a polynomial that has zeroes in $\Omega$, and
 estimating  the asymptotic behaviour of $J_{\Omega}(r)$ requires more intricate analysis. To handle the integral $J_{\Omega}(r)$, we first invoke the resolution of singularities theory.

It is well-known that Hironaka's celebrated theorem \cite{hironaka1964} on the resolution of singularities is a profound result in algebraic geometry. The theorem was first proved in 1964, and since then, many different variations of it have been developed. The version we state below is due to Atiyah \cite{Atiyah1970}, and it has also been  employed by Watanabe \cite{Wtanabe2009} and Lin  \cite{Lin2011} in the construction singular learning theory.

\begin{proposition}[Resolution of singularities]
\label{prop4-2}
Let $f_1(x),\ldots,f_m(x)$ be a family of non-constant real analytic functions defined on an open set  $V\subset\mathbb{R}^{N}$ containing the origin. Suppose that $f_i(0)=0$ for $i=1,\ldots,m$. Then there exists a triple $(M,W,\rho)$ such that
\begin{enumerate}[(a)]
  \item $W\subset V$ is a neighbourhood of the origin in $\mathbb{R}^{N}$;
  \item $M$ is an $N$-dimensional real analytic manifold;
  \item $\rho:M\to W$ is a real analytic map.
\end{enumerate}
Furthermore, $(M,W,\rho)$  satisfies the following properties:
\begin{enumerate}[(A)]
  \item $\rho$ is proper, i.e. the inverse image of any compact set is compact;
  \item $\rho$ is a real analytic isomorphism between $M\setminus \bigcup_{i=1}^{m} Z(f_i\circ \rho)$ and $W\setminus \bigcup_{i=1}^{m} Z(f_i)$, where
      \[Z(f_{i}\circ\rho)=\{x\in M|f_{i}(\rho(x))=0\}\quad\mbox{and}\quad  Z(f_{i})=\{x\in W|f_{i}(x)=0\};\]
  \item For any $y\in \bigcup_{i=1}^{m} Z(f_i\circ \rho)$, there exists a local chart $U_y$ with the coordinates $u=(u_1, u_2,\ldots,u_N)$ such that $y$ corresponds to the origin and
      \[(f_i\circ\rho)(u)=a_{y,i}(u)u^{p_{y,i}}=a_{y,i}(u_1, u_2,\ldots,u_N)u_{1}^{p_{y}^{1}} u_{2}^{p_{y}^{2}} \cdots u_{N}^{p_{y}^{N}}~~~\mbox{for}~~i=1,\ldots,m, \]
      where $a_{y,i}$ is a non-vanishing analytic function on $U_y$, and $p_{y,i}=(p_{y}^{1},p_{y}^{2},\ldots,p_{y}^{N})$ is an $N$-dimensional multi-index.\footnote{Here, we are indicating  that $\psi_{y}(y)=0$ and $(f_i\circ\rho)(\psi_{y}^{-1}(u))=a_{y,i}(\psi_{y}^{-1}(u))u^{p_{y,i}}$, where $\psi_{y}:U_y\to \mathbb{R}^{N}$ is the coordinate map on the chart $U_y$.
      To simplify notation, we shall omit the coordinate map $\psi_{y}$. } Moreover, in local coordinates, the Jacobian  $J_\rho$ of map $\rho$ has the form
        \[ J_\rho(u)=b_y(u)u^{q_{y}}=b_y(u_1, u_2,\ldots,u_N)u_{1}^{q_{y}^{1}}u_{2}^{q_{y}^{2}}\cdots u_{N}^{q_{y}^{N}},\]
      where $b_y$ is an analytic non-vanishing function on $U_y$, and $q_y=(q_{y}^{1},q_{y}^{2},\ldots,q_{y}^{N})$ is an $N$-dimensional multi-index.
\end{enumerate}
\end{proposition}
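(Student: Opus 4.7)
The statement is precisely Hironaka's theorem on resolution of singularities specialized to the simultaneous monomialization of finitely many real analytic functions. I would not attempt to give an independent proof: the full argument is long and has been extensively developed in the literature. My plan is to quote the result from standard sources, specifically \cite{hironaka1964} for the foundational theorem and \cite{Atiyah1970} for the analytic reformulation used here, and to remark that the same version appears in \cite{Wtanabe2009,Lin2011} in the context of singular learning theory. Since the subsequent analysis of $J_\Omega(r)$ in Section \ref{Section4} uses only the stated normal form, the proposition functions here as a black box.

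For the reader's orientation I would then sketch the structure of the construction without giving details. The manifold $M$ is obtained as a finite composition of blow-ups $\pi_k:M_k\to M_{k-1}$ along smooth analytic centers $C_{k-1}\subset M_{k-1}$ contained in the zero locus of the current total transform of the product $f_1\cdots f_m$, with $M_0$ a neighbourhood $W\subset V$ of the origin. Setting $\rho=\pi_1\circ\cdots\circ\pi_N$, property (A) is inherited because each individual blow-up is proper and properness is preserved under finite composition. Property (B), that $\rho$ is a real analytic isomorphism off $\bigcup_i Z(f_i\circ\rho)$, follows because each blow-up $\pi_k$ is an analytic isomorphism away from its center and every center is chosen inside the successive zero loci.

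The local normal form in property (C) is read off from blow-up charts: on each such chart the exceptional divisor is cut out by a single coordinate function, and after finitely many carefully chosen blow-ups every pullback $f_i\circ\rho$ factors as a monomial $u^{p_{y,i}}$ in the local coordinates times an analytic non-vanishing unit $a_{y,i}(u)$. The Jacobian $J_\rho$ acquires the monomial form $b_y(u)u^{q_y}$ by the same mechanism applied to the standard Jacobian formulas for blow-ups, in which each exceptional component contributes a prescribed power of the corresponding coordinate. The genuine difficulty, which is the heart of Hironaka's theorem, is to show that this procedure terminates, i.e.\ that the centers $C_k$ can be selected so that a suitable well-ordered invariant (such as Hironaka's characteristic function, or a simplified invariant in the analytic or toric setting) strictly decreases at each step. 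That termination is what I would quote rather than reconstruct; everything else in the statement of Proposition \ref{prop4-2} is bookkeeping around the blow-up charts.
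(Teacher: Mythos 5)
Your proposal matches the paper exactly: the paper's entire proof is a citation to \cite{Atiyah1970}, \cite[Theorem 2.8]{Wtanabe2009} and \cite[Corollary 3.4]{Lin2011}, which are the same sources you quote. Your additional sketch of the blow-up construction goes beyond what the paper includes but is consistent with it and serves only as orientation.
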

\begin{proof}
See \cite{Atiyah1970}, \cite [Theorem 2.8]{Wtanabe2009} and  \cite[Corollary 3.4]{Lin2011}.
\end{proof}

Additionally, we can deduce that
\begin{lemma}
\label{lemma4-2}
The real analytic map $\rho  :M\to W$ given in Proposition \ref{prop4-2} is surjective. In particular, $\rho^{-1}(0)\neq \varnothing$.
\end{lemma}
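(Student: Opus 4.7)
The plan is to prove surjectivity by combining the analytic isomorphism property on the complement of zero sets with the properness of $\rho$. The key observation is that the domain and codomain are both $N$-dimensional, and on a dense open subset $\rho$ is already a bijection onto its image, so I only need to close the gap on the (small) singular locus.

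First, I would argue that $\rho(M)$ contains a dense subset of $W$. By property (B) of Proposition \ref{prop4-2}, $\rho$ restricts to a real analytic isomorphism $M \setminus \bigcup_{i=1}^{m} Z(f_i \circ \rho) \to W \setminus \bigcup_{i=1}^{m} Z(f_i)$, so in particular
\[
W \setminus \bigcup_{i=1}^{m} Z(f_i) \subset \rho(M).
\]
Since each $f_i$ is a non-constant real analytic function on a connected neighbourhood of the origin (we may shrink $W$ if necessary so that $W$ is connected), the zero set $Z(f_i) = \{x \in W \mid f_i(x) = 0\}$ is a proper real analytic subset of $W$, hence closed with empty interior. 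The finite union $\bigcup_{i=1}^{m} Z(f_i)$ is therefore also closed and nowhere dense, so $W \setminus \bigcup_{i=1}^{m} Z(f_i)$ is open and dense in $W$.

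Next, I would use property (A), the properness of $\rho$, to upgrade density to equality. Since $W \subset \mathbb{R}^N$ is locally compact Hausdorff and $\rho : M \to W$ is continuous and proper, the image $\rho(M)$ is closed in $W$: indeed, for any convergent sequence $\rho(y_n) \to z \in W$, a compact neighbourhood $K$ of $z$ contains the tail of the sequence, and $\rho^{-1}(K)$ is compact by properness, so a subsequence $y_{n_k}$ converges in $M$ to some $y$ with $\rho(y) = z$. Combining this with the previous step, $\rho(M)$ is a closed subset of $W$ containing the dense subset $W \setminus \bigcup_{i=1}^{m} Z(f_i)$, hence $\rho(M) = W$.

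Finally, the ``in particular'' assertion is immediate: since $0 \in W$ (by (a)) and $\rho$ is surjective, $\rho^{-1}(0) \neq \varnothing$. There is no serious obstacle in this argument; the only point requiring mild care is the use of properness to guarantee closedness of the image, and the preliminary (harmless) reduction to a connected $W$ so that ``non-constant'' forces each $Z(f_i)$ to be nowhere dense.
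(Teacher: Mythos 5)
Your proof is correct and follows essentially the same route as the paper: both use property (B) to get $W\setminus\bigcup_i Z(f_i)\subset\rho(M)$ with the zero sets nowhere dense, and then use property (A) (properness) to extract a convergent subsequence of preimages and conclude the image is closed. The only cosmetic difference is that you state ``proper implies closed image'' once as a general fact, whereas the paper runs the same subsequence argument directly for each $y_0\in Z_W$; the paper also justifies density of $W\setminus Z_W$ via the measure-zero property of zero sets of non-vanishing analytic functions rather than via your connectivity reduction, but both justifications are standard and interchangeable here.
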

\begin{proof}
We denote by
$ Z_{W}:=\bigcup_{i=1}^{m} Z(f_i)$ and $ Z_M:=\bigcup_{i=1}^{m} Z(f_i\circ \rho)$, respectively.
Proposition \ref{prop4-2} (B) indicates that $\rho$ is an analytic isomorphism from $M\setminus Z_M$ to $W\setminus Z_W$. As both $f_{i}$ and $f_{i}\circ\rho$ are analytic and non-zero functions for each $1\leq i\leq m$, it follows that $W\setminus Z_W$ and $M\setminus Z_M$ are dense in $W$ and $M$, respectively.

For any $y_{0}\in Z_W$, we can choose a sequence $\{y_{k}\}_{k=1}^{\infty}\subset W\setminus Z_W$ such that $y_{k}\to y_{0}$ as $k\to +\infty$. According to Proposition \ref{prop4-2} (B),  for $\{y_{k}\}_{k=1}^{\infty}\subset W\setminus Z_W$, there exists a sequence $\{x_{k}\}_{k=1}^{\infty}\subset M\setminus Z_M$ such that $\rho(x_{k})=y_{k}$ holds for all $k\geq 1$. Set $K:=\{y_{k}\}_{k=1}^{\infty}\cup\{y_{0}\}\subset W$. Clearly, $K$ is compact, and $\{x_{k}\}_{k=1}^{\infty}\subset\rho^{-1}(K)$ implies that $\rho^{-1}(K)$ is a non-empty compact subset of $M$. Therefore, there is a subsequence $\{x_{k_{j}}\}_{j=1}^{\infty}\subset\{x_{k}\}_{k=1}^{\infty}$ such that $x_{k_{j}}\to x_{0}\in \rho^{-1}(K)\subset M$ as $j\to +\infty$. By to the continuity of $\rho$, we have $\rho(x_{0})=y_{0}$. Hence, $\rho$ is a surjective map and $\rho^{-1}(0)\neq \varnothing$.
\end{proof}

By Proposition \ref{prop4-2} and Lemma \ref{lemma4-2}, we can derive the following conclusion.

\begin{proposition}
\label{prop4-3}
Let $X=(X_{1},X_{2},\ldots,X_{m})$ be homogeneous H\"{o}rmander vector fields defined on $\mathbb{R}^n$, with
 $v$ degenerate components where $2\leq v\leq n-1$. For any bounded domain $\Omega$ in $\mathbb{R}^n$ containing the origin, we have
\begin{equation}\label{4-6}
 J_{\Omega}(r)=\int_{\Omega}\frac{dx}{\Lambda(x,r)}\approx \sum_{j=1}^{l}\int_{(-1,1)^{v}}\frac{|u^{q_{j}}| du}{\sum_{I}|u^{p_{j,I}}|r^{d(I)}},
\end{equation}
where $l$ is a positive integer, $q_{j}$ and $p_{j,I}$ are $v$-dimensional multi-indexes.
\end{proposition}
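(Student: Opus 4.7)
The plan is to combine Lemma~\ref{lemma4-1} (reduction to the degenerate components), the resolution of singularities in Proposition~\ref{prop4-2}, and a partition of unity argument. Since $\Lambda(x,r)$ depends only on the $v$ degenerate components $x'=(x_{i_{1}},\ldots,x_{i_{v}})$, Lemma~\ref{lemma4-1} first gives $J_{\Omega}(r)\approx \int_{V}\frac{dx'}{\Lambda(x',r)}$ for any fixed bounded open neighbourhood $V\subset \mathbb{R}^{v}$ of the origin.

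Next, I would apply Proposition~\ref{prop4-2} to the finite family of non-constant polynomials among $\{\lambda_{I}\}_{I}$, viewed as polynomials on $\mathbb{R}^{v}$, which all vanish at the origin by Proposition~\ref{prop2-4}(1). This produces a triple $(M,W,\rho)$ with $W\subset \mathbb{R}^{v}$ a neighbourhood of the origin and $\rho:M\to W$ proper and, by Lemma~\ref{lemma4-2}, surjective. Shrinking $V$ so that $\overline{V}\subset W$, the set $K:=\rho^{-1}(\overline{V})$ is compact. I would then cover $K$ by finitely many open charts $\{U_{j}\}_{j=1}^{l}$: at points of $K$ lying in $\bigcup_{I}Z(\lambda_{I}\circ\rho)$ use the charts from Proposition~\ref{prop4-2}(C), and at the remaining points of $K$ use any chart on which $\rho$ is an analytic isomorphism and every $\lambda_{I}\circ\rho$ is non-vanishing (fitting the same monomial template with $q_{j}=0$ and all $p_{j,I}=0$). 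After further refinement, each $U_{j}$ can be identified with the cube $(-1,1)^{v}$ via its coordinate map; on the compact closure, the non-vanishing analytic factors $a_{j,I}(u)$ and $b_{j}(u)$ in $\lambda_{I}\circ\rho=a_{j,I}(u)u^{p_{j,I}}$ and $J_{\rho}=b_{j}(u)u^{q_{j}}$ are bounded above and below by positive constants. Finally, choose a smooth partition of unity $\{\chi_{j}\}_{j=1}^{l}$ subordinate to this cover with $\sum_{j}\chi_{j}\equiv 1$ on $K$.

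Since the zero sets $\bigcup_{I}Z(\lambda_{I})\subset W$ and $\bigcup_{I}Z(\lambda_{I}\circ\rho)\subset M$ have Lebesgue measure zero, Proposition~\ref{prop4-2}(B) shows that $\rho$ is bijective off these null sets, so the change of variables formula for non-negative integrands yields
\[
\int_{V}\frac{dx'}{\Lambda(x',r)}=\sum_{j=1}^{l}\int_{U_{j}}\chi_{j}(u)\frac{|J_{\rho}(u)|\,du}{\Lambda(\rho(u),r)}.
\]
On each chart, substituting the local monomial expressions and absorbing the uniformly bounded, uniformly positive factors $a_{j,I}, b_{j}, \chi_{j}$ into the $\approx$ relation makes each summand comparable to $\int_{(-1,1)^{v}}\frac{|u^{q_{j}}|\,du}{\sum_{I}|u^{p_{j,I}}|r^{d(I)}}$, which gives \eqref{4-6}.

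The main obstacle I anticipate is the lower-bound direction in this final comparison: because $\chi_{j}$ is compactly supported in $U_{j}$, the pullback integral only accesses a sub-cube where $\chi_{j}\equiv 1$, rather than all of $(-1,1)^{v}$. I would address this by choosing the cutoffs so that the sub-cube where $\chi_{j}\equiv 1$ carries an integral of the same asymptotic order as the full cube. This can be established either by a direct rescaling argument analogous to Lemma~\ref{lemma4-1} applied to the monomial integrand, or by first deriving the explicit asymptotic form $r^{-\alpha}|\ln r|^{\beta}$ of such monomial integrals (the object of the later subsections in Section~\ref{Section4}) and noting that this order is preserved under such domain changes.
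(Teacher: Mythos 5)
Your proposal follows essentially the same strategy as the paper: reduce to a $v$-dimensional integral via Lemma~\ref{lemma4-1}, apply the resolution of singularities (Proposition~\ref{prop4-2}) to the non-constant $\lambda_I$'s on $\mathbb{R}^v$, cover a compact part of the blow-up manifold by finitely many monomializing charts, use a partition of unity to localize, and change variables. The main structural difference is the set you cover: you cover $K=\rho^{-1}(\overline V)$ for a fixed $V$ and then apply change-of-variables on $V$, whereas the paper covers only $\rho^{-1}(0)$ (also compact since $\rho$ is proper), forms $V_0$ as the union of the bump function supports, proves $\rho(V_0)$ contains an open neighbourhood of the origin, and then chooses $\Omega_0 = \rho(V_0)$ adaptively in Lemma~\ref{lemma4-1}. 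Both routes are workable; the paper's avoids having to cover points of $K$ outside the singular locus (so no ``trivial'' charts with $q_j=p_{j,I}=0$ are needed), at the cost of the small extra argument that $\rho(V_0)$ has the origin in its interior.

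You correctly flag the lower bound as the delicate point, but your diagnosis and fix need sharpening. The difficulty is not that $\chi_j$ is compactly supported (which only affects the behaviour near $\partial U_j$, irrelevant for the model integral); it is that a generic partition of unity subordinate to the cover could have $\chi_j(y_j)=0$ at the very chart center $y_j$ where the local coordinates place the origin of the monomial model $\sum_I |u^{p_{j,I}}|r^{d(I)}$. If that happens, $\int_{U_j}\chi_j\cdots$ misses the deepest singularity of its own model integral, and there is no a priori reason the missing contribution reappears in another chart with the correct monomial data, since a different chart $U_k$ places $y_j$ away from its own origin. Your proposed remedy (arrange $\chi_j\equiv 1$ on a sub-cube and rescale) would close the gap, but it is stronger than needed and harder to realize from a generic subordinate partition of unity; you would still need to explain how to make the cut-offs equal to $1$ near each chart center while keeping them summing to $1$.

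The paper's construction handles this more directly: for each $y_j\in\rho^{-1}(0)$ it builds a bump function $\Psi_{y_j}$ with $\Psi_{y_j}(y_j)=1$ and support in $U_{y_j}$, and uses $\varphi_j=\Psi_{y_j}/\sum_k\Psi_{y_k}$. Then $\varphi_j(y_j)>0$ automatically, and by continuity $\varphi_j>\tfrac12\varphi_j(y_j)$ on some fixed neighbourhood $D_j$ of $y_j$. The lower bound is obtained by shrinking the integration to $D_j$, pulling out the positive constant $\tfrac12\varphi_j(y_j)$, and then using that the monomial integral over a fixed neighbourhood of the origin is comparable (up to $r$-independent constants) to the one over $(-1,1)^v$ --- the same rescaling fact you invoke. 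If you replace your ``choose cutoffs that are $\equiv1$ on a sub-cube'' step with ``choose cutoffs that are bounded below on a fixed neighbourhood of each chart center,'' and ensure this by starting from bump functions normalized to equal $1$ at the centers, your proof goes through and is essentially the paper's argument.
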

\begin{proof}
We write the collection of all degenerate components of vector fields $X$ as $\{x_{i_{1}},\ldots,x_{i_{v}}\}$ and substitute $x_{i_{j}}=z_{j}$ for $1\leq j\leq v$. Then,
for any bounded subset $\Omega_{0}\subset\mathbb{R}^{v}$ containing an open neighbourhood of the origin in $\mathbb{R}^{v}$, we can use  Lemma \ref{lemma4-1} to derive
\begin{equation}\label{4-7}
J_{\Omega}(r)\approx J_{\Omega_{0},v}(r):=\int_{\Omega_{0}}\frac{dz}{\Lambda(z,r)},
\end{equation}
where $z=(z_{1},\ldots,z_{v})\in \mathbb{R}^{v}$. We shall choose a suitable set $\Omega_{0}$ to estimate  $J_{\Omega}(r)$.

Note that $\lambda_{I}(z)$ is a polynomial in $\mathbb{R}^{v}$ for each $n$-tuple of integers $I=(i_{1},\ldots,i_{n})$ with $1\leq i_{j}\leq q$. We define the set
\[ \mathcal{B}:=\{I=(i_{1},\ldots,i_{n})| 1\leq i_{j}\leq q,~1\leq j\leq n, ~\lambda_{I}(z)~\mbox{is a non-constant polynomial}\}. \]
It is clear that $\mathcal{B}\neq \varnothing$. For any $I\in \mathcal{B}$, Proposition \ref{prop2-4} yields that $d(I)<Q$  and $\lambda_{I}(0)=0$. Applying  Proposition \ref{prop4-2} to the family of polynomials $\{\lambda_I\}_{I\in \mathcal{B}}$,  we obtain a triple $(M,W,\rho)$ where:
\begin{enumerate}[(a)]
  \item $W\subset V$ is a neighbourhood of the origin in $\mathbb{R}^{v}$;
  \item $M$ is a $v$-dimensional real analytic manifold;
  \item $\rho:M\to W$ is a real analytic map.
\end{enumerate}
Furthermore, $(M,W,\rho)$ satisfies the following properties:
\begin{enumerate}[(A)]
  \item $\rho$ is proper, i.e. the inverse image of any compact set is compact;
  \item $\rho$ is a real analytic isomorphism between $M\setminus \bigcup_{I\in\mathcal{B}} Z(\lambda_{I}\circ \rho)$ and $W\setminus \bigcup_{I\in\mathcal{B}} Z(\lambda_{I})$, where
      \[Z(\lambda_{I}\circ \rho)=\{x\in M|\lambda_{I}(\rho(x))=0\} \quad \mbox{and}\quad Z(\lambda_{I})=\{x\in W|\lambda_{I}(x)=0\};\]
  \item For any $y\in \bigcup_{I\in\mathcal{B}} Z(\lambda_{I}\circ \rho)$, there exists a local chart $U_y$ with the coordinates $u=(u_1, u_2,\ldots,u_v)$ such that $y$ corresponds to the origin and
      \[(\lambda_{I}\circ \rho)(u)=a_{y,I}(u)u^{p_{y,I}}~~~\mbox{for all}~~I\in \mathcal{B}, \]
      where $a_{y,I}$ is a non-vanishing analytic function on $U_y$, and $p_{y,I}$ is a $v$-dimensional multi-index. Moreover, in local coordinates,
       the Jacobian  $J_\rho$ of map $\rho$ has the form
        \[ J_\rho(u)=b_y(u)u^{q_{y}},\]
      where $b_y$ is a non-vanishing analytic function on $U_y$, and $q_y$ is a $v$-dimensional multi-index.
\end{enumerate}

Using $\lambda_{I}(0)=0$ and  Lemma \ref{lemma4-2}, we conclude that $\rho^{-1}(0)\neq \varnothing$ and $\rho^{-1}(0)\subset  \bigcup_{I\in\mathcal{B}} Z(\lambda_{I}\circ \rho)$. For any $y\in \rho^{-1}(0)$, there exists a pre-compact open neighbourhood $U_{y}$ on which property (C) is satisfied, and the analytic functions $a_{y,I}(u)$ and $b_y(u)$ are non-vanishing on $\overline{U_{y}}$. Moreover, we can find a bump function $\Psi_{y}$ such that:
\begin{itemize}
  \item $\Psi_{y}\in C^{\infty}(M)$ and $0\leq \Psi_{y}\leq 1$;
  \item $\Psi_{y}(y)=1$;
  \item $\text{supp}~\Psi_{y}:=\overline{\{z\in M|\Psi_{y}(z)\neq 0\}}\subset U_{y}$.
\end{itemize}
Denoting by $V_{y}:=\{z\in M|\Psi_{y}(z)>0\}$ the open neighborhood of $y$, we have
$V_{y}\subset \text{supp}~\Psi_{y}\subset U_{y}$. Since
$\{V_{y}\}_{y\in \rho^{-1}(0)}$ constitutes an open cover of the compact set $\rho^{-1}(0)$, there exist a finite subcover $\{V_{y_{1}},V_{y_{2}},\ldots, V_{y_{l}}\}$ associated with the bump functions $\{\Psi_{y_{1}},\ldots,\Psi_{y_{l}}\}$ and pre-compact open neighborhoods $\{U_{y_{1}},U_{y_{2}},\ldots, U_{y_{l}}\}$ such that $V_{y_{i}}\subset  \text{supp}~\Psi_{y_{i}}\subset U_{y_{i}}$
for each $1\leq i\leq l$.

Let $\Psi:=\sum_{j=1}^{l}\Psi_{y_{j}}$. It follows that $\Psi\in C^{\infty}(M)$ and
\[ \rho^{-1}(0)\subset V_{0}\subset U_{0}\subset M, \]
where $U_{0}:=\bigcup_{j=1}^{l}U_{y_{j}}$ and $V_{0}:=\bigcup_{j=1}^{l}V_{y_{j}}=\{x\in M|\Psi(x)>0\}$ are pre-compact open subsets of $M$. For each $1\leq i\leq l$, denoting by $\varphi_{i}:=\frac{\Psi_{y_{i}}}{\Psi}$, we have $\varphi_{i}\in C^{\infty}(V_{0})$, $0\leq \varphi_{i}\leq 1$, $\text{supp}~\varphi_{i}\subset \text{supp}~\Psi_{y_{i}}\subset U_{y_{i}}$ and $\sum_{j=1}^{l}\varphi_{j}(y)=1$ for all $y\in V_{0}$.\par

Clearly, $\rho(V_{0})$ is a bounded set in $\mathbb{R}^{v}$ containing the origin. We further show that $\rho(V_{0})$ contains an open neighbourhood of the origin in $\mathbb{R}^{v}$. Suppose that the origin in $\mathbb{R}^{v}$ is not in the interior of $\rho(V_{0})$. Then there exists a sequence $\{y_{k}\}_{k=1}^{+\infty}\subset W\setminus\rho(V_{0})$ such that $y_{k}\to 0\in \mathbb{R}^{v}$ as $k\to +\infty$. It follows from Lemma \ref{lemma4-2} that $\rho: M\to W$ is a surjective map. Therefore, we can find a corresponding sequence $\{x_{k}\}_{k=1}^{+\infty}\subset M\setminus V_{0}$ such that $\rho(x_{k})=y_{k}$ for all $k\geq 1$. Setting $K_{1}:=\{y_{k}\}_{k=1}^{\infty}\cup\{0\}$, we see that $K_{1}$ is a compact subset in $\mathbb{R}^{v}$. This indicates that   $\rho^{-1}(K_{1})$ is a non-empty compact subset in $M$ since $\{x_{k}\}_{k=1}^{+\infty}\subset\rho^{-1}(K_{1})$. Thus, there exists a subsequence  $\{x_{k_{j}}\}_{j=1}^{+\infty}\subset \{x_{k}\}_{k=1}^{+\infty}$ that converges to some point $x_{0}\in \rho^{-1}(K_{1})\subset M$. The continuity of $\rho$ gives $\rho(x_{0})=0$ and $ x_{0}\in \rho^{-1}(0)\subset V_{0}$. Recalling that $V_{0}$ is an open subset in $M$, it follows that $\{x_{k_{j}}\}_{j=j_{0}}^{+\infty}\subset V_{0}$ for some $j_{0}\in \mathbb{N}^{+}$, which contradicts $\{x_{k}\}_{k=1}^{+\infty}\subset M\setminus V_{0}$. Consequently, $\rho(V_{0})$ must contain an open neighbourhood of the origin in $\mathbb{R}^{v}$.\par

To simplify notation,  we denote by $Z_W:=\bigcup_{I\in\mathcal{B}} Z(\lambda_{I})$ and $Z_{M}:=\bigcup_{I\in\mathcal{B}} Z(\lambda_{I}\circ \rho)$. Since for each $I\in \mathcal{B}$, $\lambda_I$ and $\lambda_I\circ\rho$ are non-identically-vanishing analytic functions,  it follows from a result in \cite{Mityagi2020} that $Z_W$ and $Z_M$ have $v$-dimensional zero measure. Thus, the property (B) yields that
\begin{equation}
\begin{aligned}\label{4-8}
  J_{\rho(V_{0}),v}(r)&=\int_{\rho(V_{0})}\frac{dz}{\Lambda(z,r)}
  =\int_{\rho(V_{0})}\frac{d z}{\sum_{I}|\lambda_I(z)|r^{d(I)}}\\
  &=\int_{V_{0}}\frac{|J_{\rho}(y)|dy}{\sum_{I}|\lambda_I(\rho(y))|r^{d(I)}}
  =\sum_{j=1}^{l}\int_{V_{0}}\frac{|J_{\rho}(y)|\varphi_{j}(y)dy}{\sum_{I}|\lambda_I(\rho(y))|r^{d(I)}},
\end{aligned}
\end{equation}
where $dy $ is the volume element in $M$, and $J_{\rho}$ denotes the Jacobian of the transition from $dz$ to $dy$.

In local coordinates, by property (C) we have
\begin{equation}\label{4-9}
\begin{aligned}
\int_{V_{0}}\frac{|J_{\rho}(y)|\varphi_{j}(y)dy}{\sum_{I}|\lambda_I(\rho(y))|r^{d(I)}}&=\int_{ \text{supp}~\varphi_{j}}\frac{|J_{\rho}(y)|\varphi_{j}(y)dy}{\sum_{I}|\lambda_I(\rho(y))|r^{d(I)}}\\
&\leq \int_{U_{y_{j}}}\frac{|J_{\rho}(y)|dy}{\sum_{I}|\lambda_I(\rho(y))|r^{d(I)}}
=\int_{U_{y_{j}}}\frac{|b_{y_{j}}(u)u^{q_{j}}| du}{\sum_{I}|a_{y_{j},I}(u)u^{p_{j,I}}|r^{d(I)}},
  \end{aligned}
\end{equation}
 where $q_{j}$ and $p_{j,I}$ are $v$-dimensional multi-indexes, $b_{y_{j}}(u)$ and $a_{y_{j},I}(u)$ are non-vanishing analytic functions on the compact set $\overline{U_{y_{j}}}$. Hence,
\begin{equation}\label{4-10}
\int_{U_{y_{j}}}\frac{|b_{y_{j}}(u)u^{q_{j}}| du}{\sum_{I}|a_{y_{j},I}(u)u^{p_{j,I}}|r^{d(I)}}\approx \int_{(-1,1)^{v}}\frac{|u^{q_{j}}| du}{\sum_{I}|u^{p_{j,I}}|r^{d(I)}}.
\end{equation}

 On the other hand, since $\varphi_{j}(y_{j})>0$, there exists an open neighbourhood $D_{j}$ of $y_{j}$ such that $\varphi_{j}(y)>\frac{1}{2}\varphi_{j}(y_{j})>0$ for all $y\in D_{j}$. This means $D_{j}\subset \text{supp}~\varphi_{j}\subset U_{y_{j}}$. Therefore,
\begin{equation}\label{4-11}
\begin{aligned}
\int_{V_{0}}\frac{|J_{\rho}(y)|\varphi_{j}(y)dy}{\sum_{I}|\lambda_I(\rho(y))|r^{d(I)}}&=\int_{ \text{supp}~\varphi_{j}}\frac{|J_{\rho}(y)|\varphi_{j}(y)dy}{\sum_{I}|\lambda_I(\rho(y))|r^{d(I)}}\\
&\geq \int_{D_{j}}\frac{|J_{\rho}(y)|\varphi_{j}(y)dy}{\sum_{I}|\lambda_I(\rho(y))|r^{d(I)}}
\geq \frac{1}{2}\varphi_{j}(y_{j})\int_{D_{j}}\frac{|J_{\rho}(y)|dy}{\sum_{I}|\lambda_I(\rho(y))|r^{d(I)}}
\\ &=\frac{1}{2}\varphi_{j}(y_{j})\int_{D_{j}}\frac{|b_{y_{j}}(u)u^{q_{j}}| du}{\sum_{I}|a_{y_{j},I}(u)u^{p_{j,I}}|r^{d(I)}}
\approx \int_{(-1,1)^{v}}\frac{|u^{q_{j}}| du}{\sum_{I}|u^{p_{j,I}}|r^{d(I)}}.
\end{aligned}
\end{equation}
Since $\rho(V_{0})$ is a bounded set containing an open neighbourhood of the origin in $\mathbb{R}^{v}$, by taking $\Omega_{0}=\rho(V_{0})$ we conclude from \eqref{4-7}-\eqref{4-11} that
\begin{equation}
   J_{\Omega}(r)\approx \sum_{j=1}^{l}\int_{(-1,1)^{v}}\frac{|u^{q_{j}}| du}{\sum_{I}|u^{p_{j,I}}|r^{d(I)}}.
\end{equation}
\end{proof}

\subsection{Estimates of integrals of rational functions}
Next, we pay attention to the explicit asymptotic behaviour of the integral
\begin{equation}\label{4-13}
\int_{(-1,1)^{v}}\frac{|u^{q_{j}}| du}{\sum_{I}|u^{p_{j,I}}|r^{d(I)}}~~~~\mbox{as}~~~r\to 0^{+}.
\end{equation}

Let us consider a more general class of integrals, which includes \eqref{4-13}. Suppose that $\mathfrak{G}$ is the collection of finitely many index pairs $(a,s)$, wherein each index pair $(a,s)$, $a=(a_{1},\ldots,a_{N})$ denotes the $N$-dimensional multi-index, and $s$ is a non-negative constant. Given a fixed
$N$-dimensional multi-index $b$, we define
\begin{equation}\label{4-14}
I(r):=\int_{(0,1]^N}\frac{x^{b} dx}{\sum_{(a,s)\in \mathfrak{G}} x^{a}r^{s}}\qquad\mbox{for}~~0<r<1.
\end{equation}
Additionally, we always assume that
\[ I(r)<+\infty\qquad \mbox{for any}~~ 0<r<1. \tag{A} \]
Clearly,  $I(r)$  reduces to the integral in \eqref{4-13} if we choose suitable $v$-dimensional multi-index $b$ and index pairs $(a,s)$.

 By using refined analysis involving the convex geometry, we shall provide the explicit asymptotic behaviour of $I(r)$ as $r\to 0^{+}$.  For this  purpose, we recall some preliminary definitions, notations and results in convex geometry. One can refer to \cite{Rockafellar1970,Brondsted2012,Hug2010} for more details.

Suppose that $N\geq 2$ is a positive integer. Let $\mathbb{R}^{N}$ be the $N$-dimensional Euclidean space equipped with the Euclidean distance
\[ |x-y|_{N}:=\sqrt{\langle x-y,x-y\rangle}, \]
where $\langle x,y\rangle:=\sum_{i=1}^{N}x_iy_i$ for the points $x=(x_{1},\ldots,x_{N})$ and $y=(y_{1},\ldots,y_{N})$ in $\mathbb{R}^N$. Given a subset set $A$ of $\mathbb{R}^{N}$, we denote by ${\rm conv}(A)$ the convex hull of $A$, which is the intersection of all convex sets containing set $A$. Obviously, if $A\subset B\subset \mathbb{R}^{N}$, then ${\rm conv}(A)\subset {\rm conv}(B)$. Moreover, we denote by
\[ A+B:=\{x+y|x\in A,~y\in B\}\]
the Minkowski sum of two sets of position vectors $A$ and $B$ in $\mathbb{R}^{N}$. \par

We then introduce the affine set and the dimension of the convex set. A set $M\subset \mathbb{R}^{N}$ is called the affine set if $tx+(1-t)y\in M$ for any $x,y\in M$ and any $t\in\mathbb{R}$. If an affine set $M\neq \varnothing$, then there is a unique linear subspace $L\subset\mathbb{R}^N$ such that
$M=\{y\}+L$ for any $y\in M$. The dimension of the affine set $M$ is defined by $\dim M:=\dim L$. Furthermore, the intersection of affine sets is also an affine set. For a convex set $A\subset\mathbb{R}^N$, the intersection of all affine sets containing $A$ is called the affine hull of $A$, denoted by $\text{aff}~A$. The dimension of convex $A$ is defined as the dimension of affine hull of $A$, i.e., $\dim A:=\dim (\text{aff}~A)$. \par

 For any convex set $A\subset \mathbb{R}^N$, we denote by
\[\text{relint}(A):=\{x\in \text{aff}~A|\exists  \varepsilon>0,  \overline{B_{\varepsilon}^{N}(x)}\cap \text{aff}~A\subset A\}\]
the relative interior of $A$, where $B_{\varepsilon}^{N}(x):=\{y\in\mathbb{R}^N| |y-x|_{N}<\varepsilon\}$ denotes the Euclidean  ball in $\mathbb{R}^{N}$. It is known that  for any  non-empty convex set $A$, $\text{relint}(A)$ is a non-empty convex set. In addition, $\overline{\text{relint}(A)}=\overline{A}$ and $\text{relint}( \overline{A})=\text{relint}(A)$.  If $\dim A=N$, we have $\text{aff}~A=\mathbb{R}^N$ and $\text{relint}(A)= A^{\circ}$, where $A^{\circ}$ denotes the interior of $A$ in $\mathbb{R}^N$. \par

We shall also examine the volumes of convex sets with different dimensions. For the sake of clarity, we denote by $V_m(A)$  the $m$-dimensional volume of a convex set $A$.  In particular, if $A$ is a non-empty $d$-dimensional convex set with $d\geq 1$, we have  $V_d(A)>0$, while $V_m(A)=0$ provided $m>d$.

As an important class of convex set, the polyhedron in $\mathbb{R}^{N}$ is defined by the intersection of finitely many closed half-spaces. Here, the closed half-space in $\mathbb{R}^{N}$ is the set $\{x\in \mathbb{R}^{N}|f(x)\leq 0\}$ associated with the given affine function $f$. Moreover, a polytope is the convex hull of finitely many points. According to \cite[Corollary 8.7]{Brondsted2012} and \cite[Theorem 1.20]{Hug2010}, we know that a non-empty bounded polyhedron is a polytope and vice versa. \par

For a given polyhedron $P\subset \mathbb{R}^{N}$, if there is an affine function $f$ such that $P\subset \{x\in \mathbb{R}^{N}|f(x)\leq 0\}$ and $F:=\{x\in \mathbb{R}^{N}|f(x)=0\}\cap P\neq\varnothing$, then we call $F$ the face of $P$. We say $F$ is a $d$-face if $\dim F=d$. In particular, the $0$-face is called the vertex, and we will not distinguish between $0$-faces and vertices. From the definitions above, we can deduce that the face of a polyhedron is also a polyhedron. It is worth pointing out that, for an unbounded polyhedron $P\subset \mathbb{R}^{N}$, there exists at least one non-zero vector $q\in \mathbb{R}^{N}$  such that $p+\lambda q\in P$ for all $\lambda\geq 0$ and $p\in P$, where the non-zero vector $q$ is known as the direction of polyhedron $P$ (see \cite[Theorem 8.4]{Rockafellar1970}).\par

To begin our estimation of $I(r)$,  we first show that $I(r)$ is asymptotically  equal to the sum of some integrals on polyhedrons. Precisely,  we have

\begin{proposition}
\label{prop4-4}
For each index pair $(a,s)\in \mathfrak{G}$, let $P_{a,s}$ be a polyhedron in $[0,+\infty)^N$ defined as
\begin{equation}\label{4-15}
P_{a,s}:=\{y\in[0,+\infty)^N|f_{(a,s)(a',s')}(y):=\langle a-a',y\rangle-(s'-s)\leq 0, ~\forall (a',s')\in \mathfrak{G}\}.
\end{equation}
Let
 \begin{equation}\label{4-16}
    \phi_{a}(y):=\langle a-b-\mathbf{1},y\rangle
  \end{equation}
be a linear function, where $\mathbf{1}:=e_{1}+e_{2}+\cdots+e_{N}$ with $e_{j}:=(0,\ldots,\mathop{1}\limits_{j},\ldots,0)$ denoting the unit vector in $\mathbb{R}^{N}$. If $I(r)$ satisfies assumption (A) (i.e., $I(r)<+\infty$ for all $0<r<1$), then there exists a positive constant $0<C<1$  depending only on $\mathfrak{G}$, such that
\begin{equation}\label{4-17}
C\sum_{(a,s)\in\mathfrak{G}^\circ}J_{a,s}(r)\leq I(r)\leq  \sum_{(a,s)\in\mathfrak{G}^\circ}J_{a,s}(r)~~~\mbox{for all}~~r\in (0,1),
\end{equation}
where
\begin{equation}\label{4-18}
  J_{a,s}(r):=\left(\ln\frac{1}{r}\right)^{N}\frac{1}{r^{s}}\int_{P_{a,s}}e^{\left(\ln \frac{1}{r}\right)\phi_{a}(y)}dy,
\end{equation}
 and $\mathfrak{G}^\circ:=\{(a,s)\in\mathfrak{G}|V_{N}(P_{a,s})>0\}$ is the collection of index pairs $(a,s)$ for which the polyhedron $P_{a,s}$ has a positive $N$-dimensional volume.
\end{proposition}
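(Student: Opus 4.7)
The plan is to reduce the rational integrand to a sum of exponentials via a logarithmic substitution, and then exploit the fact that the dominant exponential is essentially determined by a polyhedral decomposition of $[0,\infty)^N$.

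Set $\tau:=\ln(1/r)>0$ for $r\in(0,1)$ and make the change of variables $x_i = e^{-\tau y_i}$, which is a diffeomorphism from $[0,\infty)^N$ onto $(0,1]^N$. Under this substitution $x^c = e^{-\tau\langle c,y\rangle}$ for any multi-index $c$, and the volume element transforms as $dx = \tau^N e^{-\tau\langle\mathbf{1},y\rangle}\,dy$. Writing $g_{a,s}(y):=\langle a,y\rangle+s$, the integral becomes
\begin{equation*}
I(r) \;=\; \tau^N \int_{[0,\infty)^N}\frac{e^{-\tau\langle b+\mathbf{1},y\rangle}}{\sum_{(a',s')\in\mathfrak{G}} e^{-\tau g_{a',s'}(y)}}\,dy.
\end{equation*}

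Next I would partition $[0,\infty)^N$ by the polyhedra $P_{a,s}$ from \eqref{4-15}, which are exactly the regions where $g_{a,s}$ attains the minimum of the finite family $\{g_{a',s'}\}_{(a',s')\in\mathfrak{G}}$. Since the minimum of finitely many affine functions is always attained, the union $\bigcup_{(a,s)\in\mathfrak{G}} P_{a,s}$ is all of $[0,\infty)^N$; and two distinct $P_{a,s}, P_{a',s'}$ meet only on the affine hyperplane $\{g_{a,s}=g_{a',s'}\}$, hence in a set of $N$-dimensional Lebesgue measure zero. On $P_{a,s}$, multiplying numerator and denominator by $e^{\tau g_{a,s}(y)}$ yields the elementary two-sided bound
\begin{equation*}
1 \;\leq\; \sum_{(a',s')\in\mathfrak{G}} e^{\tau(g_{a,s}(y)-g_{a',s'}(y))} \;\leq\; |\mathfrak{G}|,
\end{equation*}
because every exponent is nonpositive on $P_{a,s}$. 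Using the identity $g_{a,s}(y)-\langle b+\mathbf{1},y\rangle = \phi_a(y)+s$ together with $e^{\tau s}=r^{-s}$, the integrand on $P_{a,s}$ is pinched between $|\mathfrak{G}|^{-1}\tau^N r^{-s}e^{\tau\phi_a(y)}$ and $\tau^N r^{-s}e^{\tau\phi_a(y)}$.

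Integrating these pointwise bounds over each $P_{a,s}$ and summing—where the measure-zero overlaps contribute nothing—produces
\begin{equation*}
\frac{1}{|\mathfrak{G}|}\sum_{(a,s)\in\mathfrak{G}} J_{a,s}(r) \;\leq\; I(r) \;\leq\; \sum_{(a,s)\in\mathfrak{G}} J_{a,s}(r),
\end{equation*}
which is the claim with $C := |\mathfrak{G}|^{-1}$ (taking, e.g., $C:=1/(|\mathfrak{G}|+1)$ if one insists on the strict inequality $C<1$). Finally, whenever $(a,s)\notin\mathfrak{G}^\circ$ the polyhedron $P_{a,s}$ has vanishing $N$-dimensional Lebesgue volume, so the corresponding $J_{a,s}(r)$ is zero and those terms are harmless. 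Assumption (A) ensures finiteness throughout the argument. The only delicate step is the verification that the $\{P_{a,s}\}$ genuinely partition $[0,\infty)^N$ up to a null set; but this is a standard fact about level sets of finite families of affine functions. I expect this bookkeeping—rather than any analytic estimate—to be the main obstacle, since everything else reduces to the elementary fact that a sum of exponentials of comparable negative exponents is comparable to the largest term.
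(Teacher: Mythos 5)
Your proposal is correct and follows essentially the same route as the paper's proof. The only cosmetic difference: the paper substitutes $x_i=e^{-y_i}$ first, works with the $r$-dependent polyhedra $A_{a,s}(r)=\{y\in[0,\infty)^N\mid e^{-\langle a,y\rangle}r^s=\max_{(a',s')}e^{-\langle a',y\rangle}r^{s'}\}$, and then rescales by $T_r(x)=(\ln\frac{1}{r})x$ to reach the fixed polyhedra $P_{a,s}$ and produce the $(\ln\frac{1}{r})^N$ prefactor; you compress both steps into the single change of variables $x_i=e^{-\tau y_i}$ with $\tau=\ln\frac{1}{r}$, landing directly on $P_{a,s}$. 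The pinching argument (the sum of exponentials is between its largest term and $|\mathfrak{G}|$ times it), the observation that the $P_{a,s}$ cover $[0,\infty)^N$ with null pairwise overlaps, the discarding of indices outside $\mathfrak{G}^\circ$, and the constant $C=|\mathfrak{G}|^{-1}$ all coincide with the paper.
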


\begin{proof}
Substituting $x_{i}=e^{-y_{i}}$ for $i=1,\ldots,N$, we have
\begin{equation}
\label{4-19}
  I(r)=\int_{(0,1]^N}\frac{x^{b} dx}{\sum_{(a,s)\in \mathfrak{G}} x^{a}r^{s}}=\int_{[0,+\infty)^N}\frac{e^{-\langle b+\mathbf{1},y\rangle }d y}{\sum_{(a,s)\in \mathfrak{G}}e^{-\langle a,y\rangle } r^{s}}.
\end{equation}
Then, for each index pair $(a,s)\in \mathfrak{G}$ and $0<r<1$, we define
\begin{equation}\label{4-20}
  A_{a,s}(r):=\left\{y\in [0,+\infty)^{N}\bigg|\langle a-a',y\rangle \leq (s'-s)\left(\ln\frac{1}{r}\right),~\forall (a',s')\in \mathfrak{G}\right\}.
\end{equation}
Obviously, $A_{a,s}(r)$ is a polyhedron in $[0,+\infty)^{N}$ depending on the index pair $(a,s)$ and the parameter $r$. Note that $A_{a,s}(r)$ can be rewritten as
\[ A_{a,s}(r)=\left\{y\in [0,+\infty)^{N}\bigg|e^{-\langle a, y\rangle}r^{s}=\max_{(a',s')\in \mathfrak{G}}e^{-\langle a', y\rangle}r^{s'} \right\}.\]
The polyhedrons $A_{a,s}(r)$ and $P_{a,s}$ satisfy the following properties:
\begin{enumerate}[(i)]
  \item  For $0<r<1$, let $T_{r}(x):=\left(\ln \frac{1}{r}\right)x$ be the automorphism in $\mathbb{R}^{N}$, then
  \[ T_{r}(P_{a,s})=A_{a,s}(r).\]
    \item For $0<r<1$, we have
    \[\bigcup_{(a,s)\in \mathfrak{G}}{A_{a,s}(r)}=\bigcup_{(a,s)\in\mathfrak{G}}{P_{a,s}}=[0,+\infty)^N.\]
    \item For $0<r<1$, $V_{N}(A_{a_1,s_1}(r)\cap A_{a_2,s_2}(r))=0$ and $V_N(P_{a_1,s_1}\cap P_{a_2,s_2})=0$ if $(a_1,s_1)\neq (a_2,s_2)$.
\end{enumerate}
Indeed, the property (i) follows clearly from the definitions \eqref{4-15} and \eqref{4-20}. Because $A_{a,s}(r)$ is isomorphic to $P_{a,s}$
for any $0<r<1$, we only need to verify properties (ii) and (iii) for either $A_{a,s}(r)$ or $P_{a,s}$. Observe that $\mathfrak{G}$ is the collection of finitely many index pairs. Hence, for any $y_{0}\in [0,+\infty)^{N}$ and $0<r<1$, there exists an index pair $(a,s)\in \mathfrak{G}$ such that
\[  e^{-\langle a, y_{0}\rangle}r^{s}=\max_{(a',s')\in \mathfrak{G}}e^{-\langle a', y_{0}\rangle}r^{s'}, \]
 which implies $y_{0}\in A_{a,s}(r)$ and $\bigcup_{(a,s)\in \mathfrak{G}}{A_{a,s}(r)}=[0,+\infty)^{N}$. Furthermore,  \eqref{4-15} gives that $P_{a_{1},s_{1}}\cap P_{a_{2},s_{2}}\subset \{y\in[0,+\infty)^{N}|f_{(a_1,s_1)(a_2,s_2)}(y)=0\}$
 for $(a_1,s_1)\neq (a_2,s_2)$. Since $\{y\in[0,+\infty)^{N}|f_{(a_1,s_1)(a_2,s_2)}(y)=0\}$ is a hyperplane with dimension at most $N-1$, it follows that $V_{N}(P_{a_1,s_1}\cap P_{a_2,s_2})=0$.

Owing to properties (i)-(iii), we have
\begin{equation}\label{4-21}
 I(r)=\int_{[0,+\infty)^N}\frac{e^{-\langle b+\mathbf{1},y\rangle }d y}{\sum_{(a',s')\in \mathfrak{G}}e^{-\langle a',y\rangle } r^{s'}}=\sum_{(a,s)\in\mathfrak{G}^\circ}\int_{A_{a,s}(r)}\frac{e^{-\langle b+\mathbf{1},y\rangle }d y}{\sum_{(a',s')\in \mathfrak{G}}e^{-\langle a',y\rangle } r^{s'}},
\end{equation}
where $\mathfrak{G}^\circ:=\{(a,s)\in\mathfrak{G}|V_{N}(P_{a,s})>0\}$. In addition, for each $(a,s)\in \mathfrak{G}^\circ$,
\begin{equation}\label{4-22}
\frac{C}{r^{s}}\int_{A_{a,s}(r)}e^{\langle a-b-\mathbf{1},y\rangle}d y\leq\int_{A_{a,s}(r)}\frac{e^{-\langle b+\mathbf{1},y\rangle }d y}{\sum_{(a',s')\in \mathfrak{G}}e^{-\langle a',y\rangle } r^{s'}}\leq\frac{1}{r^{s}}\int_{A_{a,s}(r)}e^{\langle a-b-\mathbf{1},y\rangle}d y,
\end{equation}
where  $0<C<1$ is a positive constant that depends only on $\mathfrak{G}$. Setting $\phi_{a}(y):=\langle a-b-\mathbf{1},y\rangle$, it follows from \eqref{4-22} and property (i) that for any $0<r<1$,
\begin{equation}\label{4-23}
\frac{C}{r^{s}}\int_{T_{r}(P_{a,s})}e^{\phi_{a}(y)}d y\leq\int_{A_{a,s}(r)}\frac{e^{-\langle b+\mathbf{1},y\rangle }d y}{\sum_{(a',s')\in \mathfrak{G}}e^{-\langle a',y\rangle } r^{s'}}\leq\frac{1}{r^{s}}\int_{T_{r}(P_{a,s})}e^{\phi_{a}(y)}d y.
\end{equation}
Consequently, by \eqref{4-21} and \eqref{4-23} we obtain
\begin{equation}
C\sum_{(a,s)\in\mathfrak{G}^\circ}\frac{1}{r^{s}}\left(\ln \frac{1}{r} \right)^{N}\int_{P_{a,s}}e^{\left(\ln \frac{1}{r} \right)\phi_{a}(y)}d y\leq I(r)
\leq \sum_{(a,s)\in\mathfrak{G}^\circ}\frac{1}{r^{s}}\left(\ln \frac{1}{r} \right)^{N}\int_{P_{a,s}}e^{\left(\ln \frac{1}{r} \right)\phi_{a}(y)}dy
\end{equation}
holds for all $0<r<1$.
\end{proof}

We now investigate the asymptotic behaviour of  $J_{a,s}(r)$ as $r\to 0^{+}$.  It follows from \eqref{4-17} that the assumption (A) is equivalent to
\begin{equation}
\label{4-25}
  J_{a,s}(r)=\left(\ln\frac{1}{r}\right)^{N}\frac{1}{r^{s}}\int_{P_{a,s}}e^{\left(\ln \frac{1}{r}\right)\phi_{a}(y)}dy<+\infty~~~~\mbox{for all}~~(a,s)\in \mathfrak{G}^{\circ},~0<r<1.
\end{equation}
 For the index pair $(a,s)\in \mathfrak{G}^\circ$, if $a-b-\mathbf{1}=0$, then $\phi_{a}(y)\equiv 0$. In this case,  $V_{N}(P_{a,s})<+\infty$ and $P_{a,s}$ is bounded. As a result,
\begin{equation}
\label{4-26}
  J_{a,s}(r)=\left(\ln\frac{1}{r}\right)^{N}\frac{1}{r^{s}}V_{N}(P_{a,s})~~\mbox{for}~~a-b-\mathbf{1}=0,~0<r<1.
\end{equation}

If $a-b-\mathbf{1}\neq 0$, we will derive the asymptotic estimates of $J_{a,s}(r)$ by using several results as follows.

\begin{lemma}
\label{lemma4-3}
For any fixed multi-index $b$ and index pair $(a,s)\in \mathfrak{G}^{\circ}$ with $a-b-\mathbf{1}\neq 0$, the linear function $\phi_{a}(x)=\langle a-b-\mathbf{1},x\rangle$ attains its maximum value on the  polyhedron $P_{a,s}$. Furthermore, the set $M_{a,s}:=\{x\in P_{a,s}|\phi_{a}(x)=m_{a,s}\}$ is a bounded face of polyhedron $P_{a,s}$ with dimension $0\leq d_{a,s}\leq N-1$, where $m_{a,s}:=\max_{x\in P_{a,s}}\phi_{a}(x)$.
\end{lemma}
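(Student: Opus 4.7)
The plan is to establish the three conclusions — that $\phi_a$ is bounded above, attains its supremum, and that $M_{a,s}$ is a bounded face of dimension at most $N-1$ — by repeatedly exploiting assumption (A) in the form \eqref{4-25} together with the fact that $V_N(P_{a,s}) > 0$ because $(a,s)\in\mathfrak{G}^{\circ}$.

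First I would prove $\phi_a$ is bounded above on $P_{a,s}$. Suppose not. Since $P_{a,s}$ is a closed convex polyhedron, unboundedness from above of the linear function $\phi_a$ forces the recession cone of $P_{a,s}$ to contain some unit vector $v$ with $\langle a-b-\mathbf{1},v\rangle > 0$. Because $V_N(P_{a,s})>0$, $P_{a,s}$ contains some open ball $B_{\varepsilon}(y_1)$, and the recession property for closed convex sets yields $B_{\varepsilon}(y_1) + [0,+\infty)v \subset P_{a,s}$. Splitting coordinates orthogonally to $v$, a direct one-variable computation along the ray then shows
\[ \int_{B_{\varepsilon}(y_1) + [0,+\infty)v} e^{(\ln(1/r))\phi_a(y)}\,dy = +\infty, \]
which contradicts \eqref{4-25}. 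Hence $\phi_a$ is bounded above.

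Next I would invoke the Minkowski–Motzkin decomposition $P_{a,s} = Q + C$, with $Q$ a polytope and $C$ the recession cone of $P_{a,s}$. Boundedness above forces $\phi_a \leq 0$ on $C$, so $\sup_{P_{a,s}} \phi_a = \max_{Q} \phi_a$ is attained on the compact $Q$. Thus $m_{a,s} = \max_{P_{a,s}}\phi_a$ exists and $M_{a,s}\neq\varnothing$. Taking the affine function $f(x) := \phi_a(x) - m_{a,s}$, we have $P_{a,s} \subset \{f \leq 0\}$ and $M_{a,s} = \{f = 0\}\cap P_{a,s}$, which matches the paper's definition of a face. The dimension bound $d_{a,s} \leq N-1$ is immediate: since $a-b-\mathbf{1}\neq 0$, the set $\{x : \langle a-b-\mathbf{1},x\rangle = m_{a,s}\}$ is a hyperplane of dimension $N-1$ containing $M_{a,s}$.

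The delicate step — and the one I expect to require the most care — is proving $M_{a,s}$ is bounded. I would argue by contradiction. Any non-empty closed convex unbounded set has a nonzero recession direction, so there exist $y_0 \in M_{a,s}$ and a unit vector $v$ with $y_0 + tv \in M_{a,s}$ for all $t \geq 0$. This $v$ is also a recession direction of the larger set $P_{a,s}$, and since $\phi_a$ is constant on $M_{a,s}$ one reads off $\langle a-b-\mathbf{1}, v\rangle = 0$. Fix a ball $B_{\varepsilon}(y_1) \subset P_{a,s}$ supplied by $V_N(P_{a,s})>0$; then $B_{\varepsilon}(y_1) + [0,+\infty)v \subset P_{a,s}$ has infinite $N$-dimensional Lebesgue measure, while on this set $\phi_a$ is bounded (above by $m_{a,s}$, below by $\phi_a(y_1) - \varepsilon|a-b-\mathbf{1}|$). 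Consequently $e^{(\ln(1/r))\phi_a}$ is bounded below by a positive constant over a set of infinite measure, making the integral in \eqref{4-25} diverge, contradicting assumption (A). This forces $M_{a,s}$ to be bounded and finishes the proof.
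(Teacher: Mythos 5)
Your proof is correct. It covers all three claims (attainment, face/dimension, boundedness of $M_{a,s}$), and the logic holds throughout. The route differs from the paper's in one substantive place: the attainment step.

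The paper shows that if $\phi_a$ does not attain its maximum, then $\phi_a(P_{a,s})\supset [c,+\infty)$, and then constructs the superlevel domains $D_{\varphi}=\{y:\phi_a(y)>\varphi\}\cap P_{a,s}^{\circ}$, proving by a nested-sets contradiction that $V_N(D_{\varphi})\to+\infty$ as $\varphi\to+\infty$ and deducing divergence of the integral in \eqref{4-25}. You instead read off directly from the Minkowski--Weyl decomposition $P_{a,s}=Q+C$ that unboundedness of $\phi_a$ forces a recession direction $v\in C$ with $\phi_a(v)>0$, place a ball $B_{\varepsilon}(y_1)\subset P_{a,s}^{\circ}$ (available since $V_N(P_{a,s})>0$), extend it along $v$ to a semi-infinite tube inside $P_{a,s}$, and integrate the exponential factor along the ray. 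This is a unified and shorter argument, and it is structurally the same mechanism that both you and the paper use later for boundedness of $M_{a,s}$: in that step the paper's set $\Gamma_R=\bigcup_{t\geq0}B_R^N(x_0+tq)$ is exactly your tube $B_R^N(x_0)+[0,\infty)q$, with the recession direction $q$ now satisfying $\phi_a(q)=0$ so that $\phi_a$ is bounded on the tube while the tube has infinite volume. What your approach buys is a single tube construction serving both parts, plus the attainment and dimension facts coming straight from the recession cone rather than from the cited linear-programming theorems in Bertsimas and Gr\"{o}tschel; what the paper's $D_\varphi$ construction buys is that it avoids having to name the Minkowski--Weyl decomposition explicitly at this stage (the paper reserves it for Lemma \ref{lemma4-8}). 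Both are valid.

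One small point worth making explicit in a final write-up: when you pass from "$v$ is a recession direction of $M_{a,s}$" to "$v$ is a recession direction of $P_{a,s}$", you are using that for a non-empty closed convex set the recession cone can be tested at a single point (Rockafellar, Theorem 8.3), since a priori you only know $y_0+tv\in P_{a,s}$ for the particular $y_0\in M_{a,s}$. The paper cites this; you rely on it implicitly when you then translate the ball $B_{\varepsilon}(y_1)$ (centred at a different point) along $v$.
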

\begin{proof}
According to the theory of linear programming (see \cite[p. 15]{Grotschel2012}), if $\phi_{a}$ attains its maximum value on the polyhedron $P_{a,s}$, then $M_{a,s}$ is a $d_{a,s}$-face of $P_{a,s}$ with $0\leq d_{a,s}\leq  N-1$. Therefore, the remaining task is to prove that the maximum of $\phi_{a}$ is achieved, and that $M_{a,s}$ is bounded. \par

Since the polyhedron $P_{a,s}\subset [0,+\infty)^{N}$ cannot contain any line in $\mathbb{R}^{N}$, we can infer from \cite[Theorem 2.6]{Bertsimas1997} that $P_{a,s}$ has at least one extreme point (i.e., vertex, see \cite[Theorem 2.3]{Bertsimas1997}). If $\phi_{a}$ cannot attain its maximum value on $P_{a,s}$, then  \cite[Theorem 2.8]{Bertsimas1997} gives that $\sup_{x\in P_{a,s}}\phi_{a}(x)=+\infty$, which further implies that $P_{a,s}$ is an unbounded closed set in $\mathbb{R}^N$.
 Additionally, \cite[Corollary 2.5]{Bertsimas1997} indicates that $\phi_{a}(P_{a,s})$ is also a polyhedron in $\mathbb{R}$, and thus $[c,+\infty)\subset \phi_{a}(P_{a,s})$ for some  constant $c>0$.\par

  Consider the affine function $g_{\varphi}(y):=\phi_{a}(y)-\varphi$ for some $\varphi>0$. For any $\varphi>c$, there is a point $y_{0}\in P_{a,s}$ such that $g_{\varphi}(y_{0})>0$. Observing that $P_{a,s}$ is an $N$-dimensional polyhedron with $V_N(P_{a,s})>0$, we have $\text{relint}(P_{a,s})=P_{a,s}^{\circ}\neq \varnothing$, which allows us to choose a point $\widetilde{y_{0}}$ in the neighbourhood of $y_{0}$ such that $\widetilde{y_{0}}\in P_{a,s}^{\circ}$ and $g_{\varphi}(\widetilde{y_{0}})>0$. Hence, for any  $\varphi>c$,  the open set  $D_{\varphi}$ given by $ D_{\varphi}:=\{y\in \mathbb{R}^{N}|g_{\varphi}(y)> 0\}\cap P_{a,s}^{\circ}$ is non-empty and satisfies $V_{N}(D_{\varphi})>0$.

Then, we further show that
\begin{equation}\label{4-27}
\lim_{\varphi\to+\infty}V_{N}(D_{\varphi})=+\infty.
\end{equation}
Assume there exists a sequence $\{\varphi_{k}\}_{k\geq 1}$ such that $c<\varphi_{k}<\varphi_{k+1}$, $\varphi_{k}\to +\infty$ as $k\to +\infty$, and $0<V_{N}(D_{\varphi_{k}})\leq c_{1}$ for some $c_{1}>0$ and all $k\geq 1$. Since $\overline{D_{\varphi_{k}}}$ is a polyhedron possessing non-empty interior $D_{\varphi_{k}}$ and  satisfying $0<V_{N}(\overline{D_{\varphi_{k}}})\leq c_{1}$,  we can deduce that  $\overline{D_{\varphi_k}}$ is the polytope for each $k\geq 1$. Thus, $\phi_{a}$ attains its maximum value $m_k:=\sup_{x\in\overline{D_{\varphi_{k}}}}\phi_{a}(x)$ on the compact set $\overline{D_{\varphi_k}}$. Clearly, $D_{\varphi_{k+1}}\subset D_{\varphi_{k}} $ and $m_{k+1}\leq m_{k}$ for $k\geq 1$. Because $\varphi_{k}\to +\infty$ as $k\to +\infty$, we obtain $\varphi_{k_{0}}>m_{1}$ and $D_{\varphi_{k_{0}}}\neq \varnothing$ for some integer $k_{0}\geq 1$. For any $y\in D_{\varphi_{k_{0}}}$, we have $\phi_{a}(y)>\varphi_{k_{0}}$, which contradicts $ \phi_{a}(y)\leq m_{k_{0}}\leq m_{1}<\varphi_{k_{0}}$. As a result,  $\lim_{\varphi\to+\infty}V_{N}(D_{\varphi})=+\infty$.

According to \eqref{4-27}, if $\phi_{a}$ cannot attain its maximum value on the polyhedron $P_{a,s}$, then for any $r\in (0,1)$, we have
\[
   \int_{P_{a,s}}e^{\left(\ln\frac{1}{r}\right)\phi_{a}(y)}d y\geq\int_{D_\varphi}e^{\left(\ln\frac{1}{r}\right)\phi_{a}(y)} dy
   \geq e^{\left(\ln\frac{1}{r}\right)\varphi}V_{N}(D_\varphi)\to +\infty\]
 as $\varphi\to+\infty$. This leads a contradiction to \eqref{4-25}. Hence, the linear function $\phi_{a}$ must attain its maximum value $m_{a,s}:=\max_{x\in P_{a,s}}\phi_{a}(x)$ on the polyhedron $P_{a,s}$.

The set $M_{a,s}=\{x\in P_{a,s}|\phi_{a}(x)=m_{a,s}\}$ is a face of polyhedron $P_{a,s}$, which is also a polyhedron in $\mathbb{R}^{N}$. If $M_{a,s}$ is an unbounded polyhedron, there exists a direction $q\in \mathbb{R}^{N}$ such that $\{p+tq|t\geq 0\}\subset M_{a,s}\subset P_{a,s}$ for any $t\geq 0$ and any $p\in M_{a,s}$.  By \cite[Theorem 8.3]{Rockafellar1970}, we see that $q$ is also the direction of polyhedron $P_{a,s}$. For any $x_{0}\in M_{a,s}$, we have $\phi_{a}(x_{0}+tq)=\phi_{a}(x_{0})=m_{a,s}$ for all $t\geq 0$, which implies $\phi_{a}(q)=0$. Recalling that  $a-b-\mathbf{1}\neq 0$, we let $B_{R}^{N}(x_{0}+tq)=\{y\in \mathbb{R}^{N}||y-x_{0}-tq|_{N}<R\}$ be the $N$-dimensional ball whose centre is $x_{0}+tq$ with radius $R=\frac{|m_{a,s}|+1}{\sum_{j=1}^{N}|a_{j}-b_{j}-1|}>0$. It follows that $B_{R}^{N}(x_{0})\cap P_{a,s}^{\circ}\neq \varnothing$ and
\begin{equation}
\label{4-28}
  \phi_{a}(y)\geq m_{a,s}-|m_{a,s}|-1~~~~\mbox{for all}~~y\in B_{R}^{N}(x_{0})\cap P_{a,s}.
\end{equation}
 Meanwhile, for any $y\in B_{R}^{N}(x_{0})\cap P_{a,s}$, we have $y+tq\in P_{a,s}$ and $y+tq\in B_{R}^{N}(x_{0}+tq)$ for any $t\geq 0$. That means
$ B_{R}^{N}(x_{0})\cap P_{a,s}+\{tq\}\subset B_{R}^{N}(x_{0}+tq)\cap P_{a,s}$
and
\[ V_{N}(B_{R}^{N}(x_{0}+tq)\cap P_{a,s})\geq V_{N}(B_{R}^{N}(x_{0})\cap P_{a,s})>0\]
holds for any $t\geq 0$. Denoting by $\Gamma_{R}:=\bigcup_{t\geq 0} B_{R}^{N}(x_{0}+tq)$, we can verify that $\Gamma_{R}$ is a convex set and $V_{N}\left(\Gamma_{R}\cap P_{a,s}\right)=+\infty$. Furthermore, by using $\phi_{a}(q)=0$ and \eqref{4-28}, we have
\[ \phi_{a}(y)\geq m_{a,s}-|m_{a,s}|-1~~~\mbox{for all}~~y\in \Gamma_{R}\cap P_{a,s}.   \]
Thus, we get
\[ \int_{P_{a,s}}e^{\left(\ln\frac{1}{r}\right)\phi_{a}(y)}d y\geq\int_{\Gamma_{R}\cap P_{a,s}}e^{\left(\ln\frac{1}{r}\right)\phi_{a}(y)} dy
   \geq e^{\left(\ln\frac{1}{r}\right)\cdot (m_{a,s}-|m_{a,s}|-1)}V_{N}\left(\Gamma_{R}\cap P_{a,s}\right)= +\infty\]
holds for any $0<r<1$, which also  contradicts \eqref{4-25}. Consequently, $M_{a,s}$ is a bounded $d_{a,s}$-face of $P_{a,s}$ with $0\leq d_{a,s}\leq N-1$.
\end{proof}

 Lemma \ref{lemma4-3} derives the following result of $J_{a,s}(r)$ for $a-b-\mathbf{1}\neq 0$.

\begin{proposition}
\label{prop4-5}
Suppose that $(a,s)\in \mathfrak{G}^{\circ}$ with $a-b-\mathbf{1}\neq 0$. For the integral $J_{a,s}(r)$ defined in \eqref{4-18} with $0<r<1$, we have
\begin{equation}\label{4-29}
J_{a,s}(r)=\left(\ln\frac{1}{r}\right)^{N}\frac{C_{0}}{r^{s+m_{a,s}}}
    \int_{0}^{+\infty}e^{-\left(\ln\frac{1}{r}\right)x_{N}}\left(\int_{\mathbb{R}^{N-1}}\mathbf{1}_{\mathcal{H}(P_{a,s})}(x)d x_1\cdots d x_{N-1} \right) d x_N,
\end{equation}
where $C_{0}>0$ is a positive constant depending only on the multi-indexes $a$ and $b$, $\mathcal{H}:\mathbb{R}^{N}\to \mathbb{R}^{N}$ is a non-degenerate affine transform such that $\mathcal{H}(M_{a,s})=\mathcal{H}(P_{a,s})\cap \{x=(x_{1},\ldots,x_{N})\in\mathbb{R}^{N}|x_N=0\}$ and $\mathcal{H}(P_{a,s})\subset \{x=(x_{1},\ldots,x_{N})\in\mathbb{R}^{N}|x_N\geq 0\}$. Furthermore,  the dimensions of $M_{a,s}$ and $P_{a,s}$ are invariant under the non-degenerate affine transform $\mathcal{H}$.

\end{proposition}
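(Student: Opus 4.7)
The plan is to construct an explicit non-degenerate affine map $\mathcal{H}$ that aligns the gradient direction of the linear functional $\phi_{a}$ with the $-x_{N}$ axis, then perform a change of variables in the integral defining $J_{a,s}(r)$ and apply Fubini's theorem. Set $v := a - b - \mathbf{1} \neq 0$. By Lemma \ref{lemma4-3}, the maximum $m_{a,s}$ of $\phi_{a}$ is attained on the non-empty face $M_{a,s}$, so I pick any point $p_{0} \in M_{a,s}$, which satisfies $\langle v, p_{0}\rangle = m_{a,s}$.

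Next, I choose an orthogonal matrix $A \in O(N)$ such that $A(v/|v|) = -e_{N}$ (such $A$ exists since both vectors are unit vectors), and define the affine map
\begin{equation*}
\mathcal{H}(y) := |v| A(y - p_{0}), \qquad y \in \mathbb{R}^{N}.
\end{equation*}
Writing $x = \mathcal{H}(y)$, so that $y - p_{0} = |v|^{-1} A^{-1} x$, a direct computation gives
\begin{equation*}
\phi_{a}(y) = m_{a,s} + \langle v, y - p_{0}\rangle = m_{a,s} + |v|^{-1}\langle Av, x\rangle = m_{a,s} - x_{N},
\end{equation*}
since $Av = -|v| e_{N}$. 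Because every $y \in P_{a,s}$ satisfies $\phi_{a}(y) \leq m_{a,s}$, it follows that $\mathcal{H}(P_{a,s}) \subset \{x_{N} \geq 0\}$, and the level set characterization of $M_{a,s}$ immediately yields $\mathcal{H}(M_{a,s}) = \mathcal{H}(P_{a,s}) \cap \{x_{N} = 0\}$. Since $\mathcal{H}$ is a non-degenerate affine isomorphism of $\mathbb{R}^{N}$, it preserves dimensions of affine hulls, hence the dimensions of $P_{a,s}$ and $M_{a,s}$ are unchanged.

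Finally, performing the change of variables $x = \mathcal{H}(y)$ in the integral of \eqref{4-18}, the Jacobian satisfies $dy = |v|^{-N} dx$, and I obtain
\begin{equation*}
\int_{P_{a,s}} e^{\left(\ln\frac{1}{r}\right)\phi_{a}(y)} dy = \frac{|v|^{-N}}{r^{m_{a,s}}} \int_{\mathcal{H}(P_{a,s})} e^{-\left(\ln\frac{1}{r}\right) x_{N}} dx.
\end{equation*}
Since $\mathcal{H}(P_{a,s}) \subset \{x_{N} \geq 0\}$, Fubini's theorem (applicable because the assumption \eqref{4-25} guarantees finiteness for every $0 < r < 1$) converts the right-hand side into the iterated integral in \eqref{4-29}, with constant $C_{0} = |v|^{-N} = |a - b - \mathbf{1}|^{-N}$, which depends only on $a$ and $b$. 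Putting these back into the definition of $J_{a,s}(r)$ yields exactly the claimed identity.

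The only genuinely delicate step is justifying the existence and nondegeneracy of the construction when $M_{a,s}$ could be lower-dimensional or lie on the boundary of $[0,+\infty)^{N}$; however, since $\mathcal{H}$ is defined on all of $\mathbb{R}^{N}$ and depends only on the direction of $v$ and the single anchor point $p_{0}$, these geometric peculiarities of $M_{a,s}$ and $P_{a,s}$ do not obstruct the argument. The remaining bookkeeping, namely verifying that affine transformations preserve the dimension of $P_{a,s}$ and the face $M_{a,s}$, is standard and follows from the invariance of affine hulls under affine isomorphisms.
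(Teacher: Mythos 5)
Your proof is correct, and it accomplishes the same goal as the paper's proof but via a genuinely different construction of the affine map $\mathcal{H}$. The paper takes $\mathcal{H}(x)=(x_1,\ldots,x_{N-1},m_{a,s}-\phi_a(x))$, a shear that leaves the first $N-1$ coordinates untouched and only replaces the last one by the affine functional; this requires the harmless WLOG assumption $a_N-b_N-1\neq 0$ and yields the Jacobian $|1+b_N-a_N|^{-1}$, so $C_0=|1+b_N-a_N|^{-1}$. You instead rotate the gradient direction $v=a-b-\mathbf{1}$ onto $-e_N$ with an orthogonal matrix and scale by $|v|$, centering at an anchor point $p_0\in M_{a,s}$; this gives $\phi_a(y)=m_{a,s}-x_N$ with Jacobian factor $|v|^{-N}$, so $C_0=|a-b-\mathbf{1}|^{-N}$. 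Both constructions are non-degenerate affine isomorphisms satisfying $\mathcal{H}(P_{a,s})\subset\{x_N\ge 0\}$ and $\mathcal{H}(M_{a,s})=\mathcal{H}(P_{a,s})\cap\{x_N=0\}$, both preserve dimensions (a standard fact about affine isomorphisms), and both permit the same Fubini step justified by the finiteness condition \eqref{4-25}. Your rotation-plus-scaling version is arguably more geometric and avoids singling out a coordinate, whereas the paper's shear is more economical computationally (the Jacobian constant is $N$-independent, and the first $N-1$ coordinates are literally unchanged, which makes the slice integrals in Lemma \ref{lemma4-7} and Lemma \ref{lemma4-8} slightly more transparent). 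Either map supports the downstream analysis, since those lemmas only use the properties stated in the proposition.
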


\begin{proof}
Without loss of generality, we may suppose that $a_{N}-b_{N}-1\neq 0$. By Lemma \ref{lemma4-3}, the function $\phi_{a}$ attains its maximum value $m_{a,s}$ on the bounded face $M_{a,s}$ of the polyhedron $P_{a,s}$. Then for any $x=(x_{1},\ldots,x_{N})\in \mathbb{R}^N$,  we define
\[ \mathcal{H}(x):=(x_{1},\ldots,x_{N-1}, m_{a,s}-\phi_{a}(x)). \]
Clearly, $\mathcal{H}:\mathbb{R}^{N}\to \mathbb{R}^{N}$ is a non-degenerate affine transform such that $\mathcal{H}(M_{a,s})=\mathcal{H}(P_{a,s})\cap \{x=(x_{1},\ldots,x_{N})\in\mathbb{R}^{N}|x_N=0\}$ and $\mathcal{H}(P_{a,s})\subset \{x=(x_{1},\ldots,x_{N})\in\mathbb{R}^{N}|x_N\geq 0\}$. According to \cite[Theorem 3.1]{Barvinok2008},  $\mathcal{H}(M_{a,s})$ and $\mathcal{H}(P_{a,s})$ are polyhedrons such that $\dim M_{a,s}=\dim\mathcal{H}(M_{a,s})$ and $\dim P_{a,s}=\dim\mathcal{H}(P_{a,s})$.
Thus, the Fubini's theorem derives
\begin{equation*}
\begin{aligned}
J_{a,s}(r)&=\frac{\left(\ln\frac{1}{r}\right)^{N}}{r^{s}}\int_{P_{a,s}}e^{\left(\ln \frac{1}{r}\right)\phi_{a}(y)}dy=\frac{\left(\ln\frac{1}{r}\right)^{N}}{r^{s}|1+b_{N}-a_{N}|}\int_{\mathcal{H}(P_{a,s})}e^{-\left(\ln\frac{1}{r}\right)(x_N-m_{a,s})}dx \\
&=\frac{\left(\ln\frac{1}{r}\right)^{N}}{r^{s}|1+b_{N}-a_{N}|}\int_{\mathbb{R}^{N}}\mathbf{1}_{\mathcal{H}(P_{a,s})}(x)e^{-\left(\ln\frac{1}{r}\right)(x_N-m_{a,s})}dx \\
    &=\frac{\left(\ln\frac{1}{r}\right)^{N}}{r^{s+m_{a,s}}|1+b_{N}-a_{N}|}\int_{0}^{+\infty}e^{-\left(\ln\frac{1}{r}\right)x_N}\left( \int_{\mathbb{R}^{N-1}}\mathbf{1}_{\mathcal{H}(P_{a,s})}(x)dx_{1}\cdots d x_{N-1}\right)dx_{N}.
\end{aligned}
\end{equation*}
\end{proof}

According to Proposition \ref{prop4-5}, when $a-b-\mathbf{1}\neq 0$, the asymptotic behaviour of $J_{a,s}(r)$ can be achieved by estimating
\begin{equation}\label{4-30}
 S(x_{N}):=\int_{\mathbb{R}^{N-1}}\mathbf{1}_{\mathcal{H}(P_{a,s})}(x)dx_{1}\cdots d x_{N-1}.
\end{equation}
To proceed with our estimation, we decompose $\mathbb{R}^{N}$ as $\mathbb{R}^{N}=E\oplus E^{\perp}$, where
\[E:={\rm span}\{e_{j}|1\leq j\leq N-1\}. \]
We also set $E_{+}:=\{x=(x_{1},\ldots,x_{N})\in \mathbb{R}^{N}|x_{N}>0\}$. Let $E_{d}$ be a
$d$-dimensional linear subspace of $E$ with $1\leq d\leq N-1$. For any $x\in E_{d}$ and $r>0$, we denote by
\[ B_{r}^{d}(x):=\{y\in E_{d}||y-x|_{N} <r\} \]
the $d$-dimensional ball in $E_{d}$. Especially, if $d=0$, we define $E_{0}=\{0\}$ and in this case, the $0$-dimensional ball is $\{0\}$, which is denoted by $B_{r}^{0}(0)$. Additionally, $B_r^N(x):=\{y\in\mathbb{R}^N||y-x|_{N} <r\}$ denotes the $N$-dimensional Euclidean ball in $\mathbb{R}^{N}$.\par

Consider the quadruple $(E_{d},x_{0},u,\varepsilon)$, where $E_d\subset E$ is a $d$-dimensional linear subspace of $E$, $\varepsilon>0$ is a positive constant, and $x_{0}\in E, u\in \mathbb{S}^{N-1}\cap E_+$ are some points. Then for any $r>0$,  we define the corresponding set that varies along $t\geq 0$ as follows:
\begin{equation}\label{4-31}
  F(t):=\overline{B_{r}^{d}(0)}+\overline{B_{t\varepsilon}^{N-1}(0)}+\{x_{0}+t\eta u\},
\end{equation}
where $\eta=\langle u,e_N\rangle^{-1}$. The definition \eqref{4-31} implies the following properties:
\begin{itemize}
  \item For all $t\geq 0$, $F(t)\subset \{x=(x_{1},\ldots,x_{N})\in \mathbb{R}^{N}|x_{N}=t\}$.
  \item For $t_{1}\neq t_{2}$, $F(t_{1})\cap F(t_{2})=\varnothing$.
  \item $F(t)$ is convex and $V_{N-1}(F(t))>0$ for all $t>0$.
\end{itemize}

 Furthermore, we have
\begin{lemma}
\label{lemma4-4}
For any $t\geq 0$, the set $F(t)$ defined in \eqref{4-31} satisfying
  \begin{equation}
  \label{4-32}
V_{N-1}(F(t))=c_{d}(\varepsilon t)^{N-1-d}+c_{d-1}(\varepsilon t)^{N-d}+\cdots+c_0(\varepsilon t)^{N-1},
  \end{equation}
  where $c_{j}>0$ for $j=0,\ldots,d$.
\end{lemma}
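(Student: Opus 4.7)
The statement is a pure Euclidean volume computation. The plan is to reduce it to a Minkowski-sum computation inside the hyperplane $E$ and then evaluate it directly via Fubini and polar coordinates.

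First, since $\{x_0+t\eta u\}$ is a single point, adding it only translates the set; the translation vector has last coordinate $t$ (because $\eta\langle u,e_N\rangle=1$ and $x_0\in E$), so $F(t)$ lies in the affine hyperplane $\{x_N=t\}$ and has the same $(N-1)$-dimensional volume as
$$T(t):=\overline{B^{d}_{r}(0)}+\overline{B^{N-1}_{t\varepsilon}(0)}\subset E\cong\mathbb{R}^{N-1}.$$
By rotational invariance of Euclidean balls in $E$, we may assume $E_d=\operatorname{span}\{e_1,\dots,e_d\}$. For $y=(y',y'')\in E_d\oplus(E_d^{\perp}\cap E)$, the nearest-point formula for a flat ball gives
$$\operatorname{dist}\bigl(y,\overline{B^{d}_{r}(0)}\bigr)^{2}=\max(|y'|-r,0)^{2}+|y''|^{2},$$
and $y\in T(t)$ iff this quantity is at most $(t\varepsilon)^{2}$. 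Integrating out $y''$ first by Fubini yields
$$V_{N-1}(T(t))=\kappa_{N-1-d}\int_{\overline{B^{d}_{r+t\varepsilon}(0)}}\Bigl((t\varepsilon)^{2}-\max(|y'|-r,0)^{2}\Bigr)^{\frac{N-1-d}{2}}dy',$$
where $\kappa_m$ denotes the volume of the unit Euclidean $m$-ball.

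Next I would split the $y'$-integral at $|y'|=r$. On the inner region $\{|y'|\le r\}$ the integrand is the constant $(t\varepsilon)^{N-1-d}$, contributing $\kappa_{N-1-d}\kappa_d r^{d}(t\varepsilon)^{N-1-d}$, which provides the leading term $c_d(t\varepsilon)^{N-1-d}$ with $c_d>0$. On the annulus $\{r<|y'|\le r+t\varepsilon\}$, polar coordinates $y'=s\sigma$ and the substitution $s=r+t\varepsilon v$ convert the integral into
$$\omega_{d-1}\kappa_{N-1-d}(t\varepsilon)^{N-d}\int_{0}^{1}(1-v^{2})^{\frac{N-1-d}{2}}(r+t\varepsilon v)^{d-1}\,dv,$$
where $\omega_{d-1}$ is the surface area of the unit $(d-1)$-sphere. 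Expanding $(r+t\varepsilon v)^{d-1}$ by the binomial theorem produces the terms $(t\varepsilon)^{N-d+k}$ for $k=0,1,\dots,d-1$ with coefficients equal to strictly positive multiples of the Beta-type integrals $\int_{0}^{1}(1-v^{2})^{(N-1-d)/2}v^{k}\,dv>0$.

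Combining the two regions gives exactly the claimed expansion
$$V_{N-1}(F(t))=c_d(\varepsilon t)^{N-1-d}+c_{d-1}(\varepsilon t)^{N-d}+\cdots+c_0(\varepsilon t)^{N-1},\qquad c_j>0.$$
The degenerate cases $d=0$ (where the annulus is empty and only the leading piece $\kappa_{N-1}(t\varepsilon)^{N-1}$ survives) and $d=N-1$ (where $T(t)$ is simply the ball of radius $r+t\varepsilon$, giving $\kappa_{N-1}(r+t\varepsilon)^{N-1}$) both agree with the formula. I expect no serious obstacle: the only care is the bookkeeping in the polar-coordinate expansion and verifying positivity of each coefficient, both of which are transparent from the Beta-integral representation.
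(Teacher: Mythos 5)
Your proof is correct, and it takes a genuinely different route from the paper. The paper invokes the Steiner formula for Minkowski sums (\cite[Theorem 3.10]{Hug2010}) to expand
$V_{N-1}\bigl(\overline{B_r^d(0)}+\overline{B_{t\varepsilon}^{N-1}(0)}\bigr)$
as a polynomial in $t\varepsilon$ whose coefficients are quermassintegrals $V^i$, and then appeals to the vanishing criterion for mixed volumes (\cite[Remark 3.15]{Hug2010}) to conclude that $V^i=0$ exactly when $N-1-i>d$ and $V^i>0$ otherwise. You instead write $T(t)$ as the $t\varepsilon$-neighbourhood of the flat ball $\overline{B_r^d(0)}$ inside $E$, integrate out the $(N-1-d)$ orthogonal directions by Fubini, and then use polar coordinates and the binomial theorem in the remaining $d$ directions. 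The nearest-point formula $\operatorname{dist}(y,\overline{B_r^d(0)})^2=\max(|y'|-r,0)^2+|y''|^2$ and the split at $|y'|=r$ are exactly right, the resulting powers of $t\varepsilon$ run over $\{N-1-d,\dots,N-1\}$ as required, and the coefficients appear as manifestly positive multiples of Beta-type integrals. The degenerate cases $d=0$ and $d=N-1$ check out as you note. What your approach buys is explicit closed-form constants $c_j$ and complete self-containment; what the paper's buys is brevity by outsourcing the combinatorics to the Steiner/mixed-volume machinery. Both are valid, and the computational bookkeeping you flagged as the only potential obstacle is indeed routine.
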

\begin{proof}
The transform invariance of Lebesgue measure gives that
\begin{equation}\label{4-33}
V_{N-1}(F(t))=V_{N-1}\left(\overline{B_{r}^{d}(0)}+\overline{B_{t\varepsilon}^{N-1}(0)}\right).
\end{equation}
 By the Steiner's formula (see \cite[Theorem 3.10]{Hug2010}), we get
  \begin{equation}\label{4-34}
    V_{N-1}\left(\overline{B_{r}^{d}(0)}+\overline{B_{t\varepsilon}^{N-1}(0)}\right)=\sum_{i=0}^{N-1}\frac{(N-1)!(\varepsilon t)^{i}}{(N-1-i)!i!}V(\underbrace{\overline{B_{r}^{d}(0)},\ldots,\overline{B_{r}^{d}(0)}}_{N-1-i},\underbrace{\overline{B_{1}^{N-1}(0)},\ldots,\overline{B_{1}^{N-1}(0)}}_{i}),
  \end{equation}
  where \[ V(\underbrace{\overline{B_{r}^{d}(0)},\ldots,\overline{B_{r}^{d}(0)}}_{N-1-i},\underbrace{\overline{B_{1}^{N-1}(0)},\ldots,\overline{B_{1}^{N-1}(0)}}_{i}),\]
abbreviated as $V^{i}$, is called the quermassintegral or $(N-1)$-dimensional mixed volume. According to \cite[Remark 3.15]{Hug2010}, we deduce that if $N-1-i>d$, then $V^{i}=0$, while $V^{i}>0$ if $N-1-i\leq d$, since there are at most $d$ segments in $\overline{B_{r}^{d}(0)}$ with linearly independent directions. Hence, we conclude from \eqref{4-33} and \eqref{4-34} that
\begin{equation*}
V_{N-1}(F(t))=c_{d}(\varepsilon t)^{N-1-d}+c_{d-1}(\varepsilon t)^{N-d}+\cdots+c_0(\varepsilon t)^{N-1},
\end{equation*}
 where $c_{j}>0$ for $j=0,\ldots,d$.
\end{proof}

  Then, for any given $\delta>0$, we define a tube by
\begin{equation}\label{4-35}
  T(\delta):=\bigcup_{0\leq t\leq \delta}F(t).
\end{equation}
\begin{lemma}
\label{lemma4-5}
For any $\delta>0$, $T(\delta)={\rm conv}(F(\delta)\cup F(0))$.
\end{lemma}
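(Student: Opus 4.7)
The plan is to verify Lemma 4.5 by a two-way inclusion using the explicit Minkowski-sum description of $F(t)$. The key observation is that each slice has the form
\[ F(t) = \overline{B_{r}^{d}(0)} + \overline{B_{t\varepsilon}^{N-1}(0)} + \{x_{0}+t\eta u\}, \]
so every point in $F(t)$ admits a decomposition $y = a + b + x_{0} + t\eta u$ with $a\in\overline{B_{r}^{d}(0)}$ and $b\in\overline{B_{t\varepsilon}^{N-1}(0)}$. All three of these sets scale or translate linearly in $t$, which is exactly what makes the union between $t=0$ and $t=\delta$ coincide with a convex hull.

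For the inclusion $T(\delta)\subset\mathrm{conv}(F(\delta)\cup F(0))$, I would pick $y\in F(t)$ with $0\le t\le\delta$. The case $t=0$ is trivial. For $t>0$, set $\lambda=t/\delta\in(0,1]$ and rewrite $b=\lambda b'$ where $b':=b/\lambda$ satisfies $|b'|_{N}\le\delta\varepsilon$, hence $b'\in\overline{B_{\delta\varepsilon}^{N-1}(0)}$. A direct computation then gives
\[ y = a+b+x_{0}+t\eta u = \lambda\bigl(a+b'+x_{0}+\delta\eta u\bigr) + (1-\lambda)\bigl(a+x_{0}\bigr), \]
and the two bracketed points lie in $F(\delta)$ and $F(0)$ respectively, so $y\in\mathrm{conv}(F(\delta)\cup F(0))$.

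For the reverse inclusion, I would show that $T(\delta)$ is convex and contains $F(\delta)\cup F(0)$, whence it contains $\mathrm{conv}(F(\delta)\cup F(0))$. Convexity is proved by a single slice-level calculation: given $y_{i}=a_{i}+b_{i}+x_{0}+t_{i}\eta u\in F(t_{i})$ for $i=0,1$ with $0\le t_{i}\le\delta$, and $\lambda\in[0,1]$,
\[ \lambda y_{1}+(1-\lambda)y_{0} = \bigl(\lambda a_{1}+(1-\lambda)a_{0}\bigr) + \bigl(\lambda b_{1}+(1-\lambda)b_{0}\bigr) + x_{0}+\bigl(\lambda t_{1}+(1-\lambda)t_{0}\bigr)\eta u. \]
Setting $t=\lambda t_{1}+(1-\lambda)t_{0}\in[0,\delta]$, convexity of $\overline{B_{r}^{d}(0)}$ puts the first summand in $\overline{B_{r}^{d}(0)}$, while the triangle inequality $|\lambda b_{1}+(1-\lambda)b_{0}|_{N}\le\lambda t_{1}\varepsilon+(1-\lambda)t_{0}\varepsilon=t\varepsilon$ puts the second in $\overline{B_{t\varepsilon}^{N-1}(0)}$. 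Hence the convex combination lies in $F(t)\subset T(\delta)$.

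The argument is essentially bookkeeping; there is no substantive obstacle, since the main structural content (homogeneity of the Minkowski factors in $t$ and convexity of balls) is built into the definition of $F(t)$. The only minor point requiring care is the case $t=0$ in the first inclusion, which must be treated separately because the scaling $\lambda=t/\delta$ degenerates.
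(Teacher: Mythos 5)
Your proof is correct and takes essentially the same two-inclusion approach as the paper: exhibit each $y\in F(t)$ as an explicit convex combination of a point in $F(\delta)$ and one in $F(0)$, then show $T(\delta)$ is convex by a slice-level Minkowski-sum calculation. The only cosmetic difference is parametrization: the paper writes elements of $\overline{B_{t\varepsilon}^{N-1}(0)}$ as $t\varepsilon b$ with $b$ in the unit ball, which lets the same identity cover $t=0$ without a special case, whereas your rescaling $b'=b/\lambda$ forces you to handle $t=0$ separately — a minor point you already flagged.
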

\begin{proof}
For any $x\in T(\delta)$, by \eqref{4-31} and \eqref{4-35} we know there exists a $t\in [0,\delta]$ such that
$$x=a+t\varepsilon b+x_{0}+t\eta u\in F(t),$$ where $a\in \overline{B_{r}^{d}(0)}$ and $b\in \overline{B_{1}^{N-1}(0)}$. Since $0\leq t\leq \delta$, we have
\[ x=\left(1-\frac{t}{\delta}\right)(a+x_{0})+\frac{t}{\delta}(a+\delta \varepsilon b+x_{0}+\delta \eta u)\in {\rm conv}(F(\delta)\cup F(0)),\]
which gives $T(\delta)\subset {\rm conv}(F(\delta)\cup F(0))$.

We next prove that $T(\delta)$ is convex. For $x_1,x_2\in T(\delta)$, there exist $t_1,t_2\in[0,\delta]$, $a_1,a_2\in \overline{B_{r}^{d}(0)}$ and $b_1, b_2\in \overline{B_{1}^{N-1}(0)}$ such that
$x_i=a_i+t_i\varepsilon b_i+x_{0}+t_i\eta u\in F(t_{i})$ for $i=1,2$.
 Thus, for any $\lambda\in [0,1]$,
\[ \lambda x_1+(1-\lambda) x_2 = [\lambda a_1+(1-\lambda) a_2]+[\lambda t_1 \varepsilon b_1+(1-\lambda) t_2 \varepsilon b_2]+x_{0}+
  [\lambda t_1+(1-\lambda) t_2]\eta u.\]
Since
$\lambda a_1+(1-\lambda) a_2\in \overline{B_{r}^{d}(0)}$ and $\lambda t_1 \varepsilon b_1+(1-\lambda) t_2 \varepsilon b_2\in \overline{B_{(\lambda t_1+(1-\lambda) t_2)\varepsilon}^{N-1}(0)}$, we deduce that
 $\lambda x_1+(1-\lambda) x_2\in F(\lambda t_1+(1-\lambda) t_2)\subset T(\delta)$, which implies that $T(\delta)$ is convex.

 Consequently,
${\rm conv}(F(\delta)\cup F(0))\subset {\rm conv}(T(\delta))=T(\delta)\subset {\rm conv}(F(\delta)\cup F(0))$.
\end{proof}

On the other hand, the plane section of polyhedron in $\mathbb{R}^{N}$ satisfies the following property.
\begin{lemma}
\label{lemma4-6}
Suppose that $P\subset\mathbb{R}^N$ is a polyhedron with $V_N(P)>0$, and $f(x)$ is an affine function. If the plane section $Q:=\{x\in\mathbb{R}^N|f(x)=0\}\cap P\neq \varnothing$ is not a face of $P$, then $V_{N-1}(Q)>0$.
\end{lemma}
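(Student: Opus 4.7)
I will prove the contrapositive: if $V_{N-1}(Q)=0$, then $Q$ is a face of $P$. Equivalently, I will show that if $Q$ fails to be a face, then $V_{N-1}(Q)>0$.

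The plan hinges on the observation that $Q=\{f=0\}\cap P$ is a face of $P$ precisely when the hyperplane $\{f=0\}$ is a supporting hyperplane of $P$, i.e.\ when $P$ is contained in one of the two closed half-spaces $\{f\leq 0\}$ or $\{f\geq 0\}$. Indeed, if $P\subset\{f\leq 0\}$, then $f$ itself witnesses $Q$ as a face in the sense defined in the excerpt, and similarly for $\{f\geq 0\}$ using $-f$. Note that $f$ must be non-constant, since $f\equiv 0$ would give $Q=P$, which is already a face (via the trivial affine function), while $f\equiv c\neq 0$ would force $Q=\varnothing$, contradicting the hypothesis.

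So, assuming $Q$ is not a face, $P$ lies in neither half-space, hence there exist $p_{1},p_{2}\in P$ with $f(p_{1})>0>f(p_{2})$. Since $V_{N}(P)>0$, the interior $P^{\circ}$ is non-empty and dense in $P$, so by continuity of $f$ I can replace $p_{1},p_{2}$ by points $q_{1},q_{2}\in P^{\circ}$ satisfying $f(q_{1})>0>f(q_{2})$. I would then verify the standard convex-geometric fact that the open segment between two interior points of a convex set lies in the interior: if $B_{r_{i}}^{N}(q_{i})\subset P$ for $i=1,2$, then for any $t\in(0,1)$ one has $B_{\min((1-t)r_{1},tr_{2})}^{N}((1-t)q_{1}+tq_{2})\subset (1-t)B_{r_{1}}^{N}(q_{1})+tB_{r_{2}}^{N}(q_{2})\subset P$ by convexity.

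Let $q_{0}=(1-t_{0})q_{1}+t_{0}q_{2}$ be the (unique) intersection of the segment $[q_{1},q_{2}]$ with $\{f=0\}$; then $q_{0}\in P^{\circ}$, so $B_{\varepsilon}^{N}(q_{0})\subset P$ for some $\varepsilon>0$. Since $f$ is a non-constant affine function, the intersection $B_{\varepsilon}^{N}(q_{0})\cap\{f=0\}$ is an open $(N-1)$-dimensional ball contained in $Q$, which immediately yields $V_{N-1}(Q)\geq V_{N-1}(B_{\varepsilon}^{N}(q_{0})\cap\{f=0\})>0$.

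The only subtle step is the characterization of faces via supporting hyperplanes in step one and the verification that interior points of a convex body can be chosen on both sides of $\{f=0\}$; both are routine given $V_N(P)>0$, so I expect no serious obstacle. The proof is otherwise a direct geometric construction.
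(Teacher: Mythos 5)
Your proof is correct and uses essentially the same approach as the paper: reduce to finding a point of $\{f=0\}$ in the interior $P^{\circ}$, then take a small ball around it. The paper phrases this as a contradiction argument using the connectedness of $P^{\circ}$ (if $\{f=0\}$ misses $P^{\circ}$, then $f$ has constant sign there, making $Q$ a face), while you construct the interior point explicitly via two interior points of opposite sign and the segment between them — a slightly more concrete packaging of the same intermediate-value idea.
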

\begin{proof}
Since $V_N(P)>0$, we know that $P$ has a non-empty $N$-dimensional relative interior $\text{relint}(P)$ satisfying $\text{relint}(P)=P^{\circ}$. Hence, $V_{N-1}(Q)>0$ amounts to proving that $\{x\in\mathbb{R}^N|f(x)=0\}\cap \text{relint}(P)\neq \varnothing$.

Suppose that $\{x\in\mathbb{R}^N|f(x)=0\}\cap \text{relint}(P)= \varnothing$. Because $f$ is continuous and $\text{relint}(P)$ is connected, it follows that either $P\subset \{x\in \mathbb{R}^{N}|f(x)\leq 0\}$ or  $P\subset \{x\in \mathbb{R}^{N}|f(x)\geq 0\}$. In either case, the plane section $Q=\{x\in\mathbb{R}^N|f(x)=0\}\cap P$  must be a face of $P$, which contradicts the assumption that $Q$ is not a face. Therefore, we have $V_{N-1}(Q)>0$.
\end{proof}

To estimate the integral $S(x_N)$ in \eqref{4-30}, we introduce the $(N-1)$-dimensional volume of the section of a set. Specifically, let $M$ be a subset of $\mathbb{R}^N$, we define
\begin{equation}\label{4-36}
  X_{M}(t):=\int_{\mathbb{R}^{N-1}}\mathbf{1}_M(y_{1},\ldots,y_{N-1},t)d y_1\cdots d y_{N-1}~~\mbox{for}~~t\geq 0.
\end{equation}
Combining \eqref{4-30} and \eqref{4-36}, we have
\begin{equation}\label{4-37}
  S(t)=\int_{\mathbb{R}^{N-1}}\mathbf{1}_{\mathcal{H}(P_{a,s})}(x_{1},\ldots,x_{N-1},t)dx_{1}\cdots d x_{N-1}=X_{\mathcal{H}(P_{a,s})}(t).
\end{equation}
Recalling that $F(t_{1})\cap F(t_{2})=\varnothing$ if $t_{1}\neq t_{2}$, we can derive from  \eqref{4-35} and \eqref{4-36} that
\begin{equation}
\begin{aligned}\label{4-38}
X_{T(\delta)}(t)&=\int_{\mathbb{R}^{N-1}}\mathbf{1}_{T(\delta)}(y_{1},\ldots,y_{N-1},t)d y_1\cdots d y_{N-1}\\
&=\int_{\mathbb{R}^{N-1}}\mathbf{1}_{F(t)}(y_{1},\ldots,y_{N-1},t)d y_1\cdots d y_{N-1}=V_{N-1}(F(t)).
\end{aligned}
\end{equation}

Based on Lemma \ref{lemma4-4}-Lemma \ref{lemma4-6}, we can establish the following two technical lemmas concerning the estimation of $S(t)$.

\begin{lemma}
\label{lemma4-7}
For $(a,s)\in \mathfrak{G}^{\circ}$ with $a-b-\mathbf{1}\neq 0$, let $\mathcal{H}(P_{a,s})$ and $\mathcal{H}(M_{a,s})$ be the sets given in Proposition \ref{prop4-5}. Denote by $d_{a,s}:=\dim M_{a,s}$ the dimension of $M_{a,s}$. Then, there exists a $\delta>0$ such that
\[ p_{2}(t)\leq S(t)\leq p_{1}(t)\qquad \mbox{for all}~~0\leq t\leq \delta, \]
where $p_{i}(t)=v_{i,d_{a,s}}t^{N-1-d_{a,s}}+\cdots+v_{i,0}t^{N-1}$ is the polynomial with positive coefficients $v_{i,j}>0$ for $i=1,2$ and $j=0,\ldots,d_{a,s}$.
\end{lemma}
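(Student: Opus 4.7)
My plan is to sandwich the slice $\mathcal{H}(P_{a,s})\cap\{x_N=t\}$ between two tubes of the form (4.31) with $d=d_{a,s}$, say $T_2(\delta)\subset \mathcal{H}(P_{a,s})\cap\{0\leq x_N\leq \delta\}\subset T_1(\delta)$, and then invoke Lemma \ref{lemma4-4} via the identity (4.38). Since horizontal slicing respects inclusion, this yields
\[
V_{N-1}(F_2(t))\;=\;X_{T_2(\delta)}(t)\;\leq\;S(t)\;\leq\;X_{T_1(\delta)}(t)\;=\;V_{N-1}(F_1(t))\quad\text{for }0\leq t\leq \delta,
\]
and Lemma \ref{lemma4-4} immediately presents each $V_{N-1}(F_i(t))$ as a polynomial of the desired shape $v_{i,d_{a,s}}t^{N-1-d_{a,s}}+\cdots+v_{i,0}t^{N-1}$ with positive coefficients.

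For the inner tube, I would pick $x_0\in\text{relint}(\mathcal{H}(M_{a,s}))$; because $\mathcal{H}(M_{a,s})\subset\{x_N=0\}$ has dimension $d_{a,s}$, its affine hull, translated by $-x_0$, is a $d_{a,s}$-dimensional subspace $E_{d_{a,s}}\subset E$, and I may take $r>0$ with $\overline{B_r^{d_{a,s}}(0)}+\{x_0\}\subset\mathcal{H}(M_{a,s})$. Since $V_N(\mathcal{H}(P_{a,s}))>0$ and $\mathcal{H}(P_{a,s})\subset\{x_N\geq 0\}$, there exists $u\in\mathbb{S}^{N-1}\cap E_+$ with $x_0+s\eta u\in\mathcal{H}(P_{a,s})^\circ$ for small $s>0$. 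Each linear inequality defining $\mathcal{H}(P_{a,s})$ is either \emph{tight} on $\mathcal{H}(M_{a,s})$ (in which case the inward direction $u$ and the closed nature of the ball $\overline{B_{t\varepsilon}^{N-1}(0)}$ give a strictly negative value for small $t\varepsilon$) or \emph{slack} there (in which case continuity provides a uniform margin). Taking $\varepsilon,\delta>0$ sufficiently small thus ensures $T_2(\delta)$ with parameters $(E_{d_{a,s}},x_0,u,\varepsilon)$ is contained in $\mathcal{H}(P_{a,s})$.

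For the outer tube, I would first show that every nonzero recession direction $q\in\text{rec}(\mathcal{H}(P_{a,s}))$ satisfies $q_N>0$: otherwise $q\in E$ would translate any point of $\mathcal{H}(M_{a,s})$ within $\{x_N=0\}\cap\mathcal{H}(P_{a,s})=\mathcal{H}(M_{a,s})$, contradicting the boundedness of $\mathcal{H}(M_{a,s})$ (which follows from Lemma \ref{lemma4-3} and the dimension-preserving property of $\mathcal{H}$ recorded in Proposition \ref{prop4-5}). A compactness argument on the unit sphere in $\text{rec}(\mathcal{H}(P_{a,s}))$ then gives a uniform positive lower bound on $q_N/|q|$, which forces the horizontal slice at height $t$ to be bounded and to have diameter growing at most linearly in $t$. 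Enlarging $r$ if necessary so that $\overline{B_{r}^{d_{a,s}}(0)}+\{x_0\}$ covers the (bounded) projection of $\mathcal{H}(M_{a,s})$ onto $E_{d_{a,s}}$, and choosing $\varepsilon'$ large enough, the local polyhedral cone structure of $\mathcal{H}(P_{a,s})$ near its face $\mathcal{H}(M_{a,s})$ yields $\mathcal{H}(P_{a,s})\cap\{0\leq x_N\leq\delta\}\subset T_1(\delta)$.

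The lower bound is mostly routine relative-interior bookkeeping, so I expect the main obstacle to be the outer-tube construction: precisely because $\mathcal{H}(P_{a,s})$ may be unbounded, one must combine the recession-cone analysis with a careful control, uniform in $x\in\mathcal{H}(M_{a,s})$, of the rate at which the defining half-spaces that are tight on $\mathcal{H}(M_{a,s})$ open up as $x_N$ grows. Once this uniform linear-opening estimate is in hand, matching the tube parameters $(r,\varepsilon,\delta,u)$ to accommodate both extremal vertices of $\mathcal{H}(M_{a,s})$ and the worst-case transverse direction delivers the required containment and completes the proof.
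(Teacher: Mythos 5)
Your overall plan coincides with the paper's: construct an inner tube $T_2(\delta)$ and an outer tube $T_1(\delta)$ of the form \eqref{4-35} with $d=d_{a,s}$, sandwich the slab $\mathcal{H}(P_{a,s})\cap\{0\le x_N\le\delta\}$ between them, read off $S(t)$ via \eqref{4-38}, and invoke Lemma \ref{lemma4-4}. Your inner-tube construction (split the defining half-spaces into those tight versus slack on $\mathcal{H}(M_{a,s})$, use an interior direction $u$ to get a linear-in-$t$ margin for the tight ones, and compactness for the slack ones) is different in detail from the paper's, which instead locates an interior point of $Q(\delta)$ and matches ball radii at heights $0$ and $\delta$; both routes are sound. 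Your recession-cone observation that every nonzero direction $q$ of $\mathcal{H}(P_{a,s})$ has $q_N>0$, and the resulting compactness bound $q_N/|q|\ge c_0$, is correct and in fact mirrors the paper's Lemma \ref{lemma4-8}.

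The gap is precisely where you flag it: the outer-tube containment $Q(t)\subset F_1(t)$ for $0\le t\le\delta$. A bound on $\operatorname{diam}Q(t)$ does not control \emph{where} $Q(t)$ sits relative to $F_1(t)$, and $F_1(t)$ near $t=0$ has only $O(t)$ thickness transverse to $E_{d_{a,s}}$; you therefore need the assertion that every $x\in Q(t)$ lies within $O(t)$ of $Q(0)=\mathcal{H}(M_{a,s})$, uniformly. You call this the ``uniform linear-opening estimate'' and defer it. The paper supplies the missing idea in a cleaner way: it chooses $\delta$ strictly smaller than the smallest positive $x_N$-coordinate of a vertex of $\mathcal{H}(P_{a,s})$ (or, if there are none, below any interior point's height), which by Lemma \ref{lemma4-6} makes $Q(\delta)$ a full-dimensional section and, crucially, makes the slab $P(\delta)$ a polytope with all vertices in $Q(0)\cup Q(\delta)$, hence $P(\delta)=\operatorname{conv}(Q(0)\cup Q(\delta))$. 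Combined with Lemma \ref{lemma4-5}, which identifies $T_1(\delta)=\operatorname{conv}(F_1(0)\cup F_1(\delta))$, one then needs only $Q(0)\subset F_1(0)$ and $Q(\delta)\subset F_1(\delta)$, both of which follow by taking $r$ and $\varepsilon_1$ large. This convex-hull decomposition is exactly the quantitative linear-opening fact you were missing, obtained structurally rather than by slope estimates; without it or an equivalent, your containment $Q(t)\subset F_1(t)$ is asserted but not proved.
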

\begin{proof}
By Lemma \ref{lemma4-3} and Proposition \ref{prop4-5}, we can deduce that $\mathcal{H}(M_{a,s})$ is a bounded $d_{a,s}$-face of the polyhedron $\mathcal{H}(P_{a,s})$, implying that $\mathcal{H}(M_{a,s})$ is a polytope.
For any $t\geq 0$, let  $Q(t):=\mathcal{H}(P_{a,s})\cap \{x\in\mathbb{R}^{N}|x_N=t\}$ be the plane section of $\mathcal{H}(P_{a,s})$. Then, we find a $\delta>0$ such that the plane section $Q(\delta)$ has a positive $(N-1)$-dimensional volume.

Note that the polyhedron $\mathcal{H}(P_{a,s})\subset \overline{E_{+}}$ has a finite number of  vertexes in $\overline{E_{+}}$ (see \cite[Corollary 2.1]{Bertsimas1997}). In the case where $\mathcal{H}(P_{a,s})$ has $m$ vertexes in $E_{+}$, we denote the orthogonal projections of these vertexes onto $E^\bot$ as
 $0<z_1\leq z_2\leq\cdots\leq z_m$. We then choose a $\delta\in(0,z_1)$ and let $g(x):=x_{N}-\delta$. Obviously, the non-empty set $\mathcal{H}(P_{a,s})\cap\{x\in \mathbb{R}^{N}|g(x)=0\}=Q(\delta)$ is not the face of $\mathcal{H}(P_{a,s})$ in this case, since $g(x)<0$ for $x\in \mathcal{H}(M_{a,s})\subset \mathcal{H}(P_{a,s})$ and $g(y)>0$ for $y:=(y_1,\ldots,y_{N-1},z_1)$ being the vertex of $\mathcal{H}(P_{a,s})$. On the other hand, if all the vertexes of $\mathcal{H}(P_{a,s})$ lie in $E$, we can find a point $w=(w_1,\ldots,w_{N-1},w_N)\in\text{relint}(\mathcal{H}(P_{a,s}))\subset E_{+}$ with $w_N>0$ due to $V_N(\mathcal{H}(P_{a,s}))>0$. By taking $\delta\in(0,w_N)$ and using the similar arguments as above, we deduce that the non-empty plane section $Q(\delta)$ is not a face of $\mathcal{H}(P_{a,s})$. Hence in both cases, Lemma \ref{lemma4-6} indicates that  $Q(\delta)$ admits a positive $(N-1)$-dimensional volume.

The boundedness of $\mathcal{H}(M_{a,s})$ and \cite[Corollary 8.4.1]{Rockafellar1970} imply that the plane section $Q(t)$ is bounded for any $t\in[0,\delta]$. Therefore
\[ P(\delta):=\mathcal{H}(P_{a,s})\cap\{x\in \mathbb{R}^{N}|x_{N}\leq \delta\}=\bigcup_{0\leq t\leq \delta}Q(t) \]
is a bounded polyhedron (i.e., polytope) with its vertexes lying in $Q(0)\cup Q(\delta)$, because $\mathcal{H}(P_{a,s})$ has no vertexes in $\{x\in \mathbb{R}^{N}|0<x_{N}<\delta\}$ by construction. From \cite[Theorem 1.21]{Hug2010} and \cite[Theorem 2.3]{Bertsimas1997}, we have $P(\delta)={\rm conv}(Q(0)\cup Q(\delta))$. Moreover, $Q(\delta)$ is an $(N-1)$-face of $P(\delta)$ with $V_{N-1}(Q(\delta))>0$.

Since  $\mathcal{H}(M_{a,s})$ has a non-empty $d_{a,s}$-dimensional relative interior $\text{relint}(\mathcal{H}(M_{a,s}))$, for any $x_{0}\in \text{relint}(\mathcal{H}(M_{a,s}))$,  there exists a unique $d_{a,s}$-dimensional linear subspace $E_{d_{a,s}}\subset E$ and a $d_{a,s}$-dimensional ball $B_{r_{1}}^{d_{a,s}}(0)\subset E_{d_{a,s}}$  such that
\[ \mathcal{H}(M_{a,s})\subset \{x_{0}\}+\overline{B_{r_{1}}^{d_{a,s}}(0)}.\]
 In particular, in the case of $d_{a,s}=0$, we have $\mathcal{H}(M_{a,s})=\{x_{0}\}\subset E$ and $B_{r_{1}}^{d_{a,s}}(0)=\{0\}$.

Following the definition of $F(t)$ in \eqref{4-31}, we can find a quadruple $(E_{d_{a,s}},x_{0},u_{1},\varepsilon_{1})$ with $u_1\in \mathbb{S}^{N-1}\cap E_+$ and sufficiently large $\varepsilon_{1}>0$, such that
   the corresponding set $F_{1}(t)$ satisfies $Q(\delta)\subset F_{1}(\delta)$ and $Q(0)\subset F_{1}(0)$. Here $F_{1}(t)$ is given by
\[ F_{1}(t):=    \overline{B_{r_{1}}^{d_{a,s}}(0)}+\overline{B_{t\varepsilon_{1}}^{N-1}(0)}+\{x_{0}+t\eta_{1} u_{1}\}
\]
for $0\leq t\leq \delta$, where $\eta_{1}=\langle u_{1},e_{N}\rangle^{-1}$. According to Lemma \ref{lemma4-5}, we obtain
\begin{equation*}
  P(\delta)={\rm conv}(Q(0)\cup Q(\delta))\subset {\rm conv}(F_{1}(0)\cup F_{1}(\delta))=\bigcup_{0\leq t\leq \delta}F_{1}(t):=T_{1}(\delta),
\end{equation*}
and
\begin{equation}\label{4-39}
  Q(t)\subset F_{1}(t)~~~~\mbox{for all}~~~~0\leq t\leq \delta.
\end{equation}

Furthermore, there exists a point $u_2\in \mathbb{S}^{N-1}\cap E_+$ such that $x_{0}+\delta \eta_{2}u_{2}\in\text{relint}(Q(\delta))$ with $\eta_{2}=\langle u_{2},e_{N}\rangle^{-1}$, since $Q(\delta)$ is an $(N-1)$-dimensional non-empty polytope. It follows that $Q(\delta)-\{x_{0}+\delta \eta_{2}u_{2}\}$ is a bounded closed subset in $E$, and $0\in \text{relint}(Q(\delta)-\{x_{0}+\delta \eta_{2}u_{2}\})$.  Thus, there is an $(N-1)$-dimensional ball $B_{r_{3}}^{N-1}(0)\subset E$ such that
\[ \{x_{0}+\delta \eta_{2}u_{2}\}+\overline{B_{r_{3}}^{N-1}(0)}\subset Q(\delta). \]
Similarly, we can also find a $d_{a,s}$-dimensional ball $B_{r_{2}}^{d_{a,s}}(0)\subset E_{d_{a,s}}$  such that $0<r_{2}<r_{3}$ and
\[ \{x_{0}\}+\overline{B_{r_{2}}^{d_{a,s}}(0)}\subset \mathcal{H}(M_{a,s}).\]
We mention that $B_{r_{2}}^{d_{a,s}}(0)=\{0\}$ if $d_{a,s}=0$. Hence, for the quadruple $(E_{d_{a,s}},x_{0},u_{2},\varepsilon_{2})$ with $\varepsilon_{2}=\frac{r_{3}-r_{2}}{\delta}>0$, the corresponding set
 \[ F_{2}(t):=\overline{B_{r_{2}}^{d_{a,s}}(0)}+\overline{B_{t\varepsilon_{2}}^{N-1}(0)}+\{x_{0}+t\eta_{2}u_{2}\} \]
satisfies
\[ \begin{aligned}
F_{2}(\delta)&=\{x_{0}+\delta \eta_{2}u_{2}\}+\overline{B_{r_{2}}^{d_{a,s}}(0)}+\overline{B_{\delta \varepsilon_{2}}^{N-1}(0)}\\
&\subset \{x_{0}+\delta \eta_{2}u_{2}\}+ \overline{B_{r_{2}+\delta\varepsilon_{2}}^{N-1}(0)}= \{x_{0}+\delta \eta_{2}u_{2}\}+\overline{B_{r_{3}}^{N-1}(0)}\subset Q(\delta)
\end{aligned}
\]
and
\[ F_{2}(0)= \{x_{0}\}+\overline{B_{r_{2}}^{d_{a,s}}(0)}\subset \mathcal{H}(M_{a,s}).\]
Therefore, we obtain
\begin{equation*}
 P(\delta)={\rm conv}(Q(0)\cup Q(\delta))\supset {\rm conv}(F_{2}(0)\cup F_{2}(\delta))=\bigcup_{0\leq t\leq \delta}F_{2}(t):=T_{2}(\delta),
\end{equation*}
and
\begin{equation}\label{4-40}
  F_{2}(t)\subset Q(t)~~\mbox{for all}~~0\leq t\leq \delta.
\end{equation}

As a result of \eqref{4-39} and \eqref{4-40}, we have
\begin{equation}\label{4-41}
 X_{T_{2}(\delta)}(t)\leq S(t)\leq X_{T_{1}(\delta)}(t)~~~~\mbox{for all}~~~0\leq t\leq \delta.
\end{equation}
Consequently, it follows from Lemma \ref{lemma4-4}, \eqref{4-38} and \eqref{4-41} that
\[ p_{2}(t)\leq S(t)\leq p_{1}(t)\qquad \mbox{for all}~~0\leq t\leq \delta, \]
where $p_{i}(t)=v_{i,d_{a,s}}t^{N-1-d_{a,s}}+\cdots+v_{i,0}t^{N-1}$ is the polynomial with $v_{i,j}>0$ for $i=1,2$ and $j=0,\ldots,d_{a,s}$.
\end{proof}

\begin{lemma}
\label{lemma4-8}
For $(a,s)\in \mathfrak{G}^{\circ}$ with $a-b-\mathbf{1}\neq 0$, let $\mathcal{H}(P_{a,s})$ and $\mathcal{H}(M_{a,s})$ be the sets given in Proposition \ref{prop4-5}. Suppose that $\mathcal{H}(P_{a,s})$ is unbounded and $\delta>0$ is the positive constant given in Lemma \ref{lemma4-7}, then there exists a positive constant $C_{1}$ such that $ S(t)\leq C_{1}t^{N-1}$ for all $t\geq \delta$.
\end{lemma}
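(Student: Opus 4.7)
The plan is to exploit the convexity of $\mathcal{H}(P_{a,s})$ together with the boundedness of the base section $Q(\delta) := \mathcal{H}(P_{a,s}) \cap \{x_N = \delta\}$, which is already guaranteed by the choice of $\delta$ made in the proof of Lemma \ref{lemma4-7}. First I will identify $S(t)$ with $V_{N-1}(Q(t))$ using \eqref{4-37} together with the fact that the hyperplane $\{x_N = t\}$ is parallel to ${\rm span}\{e_1,\ldots,e_{N-1}\}$; the desired conclusion then reads $V_{N-1}(Q(t)) \le C_1 t^{N-1}$ for every $t \ge \delta$.

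The main step is a scaling construction. Fix any $p \in \mathcal{H}(M_{a,s})$, which exists by Lemma \ref{lemma4-3} and necessarily satisfies $p_N = 0$ since $\mathcal{H}(M_{a,s}) \subset \{x_N = 0\}$. For each $t \ge \delta$ define the affine map
\[
\Phi_t(y) \;:=\; \Bigl(1 - \tfrac{\delta}{t}\Bigr) p \;+\; \tfrac{\delta}{t}\, y.
\]
Because $\delta/t \in (0,1]$ and $\mathcal{H}(P_{a,s})$ is convex, $\Phi_t$ sends any $y \in Q(t) \subset \mathcal{H}(P_{a,s})$ back into $\mathcal{H}(P_{a,s})$; moreover the $N$-th coordinate of $\Phi_t(y)$ is $(1 - \delta/t)\cdot 0 + (\delta/t)\cdot t = \delta$, so $\Phi_t$ in fact maps $Q(t)$ injectively into $Q(\delta)$. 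Expressed in the coordinates $(y_1,\ldots,y_{N-1})$ identifying $\{x_N = t\}$ with $\mathbb{R}^{N-1}$, $\Phi_t$ is a dilation by $\delta/t$ composed with a translation, so its Jacobian determinant has absolute value $(\delta/t)^{N-1}$.

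Combining these two observations gives
\[
\Bigl(\tfrac{\delta}{t}\Bigr)^{N-1} V_{N-1}(Q(t)) \;=\; V_{N-1}\bigl(\Phi_t(Q(t))\bigr) \;\le\; V_{N-1}(Q(\delta)),
\]
and the constant $C_1 := V_{N-1}(Q(\delta))/\delta^{N-1}$ is finite and positive, since Lemma \ref{lemma4-7} ensures that $Q(\delta)$ is a bounded $(N-1)$-dimensional polyhedron of positive $(N-1)$-volume. This yields the required estimate $S(t) = V_{N-1}(Q(t)) \le C_1 t^{N-1}$. I do not anticipate a serious obstacle: the entire argument rests on one convexity-based scaling trick plus the volume control on $Q(\delta)$ that is already packaged into the choice of $\delta$. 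The hypothesis that $\mathcal{H}(P_{a,s})$ be unbounded is used only to make the range $t \ge \delta$ non-trivial; the inequality itself holds identically for every $t \ge \delta$ regardless of whether $\mathcal{H}(P_{a,s})$ is bounded.
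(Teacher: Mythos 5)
Your proof is correct, and it takes a genuinely different and noticeably cleaner route than the paper's. The paper proves Lemma \ref{lemma4-8} via the Minkowski--Weyl decomposition $\mathcal{H}(P_{a,s}) = P_1 + C_{\mathcal{H}(P_{a,s})}$ (a polytope plus the recession cone), establishes that every nonzero recession direction $z$ satisfies $z_N/|z|_N \ge c_0 > 0$ by a compactness argument on $C_{\mathcal{H}(P_{a,s})} \cap \mathbb{S}^{N-1}$, and then bounds the coordinates of any $x \in B_{r_4}^N(0) + C_{\mathcal{H}(P_{a,s})}$ with $x_N = t$ by elementary inequalities, splitting into the cases $z \neq 0$ and $z = 0$. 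Your argument sidesteps all of that: it uses only the convexity of $\mathcal{H}(P_{a,s})$, the existence of a point $p \in \mathcal{H}(M_{a,s}) \subset \{x_N = 0\}$, and the finiteness of $V_{N-1}(Q(\delta))$ secured by Lemma \ref{lemma4-7}. The contraction $\Phi_t(y) = (1 - \delta/t)p + (\delta/t)y$ maps $Q(t)$ injectively into $Q(\delta)$ with Jacobian $(\delta/t)^{N-1}$ on the hyperplane, giving $V_{N-1}(Q(t)) \le (t/\delta)^{N-1} V_{N-1}(Q(\delta))$ directly. The one small gain of the paper's approach is that it produces an explicit description of the cross-section via the recession cone, which could be reused elsewhere; your approach buys brevity and conceptual transparency, and in addition makes visible (as you correctly note) that the unboundedness hypothesis is not actually used in the inequality itself. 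Both proofs rely on exactly the same inputs from Lemma \ref{lemma4-7}, namely that $Q(\delta)$ is a bounded $(N-1)$-dimensional polytope, so there is no hidden gap.
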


\begin{proof}
We denote by $C_{\mathcal{H}(P_{a,s})}:=\{y\in\mathbb{R}^{N}|x+\gamma y\in \mathcal{H}(P_{a,s}),~\forall x\in \mathcal{H}(P_{a,s}),~\forall \gamma\geq0\}$ the recession cone consisting of the origin in $\mathbb{R}^N$ and all directions of $\mathcal{H}(P_{a,s})$. The unboundedness of  $\mathcal{H}(P_{a,s})$ indicates that $C_{\mathcal{H}(P_{a,s})}$ must contain at least one non-zero vector. Additionally, it follows from \cite[Theorem 8.2]{Rockafellar1970}  that $C_{\mathcal{H}(P_{a,s})}$ is closed.  By virtue of the Minkowski-Weyl's decomposition theorem for polyhedron (see \cite[Section 8]{Schrijver1998}), we have
\begin{equation}\label{4-42}
\mathcal{H}(P_{a,s})=P_{1}+C_{\mathcal{H}(P_{a,s})},
\end{equation}
where $P_{1}$ is a polytope.\par

Since the plane section $Q(t)=\mathcal{H}(P_{a,s})\cap \{x\in\mathbb{R}^{N}|x_N=t\}$ is bounded for any $t\in[0,\delta]$ and $\mathcal{H}(P_{a,s})\subset \overline{E_{+}}$, it derives that
 any non-zero vector $z=(z_{1},\ldots,z_{N})\in  C_{\mathcal{H}(P_{a,s})}$ must have
 $z_{N}>0$.  Let $ f(z):=\langle z,e_{N}\rangle $ be the continuous function defined on the compact set $C_{\mathcal{H}(P_{a,s})}\cap \mathbb{S}^{N-1}$. Then
$f(z)\geq c_{0}>0$  for all $z\in C_{\mathcal{H}(P_{a,s})}\cap \mathbb{S}^{N-1}$ and some $0<c_{0}<1$. Thus, $\frac{z_{N}}{|z|_{N}}\geq c_{0}>0$ holds for any non-zero vector $z\in C_{\mathcal{H}(P_{a,s})}$.

Recalling that $P_{1}$ is a polytope, we can find an $N$-dimensional ball $B_{r_{4}}^{N}(0)$ such that $P_{1}\subset B_{r_{4}}^{N}(0)$. Therefore,  \eqref{4-42} gives that
\begin{equation}\label{4-43}
 \mathcal{H}(P_{a,s})=P_{1}+C_{\mathcal{H}(P_{a,s})}\subset B_{r_{4}}^{N}(0)+C_{\mathcal{H}(P_{a,s})}.
\end{equation}
Combining \eqref{4-36}, \eqref{4-37} and \eqref{4-43}, we obtain
\begin{equation}\label{4-44}
\begin{aligned}
  S(t)&=X_{\mathcal{H}(P_{a,s})}(t)=V_{N-1}(\mathcal{H}(P_{a,s})\cap\{x\in \mathbb{R}^{N}|x_{N}=t\})\\
  &\leq   V_{N-1}((B_{r_{4}}^{N}(0)+C_{\mathcal{H}(P_{a,s})})\cap\{x\in \mathbb{R}^{N}|x_{N}=t\}).
\end{aligned}
\end{equation}

We next estimate the upper bound of $V_{N-1}((B_{r_{4}}^{N}(0)+C_{\mathcal{H}(P_{a,s})})\cap\{x\in \mathbb{R}^{N}|x_{N}=t\})$.
For any $x\in  B_{r_{4}}^{N}(0)+C_{\mathcal{H}(P_{a,s})}$, we have $x=y+z$, where $y\in B_{r_{4}}^{N}(0)$ and $z\in C_{\mathcal{H}(P_{a,s})}$. Additionally, examining the coordinates $x=(x_{1},\ldots,x_{N})=(y_{1}+z_{1},\ldots,y_{N}+z_{N})$, we get that $|y|_{N}\leq r_{4}$ and $\frac{z_{N}}{|z|_{N}}\geq c_{0}>0$ if $|z|_{N}\neq 0$. Setting $x_{N}=y_{N}+z_{N}=t>0$, the upper bound  will be examined in the following two cases:\\
 \emph{\textbf{Case 1:}} $|z|_{N}\neq 0$. It follows that
\begin{equation}\label{4-45}
0<c_{0}|z|_{N}\leq z_{N}=t-y_{N}\leq t+r_{4},
\end{equation}
which means
\begin{equation}\label{4-46}
z_{1}^{2}+\cdots+z_{N-1}^{2}\leq \frac{1-c_{0}^{2}}{c_{0}^{2}}z_{N}^{2}.
\end{equation}
Combining  \eqref{4-45} and \eqref{4-46}, we obtain $|z_{j}|\leq \frac{1}{c_{0}}z_{N}\leq \frac{t+r_{4}}{c_{0}}$ and $|x_{j}|\leq |y_{j}|+|z_{j}|\leq \frac{t+r_{4}}{c_{0}}+r_{4}$ for $1\leq j\leq N-1$.
Thus, for $t>0$ we have
\begin{equation}\label{4-47}
 V_{N-1}\left((B_{r_{4}}^{N}(0)+C_{\mathcal{H}(P_{a,s})})\cap\{x\in \mathbb{R}^{N}|x_{N}=t\}\right)\leq 2^{N-1} \left(\frac{t+r_{4}}{c_{0}}+r_{4}\right)^{N-1}.
\end{equation}
\emph{\textbf{Case 2:}} $|z|_{N}= 0$. In this case, $y_{N}=t>0$ and $|x_{j}|=|y_{j}|\leq r_{4}$ for $1\leq j\leq N-1$. Hence,
 \begin{equation}\label{4-48}
 V_{N-1}\left((B_{r_{4}}^{N}(0)+C_{\mathcal{H}(P_{a,s})})\cap\{x\in \mathbb{R}^{N}|x_{N}=t\}\right)\leq (2r_{4})^{N-1}\leq 2^{N-1}\left(\frac{t+r_{4}}{c_{0}}+r_{4}\right)^{N-1}.
\end{equation}

According to  \eqref{4-44}, \eqref{4-47} and \eqref{4-48}, there is a positive constant $C_{1}>0$ such that
$ S(t)\leq C_{1}t^{N-1}$ for all $t\geq \delta$.
\end{proof}

We can now apply Proposition \ref{prop4-5}, Lemma \ref{lemma4-7} and Lemma \ref{lemma4-8} to derive the asymptotic behaviour of integral $J_{a,s}(r)$ when $a-b-\mathbf{1}\neq 0$.
\begin{proposition}
\label{prop4-6}
For $(a,s)\in \mathfrak{G}^{\circ}$ with $a-b-\mathbf{1}\neq 0$, let $\mathcal{H}(P_{a,s})$ and $\mathcal{H}(M_{a,s})$ be the sets given in Proposition \ref{prop4-5}. Then the integral $J_{a,s}(r)$ defined in \eqref{4-29} admits the following asymptotic behaviour:
\begin{equation}
\label{4-49}
 J_{a,s}(r)\approx \frac{|\ln r|^{d_{a,s}}}{r^{s+m_{a,s}}}~~~\mbox{as}~~r\to 0^{+},
\end{equation}
where $m_{a,s}$ and $d_{a,s}$ are the constants defined in Lemma \ref{lemma4-3} above.
\end{proposition}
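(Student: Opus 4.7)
The plan is to work directly from the integral representation in Proposition~\ref{prop4-5} and split the integration in $x_N$ at the threshold $\delta>0$ supplied by Lemma~\ref{lemma4-7}. Writing $L:=\ln(1/r)$ for brevity, we have
\[
J_{a,s}(r)=\frac{C_0\,L^N}{r^{s+m_{a,s}}}\Bigl(I_1(r)+I_2(r)\Bigr),\qquad I_1(r):=\int_0^{\delta}e^{-Lt}S(t)\,dt,\quad I_2(r):=\int_\delta^{+\infty}e^{-Lt}S(t)\,dt,
\]
where $S(t)=V_{N-1}(\mathcal{H}(P_{a,s})\cap\{x_N=t\})$. On the short interval $[0,\delta]$ we will sandwich $S(t)$ between the two polynomials $p_2(t)\le S(t)\le p_1(t)$ furnished by Lemma~\ref{lemma4-7}, while on $[\delta,+\infty)$ we will either use compactness of $\mathcal{H}(P_{a,s})$ (so $S\equiv 0$ past some finite height) or invoke Lemma~\ref{lemma4-8} to get $S(t)\le C_1 t^{N-1}$ in the unbounded case.

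The tail is disposed of first and shown to be negligible. Splitting $e^{-Lt}=e^{-Lt/2}e^{-Lt/2}$ and bounding $e^{-Lt/2}\le e^{-L\delta/2}=r^{\delta/2}$ on $[\delta,+\infty)$, one obtains
\[
\int_\delta^{+\infty}e^{-Lt}t^{N-1}\,dt\le r^{\delta/2}\int_0^{+\infty}e^{-Lt/2}t^{N-1}\,dt=\frac{2^N(N-1)!}{L^N}\,r^{\delta/2},
\]
so that $L^N I_2(r)=O(r^{\delta/2})=o(L^{d_{a,s}})$ as $r\to 0^+$. Hence the asymptotic behaviour of $r^{s+m_{a,s}}J_{a,s}(r)$ is governed entirely by $L^N I_1(r)$.

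For the main term, each monomial $t^{N-1-j}$ appearing in $p_i(t)$ (with $0\le j\le d_{a,s}$) is handled by the substitution $u=Lt$, yielding
\[
\int_0^{\delta}e^{-Lt}t^{N-1-j}\,dt=\frac{1}{L^{N-j}}\int_0^{\delta L}e^{-u}u^{N-1-j}\,du\ \longrightarrow\ \frac{(N-1-j)!}{L^{N-j}}\qquad(r\to 0^+),
\]
since $\delta L\to +\infty$ and the Gamma integral converges. After multiplication by $L^N$ the $j$-th monomial contributes $(N-1-j)!\,L^{j}$; all coefficients of $p_i$ are strictly positive and the dominant index is $j=d_{a,s}$, so
\[
L^N\int_0^{\delta}e^{-Lt}p_i(t)\,dt\sim v_{i,d_{a,s}}(N-1-d_{a,s})!\,L^{d_{a,s}}\qquad(r\to 0^+),\quad i=1,2.
\]
Sandwiching $p_2\le S\le p_1$ on $[0,\delta]$ and absorbing the $o(L^{d_{a,s}})$ tail then gives $r^{s+m_{a,s}}J_{a,s}(r)\approx L^{d_{a,s}}=|\ln r|^{d_{a,s}}$, which is the claim. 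The argument is essentially a Laplace/Watson-type expansion once the geometric slice estimates of Lemmas~\ref{lemma4-7}--\ref{lemma4-8} are in hand; the one non-routine point is to confirm that the leading coefficient $v_{i,d_{a,s}}$ is strictly positive on both sides of the sandwich, which is exactly the positivity assertion of Lemma~\ref{lemma4-7}.
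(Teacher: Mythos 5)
Your proof is correct and follows essentially the same route as the paper: split at the threshold $\delta$ from Lemma \ref{lemma4-7}, sandwich $S(t)$ between the polynomials $p_2\le S\le p_1$ on $[0,\delta]$, substitute $u=\bigl(\ln\frac{1}{r}\bigr)t$ to read off the leading power $|\ln r|^{d_{a,s}}$, and use Lemma \ref{lemma4-8} to show the tail over $[\delta,\infty)$ is negligible. The only cosmetic difference is that you factor $e^{-Lt}=e^{-Lt/2}e^{-Lt/2}$ to extract an $r^{\delta/2}$ decay for the tail, whereas the paper directly observes that $(\ln\frac{1}{r})^{-d_{a,s}}\int_{\delta\ln(1/r)}^{\infty}e^{-u}u^{N-1}\,du\to 0$; both are valid and equivalent.
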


\begin{proof}
Lemma \ref{lemma4-7} gives that $p_{2}(t)\leq S(t)\leq p_{1}(t)$ for all $0\leq t\leq \delta$,
where $\delta>0$ is a positive constant, and $p_{i}(t)=v_{i,d_{a,s}}t^{N-1-d_{a,s}}+\cdots+v_{i,0}t^{N-1}$ with $v_{i,j}>0$ for $i=1,2$ and $j=0,\ldots,d_{a,s}$.
 Then, by \eqref{4-29} and \eqref{4-30} we have for $0<r<1$,
\begin{equation}
\begin{aligned}\label{4-50}
  J_{a,s}(r)&=\left(\ln\frac{1}{r}\right)^{N}\frac{C_{0}}{r^{s+m_{a,s}}}
    \int_{0}^{\delta}e^{-\left(\ln\frac{1}{r}\right)t}S(t) d t+\left(\ln\frac{1}{r}\right)^{N}\frac{C_{0}}{r^{s+m_{a,s}}}
    \int_{\delta}^{+\infty}e^{-\left(\ln\frac{1}{r}\right)t}S(t) d t\\
&:=J_{a,s,1}(r)+ J_{a,s,2}(r),
\end{aligned}
\end{equation}
where $\delta>0$ is the positive constant given in Lemma \ref{lemma4-7}.

A direct calculation yields that for $i=1,2$ and $0<r<1$,
\begin{equation}
\begin{aligned}\label{4-51}
 \int_{0}^{\delta}e^{-\left(\ln\frac{1}{r}\right)t}p_i(t) dt &= \sum_{k=N-1-d_{a,s}}^{N-1}v_{i,N-1-k}\int_{0}^{\delta}e^{-\left(\ln\frac{1}{r}\right)t}t^k dt \\
   &= \sum_{k=N-1-d_{a,s}}^{N-1}\frac{v_{i,N-1-k}}{\left(\ln \frac{1}{r}\right)^{k+1}}\int_{0}^{\delta\ln \frac{1}{r}}e^{-u}u^k du \\
   & \approx \frac{1}{\left(\ln \frac{1}{r}\right)^{N-d_{a,s}}}~~~\mbox{as}~~r\to 0^{+},
\end{aligned}
\end{equation}
which means
\begin{equation}\label{4-52}
  J_{a,s,1}(r)=\left(\ln\frac{1}{r}\right)^{N}\frac{C_{0}}{r^{s+m_{a,s}}}
    \int_{0}^{\delta}e^{-\left(\ln\frac{1}{r}\right)t}S(t) d t\approx \frac{|\ln r|^{d_{a,s}}}{r^{s+m_{a,s}}}~~~\mbox{as}~~r\to 0^{+}.
\end{equation}
Furthermore,  by Lemma \ref{lemma4-8} we have $S(t)\leq C_{1}t^{N-1}$ for all $t\geq \delta$. Therefore,
\begin{equation}\label{4-53}
\begin{aligned}
 0&\leq \left(\ln \frac{1}{r}\right)^{N-d_{a,s}} \int_{\delta}^{+\infty}e^{-\left(\ln\frac{1}{r}\right)t}S(t) dt\leq C_{1}\left(\ln \frac{1}{r}\right)^{N-d_{a,s}} \int_{\delta}^{+\infty}e^{-\left(\ln\frac{1}{r}\right)t}t^{N-1}dt\\
&=C_{1}\left(\ln \frac{1}{r}\right)^{-d_{a,s}}\int_{\delta\ln \frac{1}{r}}^{+\infty}e^{-u}u^{N-1}du\to 0\qquad\mbox{as}~~~r\to 0^{+}.
\end{aligned}
\end{equation}
It follows from \eqref{4-53} that
\begin{equation}\label{4-54}
J_{a,s,2}(r)=\left(\ln\frac{1}{r}\right)^{N}\frac{C_{0}}{r^{s+m_{a,s}}}
    \int_{\delta}^{+\infty}e^{-\left(\ln\frac{1}{r}\right)t}S(t) d t=o\left(\frac{|\ln r|^{d_{a,s}}}{r^{s+m_{a,s}}} \right)~~\mbox{as}~r\to 0^{+}.
\end{equation}
Combining \eqref{4-50}, \eqref{4-52} and \eqref{4-54}, we have
\[
 J_{a,s}(r)=\left(\ln\frac{1}{r}\right)^{N}\frac{C_{0}}{r^{s+m_{a,s}}}
    \int_{0}^{+\infty}e^{-\left(\ln\frac{1}{r}\right)t}S(t) d t\approx \frac{|\ln r|^{d_{a,s}}}{r^{s+m_{a,s}}}~~~\mbox{as}~~r\to 0^{+}.
\]
\end{proof}

Finally, let us summarize the asymptotic results in this section.
\begin{proposition}
\label{prop4-7}
Let $\mathfrak{G}$ be the collection of finitely many index pairs $(a,s)$, and let $b$ be an
$N$-dimensional multi-index. Assume that for each index pair $(a,s)\in \mathfrak{G}$, $a=(a_{1},\ldots,a_{N})$ is an $N$-dimensional multi-index, and $s\in [0,+\infty)$ is a non-negative constant. Suppose further that
\[
I(r)=\int_{(0,1]^N}\frac{x^{b} dx}{\sum_{(a,s)\in \mathfrak{G}} x^{a}r^{s}}<+\infty\qquad\mbox{for all}\qquad0<r<1,\]
 then we have
  \begin{equation}
\label{4-55}
   I(r)\approx\sum_{(a,s)\in \mathfrak{G}^\circ}r^{-s-m_{a,s}}|\ln r|^{d_{a,s}}\approx r^{-\alpha_0}|\ln r|^{d_0}~~\mbox{as}~~r\to 0^{+}.
  \end{equation}
Here, $\mathfrak{G}^\circ=\{(a,s)\in\mathfrak{G}|V_{N}(P_{a,s})>0\}$, $P_{a,s}$ is the polyhedron defined in \eqref{4-15}, $m_{a,s}=\max_{x\in P_{a,s}}\phi_{a}(x)$ and $d_{a,s}=\dim M_{a,s}$, where  $\phi_{a}(y)=\langle a-b-\mathbf{1},y\rangle$ is a linear function and $M_{a,s}=\{x\in P_{a,s}|\phi_{a}(x)=m_{a,s}\}$ is the face of $P_{a,s}$. Additionally, the indexes $\alpha_0$ and $d_0$ in \eqref{4-55} are given by
  \begin{equation}\label{4-56}
    \alpha_0:=\max\{s+m_{a,s}|(a,s)\in \mathfrak{G}^\circ\}~~\mbox{and}~~d_0:=\max\{d_{a,s}|(a,s)\in \mathfrak{G}^\circ, s+m_{a,s}=\alpha_0\}.
  \end{equation}
Furthermore, if for each index pair $(a,s)\in \mathfrak{G}$, the corresponding index $s$ is a non-negative integer, then we have $\alpha_{0}\in \mathbb{Q}$.
\end{proposition}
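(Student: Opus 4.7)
The plan is to sandwich $I(r)$ between constant multiples of $\sum_{(a,s)\in\mathfrak{G}^\circ} J_{a,s}(r)$ via Proposition \ref{prop4-4}, then evaluate each $J_{a,s}(r)$ asymptotically by splitting on whether $a-b-\mathbf{1}$ vanishes. In the trivial case $a-b-\mathbf{1}=0$, one has $\phi_a\equiv 0$, so $m_{a,s}=0$ and $M_{a,s}=P_{a,s}$ with $d_{a,s}=N$; formula \eqref{4-26} then yields $J_{a,s}(r)=(\ln(1/r))^N r^{-s}V_N(P_{a,s})=r^{-s-m_{a,s}}|\ln r|^{d_{a,s}}\,V_N(P_{a,s})$, where finiteness of $V_N(P_{a,s})$ is forced by the standing assumption (A). In the case $a-b-\mathbf{1}\neq 0$, Proposition \ref{prop4-6} gives directly $J_{a,s}(r)\approx r^{-s-m_{a,s}}|\ln r|^{d_{a,s}}$ as $r\to 0^+$. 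Note also that $\mathfrak{G}^\circ\neq\varnothing$, since $\bigcup_{(a,s)\in\mathfrak{G}} P_{a,s}=[0,+\infty)^N$ has positive $N$-dimensional measure, so at least one $P_{a,s}$ must have $V_N(P_{a,s})>0$.

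Combining these two cases with the sandwich of Proposition \ref{prop4-4} produces the first asymptotic equivalence $I(r)\approx\sum_{(a,s)\in\mathfrak{G}^\circ}r^{-s-m_{a,s}}|\ln r|^{d_{a,s}}$. To reduce the finite sum to a single term $r^{-\alpha_0}|\ln r|^{d_0}$, I will note that as $r\to 0^+$, the function $r^{-\alpha}|\ln r|^d$ is strictly dominated by $r^{-\alpha'}|\ln r|^{d'}$ whenever $(\alpha',d')>_{\rm lex}(\alpha,d)$. Hence every term with $s+m_{a,s}<\alpha_0$ is $o(r^{-\alpha_0}|\ln r|^{d_0})$, every term with $s+m_{a,s}=\alpha_0$ but $d_{a,s}<d_0$ is $o(r^{-\alpha_0}|\ln r|^{d_0})$, and the remaining finitely many terms contribute a positive multiple of $r^{-\alpha_0}|\ln r|^{d_0}$. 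An upper bound by the number of summands times the maximal term, together with a lower bound by any maximal term, then gives $\sum r^{-s-m_{a,s}}|\ln r|^{d_{a,s}}\approx r^{-\alpha_0}|\ln r|^{d_0}$.

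For the rationality of $\alpha_0$ under the hypothesis that every $s$ is a non-negative integer, the defining inequalities of $P_{a,s}$ in \eqref{4-15} have integer coefficients (since $a,a'$ are multi-indices and $s'-s\in\mathbb{Z}$), so every vertex of $P_{a,s}$ has rational coordinates; in particular any face, including the bounded face $M_{a,s}$ from Lemma \ref{lemma4-3}, has rational vertices. Since the linear form $\phi_a(y)=\langle a-b-\mathbf{1},y\rangle$ also has integer coefficients, the maximum $m_{a,s}$ is either $0$ (when $a-b-\mathbf{1}=0$) or the value of $\phi_a$ at a rational vertex of $M_{a,s}$; in either case $m_{a,s}\in\mathbb{Q}$, whence $s+m_{a,s}\in\mathbb{Q}$ for each $(a,s)\in\mathfrak{G}^\circ$ and therefore $\alpha_0\in\mathbb{Q}$. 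The main technical content is already absorbed into Propositions \ref{prop4-4} and \ref{prop4-6}; the remaining work is chiefly careful bookkeeping around the degenerate case $a-b-\mathbf{1}=0$ and the lexicographic comparison of the finitely many candidate asymptotic orders.
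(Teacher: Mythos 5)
Your proof is correct and follows essentially the same route as the paper, which compresses the first asymptotic equivalence into the single sentence ``a direct result of \eqref{4-17}, \eqref{4-26} and \eqref{4-49}''; you simply unfold that compression, including the correct observation that when $a-b-\mathbf{1}=0$ one has $m_{a,s}=0$, $M_{a,s}=P_{a,s}$, $d_{a,s}=N$, so that \eqref{4-26} coincides with the pattern $r^{-s-m_{a,s}}|\ln r|^{d_{a,s}}$, and the standard lexicographic comparison of the finitely many terms. For the rationality of $\alpha_0$, the paper projects the rational polyhedron $P_{a,s}$ under the integer linear map $\phi_a$ and reads off the right endpoint of the resulting one-dimensional rational polyhedron, whereas you argue directly that vertices of $P_{a,s}$ (hence of its bounded face $M_{a,s}$, via Lemma~\ref{lemma4-3}) are rational because the defining constraints have integer data, and that $\phi_a$ has integer coefficients; these are two faces of the same standard polyhedral fact, with your version being marginally more self-contained and the paper's relying on the cited result that rational images of rational polyhedra are rational. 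No gaps.
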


\begin{proof}
The asymptotic estimate \eqref{4-55} is a direct result of \eqref{4-17}, \eqref{4-26} and \eqref{4-49}. Moreover, if for each index pair $(a,s)\in \mathfrak{G}$, the corresponding index $s$ is a non-negative integer, then all $P_{a,s}$ given by \eqref{4-15} are rational polyhedrons (see \cite[Chapter 13]{Barvinok2008}).
 For any index pair $(a,s)\in \mathfrak{G}$,  since $\phi_{a}(y)=\langle a-b-\mathbf{1},y\rangle$ is a rational transformation from $\mathbb{R}^{N}$ to $\mathbb{R}$, it follows from \cite[Chapter 13, p. 108]{Barvinok2008} that $\phi_{a}(P_{a,s})$ is also a rational polyhedron in $\mathbb{R}$. Using  Lemma \ref{lemma4-3}, we can conclude that $\phi_{a}(P_{a,s})=\{x\in \mathbb{R}|x\leq m_{a,s}\}$ or $\phi_{a}(P_{a,s})=\{x\in \mathbb{R}|c\leq x\leq m_{a,s}\}$ for some constant $c\leq m_{a,s}$, which are both rational polyhedrons. This means that $m_{a,s}\in \mathbb{Q}$ for all $(a,s)\in \mathfrak{G}$ and $\alpha_{0}\in \mathbb{Q}$.
\end{proof}

Using Proposition \ref{prop4-7}, we obtain the following explicit asymptotic estimate of $J_{\Omega}(r)$.
\begin{proposition}
\label{prop4-8}
Let $X=(X_{1},X_{2},\ldots,X_{m})$ be the homogeneous H\"{o}rmander vector fields defined on $\mathbb{R}^n$. Suppose that $\{x_{i_{1}},\ldots,x_{i_{v}}\}$ is the collection of all degenerate components of $X$, and $\Omega\subset \mathbb{R}^n$ is a bounded open domain containing the origin. Then
\begin{equation}\label{4-57}
  J_{\Omega}(r)=\int_{\Omega}\frac{dx}{\Lambda(x,r)}\approx r^{-Q_0}|\ln r|^{d_0}~~~\mbox{as}~~r\to 0^+,
\end{equation}
where $Q_0\in\mathbb{Q}$ and $d_0\in\{0,1,\ldots,v\}$ with $0\leq v\leq n-1$.
\end{proposition}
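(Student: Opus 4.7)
The plan is to prove Proposition 4.8 by reducing to the integral $I(r)$ studied in Proposition 4.7, treating three cases according to the number $v$ of degenerate components of $X$.

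\textbf{Case $v=0$.} By Proposition 2.10(a), $w=Q$, so by Proposition 2.7 we have $\Lambda(x,r)=\sum_{k=w}^{Q}f_k(x)r^k$ with every $f_k$ independent of $x$. Since $f_k$ is continuous and $\delta_t$-homogeneous of degree $Q-k$, a constant function satisfying $f_k=t^{Q-k}f_k$ for all $t>0$ must vanish identically whenever $k<Q$. Hence $\Lambda(x,r)=f_Q r^Q$ with $f_Q>0$, so $J_{\Omega}(r)=|\Omega|(f_Q)^{-1}r^{-Q}$, giving $Q_0=Q\in\mathbb{Q}$ and $d_0=0$.

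\textbf{Case $v=1$.} This is precisely Proposition 4.1: writing $x_j$ for the unique degenerate component, $J_{\Omega}(r)\approx r^{-(Q-\alpha_j)}|\ln r|^{d_0}$ with $d_0\in\{0,1\}$ depending on whether $Q-w=\alpha_j$ or $Q-w>\alpha_j$. Both exponents lie in $\mathbb{Z}\subset\mathbb{Q}$.

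\textbf{Case $2\leq v\leq n-1$.} Invoke Proposition 4.3 to obtain
\[
J_{\Omega}(r)\approx\sum_{j=1}^{l}\int_{(-1,1)^{v}}\frac{|u^{q_j}|\,du}{\sum_{I}|u^{p_{j,I}}|r^{d(I)}}.
\]
Splitting $(-1,1)^{v}$ into its $2^{v}$ coordinate orthants and exploiting the invariance of $|u^{q_j}|$ and $|u^{p_{j,I}}|$ under sign changes, each such integral equals $2^{v}$ times
\[
I_j(r):=\int_{(0,1]^{v}}\frac{u^{q_j}\,du}{\sum_{I}u^{p_{j,I}}r^{d(I)}}.
\]
Next, verify assumption (A) of Proposition 4.7 for $I_j$: since $f_Q(0)>0$ is the constant term in $\Lambda(x,r)$, we have $\Lambda(x,r)\geq f_Q(0)r^{Q}>0$ on $\Omega$, so the original $J_{\Omega}(r)$ is finite for every $r\in(0,1)$, and this finiteness transfers to each $I_j(r)$ through the chain of comparisons in Proposition 4.3. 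Now apply Proposition 4.7 with $N=v$, $b=q_j$, and the collection of index pairs $\{(p_{j,I},d(I))\}_{I}$; since the formal degrees $d(I)$ are non-negative integers, the last clause of Proposition 4.7 yields $I_j(r)\approx r^{-\alpha_{0,j}}|\ln r|^{d_{0,j}}$ with $\alpha_{0,j}\in\mathbb{Q}$ and $d_{0,j}\in\{0,1,\ldots,v\}$.

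\textbf{Conclusion and main difficulty.} Summing over $j$ and taking $Q_0:=\max_j\alpha_{0,j}$ and $d_0:=\max\{d_{0,j}:\alpha_{0,j}=Q_0\}$ yields $J_{\Omega}(r)\approx r^{-Q_0}|\ln r|^{d_0}$ with $Q_0\in\mathbb{Q}$ and $d_0\in\{0,1,\ldots,v\}$. The only subtlety is ensuring that assumption (A) of Proposition 4.7 passes through the resolution-of-singularities reduction in Proposition 4.3; this is handled by the global lower bound $\Lambda(x,r)\geq f_Q(0)r^{Q}$ together with the local-to-global comparison already encoded in Lemma 4.1. The remaining optimality bound $v\leq n-1$ is guaranteed by Remark 2.2, since the last variable $x_n$ is always non-degenerate.
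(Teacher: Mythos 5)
Your proposal is correct and follows essentially the same structure as the paper's proof: a case split on $v$, using Propositions 2.7 and 2.10 for $v=0$, Proposition 4.1 for $v=1$, and Propositions 4.3 and 4.7 for $2\leq v\leq n-1$. Your explicit verification of assumption (A) via the lower bound $\Lambda(x,r)\geq f_Q(0)r^Q>0$ and the preservation of finiteness through the $\approx$ comparisons is a welcome detail that the paper leaves implicit.
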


\begin{proof}
 Remark \ref{remark2-2} indicates that  $0\leq v\leq n-1$.
If $v=0$, Proposition \ref{prop2-7} and Proposition \ref{prop2-10} imply that $\Lambda(x,r)=f_{Q}r^{Q}$ with $f_{Q}>0$, which derives \eqref{4-57}. Clearly, Proposition \ref{prop4-1} yields  \eqref{4-57} in the case of $v=1$. For $2\leq v\leq n-1$, by Proposition \ref{prop4-3} we have
\begin{equation}
\label{4-58}
  J_{\Omega}(r)=\int_{\Omega}\frac{dx}{\Lambda(x,r)}\approx \sum_{j=1}^{l}\int_{(-1,1)^{v}}\frac{|u^{q_{j}}| du}{\sum_{I}|u^{p_{j,I}}|r^{d(I)}}\approx \sum_{j=1}^{l}\int_{(0,1]^{v}}\frac{u^{q_{j}} du}{\sum_{I}u^{p_{j,I}}r^{d(I)}},
\end{equation}
where $l\in \mathbb{N}^{+}$ is a positive integer, $q_{j}$ and $p_{j,I}$ are $v$-dimensional multi-indexes. Owing to Proposition \ref{prop4-7}, we obtain for $1\leq j\leq l$,
\begin{equation}\label{4-59}
 \int_{(0,1]^{v}}\frac{u^{q_{j}} du}{\sum_{I}u^{p_{j,I}}r^{d(I)}}\approx r^{-\tilde{\alpha}_j}|\ln r|^{\tilde{d}_j}~~\mbox{as}~~r\to 0^{+},
\end{equation}
where $\tilde{\alpha}_{j}\in \mathbb{Q}$ and $\tilde{d}_{j}\in \{0,1,\ldots, v\}$. Hence, combining \eqref{4-58} and \eqref{4-59} we get
\[
  J_{\Omega}(r)=\int_{\Omega}\frac{dx}{\Lambda(x,r)}\approx r^{-Q_0}|\ln r|^{d_0}~~~\mbox{as}~~r\to 0^+,\]
where $Q_0\in\mathbb{Q}$ and $d_0\in\{0,1,\ldots,v\}$ with $0\leq v\leq n-1$.
\end{proof}

\section{Proofs of main results}
\label{Section5}
\subsection{Proof of Theorem \ref{thm1}}
\begin{proof}[Proof of Theorem \ref{thm1}]
By  \eqref{3-21} and \eqref{3-43}, we have
\begin{equation}\label{5-1}
   \int_{\Omega}h_D(x,x,t) dx\leq A_{1} \int_{\Omega}\frac{d x}{|B_{d_{X}}(x,\sqrt{t})|}~~~\mbox{for all}~~t>0.
\end{equation}
According to \eqref{3-21} and Proposition \ref{prop3-7}, for any compact subset $K\subset \Omega$, there exist  $\eta(K):=\frac{d_{X}^{2}(K,\partial\Omega)}{A_{1}Q}$ and $\eta_{1}(K)$ with $0<\eta_{1}(K)\leq  \eta(K)$, such that for any $0<t\leq \eta_{1}(K)$,
\begin{equation}\label{5-2}
\begin{aligned}
 \int_{\Omega}h_{D}(x,x,t)dx&\geq \int_{K}h_{D}(x,x,t)dx=\int_{K}h(x,x,t)dx- \int_{K}E(x,x,t)dx\\
 &\geq \int_{K}\frac{1}{A_{1}|B_{d_{X}}(x,\sqrt{t})|}dx-\int_{K}\frac{2A_{1}C_{3}}{|B_{d_{X}}(x,\sqrt{t})|}e^{-\frac{d_{X}^{2}(x,\partial\Omega)}{A_{1}t} }dx\\
 &\geq \left(\frac{1}{A_{1}}-2A_{1}C_{3} e^{-\frac{d_{X}^2(K,\partial\Omega)}{A_{1}t}}\right)\int_{K} \frac{dx}{|B_{d_{X}}(x,\sqrt{t})|}\\
&\geq \frac{1}{2 A_{1}}\int_{K} \frac{dx}{|B_{d_{X}}(x,\sqrt{t})|}.
\end{aligned}
\end{equation}

For $0<\varepsilon<1$, we let $K_{\varepsilon}:=\overline{\delta_{\varepsilon}(\Omega)}$ be the compact subset of $\mathbb{R}^{n}$, where $\delta_{\varepsilon}(\Omega):=\{\delta_{\varepsilon}(x)|x\in \Omega\}$ and $\delta_{\varepsilon}(x)$ is the dilation given in assumption (H.1). We then find a $\varepsilon_{0}\in (0,1)$ such that $K_{\varepsilon_{0}}\subset \Omega$.

 Recalling that $\Omega$ is a bounded open domain containing the origin, there exist $0<R_{1}<R_{2}$ such that
$B_{R_{1}}^{n}(0)\subset \Omega \subset B_{R_{2}}^{n}(0)$, where $B_{R}^{n}(0):=\{x\in \mathbb{R}^{n}||x|_{n}<R\}$ denotes the classical Euclidean ball in $\mathbb{R}^{n}$.
For any $x\in \delta_{\varepsilon}(B_{R_{2}}^{n}(0))$ with $0<\varepsilon<1$, there is a unique point $y=(y_{1},\ldots,y_{n})\in B_{R_{2}}^{n}(0)$ such that $x=\delta_{\varepsilon}(y)$. Then we have
$ |x|_{n}^{2}=\varepsilon^{2\alpha_{1}}y_{1}^{2}+\cdots+ \varepsilon^{2\alpha_{n}}y_{n}^{2}\leq \varepsilon^{2}(y_{1}^{2}+\cdots+y_{n}^{2})<\varepsilon^{2}R_{2}^{2}$, and
 $\delta_{\varepsilon}(B_{R_{2}}^{n}(0))\subset B_{\varepsilon R_{2}}^{n}(0)$. Choosing $\varepsilon_{0}=\frac{R_{1}}{2R_{2}}\in (0,1)$, we get
$ \delta_{\varepsilon_{0}}(\Omega)\subset \delta_{\varepsilon_{0}}(B_{R_{2}}^{n}(0))\subset B_{\frac{R_{1}}{2}}^{n}(0)$.
This means $K_{\varepsilon_{0}}=\overline{\delta_{\varepsilon_{0}}(\Omega)}\subset \overline{B_{\frac{R_{1}}{2}}^{n}(0)}\subset B_{R_{1}}^{n}(0)\subset \Omega$.

Next, by Corollary \ref{corollary2-1} and the homogeneity property (3) of $B_{d_{X}}(x,r)$  we have
\begin{equation}\label{5-3}
\begin{aligned}
\int_{K_{\varepsilon_{0}}}\frac{dx}{|B_{d_{X}}(x,\sqrt{t})|}&=\int_{\delta_{\varepsilon_{0}}(\Omega)}\frac{dx}{|B_{d_{X}}(x,\sqrt{t})|}=\int_{\Omega}\frac{\varepsilon_{0}^{Q} dy}{|B_{d_{X}}(\delta_{\varepsilon_{0}}(y),\sqrt{t})|}
=\int_{\Omega}\frac{dy}{|B_{d_{X}}(y,\frac{\sqrt{t}}{\varepsilon_{0}})|}\\
&\geq \frac{\varepsilon_{0}^{Q}}{C_{3}}\int_{\Omega}\frac{dy}{|B_{d_{X}}(y,\sqrt{t})|}~~~\mbox{for all}~~ t>0.
\end{aligned}
\end{equation}
Hence, it follows from \eqref{5-1}-\eqref{5-3} that
 \begin{equation*}
\int_{\Omega}h_{D}(x,x,t) dx\approx\int_{\Omega}\frac{dx}{|B_{d_X}(x,\sqrt{t})|}~~ \mbox{as}~~~t\to 0^+.
\end{equation*}
The proof of Theorem \ref{thm1} is complete.
\end{proof}

\subsection{Proof of Theorem \ref{thm2}}

We introduce the following useful lemma to prove Theorem \ref{thm2}.

\begin{lemma}
\label{lemma5-1}
Let $f:\mathbb{R}^{+}\to \mathbb{R}$ be a real function such that $f(x)\approx x^{-\mu_0}|\ln x|^b$ as $x\to 0^+$ with some $\mu_0\in \mathbb{R}$ and $b>0$. Denote by $g_a(x):=x^af(x)$ for $a>0$. If there exists some positive constants $\mu_2>\mu_1>0$ such that
\begin{enumerate}[(1)]
    \item $\liminf_{x\to 0^+}g_{\mu_1}(x)>0$;
    \item $\lim_{x\to 0^+}g_{\mu_2+\varepsilon}(x)=0$ holds for any $\varepsilon\in (0,1)$.
  \end{enumerate}
Then $\mu_0\in[\mu_1,\mu_2]$.
\end{lemma}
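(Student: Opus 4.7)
The plan is to unpack the definition of $f(x)\approx x^{-\mu_0}|\ln x|^b$ into two-sided bounds and then argue by contradiction that neither $\mu_0<\mu_1$ nor $\mu_0>\mu_2$ is compatible with the assumptions. By hypothesis, there exist constants $C>1$ and $\delta\in(0,1)$ such that
\begin{equation*}
C^{-1}x^{-\mu_0}|\ln x|^b\leq f(x)\leq Cx^{-\mu_0}|\ln x|^b\qquad\forall\,0<x<\delta,
\end{equation*}
and consequently
\begin{equation*}
C^{-1}x^{a-\mu_0}|\ln x|^b\leq g_a(x)\leq Cx^{a-\mu_0}|\ln x|^b\qquad\forall\,0<x<\delta,\ a>0.
\end{equation*}
I will repeatedly use the elementary fact that $\lim_{x\to 0^+}x^{\alpha}|\ln x|^b=0$ for every $\alpha>0$ and every $b>0$, while $\lim_{x\to 0^+}x^{\alpha}|\ln x|^b=+\infty$ if $\alpha<0$.

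For the lower bound $\mu_1\leq\mu_0$, I argue by contradiction. Suppose $\mu_0<\mu_1$, so that $\mu_1-\mu_0>0$. Then the upper bound above gives
\begin{equation*}
0\leq g_{\mu_1}(x)\leq Cx^{\mu_1-\mu_0}|\ln x|^b\to 0\qquad\text{as}\ x\to 0^+,
\end{equation*}
hence $\liminf_{x\to 0^+}g_{\mu_1}(x)=0$, contradicting assumption~(1). Therefore $\mu_1\leq\mu_0$.

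For the upper bound $\mu_0\leq\mu_2$, suppose instead that $\mu_0>\mu_2$. Choose $\varepsilon\in(0,1)$ small enough that $\mu_2+\varepsilon-\mu_0<0$ (take for instance $\varepsilon=\tfrac{1}{2}\min\{1,\mu_0-\mu_2\}$). The lower bound then yields
\begin{equation*}
g_{\mu_2+\varepsilon}(x)\geq C^{-1}x^{\mu_2+\varepsilon-\mu_0}|\ln x|^b\to +\infty\qquad\text{as}\ x\to 0^+,
\end{equation*}
contradicting assumption~(2). Hence $\mu_0\leq\mu_2$, and combining the two inequalities gives $\mu_0\in[\mu_1,\mu_2]$. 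The proof is entirely elementary; there is no real obstacle beyond making sure the admissible range $(0,1)$ for $\varepsilon$ is large enough to force a sign change in $\mu_2+\varepsilon-\mu_0$, which is guaranteed by the explicit choice above.
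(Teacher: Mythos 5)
Your proof is correct and takes essentially the same route as the paper: argue by contradiction in both directions, using the two-sided bound $C^{-1}x^{-\mu_0}|\ln x|^b\le f(x)\le Cx^{-\mu_0}|\ln x|^b$ to show that $\mu_0<\mu_1$ forces $\liminf g_{\mu_1}=0$ and $\mu_0>\mu_2$ forces $g_{\mu_2+\varepsilon}\to+\infty$ for a suitable admissible $\varepsilon$. The only cosmetic difference is your explicit choice $\varepsilon=\tfrac12\min\{1,\mu_0-\mu_2\}$, where the paper writes $\varepsilon=\tfrac1l(\mu_0-\mu_2)$ for some $l>1$; both are valid.
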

\begin{proof}
Since $f(x)\approx x^{-\mu_0}|\ln x|^b$ as $x\to 0^+$, there are some constants $0<c_1\leq c_2<+\infty$ and $\delta>0$ such that
 \[c_1 x^{-\mu_0}|\ln x|^b \leq f(x) \leq  c_2x^{-\mu_0}|\ln x|^b ~~~~\mbox{for all}~~x\in (0,\delta). \]
If $\mu_1>\mu_0$, it follows that
 \[ \liminf_{x\to 0^+}g_{\mu_1}(x)\leq c_{2}\limsup_{x\to 0^+} x^{\mu_1-\mu_0}|\ln x|^b=0,\]
which contradicts condition (1). This means $\mu_{0}\geq \mu_{1}$. Additionally, if we assume $\mu_2<\mu_0$, there exists a positive constant $l>1$ such that $\varepsilon=\frac{1}{l}(\mu_0-\mu_2)\in (0,1)$. Then
\[ g_{\mu_{2}+\varepsilon}(x)=x^{\frac{1}{l}\mu_{0}+\left(1-\frac{1}{l} \right)\mu_{2}}f(x)\geq c_{1}x^{\left(1-\frac{1}{l}\right)(\mu_{2}-\mu_{0})}|\ln x|^{b} \]
holds for all $x\in (0,\delta)$. This implies $\liminf_{x\to 0^+}g_{\mu_{2}+\varepsilon}(x)=+\infty$, which contradicts  condition (2). Hence, we have $\mu_0\in[\mu_1,\mu_2]$.
  \end{proof}

\begin{proof}[Proof of Theorem \ref{thm2}]
Proposition \ref{prop2-8} yields that
 \begin{equation}\label{5-4}
\int_{\Omega}\frac{d x}{|B_{d_X}(x,r)|}\approx \int_{\Omega}\frac{dx}{\Lambda(x,r)}=J_{\Omega}(r).
\end{equation}
Thus,  we only need to be concerned with the explicit asymptotic behavior of $J_{\Omega}(r)$ as $r\to 0^{+}$. Our estimates will be derived in the following two cases:\\
\emph{\textbf{Case 1:}} $w=Q$. By Proposition \ref{prop2-7}  we have
\begin{equation}
\label{5-5}
 J_{\Omega}(r)=\int_{\Omega}\frac{dx}{\Lambda(x,r)}=\frac{|\Omega|}{f_{Q}(0)}r^{-Q}.
\end{equation}
\emph{\textbf{Case 2:}} $w\leq Q-1$. Suppose that $\{x_{i_{1}},\ldots,x_{i_{v}}\}$ is the collection of all degenerate components of vector fields $X$ associated with the degenerate indexes $\{\alpha_{i_{1}},\ldots,\alpha_{i_{v}}\}$, and $\alpha(X)=\alpha_{i_1}+\cdots+\alpha_{i_v}$ is the sum of all degenerate indexes. Proposition \ref{prop4-8} shows that
\begin{equation}\label{5-6}
  J_{\Omega}(r)=\int_{\Omega}\frac{dx}{\Lambda(x,r)}\approx r^{-Q_0}|\ln r|^{d_0}~~~\mbox{as}~~r\to 0^+,
\end{equation}
where $Q_0\in\mathbb{Q}$ and $d_0\in\{0,1,\ldots,v\}$ with $0\leq v\leq n-1$. Hence, it remains to examine the bounds of index $Q_{0}$.

Using Proposition \ref{prop2-7} and Lemma \ref{lemma4-1}, we obtain
\begin{equation}
\begin{aligned}\label{5-7}
 J_{\Omega}(r)&\approx J_{(-1,1)^v,v}(r)
 =\int_{(-1,1)^v}\frac{dx_{i_{1}}\cdots dx_{i_{v}}}{\Lambda(x,r)}\\
 &=\frac{1}{r^Q}\int_{(-1,1)^v}\frac{dx_{i_{1}}\cdots dx_{i_{v}}}{\sum_{k=w}^{Q}f_k(x)r^{k-Q}}
 =\frac{1}{r^Q}\int_{(-1,1)^v}\frac{dx_{i_{1}}\cdots dx_{i_{v}}}{\sum_{k=w}^{Q}f_k(\delta_{\frac{1}{r}}(x))}\\
 &=\frac{1}{r^{Q-\alpha(X)}}\int_{\prod_{j=1}^{v}(-r^{-\alpha_{i_{j}}},r^{-\alpha_{i_{j}}})}\frac{dy_{1}\cdots dy_{v}}{\sum_{k=w}^{Q}f_k(y)}.
\end{aligned}
\end{equation}
Here, we change the variables $y_{j}=r^{-\alpha_{i_{j}}}x_{i_{j}}$ for $1\leq j\leq v$ in the last step of \eqref{5-7}. We then consider the function
$ g_{a}(r):=r^{a}J_{\Omega}(r)$ for $r>0$.
It follows from Proposition \ref{prop2-7} that $\Lambda(y,1)=\sum_{k=w}^{Q}f_k(y)\geq f_{Q}(0)>0$. Therefore, \eqref{5-7} gives that
\begin{equation}\label{5-8}
\begin{aligned}
\liminf_{r\to 0^{+}}g_{Q-\alpha(X)}(r)&=\liminf_{r\to 0^{+}} r^{Q-\alpha(X)} J_{\Omega}(r)\\
&\geq C\cdot\liminf_{r\to 0^{+}}\int_{\prod_{j=1}^{v}(-r^{-\alpha_{i_{j}}},r^{-\alpha_{i_{j}}})}\frac{dy_{1}\cdots dy_{v}}{\Lambda(y,1)}>0,
\end{aligned}
\end{equation}
where $C>0$ is a positive constant, and the last term in \eqref{5-8} is finite or positive infinity. Furthermore, we have
\begin{equation}\label{5-9}
\begin{aligned}
    g_w(r)&=r^wJ_{\Omega}(r)
    =\int_{\Omega}\frac{d x}{f_w(x)+f_{w+1}(x)r+\cdots+f_Q(x)r^{Q-w}}\\
    &\geq\int_{\Omega}\frac{d x}{\sum\limits_{k=w}^{Q}f_k(x)}>0~~~~\mbox{for all}~~0<r<1,
\end{aligned}
  \end{equation}
which implies $\liminf_{r\to 0^+}g_{w}(r)>0$. Hence, $\mu_1:=\max\{Q-\alpha(X),w\}>0$ satisfies the condition (1) of Lemma \ref{lemma5-1}. Next, we show that for any $\varepsilon\in (0,1)$,
\begin{equation}\label{5-10}
  \lim_{r\to 0^{+}}g_{Q-1+\varepsilon}(r)=\lim_{r\to 0^{+}}r^{Q-1+\varepsilon}J_{\Omega}(r)=0,
\end{equation}
which indicates that $\mu_2:=Q-1>0$ admits the condition (2) of Lemma \ref{lemma5-1}.
Using Proposition \ref{prop2-7} again, we obtain $f_{w}(x_{0})\neq 0$ for some $x_{0}\in  \mathbb{R}^n$ and $f_Q(x)=f_{Q}(0)>0$ for all $x\in \mathbb{R}^n$, provided $w\leq Q-1$. That means $\lambda_{\widetilde{I}}(x)\not\equiv 0$ holds for some $n$-tuple $\widetilde{I}$ with $d(\widetilde{I})=w$.
 For any $\varepsilon\in (0,1)$ and $r>0$, we have
\begin{equation}
\begin{aligned}\label{5-11}
  0\leq g_{Q-1+\varepsilon }(r)&= r^{Q-1+\varepsilon}J_\Omega(r)
  =r^{Q-1}\int_{\Omega}\frac{r^{\varepsilon} dx}{\Lambda(x,r)}\\
   &=\int_{\Omega}\frac{r^{\varepsilon} dx}{\sum_{k=w}^{Q}f_k(x)r^{k-Q+1}}
   \leq\int_{\Omega}\frac{r^{\varepsilon} dx}{|\lambda_{\widetilde{I}}(x)|r^{w-Q+1}+f_Q(0)r}.\\
\end{aligned}
\end{equation}

On the other hand, we mention that the set $Z(\lambda_{\widetilde{I}}):=\{x\in \mathbb{R}^{n}|\lambda_{\widetilde{I}}(x)=0\}$ has zero $n$-dimensional measure since $\lambda_{\widetilde{I}}$ is a polynomial. Thus for $r>0$, the function $h_{\varepsilon}(x,r)$ given by \[h_{\varepsilon}(x,r):=\frac{r^{\varepsilon}}{|\lambda_{\widetilde{I}}(x)|r^{w-Q+1}+f_Q(0)r}=\frac{r^{Q-1-w+\varepsilon}}{|\lambda_{\widetilde{I}}(x)|+f_Q(0)r^{Q-w}}\]
satisfies that
\begin{equation}\label{5-12}
 \int_{\Omega}\frac{r^{\varepsilon} dx}{|\lambda_{\widetilde{I}}(x)|r^{w-Q+1}+f_Q(0)r}=\int_{\Omega\setminus Z(\lambda_{\widetilde{I}})}h_{\varepsilon}(x,r)d x,
\end{equation}
and
\begin{equation}\label{5-13}
\lim_{r\to 0^{+}}h_{\varepsilon}(x,r)=0\qquad \forall x\in \Omega\setminus Z(\lambda_{\widetilde{I}}).
\end{equation}
 Moreover, for any $x\in \Omega\setminus Z(\lambda_{\widetilde{I}})$ and $r>0$,
\begin{equation*}
    \frac{1}{h_\varepsilon(x,r)}=\frac{|\lambda_{\widetilde{I}}(x)|}{r^{Q+\varepsilon-w-1}}+f_{Q}(0)r^{1-\varepsilon}\geq \frac{1}{C}|\lambda_{\widetilde{I}}(x)|^{\frac{1-\varepsilon}{Q-w}}
  \end{equation*}
holds for some positive constant $C>0$, which means
  \begin{equation}\label{5-14}
  h_{\varepsilon}(x,r)\leq \frac{C}{|\lambda_{\widetilde{I}}(x)|^{\frac{1-\varepsilon}{Q-w}}}\qquad \forall x\in \Omega\setminus Z(\lambda_{\widetilde{I}}),~ r>0.
\end{equation}
It follows from \eqref{2-1} and Proposition \ref{prop2-4} that $\lambda_{\widetilde{I}}$ is a $\delta_t$-homogeneous polynomial of degree $Q-w$ having the form
\[\lambda_{\widetilde{I}}(x)=\sum_{\sum_{i=1}^{n}\alpha_{i}\beta_{i}=Q-w}c_{\beta_{1},\ldots,\beta_{n}}x_{1}^{\beta_{1}}x_{2}^{\beta_{2}}\cdots x_{n}^{\beta_{n}},\]
where $\beta_{1},\beta_{2},\ldots,\beta_{n}$ are non-negative integers. For
 each monomial $c_{\beta_{1},\ldots,\beta_{n}}x_{1}^{\beta_{1}}x_{2}^{\beta_{2}}\cdots x_{n}^{\beta_{n}}$, we have
\[\frac{1-\varepsilon}{Q-w}(\beta_1+\cdots+\beta_n)\leq \frac{1-\varepsilon}{Q-w}(\alpha_1\beta_1+\cdots+\alpha_n\beta_n)=1-\varepsilon<1.\]
Thus, by using \cite[Proposition 4.1]{Dieu2018} we get
\begin{equation}\label{5-15}
 \int_{\Omega}\frac{d x}{|\lambda_{\widetilde{I}}(x)|^{\frac{1-\varepsilon}{Q-w}}}
<+\infty.
\end{equation}
Combining \eqref{5-11}-\eqref{5-15} and the Lebesgue's dominated convergence theorem, we obtain \eqref{5-10}.
 Consequently, Lemma \ref{lemma5-1} yields that $ n\leq\max\{Q-\alpha(X),w\}\leq Q_{0}\leq Q-1$.

The proof of Theorem \ref{thm2} is complete.
\end{proof}

\subsection{Proof of Theorem \ref{thm3}}

\begin{proof}[Proof of Theorem \ref{thm3}]
According to Theorem \ref{thm1}, Theorem \ref{thm2} and Proposition \ref{prop3-6}, we have
\begin{equation}\label{5-16}
  \sum_{k=1}^{\infty}e^{-t\lambda_k}=\int_{\Omega}h_{D}(x,x,t)dx\approx \int_{\Omega}\frac{dx}{|B_{d_X}(x,\sqrt{t})|}\approx t^{-\frac{Q_0}{2}}|\ln t|^{d_0}~~~\mbox{as}~~t\to 0^{+},
\end{equation}
where $Q_0$ and $d_0$ are the indexes given in Theorem \ref{thm2}. Applying the Tauberian theorem (see \cite[Proposition B.0.13]{karamata}) to \eqref{5-16}, we get
\begin{equation}\label{5-17}
N(\lambda)\approx\int_{\Omega}\frac{dx}{|B_{d_X}(x,\lambda^{-\frac{1}{2}})|}\approx \lambda^{\frac{Q_0}{2}}|\ln \lambda|^{d_0}~~~~\mbox{as}~~\lambda\to +\infty,
\end{equation}
where $N(\lambda):=\{k|\lambda_{k}\leq \lambda\}$ is the counting function.

We next present the explicit asymptotic behaviour of $\lambda_{k}$. By \eqref{5-17}, there
exist positive constants $M>e^{2}$ and $C>1$, such that for any $\lambda> M$,
\begin{equation}\label{5-18}
0<\frac{1}{C}\lambda^{\frac{Q_0}{2}}|\ln \lambda|^{d_0}\leq N(\lambda)\leq C\lambda^{\frac{Q_0}{2}}|\ln \lambda|^{d_0}.
\end{equation}
Because $\lambda_{k}\to +\infty$ as $k\to +\infty$, \eqref{5-18} yields
\begin{equation}\label{5-19}
  k\leq N(\lambda_{k})\leq C\lambda_{k}^{\frac{Q_{0}}{2}}|\ln \lambda_{k}|^{d_{0}}~~~\mbox{for all}~~k\geq k_{0},
\end{equation}
 where $k_{0}$ is a positive integer such that $\lambda_{k}> M$ for any $k\geq k_{0}$. For $\lambda>\lambda_{1}$, we define
\begin{equation}\label{5-20}
M(\lambda):=\lim_{p\to \lambda^{-}}N(p).
\end{equation}
Clearly, $M(\lambda)$ is a left continuous function with $M(\lambda_{k})<k$ for all $k\geq 1$. For any $\lambda_{0}>M$, \eqref{5-18} and \eqref{5-20} indicates that
\begin{equation}\label{5-21}
  M(\lambda_{0})=\lim_{p\to \lambda_{0}^{-}}N(p)\geq \lim_{p\to \lambda_{0}^{-}}\left(\frac{ p^{\frac{Q_0}{2}}}{C}|\ln p|^{d_0}\right)=\frac{1}{C}\lambda_{0}^{\frac{Q_0}{2}}|\ln \lambda_{0}|^{d_0}.
\end{equation}
Combining \eqref{5-19}-\eqref{5-21} we get
\begin{equation}\label{5-22}
\frac{1}{C}\lambda_{k}^{\frac{Q_{0}}{2}}|\ln \lambda_{k}|^{d_{0}}\leq k\leq C\lambda_{k}^{\frac{Q_{0}}{2}}|\ln \lambda_{k}|^{d_{0}}~~~\mbox{for all}~~k\geq k_{0}.
\end{equation}
As a consequence of \eqref{5-22}, we obtain
\begin{equation*}
\limsup_{k\to +\infty}\frac{\lambda_k(\ln k)^{\frac{2d_0}{Q_0}}}{k^{\frac{2}{Q_0}}}\leq \limsup_{k\to +\infty}\frac{\left(\ln C+\frac{Q_{0}}{2}\ln \lambda_{k}+d_{0}\ln \ln \lambda_{k} \right)^{\frac{2d_{0}}{Q_{0}}}}{C^{-\frac{2}{Q_{0}}}(\ln \lambda_{k})^{\frac{2d_{0}}{Q_{0}}}}=C^{\frac{2}{Q_{0}}}\left(\frac{Q_0}{2}\right)^{\frac{2d_0}{Q_0}},
\end{equation*}
and
\begin{equation*}
 \liminf_{k\to +\infty}\frac{\lambda_k(\ln k)^{\frac{2d_0}{Q_0}}}{k^{\frac{2}{Q_0}}}\geq \liminf_{k\to +\infty} \frac{\left(\ln \frac{1}{C}+\frac{Q_{0}}{2}\ln \lambda_{k}+d_{0}\ln \ln \lambda_{k} \right)^{\frac{2d_{0}}{Q_{0}}}}{C^{\frac{2}{Q_{0}}}(\ln \lambda_{k})^{\frac{2d_{0}}{Q_{0}}}}=C^{-\frac{2}{Q_{0}}}\left(\frac{Q_0}{2}\right)^{\frac{2d_0}{Q_0}}.
\end{equation*}
That means
$ \lambda_{k}\approx k^{\frac{2}{Q_{0}}}(\ln k)^{-\frac{2d_{0}}{Q_{0}}}$ as $k\to +\infty$.

The proof of Theorem \ref{thm3} is complete.
\end{proof}

\section{Some examples}
\label{Section6}

As further applications of Theorem \ref{thm2} and Theorem \ref{thm3}, we present some examples as follows.

\begin{example}
\label{ex6-1}
For $l\in \mathbb{N}^{+}$, we let $X=(\partial_{x_{1}},\ldots,\partial_{x_{n-1}},x_{1}^{l}\partial_{x_{n}})$ be the Grushin type vector fields defined on  $\mathbb{R}^n$.  The Grushin operator (see \cite{Grushin1970}) generated by $X$ is given by
\[ \triangle_{G}:=\frac{\partial^{2}}{\partial x_{1}^{2}}+\cdots+\frac{\partial^{2}}{\partial x_{n-1}^{2}}+x_{1}^{2l}\frac{\partial^{2}}{\partial x_{n}^{2}}. \]
The vector fields $X$ satisfy the assumption (H.1) with
the dilation
\[ \delta_{t}(x)=(t^{\alpha_{1}}x_{1},t^{\alpha_{2}}x_{2},\ldots,t^{\alpha_{n-1}}x_{n-1},t^{\alpha_{n}}x_{n})=(tx_{1},tx_{2},\ldots,tx_{n-1},t^{l+1}x_{n}), \]
and homogeneous dimension $Q=n+l$. Also, $X$ satisfy H\"{o}rmander's condition in $\mathbb{R}^n$ with the H\"{o}rmander index $r=l+1$. Moreover,  $x_{1}$ is the unique degenerate component of $X$ with corresponding degenerate index $\alpha_{1}=1$, and $w=\min_{x\in \mathbb{R}^{n}}\nu(x)=n$.

Assume  $\Omega\subset\mathbb{R}^n$ is a bounded open domain containing the origin. Proposition \ref{prop4-1} yields that
\begin{equation*}
 J_{\Omega}(r)=\int_{\Omega}\frac{dx}{\Lambda(x,r)}\approx\left\{
     \begin{array}{ll}
      \frac{1}{r^{Q-1}}|\ln r|, & \hbox{if $l=1$;} \\[2mm]
      \frac{1}{r^{Q-1}}, & \hbox{if $l>1$,}
     \end{array}
   \right.~~~\mbox{as}~~r\to 0^{+},
\end{equation*}
which gives $Q_{0}=Q-1=\tilde{\nu}-1$ and
\[ d_{0}=\left\{
           \begin{array}{ll}
             1, & \hbox{if $l=1$;} \\[2mm]
             0, & \hbox{if $l>1$.}
           \end{array}
         \right.\]
Denote by $\lambda_{k}$ the $k$-th Dirichlet eigenvalue of the Grushin operator $\triangle_{G}$ on $\Omega$. It follows from Theorem \ref{thm3} that
  \begin{equation}\label{6-1}
    \lambda_k\approx k^{\frac{2}{Q_{0}}}(\ln k)^{-\frac{2d_{0}}{Q_{0}}}\approx\left\{
                                                                                \begin{array}{ll}
                                                                                  \left(\frac{k}{\ln k} \right)^{\frac{2}{Q-1}}, & \hbox{if $l=1$;} \\[2mm]
                                                                                  k^{\frac{2}{Q-1}}, & \hbox{if $l>1$.}
                                                                                \end{array}
                                                                              \right.
~~~\mbox{as}~~ k\to+\infty.
  \end{equation}
\end{example}
\begin{remark}
The estimate \eqref{6-1}  for Dirichlet eigenvalues of the Grushin operator $\triangle_{G}$ improves the estimates by Chen and Luo presented in \cite[Theorem 1.2]{chen-luo2015}.
\end{remark}

\begin{example}
\label{ex6-2}
Let $X=(X_{1},X_{2})$ with
\[ X_{1}=\partial_{x_{1}},\quad X_{2}=x_{1}\partial_{x_{2}}+x_{1}^{2}\partial_{x_{3}}+\cdots+x_{1}^{n-1}\partial_{x_{n}} \]
be the Bony type vector fields defined on $\mathbb{R}^{n}$ (see \cite{Bony1969}). The Bony operator generated by $X$ is given by
\[ \triangle_{B}=\partial_{ x_{1}}^{2}+\left(x_{1}\partial_{x_{2}}+x_{1}^{2}\partial_{x_{3}}+\cdots+x_{1}^{n-1}\partial_{x_{n}}  \right)^{2}, \]
which satisfies H\"{o}rmander's condition but nevertheless with a very degenerate characteristic form. A direct calculation shows that $X$ satisfy the assumption (H.1) with
the dilation
\[ \delta_{t}(x)=(tx_{1},t^{2}x_{2},\ldots,t^{n}x_{n}), \]
and the homogeneous dimension $Q=\frac{n(n+1)}{2}$. In addition, $x_{1}$ is the unique degenerate component of $X$ with corresponding degenerate index $\alpha_{1}=1$, and $w=\min_{x\in \mathbb{R}^{n}}\nu(x)=Q-n+1$.

Suppose  $\Omega\subset\mathbb{R}^{n}$ is a bounded open domain containing the origin. By Proposition \ref{prop4-1},
\begin{equation*}
 J_{\Omega}(r)=\int_{\Omega}\frac{dx}{\Lambda(x,r)}\approx\left\{
     \begin{array}{ll}
      \frac{1}{r^{Q-1}}|\ln r|, & \hbox{if $n=2$;} \\[2mm]
      \frac{1}{r^{Q-1}}, & \hbox{if $n\geq 3$,}
     \end{array}
   \right.~~~\mbox{as}~~r\to 0^{+},
\end{equation*}
which gives  that
  \begin{equation}
    \lambda_k\approx \left\{
                                                                                \begin{array}{ll}
                                                                                  \left(\frac{k}{\ln k} \right)^{\frac{2}{Q-1}}, & \hbox{if $n=2$;} \\[2mm]
                                                                                  k^{\frac{2}{Q-1}}, & \hbox{if $n\geq 3$,}
                                                                                \end{array}
                                                                              \right.
~~~\mbox{as}~~ k\to+\infty,
  \end{equation}
where $\lambda_{k}$ is the $k$-th Dirichlet eigenvalue of the Bony operator on $\Omega$.

\end{example}

\begin{example}
\label{ex6-3}
Let $X=(X_{1},X_{2})$  be the Martinet type vector fields defined on  $\mathbb{R}^3$, where $X_{1}=\partial_{x_{1}}$ and $X_{2}=\partial_{x_{2}}+x_{1}^{2}\partial_{x_{3}}$. The Martinet operator generated by $X$ is given by
\[ \triangle_{M}:=\partial_{x_{1}}^{2}+\left(\partial_{x_{2}}+x_{1}^{2}\partial_{x_{3}}\right)^{2}. \]
We can verify that $X$ satisfy the assumption (H.1) with
the dilation $\delta_{t}(x)=(tx_{1},tx_{2},t^{3}x_{3})$,
and homogeneous dimension $Q=5$.
Meanwhile, $X$ satisfy H\"{o}rmander's condition in $\mathbb{R}^3$ with the H\"{o}rmander index $r=3$. Moreover, $x_{1}$ is the unique degenerate component of $X$ with corresponding degenerate index $\alpha_{1}=1$, and $w=\min_{x\in \mathbb{R}^{3}}\nu(x)=4$.

Assume that $\Omega\subset\mathbb{R}^{3}$ is a bounded open domain containing the origin. Using Proposition \ref{prop4-1} and Theorem \ref{thm3}, we have
\begin{equation}
  \lambda_{k}\approx \left(\frac{k}{\ln k}\right)^{\frac{2}{Q-1}}~~~\mbox{as}~~k\to+\infty,
\end{equation}
where $\lambda_{k}$ denotes the $k$-th Dirichlet eigenvalue of $\triangle_{M}$ on $\Omega$.

\end{example}
We mention that the three examples above  have only one degenerate component $x_{1}$. In this case, Proposition \ref{prop4-1} together with Theorem \ref{thm3} provide a precise
 asymptotic behaviour of the Dirichlet eigenvalue, complete with an exact growth rate. Furthermore, Examples \ref{ex6-1}-\ref{ex6-3} suggest that the upper bound $Q_{0}\leq Q-1$ in \eqref{1-12} for the index $Q_{0}$ is optimal. The subsequent example demonstrates that the index $Q_{0}$ in Theorem $\ref{thm3}$ may be fractional and presents a computational method for determining the indexes $Q_{0}$ and $d_{0}$.

\begin{example}
\label{ex6-4}
Consider the vector fields $X=(X_{1},X_{2},X_{3})$ defined on $\mathbb{R}^{3}$ as follows:
\[ X_{1}=\partial_{x_{1}},\quad X_{2}=x_{1}\partial_{x_{2}}+x_{2}\partial_{x_{3}},~~\mbox{and}~~X_{3}=x_{1}^{2}\partial_{x_{3}}. \]
The dilation of $X$ is given by $ \delta_t(x)=(tx_{1},t^2x_{2},t^3x_3)$,
which implies the homogeneous dimension $Q=6$. Clearly, $X$ satisfy H\"{o}rmander's condition in $\mathbb{R}^3$ with the H\"{o}rmander index $r=3$.\par

Assume $\Omega\subset\mathbb{R}^{3}$ is a bounded open domain containing the origin. It follows that
\[ \Lambda(x,r)\approx |x_1|^3r^3+(|x_1|^2+|x_2|)r^4+|x_1|r^5+r^6. \]
Therefore, Lemma \ref{lemma4-1} derives
\begin{equation}\label{6-4}
 J_{\Omega}(r)=\int_{\Omega}\frac{dx}{\Lambda(x,r)}\approx \int_{(0,1]^{2}}\frac{dx_{1}dx_{2}}{x_{1}^{3}r^{3}+(x_{1}^{2}+x_{2})r^{4}+x_{1}r^{5}+r^{6}}.
\end{equation}

We now estimate \eqref{6-4} using the method outlined in Proposition \ref{prop4-7}. The set of index pairs is given by
\[\mathfrak{G}=\{(a_1,a_2,s)|(3,0,3),(2,0,4),(0,1,4),(1,0,5),(0,0,6)\}. \]
For each index pair $(a_1,a_2,s)\in \mathfrak{G}$, we let
\[ P_{a_1,a_2,s}=\{(y_{1},y_{2})\in [0,+\infty)^{2}|(a_{1}-a_{1}')y_{1}+(a_{2}-a_{2}')y_{2}\leq s'-s,~\forall (a_{1}',a_{2}',s')\in \mathfrak{G}\} \]
be the polyhedron in $[0,+\infty)^{2}$, and let
$ \phi_{a_1,a_2}(y)=(a_{1}-1)y_{1}+(a_{2}-1)y_{2} $
be the corresponding linear function. It follows from Proposition \ref{prop4-4} that
\begin{equation}\label{6-5}
  J_{\Omega}(r)\approx J_{3,0,3}(r)+J_{2,0,4}(r)+J_{0,1,4}(r)+J_{1,0,5}(r)+J_{0,0,6}(r),
\end{equation}
where
\begin{equation}\label{6-6}
  J_{a_1,a_2,s}(r):=\left(\ln\frac{1}{r}\right)^{2}\frac{1}{r^{s}}\int_{P_{a_1,a_2,s}}e^{\left(\ln \frac{1}{r}\right)\phi_{a_1,a_2}(y)}dy.
\end{equation}
According to Lemma \ref{lemma4-3}, we can find the maximum value $m_{a_1,a_2,s}$ of $\phi_{a_1,a_2}$ in polyhedron $P_{a_1,a_2,s}$, if $V_2(P_{a_1,a_2,s})\neq 0$, and the dimension $d_{a_1,a_2,s}=\dim\{x\in P_{a_1,a_2,s}|\phi_{a_1,a_2}(x)=m_{a_1,a_2,s}\}$.

For index pairs $(2,0,4)$ and $(1,0,5)$, we have $V_2(P_{2,0,4})=V_2(P_{1,0,5})=0$, which implies $J_{2,0,4}(r)=J_{1,0,5}(r)=0$. For other index pairs, employing the linear programming and Proposition \ref{prop4-6} we obtain
\begin{enumerate}[(1)]
  \item  $(3,0,3)$: $m_{3,0,3}=\frac{2}{3}$, $d_{3,0,3}=0$ and $ J_{3,0,3}(r)\approx r^{-\frac{11}{3}}$ as $r\to 0^{+}$.
  \item $(0,1,4)$: $m_{0,1,4}=-\frac{1}{3}$, $d_{0,1,4}=0$ and
$ J_{0,1,4}(r)\approx r^{-\frac{11}{3}}$ as $r\to 0^{+}$.
  \item $(0,0,6)$: $m_{0,0,6}=-3$, $d_{0,0,6}=0$ and
$ J_{0,0,6}(r)\approx r^{-3}$ as $r\to 0^{+}$.
\end{enumerate}
In conclusion, we have $ J_{\Omega}(r)\approx r^{-\frac{11}{3}}$ as $r\to 0^{+}$. Consequently,   $\lambda_{k}\approx k^{\frac{6}{11}}=k^{\frac{2}{11/3}}$ as $k\to +\infty$, where $\lambda_{k}$ is the $k$-th Dirichlet eigenvalue of the operator $\triangle_{X}=X_{1}^{2}+X_{2}^{2}+X_{3}^{2}$ on $\Omega$. It is noteworthy that
 $\frac{11}{3}$ is not an integer. This type of asymptotic behaviour of Dirichlet eigenvalues for degenerate elliptic operators unveils a completely new phenomenon that we have not encountered before.
\end{example}

Finally, we provide two examples satisfying  $Q_{0}=Q-\alpha(X)>w$ and $Q-\alpha(X)<w=Q_{0}$, respectively. These examples suggest that the lower bound $Q_0\geq \max\{Q-\alpha(X),w\}$ in \eqref{1-12} is optimal.

\begin{example}($Q_{0}=Q-\alpha(X)>w$) Consider the vector fields $X=(X_{1},X_{2},X_{3})$ in $\mathbb{R}^3$, where
\[ X_{1}=\partial_{x_{1}},\quad X_{2}=x_{1}\partial_{x_{2}}+x_{1}^{3}\partial_{x_{3}},~~\mbox{and}~~X_{3}=x_{1}x_{2}\partial_{x_{3}}. \]
The corresponding dilation is given by $\delta_t(x)=(tx_1,t^2x_2,t^4x_3)$,
and the homogeneous dimension $Q=7$. Moreover, $X$ satisfy H\"{o}rmander's condition in $\mathbb{R}^3$ with the H\"{o}rmander index $r=4$.

Suppose that $\Omega\subset\mathbb{R}^{3}$ is a bounded open domain containing the origin. A direct calculation yields that
\begin{equation}\label{6-7}
  \Lambda(x,r)\approx |x_1|^2|x_2|r^3+(|x_1x_2|+|x_1|^3)r^4+(|x_2|+|x_1|^2)r^5+|x_1|r^6+r^7,
\end{equation}
which implies $Q-\alpha(X)=4>w=3$. Therefore, by Lemma \ref{lemma4-1} and Proposition \ref{prop4-7} we have
\begin{equation}
\begin{aligned}
 J_{\Omega}(r)&=\int_{\Omega}\frac{dx}{\Lambda(x,r)}\approx\int_{(0,1]^{2}}\frac{dx_{1}dx_{2}}{x_1^2x_2r^3+(x_1x_2+x_1^3)r^4+(x_2+x_1^2)r^5+x_1r^6+r^7}\\
 &\approx \frac{1}{r^4}|\ln r|~~\mbox{as}~~r\to 0^+.
\end{aligned}
\end{equation}
This means $Q_{0}=Q-\alpha(X)=4>w=3$ and $d_{0}=1$, which yields $\lambda_k\approx k^{\frac{1}{2}}(\ln k)^{-\frac{1}{2}}$ as $k\to +\infty$.

\end{example}

\begin{example}($Q_{0}=w>Q-\alpha(X)$)
\label{ex6-6}
 Consider the vector fields $X=(X_{1},X_{2})$ in $\mathbb{R}^3$, where
\[ X_{1}=\partial_{x_{1}}-x_{2}^{2}\partial_{x_{3}}~~\mbox{and}~~X_{2}=\partial_{x_{2}}+x_{1}^{2}\partial_{x_{3}}. \]
The corresponding dilation is given by $\delta_t(x)=(tx_1,tx_2,t^3x_3)$,
and the  homogeneous dimension $Q=5$. Obviously, $X$ satisfy H\"{o}rmander's condition in $\mathbb{R}^3$ with the H\"{o}rmander index $r=3$.

Let $\Omega\subset\mathbb{R}^{3}$ be a bounded open domain containing the origin. Then
\begin{equation}\label{6-9}
  \Lambda(x,r)\approx |x_1+x_2|r^4+r^5,
\end{equation}
which gives  $w=4>Q-\alpha(X)=3$. It follows  from Lemma \ref{lemma4-1} and \eqref{6-9} that
\begin{equation}\label{6-10}
  J_{\Omega}(r)=\int_{\Omega}\frac{dx}{\Lambda(x,r)}\approx\int_{(-1,1)^2}\frac{dx_1 dx_2}{|x_1+x_2|r^4+r^5}.
\end{equation}

A general method for transforming the integrand $(|x_1+x_2|r^4+r^5)^{-1}$
 into the form specified in the property (C) of Proposition \ref{prop4-2} is the blow-up technique in algebraic geometry (see \cite[Section 3]{Wtanabe2009}). However, this method involves many tedious calculations.
As an alternative, we will consider a different variable transformation to handle \eqref{6-10}.

Changing $x_1+x_2=u_1$ and $x_2=u_2$ in \eqref{6-10}, we obtain
\begin{equation}\label{6-11}
  \int_{(-1,1)^2}\frac{dx_1 dx_2}{|x_1+x_2|r^4+r^5}=\int_{M}\frac{d u_1du_2}{|u_1|r^4+r^5},
\end{equation}
where $M=\{(u_1,u_2)\in \mathbb{R}^2|u_2-1< u_1<u_2+1, -1<u_2< 1\}$ is an open domain containing the origin. Hence, using Lemma \ref{lemma4-1}  again, we have
\begin{equation}\label{6-12}
 J_{\Omega}(r)=\int_{\Omega}\frac{dx}{\Lambda(x,r)}\approx \int_{M}\frac{d u_1du_2}{|u_1|r^4+r^5}\approx \int_{0}^{1}\frac{d u_1}{u_{1}r^4+r^5}\approx \frac{1}{r^4}|\ln r|~~\mbox{as}~~r\to 0^+.
\end{equation}
This means $Q_{0}=w=4>Q-\alpha(X)=3$ and $d_{0}=1$, which yields that $\lambda_k\approx k^{\frac{1}{2}}(\ln k)^{-\frac{1}{2}}$ as $k\to +\infty$.
\end{example}
\begin{remark}
The change of variables in \eqref{6-11} is a straightforward application of resolution of singularities. In this case, the triple $(M,W,\rho)$ specified in Proposition \ref{prop4-2} can be given by
\[  M=\{(u_1,u_2)\in \mathbb{R}^2|u_2-1< u_1<u_2+1, -1<u_2< 1\},~~~~ W=(-1,1)^2, \]
and $\rho:M\to W$ is the real analytic map such that
$ \rho(u_1,u_2)=(u_1-u_2,u_2)$.
\end{remark}

\section*{Acknowledgements}
 Hua Chen is supported by National Natural Science Foundation of China (Grant No. 12131017) and National Key R$\&$D Program of China (no. 2022YFA1005602). Hong-Ge Chen is supported by National Natural Science Foundation of China (Grant No. 12201607), Knowledge Innovation Program of Wuhan-Shuguang Project (Grant No. 2023010201020286) and China Postdoctoral Science Foundation (Grant Nos. 2023T160655 and 2021M703282). Jin-Ning Li is supported by China National Postdoctoral Program for Innovative Talents (Grant No. BX20230270).

\bibliographystyle{plain}
\bibliography{reference.bib}

\end{document}